\def\longmapsto#1{%
\begin{tikzpicture}
\draw (0,0.5mm) -- (0,-0.5mm);
\newlength\mylength
\setlength{\mylength}{\widthof{\hspace{.5mm}#1\hspace{.5mm}}}
\draw[->] (0,0) -- (1.2\mylength,0) node[above,midway] {#1};
\end{tikzpicture}
}
\newtheorem{lemma}{Lemma}[section]
\newtheorem{proposition}[lemma]{Proposition}
\newtheorem{theorem}[lemma]{Theorem}
\newtheorem{corollary}[lemma]{Corollary}
\newtheorem{conjecture}[lemma]{Conjecture}
\newtheorem*{propositionA}{Proposition}
\newtheorem*{theoremA}{Theorem}
\newtheorem*{corollaryA}{Corollary}
\theoremstyle{definition}
\newtheorem{definition}[lemma]{Definition}
\newtheorem{remark}[lemma]{Remark}
\newtheorem*{definitionA}{Definition}
\newcommand{\mfk}[1]{\mathfrak{#1}}
\newcommand{\mbb}[1]{\mathbb{#1}}
\newcommand{\mcl}[1]{\mathcal{#1}}
\newcommand{\mrm}[1]{\mathrm{#1}}
\newcommand{\msc}[1]{\mathscr{#1}}
\newcommand{\mbf}[1]{\mathbf{#1}}
\newcommand{\opn}[1]{\operatorname{#1}}
\newcommand{\ot}{\otimes}
\newcommand{\bt}{\boxtimes}
\newcommand{\C}{\mathscr{C}}
\newcommand{\D}{\mathscr{D}}
\newcommand{\M}{\mathscr{M}}
\newcommand{\g}{\mathfrak{g}}
\renewcommand{\1}{\mathbf{1}}
\renewcommand{\1}{\mathbf{1}}
\renewcommand{\O}{\mathscr{O}}
\renewcommand{\b}{\text{\tiny$\bullet$}}
\DeclareMathOperator{\Hom}{Hom}
\DeclareMathOperator{\End}{End}
\DeclareMathOperator{\Ext}{Ext}
\DeclareMathOperator{\Aut}{Aut}
\DeclareMathOperator{\Sym}{Sym}
\DeclareMathOperator{\rep}{rep}
\DeclareMathOperator{\Kdim}{Kdim}
\DeclareMathOperator{\GKdim}{GKdim}
\DeclareMathOperator{\FPdim}{FPdim}
\title[]{Cohomology of finite tensor categories: duality and Drinfeld centers}
\date{\today}
\author{Cris Negron}
\thanks{The first author was supported by NSF Postdoctoral Research Fellowship DMS-1503147.  The second author was supported by NSF grant DMS-1802503.  The second author was also supported by an AMS Simons travel grant.}
\email{negronc@mit.edu}
\address{Department of Mathematics\\Massachusetts Institute of Technology\\
Cambridge, MA USA}
\author{Julia Yael Plavnik}
\email{jplavnik@iu.edu}
\address{Department of Mathematics\\Indiana University\\ 
Bloomington IN, USA}
\begin{document}

\maketitle

\begin{abstract}
We consider the finite generation property for cohomology of a finite tensor category $\msc{C}$, which requires that the self-extension algebra of the unit $\Ext^\b_\msc{C}(\mbf{1},\mbf{1})$ is a finitely generated algebra and that, for each object $V$ in $\msc{C}$, the graded extension group $\Ext^\b_\msc{C}(\mbf{1},V)$ is a finitely generated module over the aforementioned algebra.  We prove that this cohomological finiteness property is preserved under duality (with respect to exact module categories) and taking the Drinfeld center, under suitable restrictions on $\msc{C}$.  For example, the stated result holds when $\msc{C}$ is a braided tensor category of odd Frobenius-Perron dimension.  By applying our general results, we obtain a number of new examples of finite tensor categories with finitely generated cohomology.  In characteristic $0$, we show that dynamical quantum groups at roots of unity have finitely generated cohomology.  We also provide a new class of examples in finite characteristic which are constructed via infinitesimal group schemes.
\end{abstract}

\section{Introduction}

Fix a base field $k$.  This field is arbitrary in general, but at times specified to be perfect or of characteristic zero. For a given finite tensor category $\msc{C}$ over $k$, by the cohomology $H^\b(\msc{C},V)$ we mean the graded group of extensions from the unit $H^\b(\msc{C},V):=\Ext^\b_\msc{C}(\1,V)$.  For us, a {\it tensor category} always means a {\it finite tensor category over the base field $k$}.  We are concerned with the following cohomological finiteness property:

\begin{definitionA}
We say a tensor category $\msc{C}$ is of {\it finite type} (over $k$) if its cohomo\-logy $H^\b(\msc{C},\1)$ is a finitely generated algebra and $H^\b(\mathscr{C},V)$ is a finitely generated $H^\b(\mathscr{C},\1)$-module for each $V$ in $\msc{C}$.  For $\msc{C}$ of finite type, we define the Krull dimension of $\msc{C}$ as $\Kdim\msc{C}:=\Kdim H^{ev}(\msc{C},\1)$.
\end{definitionA}

It is conjectured in~\cite{etingofostrik04} that any finite tensor category is of finite type (see also~\cite[pg 2]{friedlandersuslin97}).  We refer to this conjecture as the finite generation conjecture.  For a short historical account of this conjecture, including myriad known examples and references, one can see the introduction to~\cite{friedlandernegron18}.
\par

With the present study we would like to initiate an investigation into the general behaviors of cohomology, as a tensor invariant.  In particular, we are interested in how cohomology evolves under certain standard operations, such as de-/equivariantization, extension, and duality.  This project has a strong intersection with the finite generation conjecture, but is more local in nature, as one needn't draw any global conclusion about the nature of cohomology in order to make explicit relative statements.  (One could compare with theories of support, e.g.~\cite[Corollary 5.4, Proposition 7.1]{suslinfriedlanderbendel97}.)  We conjecture the following.

\begin{conjecture}[Cohomological stability]\label{conj:stab_conj}
If $\msc{C}$ is a tensor category which is of finite type and $\msc{M}$ is an exact $\msc{C}$-module category, then the corresponding dual category $\msc{C}^\ast_\msc{M}$ is also of finite type.  Furthermore, in characteristic $0$, the Krull dimension is invariant under duality
\[
\Kdim\msc{C}^\ast_\msc{M}=\Kdim\msc{C}.
\] 
\end{conjecture}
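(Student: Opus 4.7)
My approach would be to factor the duality $\msc{C} \mapsto \msc{C}^\ast_\msc{M}$ through the Drinfeld center, using the well-known Morita invariance $Z(\msc{C}) \simeq Z(\msc{C}^\ast_\msc{M})$ as braided tensor categories. This reduces the conjecture to two complementary implications: an \emph{ascent} step, asserting that $\msc{D}$ of finite type implies $Z(\msc{D})$ of finite type, and a \emph{descent} step, asserting that $Z(\msc{D})$ of finite type implies $\msc{D}$ of finite type. Applying ascent to $\msc{C}$ and then descent to $\msc{C}^\ast_\msc{M}$ would then yield the first part of the conjecture.

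The descent step is the more tractable one. It exploits the adjoint pair $I \dashv F$ between the induction functor $I \colon \msc{C} \to Z(\msc{C})$ and the forgetful tensor functor $F$. Frobenius reciprocity yields a natural isomorphism
\[
\Ext^\b_{Z(\msc{C})}\bigl(I(\1),\,W\bigr) \;\cong\; H^\b\bigl(\msc{C},\, F(W)\bigr) \quad \text{for all } W \in Z(\msc{C}),
\]
while $F$ induces a ring homomorphism $F^\ast \colon H^\b(Z(\msc{C}), \1) \to H^\b(\msc{C}, \1)$. If $Z(\msc{C})$ is of finite type, then $\Ext^\b_{Z(\msc{C})}(I(\1), W)$ is finitely generated over $H^\b(Z(\msc{C}), \1)$ for every $W$, and via $F^\ast$ one recovers finite generation of $H^\b(\msc{C}, \1)$ as an algebra. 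Finite generation of the modules $H^\b(\msc{C}, V)$ should follow by a parallel argument applied to objects in $Z(\msc{C})$ lifting or involving $V$, together with the fact that in a finite tensor category $\Ext^\b_\msc{C}(X,Y)$ is controlled by $H^\b(\msc{C}, Y \ot X^\ast)$.

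The ascent step is the main obstacle. The natural starting point is the dual identification
\[
\Ext^\b_{Z(\msc{C})}\bigl(I(\1),\,I(\1)\bigr) \;\cong\; H^\b\bigl(\msc{C},\, FI(\1)\bigr),
\]
whose right-hand side is a finitely generated $H^\b(\msc{C}, \1)$-module by hypothesis, and hence finitely generated as a Yoneda algebra. The difficulty is to pass from finite generation of this ``large'' Yoneda algebra of $I(\1)$ down to that of $H^\b(Z(\msc{C}), \1)$ itself. The canonical unit morphism $\1 \to FI(\1)$ produces a ring map $H^\b(Z(\msc{C}), \1) \to \Ext^\b_{Z(\msc{C})}(I(\1), I(\1))$, but controlling its finiteness properties requires a spectral-sequence or filtration argument that is not available in full generality---this is presumably why the paper's analogous theorems impose restrictions such as braidedness together with odd Frobenius-Perron dimension. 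For the Krull dimension equality in characteristic zero, both the ascent and descent steps should produce module-finite ring maps between the relevant cohomology rings, so the equality $\Kdim\msc{C} = \Kdim Z(\msc{C}) = \Kdim\msc{C}^\ast_\msc{M}$ will follow automatically once finite generation is established.
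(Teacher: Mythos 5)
This statement is a conjecture, and the paper offers no proof of it in general; it only establishes special cases (non-degenerate, Tannakian, and semisimple M\"uger center in the braided setting). Your plan correctly reproduces the paper's own reduction strategy: the factorization through the Drinfeld center via $Z(\msc{C}^\ast_\msc{M})\simeq Z(\msc{C})^{\mrm{rev}}$ is exactly Proposition~\ref{prop:conjs}, and your ``descent'' step is Proposition~\ref{prop:fgcat} together with Corollary~\ref{cor:Z2C} (the paper phrases it via the exact \emph{right} adjoint to the surjective forgetful functor and the identification $H^\b(Z(\msc{C}),I(V))\cong H^\b(\msc{C},V)$, but your Frobenius-reciprocity version with the left adjoint is an equivalent route, since the forgetful functor is surjective and preserves projectives). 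You are also right that the ascent step --- finite type of $\msc{C}$ implies finite type of $Z(\msc{C})$ --- is the genuine open problem; the paper resolves it only under braiding hypotheses, by splitting off the M\"uger center and using $Z(\msc{C})\simeq\msc{C}\bt\msc{C}^{\mrm{rev}}$ in the non-degenerate case, relative centers and de-equivariantization in the Tannakian case, and minimal non-degenerate extensions in the super-Tannakian semisimple case. So as a proof of the conjecture your proposal is, by your own admission, incomplete --- which is the correct status.

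There is, however, a concrete error in your final paragraph. The claimed chain $\Kdim\msc{C}=\Kdim Z(\msc{C})=\Kdim\msc{C}^\ast_\msc{M}$ is wrong in the middle: the conjectural (and, in the proven cases, actual) relation is $\Kdim Z(\msc{C})=2\Kdim\msc{C}$. This is already visible in the non-degenerate case, where $Z(\msc{C})\simeq\msc{C}\bt\msc{C}^{\mrm{rev}}$ and Lemma~\ref{lem:tp} gives Krull dimension $2\Kdim\msc{C}$; for instance $Z(\rep u_q(\mfk{g}))$ has cohomology of dimension $2\dim\mrm{Nil}(\mfk{g})$. Module-finiteness of the map $F^\ast\colon H^\b(Z(\msc{C}),\1)\to H^\b(\msc{C},\1)$ only yields the inequality $\Kdim\msc{C}\leq\Kdim Z(\msc{C})$, since $F^\ast$ has a large kernel; equality of Krull dimensions does not ``follow automatically'' from module-finiteness in both directions. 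The correct derivation of $\Kdim\msc{C}^\ast_\msc{M}=\Kdim\msc{C}$, as in Proposition~\ref{prop:conjs}, is $\Kdim\msc{C}^\ast_\msc{M}=\tfrac{1}{2}\Kdim Z(\msc{C}^\ast_\msc{M})=\tfrac{1}{2}\Kdim Z(\msc{C})=\Kdim\msc{C}$, which requires proving the factor-of-two identity for the center, not merely finiteness of the comparison maps.
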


In general, it is quite difficult to establish a precise equality of Krull dimensions under duality.  So here we focus on uniformly {\it bounding} the Krull dimensions of the duals $\msc{C}^\ast_\msc{M}$ by a polynomial function in $\Kdim\msc{C}$.  By {\it uniform} we mean uniform across all $\msc{M}$.  Whence we have a relaxation of the above conjecture.

\begin{conjecture}[Weak cohomological stability]\label{conj:weak_stab}
The finite type property is preserved under categorical duality, as in Conjecture~\ref{conj:stab_conj}.  Furthermore, in characteristic $0$, there is a polynomial $P_\mrm{univ}\in \mbb{R}_{\geq 0}[X]$ which provides a uniform bound on the Krull dimensions of the duals
\[
\Kdim\msc{C}^\ast_\msc{M}\leq P_\mrm{univ}(\Kdim\msc{C}).
\]
\end{conjecture}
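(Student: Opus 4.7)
The plan is to factor the duality through the Drinfeld center, exploiting the Morita invariance $Z(\msc{C})\simeq Z(\msc{C}^\ast_\msc{M})$ available for any indecomposable exact module category $\msc{M}$ (the general exact case reduces to this by decomposition). With this in hand, Conjecture~\ref{conj:weak_stab} reduces to a single transfer principle: finite type (and $\Kdim$, up to a polynomial) should be preserved under the passage $\msc{C}\leftrightarrow Z(\msc{C})$. Applying such a transfer on both sides of the Morita bridge then transports the finite type property from $\msc{C}$ to $\msc{C}^\ast_\msc{M}$, and composing the two Krull comparisons yields the bound $\Kdim\msc{C}^\ast_\msc{M}\leq P_\mrm{univ}(\Kdim\msc{C})$.

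The key technical input is the central algebra $A:=I(\mbf{1})\in Z(\msc{C})$, where $I:\msc{C}\to Z(\msc{C})$ is the induction functor left adjoint to the forgetful $U:Z(\msc{C})\to\msc{C}$. The algebra $A$ is commutative in $Z(\msc{C})$, and there is a canonical equivalence of $Z(\msc{C})$-module categories $\msc{C}\simeq Z(\msc{C})\text{-mod}_A$ identifying $\mbf{1}_\msc{C}$ with $A$. Combined with Frobenius reciprocity
\[
\Ext^\b_{Z(\msc{C})}(A,Z)\ \cong\ \Ext^\b_\msc{C}(\mbf{1},U(Z)),
\]
this dictionary converts every cohomological question on $Z(\msc{C})$ into one about $\Ext$ of $A$-modules, and vice versa.

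Given the dictionary, the proof would proceed through a change-of-algebra (relative bar) spectral sequence along the unit map $\mbf{1}\to A$ in $Z(\msc{C})$, linking $\Ext^\b_{Z(\msc{C})}(\mbf{1},-)$ to $\Ext^\b_{Z(\msc{C})}(A,-)\cong \Ext^\b_\msc{C}(\mbf{1},U(-))$, whose $E_2$ page is controlled by the cohomology of $\msc{C}$ and is Noetherian by the finite type hypothesis. One then has to argue that Noetherianity descends to the abutment, yielding finite type of $Z(\msc{C})$. The descent step from $Z(\msc{C}^\ast_\msc{M})$ to $\msc{C}^\ast_\msc{M}$ is the mirror image of the same construction, using that $\msc{C}^\ast_\msc{M}\simeq Z(\msc{C}^\ast_\msc{M})\text{-mod}_{A'}$ for the analogous commutative algebra $A'$ built from the forgetful/induction adjunction on the dual side.

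The main obstacle, absent further hypotheses on $\msc{C}$, is controlling this spectral sequence: in general the filtration on $\Ext^\b_{Z(\msc{C})}(\mbf{1},\mbf{1})$ is infinite, and there is no a priori reason for $E_\infty$ to inherit finite generation from $E_2$. This is exactly the point at which one expects auxiliary structure on $\msc{C}$---a braiding, odd Frobenius--Perron dimension, a fibre functor to $\opn{Vect}$, or a group-scheme presentation---to become indispensable, and indeed this is the pattern of the restricted theorems the paper goes on to establish. Granting enough control on the spectral sequence, a Poincar\'e series comparison would yield a linear estimate of the form $\Kdim Z(\msc{C})\leq 2\Kdim\msc{C}$, and hence the sharpest plausible choice $P_\mrm{univ}(X)=2X$.
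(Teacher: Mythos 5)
The statement you are asked about is Conjecture~\ref{conj:weak_stab}, and the paper does not prove it in general --- nor do you, and to your credit you say so explicitly. So the right comparison is between your proposed strategy and the strategy the paper actually follows for the cases it does settle.

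Your opening reduction is exactly the paper's: Proposition~\ref{prop:wconjs} shows the duality statement is equivalent to the corresponding statement for Drinfeld centers, and Corollary~\ref{cor:Z2C} handles the descent from $Z(\msc{C})$ to $\msc{C}^\ast_\msc{M}$. But note that this descent direction is the \emph{easy} one, and the paper does it without any spectral sequence: there is a surjective tensor functor $Z(\msc{C})\to\msc{C}^\ast_\msc{M}$, surjective tensor functors admit exact right adjoints, and Proposition~\ref{prop:fgcat} transfers finite type along such functors directly. Your proposal treats this step as a ``mirror image'' of the hard ascent and invokes the algebra $A'$ machinery for it, which is unnecessary. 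The genuinely hard direction is the ascent $\msc{C}\rightsquigarrow Z(\msc{C})$, and here your mechanism --- the commutative algebra $A=I(\mathbf{1})$ in $Z(\msc{C})$, the identification $\msc{C}\simeq Z(\msc{C})\text{-mod}_A$, and a change-of-rings spectral sequence along $\mathbf{1}\to A$ --- is not what the paper uses. In the cases the paper resolves it instead uses: the K\"unneth-type Lemma~\ref{lem:tp} together with $Z(\msc{C})\simeq\msc{C}\bt\msc{C}^{\mrm{rev}}$ in the non-degenerate case; de-equivariantization plus the relative center $Z_\msc{C}(\msc{C}_G)$ and an exact sequence of tensor categories whose quotient is \emph{fusion}, so that the Lyndon--Hochschild--Serre spectral sequence of Proposition~\ref{prop:ss} degenerates and no convergence issue arises; and fermionic $\mbb{Z}/2\mbb{Z}$-gradings of minimal non-degenerate extensions in the super-Tannakian case. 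Your diagnosis of the obstruction is nonetheless accurate and is the same one the paper implicitly works around: with no structure on $\msc{C}$ there is no control on the relevant spectral sequence, and finite generation of an $E_2$-page does not pass to the abutment. (Your closing remark that $P_{\mrm{univ}}(X)=2X$ is the sharpest plausible choice is slightly off: Conjecture~\ref{conj:stab_conj} predicts $\Kdim\msc{C}^\ast_\msc{M}=\Kdim\msc{C}$, so the conjecturally sharp bound for duality is $P(X)=X$; the factor $2$ is an artifact of routing the estimate through the center.)
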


In Conjecture~\ref{conj:weak_stab} the polynomial $P_\mrm{univ}$ is intended to be independent of choice of $\msc{C}$ and $\msc{M}$.  We note that Conjecture~\ref{conj:weak_stab} also implies a lower bound on the Krull dimensions of the duals, as $P_\mrm{univ}$ is an increasing function in $X$ and is therefore invertible.  We remark here that a precise understanding of the behavior of the Krull dimension under duality would be especially interesting in light of the fact that more refined properties of the spectrum of cohomology are decidedly {\it not} preserved under such operations (see Remark~\ref{rem:q_group}).
\par

The notion of categorial duality, also known as categorical Morita equivalence, was introduced by M\"uger in~\cite{muger03I} (cf.~\cite{ostrik03}) and now plays a fundmental role in the general theory of tensor categories.  The precise construction of the dual $\msc{C}^\ast_\msc{M}$ is recalled in Section~\ref{sect:intro_duals} and a discussion of the specific meaning of Conjectures~\ref{conj:stab_conj} and~\ref{conj:weak_stab} for Hopf algebras is given below, in Subsection \ref{stability of cohomology-Hopf}.
\par

In the present paper we prove the weak stability conjecture for ``most" braided tensor categories in characteristic $0$; specifically braided tensor categories with semisimple M\"uger center.  The polynomial in this case is simply $P_\mrm{braid}(X)=2X$.  More explicit descriptions of our main results and examples are given below.
\par

Our general method is to prove that the Drinfeld center $Z(\msc{C})$ of $\msc{C}$ is of finite type provided that $\msc{C}$ is of finite type, then to appeal to the fact that the construction of the center is invariant under categorical duality (see Proposition~\ref{prop:conjs}/\ref{prop:wconjs}).  The Drinfeld center approach allows us to obtain some results for categories in finite characteristic as well.  In terms of the center, the stability conjecture proposes that $Z(\msc{C})$ is of finite type whenever $\msc{C}$ is of finite type and that $\Kdim Z(\msc{C})=2\Kdim\msc{C}$.  Similarly, weak stability proposes that the finite type property is preserved under formation of the center and that the Krull dimension of $Z(\msc{C})$ grows sub-exponentially as a function of $\Kdim\msc{C}$.

\begin{remark}
Formally speaking, the dual $\msc{C}^\ast_\msc{M}$ will be a ``multi-tensor" category when $\msc{M}$ is decomposable.  Although we are not interested in this case, decomposability of $\msc{M}$ causes no problems for us, so we allow $\msc{M}$ to be decomposable in general.
\end{remark}
\begin{remark}
The proposed bound on the Krull dimension is clearly false in finite characteristic.  For example, $\rep(\mbb{F}_p[\mbb{Z}/p\mbb{Z}]^{\ot d})$ has Krull dimension $d$ while $\rep\left(\O(\mbb{Z}/p\mbb{Z})^{\ot d}\right)$ has Krull dimension $0$, even though $\rep(\O(\mbb{Z}/p\mbb{Z})^{\ot d})^\ast_{Vect}$ is tensor equivalent to $\rep(\mbb{F}_p[\mbb{Z}/p\mbb{Z}]^{\ot d})$.
\end{remark}

\subsection{Stability of cohomology for Hopf algebras}\label{stability of cohomology-Hopf}
Let us explain the situation a bit more clearly for the Hopf algebraically inclined.  As with categories, a {\it Hopf algebra} always means a {\it finite dimensional Hopf algebra}.
\par

In the Hopf setting, Conjecture~\ref{conj:stab_conj} can be seen as a generalization of a theorem of Larson and Radford, which states that a finite dimensional Hopf algebra in characteristic $0$ is semisimple if and only if its dual is semisimple~\cite{larsonradford88}.  

Let us consider some illustrative examples.  Take a Drinfeld twist $J$ of a Hopf algebra $A$.  Then we have a canonical tensor equivalence $\rep(A)\overset{\sim}\to \rep(A^J)$, where $A^J$ is obtained by altering the comultiplication of $A$ via $J$.  We also have the forgetful functor $\rep(A)\to Vect$ and the alternate functor $\rep(A)\to \rep(A^J)\to Vect$.  These functors produce $\mrm{rep}(A)$-module categories $\msc{M}=Vect$ and $Vect_J$, where $Vect_J$ is $Vect$ with the $\rep(A)$-action ``twisted" by $J$.  The dual categories are
\[
\rep(A)_{Vect}^\ast=\rep(A^\ast)^{\mrm{cop}},\ \ \rep(A)_{Vect_J}^\ast=\rep((A^\ast)_J)^{\mrm{cop}},
\]
where in the second instance $J$ alters the multiplicative structure on $A^\ast$ as a cocycle twist.  The superscript ``$\mrm{cop}$" here means we are taking the opposite tensor product.  (Notice that the $\mrm{cop}$ operation does not affect cohomology.)  One can also find a module category $\msc{M}(\sigma)$ associated to any $2$-cocycle $\sigma:A\otimes A\to k$ so that $\rep(A)^\ast_{\msc{M}(\sigma)}\cong \rep(A_\sigma)$.
\par

Whence the (weak) stability conjecture proposes, among other things, that if $A$ has finitely generated cohomology then any cocycle twist $A_\sigma$ also has finitely generated cohomology, as does its dual $A^\ast$ and any cocycle twists of its dual $A^\ast_J$.  Furthermore, the Krull dimension of cohomology is proposed to be invariant under these operations, or to vary at most as a polynomial in $\Kdim H^{ev}(A,k)$.

\subsection{New examples}

Among our contributions herein, there are two explicit classes of new examples of finite type tensor categories.  In finite characteristic, we show that if $G$ is a Frobenius kernel in a smooth algebraic group $\mbb{G}$, $G=\mbb{G}_{(r)}$, then arbitrary duals of the category of representations $\rep(G)^\ast_\msc{M}$ are of finite type.  Furthermore, we uniformly bound the Krull dimensions
\[
\Kdim \rep(G)^\ast_\msc{M}\leq \Kdim \rep(G)+\dim\mfk{g},
\]
where $\mfk{g}$ is the Lie algebra of $G$.  These results are obtained as an application of our work here in conjunction with results of Friedlander and the first author~\cite{friedlandernegron18}, and appear in Corollary~\ref{cor:repOM} below.  We note that exact module categories over $\rep(G)$ were classified via cohomological data by Gelaki~\cite{gelaki15}.
\par

One can produce concrete examples of new Hopf algebras with finitely gene\-rated cohomology via Corollary~\ref{cor:repOM}.  Specifically, one can take cocycle twists of the function algebra $\O(G)$ to produce Hopf algebras in characteristic $p$ which are neither commutative nor cocommutative but are seen to have finitely generated cohomology (see Section~\ref{sect:O_sig}).
\par

In characteristic 0, we prove that the dynamical quantum groups of Etingof and Nikshych~\cite{etingofnikshych01} (see also~\cite{etingofvarchenko99,xu01}) have finitely generated cohomology.  More in depth descriptions of dynamical quantum groups are given in Section \ref{sect:uqT} and Appendix~\ref{sect:A} but let us say here that while usual quantum groups are associated to constant solutions to the Yang-Baxter equation, dynamical quantum groups are associated to parameter dependent solutions to the Yang-Baxter equation.  In particular, dynamical quantum groups are not Hopf algebras in the strict sense of the term, although they do have associated tensor categories of representations.  These examples are covered in Section~\ref{sect:uqT}.
\par

More generally, we consider the pointed Hopf algebras $u(\mfk{D})$ of Andruskiewitsch and Schneider~\cite{andruskiewitschschneider10}.  We apply results of Mastnak, Pevtsova, Schauenburg, and Witherspoon~\cite{mpsw10} to find that arbitrary duals $\rep(u(\mfk{D}))^\ast_\msc{M}$ are of finite type, and also bound their Krull dimensions.  This result appears in Theorem~\ref{thm:char0}.

\subsection{Description of main results}
We begin with the following

\begin{propositionA}[\ref{prop:fgcat}]
Suppose that $\msc{D}$ is a tensor category of finite type and that $F:\msc{D}\to \msc{C}$ is a surjective tensor functor.  Then $\msc{C}$ is also of finite type and has bounded Krull dimension $\Kdim\C\leq \Kdim\D$.
\end{propositionA}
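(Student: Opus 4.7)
The plan is to control the cohomology of $\C$ via that of $\D$ through the right adjoint of $F$, combined with the graded algebra map $F^\ast \colon H^\b(\D, \1) \to H^\b(\C, \1)$ that $F$ induces on self-extensions of the unit by applying $F$ to an $n$-fold extension of $\1_\D$.

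As an exact tensor functor between finite tensor categories, $F$ admits a right adjoint $R \colon \C \to \D$, and I would appeal to the standard fact that $R$ is exact — equivalently, that $F$ takes projectives to projectives. Consequently, for any projective resolution $P^\b \to \1_\D$ in $\D$, the complex $F(P^\b) \to \1_\C$ is a projective resolution in $\C$, and the $\Hom$-adjunction passes to a natural isomorphism
\[
H^\b(\C, V) \ =\ \Ext^\b_\C(F(\1_\D), V) \ \cong\ \Ext^\b_\D(\1_\D, R(V))\ =\ H^\b(\D, R(V)).
\]
A direct verification using the Yoneda product shows this identification to be equivariant for the action of $H^\b(\D, \1)$, where on the left-hand side the action is obtained by restriction of scalars along $F^\ast$.

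By the finite type hypothesis on $\D$, the module $H^\b(\D, R(V))$ is finitely generated over $H^\b(\D, \1)$; so via the isomorphism above, $H^\b(\C, V)$ is finitely generated over the subalgebra $\opn{im}(F^\ast) \subseteq H^\b(\C, \1)$, and a fortiori over $H^\b(\C, \1)$ — using the same generators. Specializing to $V = \1_\C$, the algebra $H^\b(\C, \1)$ is a finite module over its finitely generated subalgebra $\opn{im}(F^\ast)$, hence is itself a finitely generated algebra. This same finiteness implies that $H^{ev}(\C, \1)$ is integral over $\opn{im}(F^\ast)^{ev}$, giving
\[
\Kdim\C \ =\ \Kdim H^{ev}(\C, \1) \ =\ \Kdim\opn{im}(F^\ast)^{ev} \ \leq\ \Kdim H^{ev}(\D, \1) \ =\ \Kdim\D,
\]
where the final inequality uses that $\opn{im}(F^\ast)^{ev}$ is a quotient of $H^{ev}(\D, \1)$.

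The main technical point is the Yoneda compatibility of the adjunction isomorphism with the two cohomology actions through $F^\ast$. This is formal once set up via projective resolutions, but it is the step tying the derived adjunction to the algebra map $F^\ast$ and is where careful bookkeeping is required; everything else reduces to well-known facts about adjoints of tensor functors between finite tensor categories together with routine commutative-algebra observations about integral extensions.
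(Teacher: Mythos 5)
Your proposal is correct and follows essentially the same route as the paper: both use the exact right adjoint of $F$ (equivalently, preservation of projectives) to identify $H^\b(\msc{C},V)\cong H^\b(\msc{D},I(V))$ as modules over $H^\b(\msc{D},\1)$ acting through $F^\ast$, and then deduce finite generation and the Krull dimension bound from finiteness of the algebra map $F^\ast$. The only cosmetic difference is that the paper verifies the module-map property by factoring the adjunction isomorphism as $\epsilon^V_\ast\circ F$, whereas you phrase it via projective resolutions and the Yoneda product; the commutative-algebra conclusion is identical.
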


The result is essentially a consequence of the fact that any surjective $F$ admits an exact right adjoint, and a similar argument is employed to obtain a finite generation property for exact module categories over finite type categories (Proposition~\ref{prop:M}).  When one considers representation categories of Hopf algebras, Proposition~\ref{prop:fgcat} appears as follows.

\begin{corollaryA}[\ref{cor:fgsubalg}]
Suppose that $A\to D$ is a Hopf inclusion and that $D$ has finitely generated cohomology.  Then $A$ also has finitely generated cohomology and the Krull dimension is bounded as $\Kdim H^{ev}(A,k)\leq \Kdim H^{ev}(D,k)$.
\end{corollaryA}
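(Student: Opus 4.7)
The plan is to deduce this from Proposition~\ref{prop:fgcat} applied to the restriction functor
\[
\mrm{res}:\rep(D)\to \rep(A)
\]
associated to the Hopf inclusion $\iota:A\hookrightarrow D$. Restriction is visibly an exact $k$-linear tensor functor: the $A$-comultiplication is the restriction of the $D$-comultiplication, so $\mrm{res}(V\otimes W)=\mrm{res}(V)\otimes \mrm{res}(W)$ on the nose, and the unit $k$ in $\rep(D)$ restricts to the unit $k$ in $\rep(A)$. Once we know $\mrm{res}$ is surjective in the sense of Etingof--Ostrik, Proposition~\ref{prop:fgcat} gives immediately that $\rep(A)$ is of finite type and $\Kdim\rep(A)\leq \Kdim\rep(D)$. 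Translated via $H^\b(A,k)=\Ext^\b_{\rep(A)}(k,k)$ (and similarly for $D$), this is exactly the asserted inequality $\Kdim H^{ev}(A,k)\leq \Kdim H^{ev}(D,k)$ together with the finite generation statement.

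The only point requiring a genuine argument is surjectivity of $\mrm{res}$, i.e.\ that every $A$-module appears as a subquotient of $\mrm{res}(W)$ for some $D$-module $W$. For this I would invoke the Nichols--Zoeller theorem, which guarantees that $D$ is free, and in particular faithfully flat, over any Hopf subalgebra $A$. Consequently, for any $V\in \rep(A)$, the induced module $\mrm{Ind}_A^D V=D\otimes_A V$ lies in $\rep(D)$, and the unit $v\mapsto 1\otimes v$ of the induction--restriction adjunction gives an $A$-linear map
\[
V\hookrightarrow \mrm{res}\bigl(\mrm{Ind}_A^D V\bigr)
\]
which is injective by faithful flatness. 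Thus $V$ is even a subobject of a restriction, which is stronger than what is needed.

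There is no significant obstacle beyond this verification; the work has been preloaded into Proposition~\ref{prop:fgcat}, and the Hopf-algebraic input is standard. If one wished to remove the use of Nichols--Zoeller, the same surjectivity can be argued from the fact that a Hopf inclusion is a Frobenius extension, so induction and coinduction coincide and the unit of adjunction is still injective.
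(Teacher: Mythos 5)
Your proof is correct and follows the same route as the paper: the paper likewise obtains the corollary by applying Proposition~\ref{prop:fgcat} to the restriction functor $\rep(D)\to\rep(A)$, whose surjectivity it records earlier in the section (restriction along a Hopf map is surjective iff the map is injective). Your Nichols--Zoeller/faithful-flatness argument simply fills in the standard details of that surjectivity claim, which the paper leaves implicit.
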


The center $Z(\msc{C})$ provides the necessary link between the cohomology of $\msc{C}$ and the cohomology of its duals $\msc{C}^\ast_\msc{M}$.  More specifically, any dual $\msc{C}^\ast_\msc{M}$ admits a surjective tensor functor from the center $Z(\msc{C})$~\cite[Proposition\ 8.5.3]{egno15}, and hence $\msc{C}^\ast_\msc{M}$ is of finite type and of Krull dimension $\leq \Kdim Z(\msc{C})$ whenever the center of $\msc{C}$ is of finite type.  We use Proposition~\ref{prop:fgcat} in the braided setting to address (weak) stability of cohomology.
\par

Recall that for a Hopf algebra $A$, the Drinfeld center of $\rep(A)$ is identified with representations of the Drinfeld double $D(A)$, and a braiding on $\rep(A)$ is exactly the information of a quasitriangular structure on $A$.  Recall also that the M\"uger center of a braided tensor category $\msc{C}$ is the full tensor subcategory of all objects $V$ in $\msc{C}$ which braid ``trivially" with every other object in $\msc{C}$ (see Section~\ref{sect:braidedI}).

\begin{theoremA}[\ref{thm:braidedII}]
Let $\msc{C}$ be a braided tensor category of finite type over a field of characteristic $0$. Suppose that the M\"uger center of $\msc{C}$ is semisimple.  Then any dual category $\msc{C}^\ast_\msc{M}$ with respect to an exact $\msc{C}$-module category $\msc{M}$ is of also of finite type.  Furthermore, there is a uniform bound on the Krull dimensions
\[
\Kdim\msc{C}^\ast_\msc{M}\leq 2\Kdim\msc{C}.
\]
\end{theoremA}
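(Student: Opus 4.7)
The plan is to factor everything through the Drinfeld center. By \cite[Proposition 8.5.3]{egno15} there is a surjective tensor functor $Z(\msc{C})\twoheadrightarrow \msc{C}^\ast_\msc{M}$ for every exact module category $\msc{M}$ over $\msc{C}$, and Proposition~\ref{prop:fgcat} propagates finite type and Krull-dimension bounds from source to target. Hence the entire statement reduces to showing that $Z(\msc{C})$ is of finite type with $\Kdim Z(\msc{C})\leq 2\Kdim \msc{C}$.

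I would first dispose of the nondegenerate case. When the M\"uger center of $\msc{C}$ is trivial, the braiding induces the canonical braided equivalence $\msc{C}\boxtimes\msc{C}^{\mathrm{rev}}\overset{\sim}{\longrightarrow}Z(\msc{C})$. Combined with the K\"unneth identification
\[
\Ext^\b_{\msc{C}\boxtimes\msc{C}^{\mathrm{rev}}}(\mathbf{1},V\boxtimes W)\cong \Ext^\b_\msc{C}(\mathbf{1},V)\otimes_k\Ext^\b_\msc{C}(\mathbf{1},W),
\]
this immediately gives that $Z(\msc{C})$ is of finite type with $\Kdim Z(\msc{C})=2\Kdim \msc{C}$.

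The general case reduces to the nondegenerate one via de-equivariantization by the M\"uger center itself. In characteristic zero, Deligne's theorem identifies the semisimple M\"uger center $\msc{E}$ with $\mathrm{Rep}(G,z)$ for a finite (super-)group; after passing to an index-two Tannakian subcategory if necessary, assume $\msc{E}=\mathrm{Rep}(G)$. Setting $A=\mathcal{O}(G)\in \msc{E}\subset\msc{C}$, the free-module functor $\msc{C}\to\msc{C}_G:=\mathrm{Mod}_\msc{C}(A)$, $X\mapsto X\otimes A$, is a surjective tensor functor, so Proposition~\ref{prop:fgcat} implies that $\msc{C}_G$ is of finite type with $\Kdim \msc{C}_G\leq\Kdim \msc{C}$. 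Since $\msc{E}$ is the entire M\"uger center, $\msc{C}_G$ inherits a nondegenerate braiding (by Drinfeld--Gelaki--Nikshych--Ostrik), and the previous paragraph produces $\Kdim Z(\msc{C}_G)\leq 2\Kdim \msc{C}$. Finally, $\msc{C}=(\msc{C}_G)^G$ and $Z(\msc{C})$ is obtained from $Z(\msc{C}_G)$ by a $G$-equivariantization (or closely related $G$-crossed) construction. In characteristic zero, equivariantization by a finite group preserves finite type without raising the Krull dimension, because the Lyndon--Hochschild--Serre type spectral sequence
\[
H^p\bigl(G,\Ext^q_{Z(\msc{C}_G)}(\mathbf{1},V)\bigr)\Rightarrow \Ext^{p+q}_{Z(\msc{C})}(\mathbf{1},V)
\]
collapses to its bottom row when $|G|$ is invertible in $k$. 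Combining these reductions yields $\Kdim Z(\msc{C})\leq 2\Kdim \msc{C}$.

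The principal obstacle is the third step: pinning down the precise relationship between $Z(\msc{C})$ and $Z(\msc{C}_G)$, and transferring cohomological bounds across it uniformly in the object $V$. Accommodating the $G$-action on $Z(\msc{C}_G)$ and its interaction with the half-braiding data is the technical heart of the argument, while the other two steps are essentially formal consequences of the existing categorical literature and the machinery already set up in Proposition~\ref{prop:fgcat}.
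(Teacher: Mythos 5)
Your overall strategy --- reduce to the Drinfeld center via the surjection $Z(\msc{C})\to\msc{C}^\ast_\msc{M}$, handle the non-degenerate case by $Z(\msc{C})\simeq\msc{C}\bt\msc{C}^\mrm{rev}$ and K\"unneth, and de-equivariantize in general --- matches the paper's skeleton, but your third step has two genuine gaps, and the second of them is precisely the new content of this theorem.

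First, the relation between $Z(\msc{C})$ and $Z(\msc{C}_G)$ is \emph{not} a $G$-equivariantization, so your proposed spectral sequence $H^p(G,\Ext^q_{Z(\msc{C}_G)}(\1,V))\Rightarrow \Ext^{p+q}_{Z(\msc{C})}(\1,V)$ is not correct as stated (already for $\msc{C}=\rep(G)$ with trivial braiding one has $\msc{C}_G=Vect$ and $Z(\msc{C}_G)^G=\rep(G)$, which is much smaller than $Z(\rep(G))=\rep(D(G))$). What is true is that the \emph{relative} center $Z_{\msc{C}}(\msc{C}_G)$ is the de-equivariantization of $Z(\msc{C})$ (Lemma~\ref{lem:1219}), and $Z(\msc{C}_G)$ sits inside $Z_{\msc{C}}(\msc{C}_G)$ as the kernel of an exact sequence $Z(\msc{C}_G)\to Z_{\msc{C}}(\msc{C}_G)\to Vect^G\bt\End_k(\msc{C}_G)$ relative to $\msc{C}_G$ in the sense of Etingof--Gelaki. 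Transferring finite type from $Z(\msc{C}_G)$ across that sequence requires the relative-invariants functor and the generalized Lyndon--Hochschild--Serre spectral sequence of Proposition~\ref{prop:ss}/Corollary~\ref{cor:ss}; one cannot bypass the relative center, because finite type must be verified on objects of $Z(\msc{C})$ whose images lie in the non-neutral components of $Z_\msc{C}(\msc{C}_G)$. You correctly flag this as ``the technical heart,'' but flagging it does not supply the argument.

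Second, and more fundamentally for \emph{this} theorem as opposed to the Tannakian case (Theorem~\ref{thm:braidedI}): when the M\"uger center is super-Tannakian, passing to ``an index-two Tannakian subcategory'' and de-equivariantizing by it does \emph{not} yield a non-degenerate category. The quotient $\msc{C}_G$ then has M\"uger center $sVect$ (it is only slightly degenerate), the DGNO non-degeneracy theorem does not apply, and $Z(\msc{C}_G)\not\simeq\msc{C}_G\bt\msc{C}_G^\mrm{rev}$. There is no further de-equivariantization available to kill the residual fermion. The paper's treatment of this case is the entire point of Section~\ref{sect:braidedIII}: one shows that the canonical functor $\msc{C}_G\bt\msc{C}_G^\mrm{rev}\to Z(\msc{C}_G)$ surjects onto the centralizer $Z(\msc{C}_G,sVect)$ and extends to an exact sequence with kernel $\rep(\mbb{Z}/2\mbb{Z})$ (Proposition~\ref{prop:dno}, after Davydov--Nikshych--Ostrik), and then that $Z(\msc{C}_G)$ is a $\mbb{Z}/2\mbb{Z}$-extension of $Z(\msc{C}_G,sVect)$ via the fermionic grading attached to a minimal non-degenerate extension (Lemmas~\ref{lem:Z2ferm} and~\ref{lem:Gext}). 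Without some version of this argument your reduction to the K\"unneth computation simply does not go through in the super-Tannakian case.
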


Basic information on the M\"uger center, as well as means of determining its semisimplicity, are given in Sections~\ref{sect:Mu},~\ref{sect:tann_check}, and~\ref{sect:ssE}.  Theorem~\ref{thm:braidedII} is more easily understood through a simple corollary.  Recall that a tensor category $\msc{C}$ is called weakly integral if its Frobenius-Perron dimension is an integer.  For example, representation categories of Hopf algebras are weakly integral with $\opn{FPdim}(\rep(A))=\dim A$.

\begin{corollaryA}[\ref{lem:4}]
Suppose $\msc{C}$ is of finite type over a field of characteristic $0$. Assume also that $\msc{C}$ is of integral Frobenius-Perron dimension which is not divisible by $4$.  If $\msc{C}$ admits a braiding, then for every exact module category $\msc{M}$ the corresponding dual $\msc{C}^\ast_\msc{M}$ is also of finite type and, furthermore, $\Kdim\msc{C}^\ast_\msc{M}\leq 2\Kdim\msc{C}$.
\end{corollaryA}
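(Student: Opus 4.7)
The plan is to obtain this corollary as a direct specialization of Theorem~\ref{thm:braidedII}. Since $\msc{C}$ is already assumed to be of finite type, braided, and over a field of characteristic $0$, it suffices to verify that under the present hypotheses the M\"uger center $\msc{C}'$ of $\msc{C}$ is semisimple; granted this, Theorem~\ref{thm:braidedII} immediately supplies both the finite type conclusion for every dual $\msc{C}^\ast_\msc{M}$ and the bound $\Kdim\msc{C}^\ast_\msc{M}\le 2\Kdim\msc{C}$.

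The first step is to reduce the semisimplicity check to a statement about $\msc{C}'$ alone. The M\"uger center is a (necessarily symmetric) finite tensor subcategory of $\msc{C}$, so by the standard divisibility of Frobenius--Perron dimensions for tensor subcategories inside a finite tensor category, $\FPdim\msc{C}'$ is a positive integer dividing $\FPdim\msc{C}$. In particular $\FPdim\msc{C}'$ is itself an integer not divisible by $4$. The corollary therefore reduces to the purely symmetric statement: any symmetric finite tensor category $\msc{E}$ in characteristic $0$ with $\FPdim\msc{E}$ an integer not divisible by $4$ is semisimple.

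This last reduction is where the work lies, and it is precisely the kind of criterion developed in Sections~\ref{sect:Mu},~\ref{sect:tann_check}, and~\ref{sect:ssE}; I would invoke the relevant assertion directly from there. Conceptually, in characteristic $0$ any symmetric finite tensor category admits a super fiber functor and is thus equivalent to representations of a finite supergroup scheme; any non-semisimple such scheme carries a non-trivial odd unipotent component whose total Frobenius--Perron dimension is divisible by $4$, so that $4\mid\FPdim\msc{E}$ as soon as $\msc{E}$ fails to be semisimple. Applying this to $\msc{E}=\msc{C}'$ yields semisimplicity of the M\"uger center, and Theorem~\ref{thm:braidedII} then completes the argument.

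The principal obstacle is of course the symmetric semisimplicity criterion invoked in the previous paragraph; both the characteristic-zero hypothesis and the ``not divisible by $4$'' hypothesis enter the corollary precisely through it. Once that criterion is in hand, everything else is a short combination of FPdim divisibility with Theorem~\ref{thm:braidedII}.
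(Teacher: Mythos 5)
Your proposal is correct and follows essentially the same route as the paper: one checks via Lemma~\ref{lem:4} that a non-semisimple M\"uger center would force $4\mid\FPdim(\msc{C})$ (using Deligne's description $\msc{E}\simeq\rep(\mrm{Wedge}(V)\rtimes G)$ with $V\neq 0$ and $\FPdim(\msc{E})\mid\FPdim(\msc{C})$), and then applies Theorem~\ref{thm:braidedII}. The only slight imprecision is in attributing the factor of $4$ to the odd unipotent part alone: $\mrm{Wedge}(V)$ contributes only $2^{\dim V}\geq 2$, and the remaining factor of $2$ comes from the order-two element $z\in G$ forcing $2\mid |G|$.
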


For non-degenerate categories, i.e.\ categories with M\"uger center equivalent to $Vect$, Theorem~\ref{thm:braidedII} is a fairly straightforward application of Proposition~\ref{prop:fgcat}.  We deal with this case independently in Proposition~\ref{prop:Cduals}.  When the M\"uger center of $\msc{C}$ is not trivial, the situation becomes much more dynamic.  (The problem is that the Drinfeld center construction is not functorial, and therefore becomes difficult to handle under de-equivariantization of the input.)  We must consider here two cases: the case where the M\"uger center is Tannakian and the case where the M\"uger center is non-Tannakian but semisimple.  We address the Tannakian case in Section~\ref{sect:braidedII} and the non-Tannakian case in Section~\ref{sect:braidedIII}.
\par

In terms of braided categories, there is a final possibility which we do not address here.  Namely, when the M\"uger center of $\msc{C}$ is the representation category of a super group with non-vanishing odd functions (see Section~\ref{sect:Mu}).  At the moment, it seems that this case will require either a different approach than the one taken here, or a much more non-trivial analysis of spectral sequences relating the cohomology of $\msc{C}$ to that of its Drinfeld center.

\subsection{General organization}

The present paper has two main portions.  In the first part, which consists of Sections~\ref{sect:HC}--\ref{sect:uqT}, we give results relating the finite type property for the center $Z(\msc{C})$ to the finite type property for arbitrary duals $\msc{C}^\ast_\msc{M}$.  These materials are punctuated by the examples of Sections~\ref{sect:O_sig} and~\ref{sect:uqT}.  For the first portion of the paper $k$ is a base field of arbitrary characteristic, unless explicitly stated otherwise.  In the second portion, which consists of Sections~\ref{sect:braidedI}--\ref{sect:braidedIII}, we pursue the (weak) stability conjecture for braided tensor categories more generally.  As relayed above, we provide a proof of Conjecture~\ref{conj:weak_stab} for braided tensor categories with semisimple M\"uger center.  We always assume $k$ is of characteristic $0$ in this latter portion of the paper.
\par

In Appendix~\ref{sect:A}, we discuss relations between dynamical twists and module categories.

\subsection*{Acknowledgement}

Thanks to Sarah Witherspoon for many helpful conversations.  Thanks also to Pavel Etingof, Shlomo Gelaki, Victor Ostrik, and Ingo Runkel for clarifying points in the literature, and to Peter Haine as well.  We extend special thanks to Eric Rowell, who provided a number of key ideas which were central to the completion of Section~\ref{sect:braidedIII}.

\tableofcontents

\section{Background on tensor categories and cohomology}

We assume the reader is well-acquainted with Hopf algebras.  We give here some information about the more general framework of tensor categories.
\par

\subsection{Conventions}
We fix a base field $k$ of arbitrary characteristic.  We let $\Kdim H^\b$ denote the Krull dimension of a graded commutative algebra $H^\b$, i.e.\ the supremum of the lengths of chains of prime ideals in $H^\b$.  Since $H^\b$ is graded commutative, any odd degree element is nilpotent.  So the Krull dimension of $H^\b$ is equal to the Krull dimension of its (commutative) even subalgebra $\Kdim H^\b=\Kdim H^{ev}$.
\par

By a ``$k$-linear category" we mean an {\it abelian} category enriched over $Vect$.  For a tensor category $\msc{C}$, we let $\msc{C}^\mrm{cop}$ denote the tensor category which is equal to $\msc{C}$ as a $k$-linear category along with the opposite tensor product $V\ot^\mrm{cop}W=W\ot V$.  For a Hopf algebra $A$, for example, $\mrm{rep}(A)^\mrm{cop}=\mrm{rep}(A^\mrm{cop})$.  When $\msc{C}$ is a braided tensor category, with braiding $c$, we write $\msc{C}^\mrm{rev}$ for the tensor category $\msc{C}$ equipped with the reverse braiding, $c^\mrm{rev}_{V,W}=c_{W,V}^{-1}$.
\par

Standard, categorial, opposites are denoted by a non-roman superscript, $\msc{C}^{op}$.  We let $G(\msc{C})$ denote the group of (ismorphism classes of) invertible objects in a tensor category $\msc{C}$.

\subsection{Tensor categories and representations of Hopf algebras}

A $k$-linear (abelian) category $\C$ is said to be {\it finite} if it is equivalent to the category of modules over a finite dimensional algebra.  Rather, $\C$ is finite if it has finitely many simple objects (up to isomorphism), finite dimensional hom spaces, enough projectives, and all objects have finite length.
\par

In this work, by a {\it tensor category} we will always mean a {\it finite tensor category}.

\begin{definition}[\cite{egno15}]
A (finite) tensor category $\C$ is a $k$-linear, finite, rigid, monoidal category such that the monoidal structure $\ot:\C\times \C\to \C$ is $k$-linear in each factor.  We require additionally that the unit object $\mbf{1}$ of $\C$ is simple and that $\End_\C(\1,\1)=k$.
\end{definition}

At times we label the unit of $\C$ as $\1_\C$.  However, when no confusion will arise, we employ the simpler notation $\1$.  Of course, such a $\msc{C}$ comes equipped with a (generally nontrivial) associator, and some additional compatibilities with the unit.  Although these seemingly subtle structures are quite important in general, they do not play a significant role in our study.
\par

The rigid structure refers to the existence of left and right duals for each object $V$ in $\C$.  These are objects $V^\ast$ and ${^\ast V}$ respectively which come equipped with evaluation
\[
ev^l_V:V^\ast\ot V\to \1,\ \ ev^r_V:V\ot {^\ast V}\to \1,
\]
and coevaluation
\[
coev^l_V:\1\to V\ot V^\ast,\ \ coev^r_V:\1\to {^\ast V}\ot V,
\]
maps which satisfy a number of exceedingly useful axioms.  We do not list the axioms here, but refer the reader to~\cite[Section\ 2.10]{egno15} for details and basic implications.
\par

The algebraically inclined reader is free to think of $\C$ as the category of (finite dimensional) representations of a finite dimensional Hopf algebra $A$.  In this case $\rep(A)$ has the usual monoidal structure $\ot=\ot_k$ and the duals are give by
\[
V^\ast=\Hom_k(V,k)\ \text{with $A$-action }a\cdot f=\left(v\mapsto f(S(a)v)\right),
\]
\[
{^\ast V}=\Hom_k(V,k)\ \text{with $A$-action }a\cdot f=\left(v\mapsto f(S^{-1}(a)v)\right).
\]
The evaluation maps are the usual ones, $f\ot v\mapsto f(v)$ and $v\ot f\mapsto f(v)$, respectively. If we choose dual bases $\{v_i,f_i\}_i$ for a representation and its linear dual then the coevaluation maps are given by $1\mapsto \sum_i v_i\ot f_i$ and $1\mapsto \sum_i f_i\ot v_i$,  respectively.

\subsection{The Yoneda-product on cohomology, a quick review}

Recall that the bounded derived category $\mrm{D}^b(\msc{C})$ of an abelain category $\mathscr{C}$ has objects given by complexes with bounded cohomology and morphisms given by equivalence classes of pairs $X\overset{s}\leftarrow Y\overset{f}\to X'$, where $s$ is a quasi-isomorphism.  We denote such an equivalence class by $fs^{-1}:X\to X'$.  The composition of two morphisms $X\leftarrow Y\to X'$ and $X'\leftarrow Y'\to X''$ is given by the top of any diagram
\[
\xymatrixrowsep{3mm}
\xymatrix{
 & & Y''\ar[dl]_t\ar[dr] & \\
 & Y\ar[dl]\ar@{-->}[dr] & & Y'\ar[dr]\ar@{-->}[dl] \\
X & & X' & & X'',
}
\]
where $t$ is a quasi-isomorphism.  We note that the above diagram occurs in the homotopy category of $\mathscr{C}$, not the category of chain complexes.
\par

One can readily check that composition of extensions of the unit object $\mbf{1}\to \Sigma^\ast \mbf{1}$ in $\mrm{D}^b(\C)$ is related to the tensor product by
\begin{equation}\label{eq:327}
(gt^{-1})\circ (fs^{-1})\ = \left(\xymatrix{
\1 & X\ot Y\ar[l]_{s\ot t}\ar[r]^{f\ot g} & \Sigma^{n+m}\1
}\right).
\end{equation}
So the natural concatenation of morphisms via the tensor product in $\mrm{D}^b(\C)$ is opposite to the composition operation.

\begin{remark}
Since the algebra $H^\b(\C,\1)=\oplus_n \Hom_{\mrm{D}^b(\C)}(\1,\Sigma^n\1)$ is graded commutative~\cite{suarez04}, the difference between $H^\b(\C,\1)$ and $H^\b(\C,\1)^{op}$ is essentially negligible, we will however keep track of the distinction for this subsection.
\end{remark}

We have for any $V$ in $\C$ the exact functor $-\ot V$ and subsequent algebra morphism
\[
-\ot V:H^\b(\C,\1)^{op}\to \Ext^\b_\C(V,V).
\]
The map $-\ot V$ takes an extension $\1\overset{s}\leftarrow X\overset{f}\to \Sigma^n\1$ of $1$ to the extension
\[
\xymatrix{
V & X\ot V\ar[l]_{s\ot V}\ar[r]^{f\ot V} & \Sigma^nV
}
\]
of $V$.  By an argument similar to the one employed in~\cite{suarez04} one finds

\begin{lemma}
The algebra map $-\ot V:H^{\b}(\msc{C},\mbf{1})^{op}\to \Ext^\b_\C(V,V)$ has image in the (graded) center of $\Ext^\b_\C(V,V)$.
\end{lemma}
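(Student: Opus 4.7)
\emph{Proof plan.} The plan is to run the Eckmann--Hilton / Koszul-sign argument of Suárez-Álvarez, with the mild twist that one of the two ``factors'' is now an endomorphism of a general object $V$ rather than of the unit. Fix $\zeta\in H^n(\C,\mathbf{1})$ and $\eta\in\Ext^m_\C(V,V)$, viewed as morphisms $\zeta:\mathbf{1}\to\Sigma^n\mathbf{1}$ and $\eta:V\to\Sigma^m V$ in $\mathrm{D}^b(\C)$. The target identity, that $\zeta\otimes V$ lies in the graded center, is
\[
(\zeta\otimes V)\cdot \eta \;=\;(-1)^{nm}\,\eta\cdot (\zeta\otimes V)
\]
in $\Ext^{\b}_\C(V,V)$, where $\cdot$ denotes the Yoneda (composition) product.

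The central input is that $\zeta\otimes(-)$ is a natural transformation of endofunctors of $\mathrm{D}^b(\C)$, from $\mathrm{Id}$ to the shift functor $(-)\otimes\Sigma^n\mathbf{1}\simeq \Sigma^n$. Applied to the morphism $\eta$, naturality produces a square in $\mathrm{D}^b(\C)$ whose two compositions are exactly (after invoking the coherence identifications $\mathbf{1}\otimes X\simeq X$, $\Sigma^n\mathbf{1}\otimes X\simeq \Sigma^n X$, and $\Sigma^m(\zeta\otimes V)\simeq \zeta\otimes \Sigma^m V$) the two products being compared, up to a sign. Equivalently, and perhaps more cleanly, one can package this via the bifunctoriality of the derived tensor product on graded morphisms: the map $\zeta\otimes \eta:V\to \Sigma^{n+m} V$ admits two factorizations,
\[
\zeta\otimes \eta \;=\;(\zeta\otimes \Sigma^m V)\circ(\mathbf{1}\otimes\eta)\;=\;(-1)^{nm}(\Sigma^n\mathbf{1}\otimes \eta)\circ(\zeta\otimes V),
\]
the sign being the Koszul rule for interchanging the two graded morphisms across $\otimes$. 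Translating each side back through the coherence identifications gives
\[
\Sigma^m(\zeta\otimes V)\circ \eta \;=\;(-1)^{nm}\,\Sigma^n\eta\circ(\zeta\otimes V),
\]
which is exactly the desired relation $(\zeta\otimes V)\cdot \eta=(-1)^{nm}\eta\cdot(\zeta\otimes V)$.

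The only real obstacle is the careful bookkeeping of Koszul signs in the coherence isomorphisms between $\Sigma^n\mathbf{1}\otimes\Sigma^m V$ and $\Sigma^{n+m}V$: the two natural routes (first absorb $\Sigma^n\mathbf{1}\otimes(-)\simeq\Sigma^n$, then pull $\Sigma^m$ through; versus the reverse order) differ by precisely $(-1)^{nm}$, and this is where the sign in the graded-center relation enters. Since the paper already appeals to Suárez-Álvarez for the graded commutativity of $H^\b(\C,\mathbf{1})$, the cleanest presentation is to note that the same naturality/Eckmann--Hilton argument applies verbatim when $\mathbf{1}$ is replaced by $V$ on the target side, so the sign arithmetic need not be redone from scratch.
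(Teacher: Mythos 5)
Your proposal is correct and is essentially the argument the paper intends: the paper gives no written proof, instead citing Su\'arez-\'Alvarez, and your interchange-law/Eckmann--Hilton argument with the two factorizations of $\zeta\ot\eta$ and the Koszul sign is exactly that argument, adapted verbatim to the case where one factor acts on $V$ rather than $\1$. No gaps.
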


We define a left action of $H^\b(\C,\1)^{op}$ on $H^\b(\C,V)$ via the tensor product.  Namely, we take
\begin{equation}\label{eq:346}
\begin{array}{c}
\left(\xymatrix{
\1 & X\ar[l]_{s}\ar[r]^{f} & \Sigma^{n}\1
}\right)\cdot \left(\xymatrix{
\1 & Y\ar[l]_{t}\ar[r]^{g} & \Sigma^{m}V
}\right)\\\\
=\ \left(\xymatrix{
\1 & X\ot Y\ar[l]_{s\ot t}\ar[r]^{f\ot g} & \Sigma^{n+m}V
}\right).
\end{array}
\end{equation}
One can check that this action agrees with the usual action $H^\b(\msc{C},V)\ot H^\b(\msc{C},\1)\to H^\b(\msc{C},V)$ given by composing morphisms in $\mrm{D}^b(\msc{C})$.
\par

If we consider the object $W\ot V^\ast$, we have the adjunction
\[
H^\b(\C,W\ot V^\ast)=\Ext^\b_\C(V,W)
\]
which explicitly sends a map $fs^{-1}:k\leftarrow X\to \Sigma^nW\ot V^\ast$ to $V\overset{s\ot id}\longleftarrow X\ot V\overset{f'}\to \Sigma^n W$ where $f'$ is $f\ot id_V$ composed with the evaluation $id_{\Sigma^nW}\ot ev_V$ (see~\cite[Proposition\ 2.10.8]{egno15}).

\begin{lemma}\label{lem:385}
The adjunction $H^\b(\C,W\ot V^\ast)=\Ext^\b_\C(V,W)$ is an identification of $H^\b(\C,\1)^{op}$-modules, where $H^\b(\C,\1)^{op}$ acts on $\Ext^\b_\C(V,W)$ via the above algebra map $-\ot V$ to $\Ext^\b_\msc{C}(V,V)$.
\end{lemma}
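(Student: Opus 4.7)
The plan is to unwind both module actions into explicit roof diagrams in $\mathrm{D}^b(\C)$ and to observe that they agree on the nose.  Fix representatives $\eta = fs^{-1}$ coming from a roof $\1 \overset{s}{\leftarrow} X \overset{f}{\to} \Sigma^n \1$ and $\xi = gt^{-1}$ coming from $\1 \overset{t}{\leftarrow} Y \overset{g}{\to} \Sigma^m(W\ot V^\ast)$.  Recall that the adjunction sends $\xi$ to the class $\tilde\xi \in \Ext^m_\C(V,W)$ represented by the roof $V \leftarrow Y\ot V \to \Sigma^m W$ whose right leg is the composite $(id_{\Sigma^m W}\ot ev_V)\circ (g\ot id_V)$.

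First I would compute $\widetilde{\eta\cdot\xi}$ directly.  By the prescription~\eqref{eq:346}, $\eta\cdot\xi$ is represented by the roof $\1 \leftarrow X\ot Y \to \Sigma^{n+m}(W\ot V^\ast)$ with legs $s\ot t$ and $f\ot g$, and applying the adjunction formula then produces the roof
\[
V \overset{(s\ot t)\ot id_V}{\longleftarrow} X\ot Y\ot V \overset{h}{\longrightarrow} \Sigma^{n+m}W, \qquad h = (id_{\Sigma^{n+m}W}\ot ev_V)\circ(f\ot g\ot id_V),
\]
as a representative for $\widetilde{\eta\cdot\xi}$.  Next I would compute the composition $\tilde\xi\circ(\eta\ot V)$ in $\mathrm{D}^b(\C)$.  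The class $\eta\ot V\in\Ext^n_\C(V,V)$ is represented by $V \overset{s\ot id_V}{\leftarrow} X\ot V \overset{f\ot id_V}{\to} \Sigma^n V$, and the natural mediating object for the composition with $\Sigma^n\tilde\xi$ is $Z = X\ot Y\ot V$ equipped with the maps $id_X\ot t\ot id_V\colon Z\to X\ot V$ and $f\ot id_{Y\ot V}\colon Z\to \Sigma^n(Y\ot V)$.  A short diagram chase (using bifunctoriality of $\ot$) shows that the resulting composite is represented by the roof with left leg $s\ot t\ot id_V$ and right leg $(id_{\Sigma^{n+m}W}\ot ev_V)\circ(f\ot g\ot id_V)$, which is literally the same roof as above.

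The one point that needs genuine verification, and the main obstacle to a complete proof, is checking that $Z = X\ot Y\ot V$ indeed represents the composite, i.e.\ that the mediating square commutes in the homotopy category.  This is essentially the mixed-target analogue of~\eqref{eq:327} and reduces to bifunctoriality of $\ot$ together with naturality of the evaluation.  Once this is in place the two roofs coincide on the nose, proving that the adjunction intertwines the two $H^\b(\C,\1)^{op}$-actions.
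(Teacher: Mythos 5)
Your proposal is correct and follows essentially the same route as the paper: push $\eta\cdot\xi$ through the adjunction to get the roof with legs $s\ot t\ot id_V$ and $(id\ot ev_V)(f\ot g\ot id_V)$, then identify this with the composite $\tilde\xi\circ(\eta\ot V)$ by the same mediating-object comparison used for~\eqref{eq:327}. The commutativity of the mediating square via bifunctoriality of $\ot$, which you flag as the remaining check, is exactly the "direct comparison" the paper invokes, so there is no real gap.
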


\begin{proof}
For the class of a homogenous map $fs^{-1}$ in $H^\b(\C,\1)$, and $gt^{-1}\in H^\b(\C,W\ot V^\ast)$ with corresponding $g't^{-1}\in \Ext^\b_\C(V,W)$, one simply needs to compare $fs^{-1}\cdot gt^{-1}$ to $fs^{-1}\cdot g'(t\ot id_V)^{-1}$ under the adjunction.  We have directly
\[
\begin{array}{ll}
fs^{-1}\cdot gt^{-1}=(f\ot g)(s\ot t)^{-1}\\
\hspace{1cm}\longmapsto{$\mrm{adj}$}\ (id\ot ev_V)(f\ot g\ot id_V)(s\ot t\ot id_V)^{-1}=(f\ot g')(s\ot t\ot id_V)^{-1}.
\end{array}
\]
By a direct comparison, as in~\eqref{eq:327}, one see that this last element is $g'(t\ot id_V)^{-1}\circ ((fs^{-1})\ot id_V)$, as desired.
\end{proof}

\begin{lemma}\label{lem:Ext_bifun}
The association
\[
\Ext^\b_\msc{C}:\msc{C}^{op}\times \msc{C}\to H^\b(\msc{C},\1)\text{-}\mrm{mod},\ \ (W,V)\mapsto \Ext^\b_\msc{C}(W,V),
\]
is functorial in both $V$ and $W$.
\end{lemma}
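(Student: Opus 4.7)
The plan is to reduce the statement to two simpler facts by appealing to the adjunction of Lemma~\ref{lem:385}. That lemma identifies $\Ext^\b_\msc{C}(W, V)$ with $H^\b(\msc{C}, V \ot W^\ast)$ as $H^\b(\msc{C}, \1)$-modules, so it suffices to show (a) that the assignment $X \mapsto H^\b(\msc{C}, X)$ defines a functor $\msc{C} \to H^\b(\msc{C},\1)\text{-}\mrm{mod}$, and (b) that $(W, V) \mapsto V \ot W^\ast$ defines a bifunctor $\msc{C}^{op} \times \msc{C} \to \msc{C}$. Composing (a) with (b) then yields the desired bifunctor $\Ext^\b_\msc{C}$.

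Statement (b) is immediate from the axioms of a finite tensor category: the tensor product is a bifunctor $\msc{C} \times \msc{C} \to \msc{C}$ by $k$-bilinearity in each factor, and left duality gives a contravariant functor $(-)^\ast : \msc{C}^{op} \to \msc{C}$ via the rigid structure.

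For statement (a), given $\phi : X \to Y$ in $\msc{C}$, I would define $\phi_\ast : H^\b(\msc{C}, X) \to H^\b(\msc{C}, Y)$ by post-composition of each roof representative with $\Sigma^n \phi$ in $\mrm{D}^b(\msc{C})$. Additivity and compatibility with composition in $X$ are immediate; the real content is $H^\b(\msc{C},\1)$-linearity of $\phi_\ast$. I would verify this directly from formula~\eqref{eq:346}: for $h \in H^n(\msc{C},\1)$ represented by $\1 \overset{s}\leftarrow Y_h \overset{f}\to \Sigma^n \1$ and $\xi \in H^m(\msc{C}, X)$ represented by $\1 \overset{t}\leftarrow Y_\xi \overset{g}\to \Sigma^m X$, the action reads $h \cdot \xi = (f \ot g)(s \ot t)^{-1}$, and bifunctoriality of $\ot$ in the homotopy category gives
\[
(\mrm{id}_\1 \ot \Sigma^m \phi) \circ (f \ot g) = f \ot (\Sigma^m \phi \circ g),
\]
from which $\phi_\ast(h \cdot \xi) = h \cdot \phi_\ast(\xi)$ follows.

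I do not anticipate any serious obstacle: the lemma is a formal consequence of Lemma~\ref{lem:385} together with the bifunctoriality of the tensor product and the contravariance of duality on $\msc{C}$. The only care needed is in tracking the suspension shifts in $\mrm{D}^b(\msc{C})$.
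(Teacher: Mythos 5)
Your proof is correct and follows essentially the same route as the paper's: both reduce the statement, via the natural isomorphism $\Ext^\b_\msc{C}(W,V)\cong H^\b(\msc{C},V\ot W^\ast)$ of Lemma~\ref{lem:385}, to the bifunctoriality of $(W,V)\mapsto H^\b(\msc{C},V\ot W^\ast)$. The only difference is that the paper declares this bifunctoriality ``clear,'' whereas you spell out the module-linearity of pushforward using formula~\eqref{eq:346}, which is a reasonable amount of extra detail but not a different argument.
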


\begin{proof}
The assignment $(W,V)\mapsto H^\b(\msc{C},V\ot W^\ast)$ is clearly a bifunctor.  So the result follows from the natural isomorphism $\Ext^\b_\msc{C}(W,V)\cong H^\b(\msc{C},V\ot W^\ast)$ and Lemma~\ref{lem:385}.
\end{proof}

\begin{remark}\label{rem:global}
One can deduce from Lemma~\ref{lem:385} that for any finite type tensor category $\msc{C}$ we have the equivalent global definition for the Krull dimension
\[
\Kdim\msc{C}=\max\left\{\GKdim\Ext^\bullet_\msc{C}(V,V):V\text{ in }\mrm{Ob}\msc{C}\right\},
\]
where $\GKdim$ denotes the Gelfand-Kirillov dimension.  Similarly, we have the global definition of the finite type property as the condition that each $\Ext^\b_\msc{C}(V,V)$, for $V$ in $\msc{C}$, is finitely generated and finite over its center.
\end{remark}

In the sections that follow we simply refer to $H^\b(\C,\1)$ as acting on the right of $H^\b(\C,V)$, rather than using the left $H^\b(\C,\1)^{op}$-action.

\section{Krull dimension bounds via surjectivity}
\label{sect:HC}

We show that if $F:\D\to\C$ is a surjective tensor functor, and $\D$ is of finite type, then $\C$ is also of finite type.  We also bound the Krull dimension of $\C$ by the Krull dimension of $\D$ in this case. Specific applications are given in Section~\ref{sect:appI}, and explicit explanations are given in Hopf theoretic terms throughout.

\subsection{Surjective tensor functors}

Following the conventions of~\cite{etingofostrik04}, a tensor functor $F:\D\to \C$ is exact by definition.  We are free to assume that any tensor functor $F$ is such that $F(\1_\msc{D})=\1_\msc{C}$, and make this assumption here (see~\cite[Remark\ 2.4.6]{egno15}).  We recall the following basic definition.

\begin{definition}
A tensor functor $F:\msc{D}\to \msc{C}$ is called surjective if every object of $\C$ is a subquotient of $F(X)$ for some $X$ in $\C$.
\end{definition}

Restriction $\mrm{res}_f:\rep(B)\to \rep(A)$ along a Hopf map $f:A\to B$, for example, is surjective if and only if $f$ is injective.  Dually, if we employ corepresentations, restriction $\mrm{res}^w:\mrm{corep}(\Sigma)\to \mrm{corep}(\Lambda)$ along a Hopf map $w:\Sigma\to \Lambda$ is surjective if and only if $w$ is surjective.  To complete the analogy, we also expect a surjective tensor functor to be faithful.  However, faithfulness holds in general, for tensor functors between (finite, non-multi) tensor categories.

\begin{lemma}[{\cite{etingofostrik04}}]\label{lemma:surj_func_faithful}
\begin{enumerate}
\item[(i)] For (finite) tensor categories $\msc{D}$ and $\msc{C}$, any tensor functor $F:\msc{D}\to \msc{C}$ is faithful.
\item[(ii)] If $F$ is surjective, then $F$ also admits an exact right adjoint $I:\msc{C}\to \msc{D}$.
\item[(iii)] If $F$ is surjective, then $F$ preserves projectives.
\end{enumerate}
\end{lemma}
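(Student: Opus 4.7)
The plan is to handle the three parts in order, using the rigidity and simplicity of the unit as the essential structural ingredients, and to reduce (ii) and (iii) to a single exactness statement for the right adjoint.

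For part (i), I would show that the kernel subcategory
\[
\ker F=\{X\in\msc{D}:F(X)=0\}
\]
is a tensor ideal in $\msc{D}$: it is a Serre subcategory because $F$ is exact, and it is stable under tensoring with arbitrary objects because $F$ is monoidal. Then I would use rigidity to show that any nonzero object $X$ of such an ideal forces $\1\in\ker F$: indeed $X\ot X^\ast$ is in the ideal, and the coevaluation embeds $\1$ as a subobject of $X\ot X^\ast$ (using that $\1$ is simple). Since $F(\1)=\1\ne 0$, we must have $\ker F=0$. The faithfulness of $F$ on morphisms then follows from the standard fact that an exact functor between abelian categories is faithful if and only if it detects zero objects: if $F(f)=0$ for $f:X\to Y$, then the image of $f$ lies in $\ker F$ by exactness, hence vanishes.

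For part (ii), I would first construct $I$ as a right adjoint by invoking the special adjoint functor theorem in the context of finite abelian categories (both $\msc{D}$ and $\msc{C}$ are equivalent to categories of finite-dimensional modules over finite-dimensional algebras, and $F$ is colimit-preserving since it is exact and the categories are small). Equivalently, one can describe $I$ explicitly as an internal Hom: since $F$ makes $\msc{C}$ into an exact $\msc{D}$-module category, one sets $I(Y)=\underline{\Hom}_\msc{D}(\1_\msc{C},Y)$, and existence of this internal Hom in the tensor setting is standard. Left exactness of $I$ is automatic for any right adjoint. For right exactness, the main obstacle of the argument, I would exploit the rigid structure: the left adjoint of $F$ also exists (again by adjoint functor theorem or from the internal-Hom description using right duals in place of left duals), and in the finite tensor setting the two adjoints of a surjective tensor functor are isomorphic up to twist by an invertible object (a categorical analogue of Radford's $S^4$-formula). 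Consequently the right adjoint $I$ is simultaneously a left adjoint, hence also right exact, and therefore exact.

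For part (iii), I would combine (ii) with the adjunction. Given $P$ projective in $\msc{D}$, the functor
\[
\Hom_\msc{C}(F(P),-)\ \cong\ \Hom_\msc{D}(P,I(-))
\]
is a composition of the exact functor $I$ with the exact functor $\Hom_\msc{D}(P,-)$, hence exact; so $F(P)$ is projective in $\msc{C}$. I expect the genuine difficulty to lie entirely in the exactness of $I$ in part (ii); once that is secured, parts (i) and (iii) are short formal consequences of, respectively, the tensor-ideal argument and the adjunction identity above.
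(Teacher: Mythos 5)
Your part (i) is correct but takes a different route from the paper: the paper disposes of faithfulness in one line by noting that tensor functors preserve Frobenius--Perron dimension and that an object vanishes iff its Frobenius--Perron dimension is $0$, whereas you run the standard tensor-ideal argument (the kernel is a Serre subcategory stable under tensoring; for $X\neq 0$ the zigzag identities force the coevaluation $\1\to X\ot X^\ast$ to be nonzero, hence monic by simplicity of $\1$, so $\1$ would lie in the kernel). Both work; yours is the more broadly applicable argument, the paper's is shorter. Your deduction of (iii) from (ii) via $\Hom_\msc{C}(F(P),-)\cong\Hom_\msc{D}(P,I(-))$ is also correct. For (ii) and (iii) the paper gives no argument beyond citing \cite{etingofostrik04}, so you are really being measured against Etingof--Ostrik rather than against the paper.

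The genuine soft spot is your mechanism for exactness of $I$. The claim that the right adjoint of a surjective tensor functor is the left adjoint composed with an exact twist is true, but it is not a safe black box here: for general finite tensor categories its known proofs (relative Serre functors, the categorical Radford $S^4$ formula) are built on the Etingof--Ostrik theory of exact module categories, i.e.\ on the very exactness statement you are trying to establish, so as written the step risks circularity. (It is non-circular in the Hopf case, where it is Nichols--Zoeller plus the $\beta$-Frobenius property of Hopf subalgebras, but that does not cover arbitrary finite tensor categories.) Your sketch also never says where surjectivity enters that step, and it must: for a non-surjective tensor functor such as the trivial-representation functor $Vect\to\rep(k[x]/(x^p))$ in characteristic $p$, the right adjoint is the invariants functor, which is not exact, so no such twist relation can hold in general. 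You in fact name the correct non-circular ingredient in passing, when you remark that $F$ makes $\msc{C}$ into an exact $\msc{D}$-module category: the robust route is to prove \emph{that} from surjectivity --- which is where the subquotient condition is actually used, and which is essentially equivalent to (iii), since exactness of the module structure says precisely that $F(P)\ot X$ is projective --- and then to conclude that $I=\underline{\Hom}_{\msc{D}}(\1_{\msc{C}},-)$, being a module functor out of an exact module category, is automatically exact. So the dependable order of deduction is surjectivity $\Rightarrow$ (iii) $\Rightarrow$ (ii), the reverse of yours.
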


\begin{proof}
(i) Since $F$ is exact by definition, it suffices to show that the only object mapped to $0$ under $F$ is the zero object in $\msc{D}$.  But this simply follows by preservation of Frobenius-Perron dimension, and the fact that an object vanishes if and only if it is of $0$ Frobenius-Perron dimension.  (ii) Existence and exactness of the right adjoint follows by~\cite[Corollary 3.15]{etingofostrik04}.  (iii)~\cite[Corollary 3.22]{etingofostrik04}.
\end{proof}

In the case of a Hopf inclusion $f:A\to B$, the right adjoint is given by the cotensor $(B^\ast\ot -)^{A^\ast}:\rep A\to \rep B$, as explained in~\cite[Proposition 6]{doi81}.  Exactness follows by coflatness for finite dimensional Hopf quotients~\cite{nicholszoeller89}.

\subsection{Surjectivity and the finite type property}

Consider $F:\msc{D}\to \msc{C}$ a surjective tensor functor.  For the units we have $\1_\C=F(\1_\D)$ and the $(F,I)$-adjunction provides a natural identification
\[
\Hom_\msc{D}(\1,I(V))\overset{\cong}\to \Hom_\msc{C}(\1,V),
\]
which is given by applying $F$ and the counit $\epsilon^V:FI(V)\to V$, for arbitrary $V$ in $\msc{C}$.  Since $F$ is exact and preserves projectives (or since $I$ is exact and preserves injectives), this natural identification extends to an identification of graded extensions
\begin{equation}\label{eq:620}
H^\b(\msc{D},I(V))\overset{\cong}\to H^\b(\msc{C},V).
\end{equation}
We consider $H^\b(\msc{C},V)$ as a $H^\b(\msc{D},\1)$-module via the algebra map $F:H^\b(\msc{D},\1)\to H^\b(\msc{C},\1)$.

\begin{proposition}\label{prop:fgcat}
If $F:\D\to \C$ is a surjective tensor functor, and $\D$ is of finite type, then $\C$ is also of finite type.  Furthermore, in this case
\begin{enumerate}
\item[(i)] the induced map $H^\b(\msc{D},\1)\to H^\b(\msc{C},\1)$ is a finite algebra map and
\item[(ii)] the Krull dimension of $\msc{C}$ is bounded as $\Kdim \C\leq \Kdim \D$.
\end{enumerate}
\end{proposition}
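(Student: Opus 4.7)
The plan is to transport the finite generation property from $\msc{D}$ to $\msc{C}$ through the adjunction isomorphism~\eqref{eq:620}, which exists because the surjective tensor functor $F$ admits an exact right adjoint $I$ by Lemma~\ref{lemma:surj_func_faithful}. The essential ingredient is that this isomorphism $H^\b(\msc{D},I(V))\xrightarrow{\cong} H^\b(\msc{C},V)$ is not merely a linear isomorphism but an isomorphism of $H^\b(\msc{D},\1)$-modules, where $H^\b(\msc{C},V)$ is given the $H^\b(\msc{D},\1)$-action via the algebra map $F:H^\b(\msc{D},\1)\to H^\b(\msc{C},\1)$.

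First I would verify this module compatibility. The adjunction isomorphism sends a class $\eta=(\1_\msc{D}\overset{s}\leftarrow Y\overset{g}\to \Sigma^m I(V))$ to the composite of $F(\eta)=(\1_\msc{C}\overset{Fs}\leftarrow FY\overset{Fg}\to \Sigma^m FI(V))$ with the counit $\epsilon^V:FI(V)\to V$. For $\alpha=(\1_\msc{D}\leftarrow X\to \Sigma^n \1_\msc{D})$ in $H^n(\msc{D},\1)$, the left action $\alpha\cdot\eta$ is computed via the tensor product as in~\eqref{eq:346}. Because $F$ is a tensor functor, $F(X\otimes Y)=F(X)\otimes F(Y)$, so applying $F$ and then $\epsilon^V$ yields precisely $F(\alpha)\cdot (\epsilon^V\circ F(\eta))$. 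This is the required compatibility.

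With the module isomorphism in hand, the finite type hypothesis on $\msc{D}$ gives that $H^\b(\msc{D},I(V))$, and hence $H^\b(\msc{C},V)$, is a finitely generated $H^\b(\msc{D},\1)$-module for every $V$ in $\msc{C}$. Specialising to $V=\1_\msc{C}$ yields part (i): the algebra map $H^\b(\msc{D},\1)\to H^\b(\msc{C},\1)$ is finite. Since $H^\b(\msc{D},\1)$ is itself finitely generated as a $k$-algebra by hypothesis, it follows that $H^\b(\msc{C},\1)$ is a finitely generated $k$-algebra as well. For arbitrary $V$, the module $H^\b(\msc{C},V)$ is finitely generated over $H^\b(\msc{D},\1)$ by the above, hence also finitely generated over its quotient-cum-extension $H^\b(\msc{C},\1)$, establishing that $\msc{C}$ is of finite type.

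For the Krull dimension bound (ii), since the map $H^\b(\msc{D},\1)\to H^\b(\msc{C},\1)$ is finite by (i), standard commutative algebra (the going-up theorem applied to the finite integral extension over the image) gives
\[
\Kdim H^\b(\msc{C},\1)=\Kdim\bigl(\mrm{image}(F)\bigr)\leq \Kdim H^\b(\msc{D},\1),
\]
and the conventions on $\Kdim$ (equality with the Krull dimension of the even subalgebra) translate this directly into $\Kdim\msc{C}\leq\Kdim\msc{D}$. The only non-formal step is the verification of module-linearity of~\eqref{eq:620}; I expect this to be the main obstacle only in the sense of bookkeeping, as it reduces immediately to the tensor-functoriality of $F$ and the explicit Yoneda-product description recorded in~\eqref{eq:346}.
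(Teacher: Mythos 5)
Your proposal is correct and follows essentially the same route as the paper: both rest on the observation that the adjunction isomorphism $H^\b(\msc{D},I(V))\cong H^\b(\msc{C},V)$ is a map of $H^\b(\msc{D},\1)$-modules (being the composite of the module maps induced by $F$ and the counit $\epsilon^V$), from which finiteness of each $H^\b(\msc{C},V)$ over $H^\b(\msc{D},\1)$, part (i) at $V=\1$, and the Krull dimension bound all follow. The extra detail you supply on the module-compatibility check and the going-up argument is consistent with what the paper leaves implicit.
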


\begin{proof}
For arbitrary $V$ in $\msc{C}$, the adjunction~\eqref{eq:620} is an isomorphism of $H^\b(\D,\1)$-modules, as it is a composition of the module maps $F:H^\b(\msc{D},I(V))\to H^\b(\msc{C},FI(V))$ and $\epsilon^V_\ast:H^\b(\msc{C},FI(V))\to H^\b(\msc{C},V)$.  Hence each $H^\b(\msc{C},V)$ is finite over $H^\b(\msc{D},\1)$.  Since $H^\b(\msc{D},\1)$ acts through $H^\b(\msc{C},\1)$ this implies that $\msc{C}$ is of finite type.  One obtains (i) by considering the case $V=\1$ and (ii) follows from (i).
\end{proof}

Applying this to the Hopf case yields

\begin{corollary}\label{cor:fgsubalg}
Suppose we have a Hopf inclusion $A\to B$, and that $B$ has finitely generated cohomology.  Then $A$ has finitely generated cohomology as well, and the Krull dimension for $A$ is bounded by that of $B$, $\Kdim H^\b(A,k)\leq \Kdim H^\b(B,k)$.
\end{corollary}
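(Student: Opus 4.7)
The plan is to deduce this corollary immediately from Proposition~\ref{prop:fgcat} by converting the Hopf inclusion into a surjective tensor functor between representation categories. First, I would observe that a Hopf inclusion $f:A\to B$ gives rise to a restriction functor $\mrm{res}_f:\rep(B)\to \rep(A)$, and this restriction is a surjective tensor functor precisely because $f$ is injective, as already remarked in the text following Definition~3.1. Concretely, for any $A$-module $V$, faithful flatness of $B$ over $A$ (by Nichols-Zoeller) gives an injection $V\hookrightarrow B\ot_A V$, exhibiting $V$ as an $A$-submodule of $\mrm{res}_f(B\ot_A V)$.

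Next, I would translate the Hopf-algebra cohomology hypothesis into a tensor-categorical one: $B$ has finitely generated cohomology exactly when $\rep(B)$ is of finite type, since the Yoneda algebra $H^\b(B,k)$ coincides with $H^\b(\rep(B),\1)$ and, for any $A$-module $V$, the module $\Ext^\b_B(k,V)$ coincides with $H^\b(\rep(B),V)$. With the surjective tensor functor $\mrm{res}_f:\rep(B)\to \rep(A)$ in hand and $\rep(B)$ of finite type, Proposition~\ref{prop:fgcat} applies directly and yields that $\rep(A)$ is of finite type with $\Kdim\rep(A)\leq \Kdim\rep(B)$.

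Finally, I would reinterpret: the conclusion that $\rep(A)$ is of finite type is precisely the statement that $A$ has finitely generated cohomology, and the Krull-dimension bound $\Kdim\rep(A)\leq \Kdim\rep(B)$ unwinds by the definition $\Kdim\rep(A)=\Kdim H^{ev}(A,k)=\Kdim H^\b(A,k)$ (since any odd-degree element is nilpotent) to exactly the claimed inequality $\Kdim H^\b(A,k)\leq \Kdim H^\b(B,k)$.

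There is essentially no obstacle here; the whole content of the corollary is in Proposition~\ref{prop:fgcat} together with the dictionary between Hopf inclusions and surjective tensor functors on representation categories. The only mild subtlety worth flagging is the direction reversal — an inclusion $A\to B$ of Hopf algebras produces a surjective functor $\rep(B)\to \rep(A)$, not the reverse — and this is precisely what makes $B$-cohomology control $A$-cohomology rather than the other way around.
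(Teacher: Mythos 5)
Your proof is correct and is exactly the route the paper takes: the paper states this corollary as an immediate consequence of Proposition~\ref{prop:fgcat} via the observation (made right after Definition of surjectivity) that restriction along an injective Hopf map is a surjective tensor functor. Your added justification of surjectivity via faithful flatness (Nichols--Zoeller) and the embedding $V\hookrightarrow B\ot_A V$ is a correct filling-in of a detail the paper leaves implicit.
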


Considering the case in which $B$ is semisimple, i.e.\ of finite type and of Krull dimension 0, reproduces the familiar result that Hopf subalgebras of semisimple Hopf algebras are semisimple (see e.g.~\cite{montgomery93}).

\subsection{A remark on module categories}
\label{sect:module}

Recall that a module category over a (finite) tensor category $\msc{C}$ is a (finite) $k$-linear category $\msc{M}$ equipped with an action $\ot:\msc{C}\times \msc{M}\to \msc{M}$ and associativity constraint which satisfies all expected axioms~\cite{ostrik03}.  We call $\msc{M}$ {\it exact}~\cite{etingofostrik04} if for any projective $P$ in $\msc{C}$, and arbitrary $V$ in $\msc{M}$, the object $P\ot V$ is projective in $\msc{M}$.  We remark on the Hopf case following Proposition~\ref{prop:M} below.
\par

One can similarly apply~\cite[Corollary 3.15]{etingofostrik04} to deduce a finite generation property for exact module categories over finite type categories.  In considering module categories, we employ the global definitions of the finite type property and Krull dimension introduced in Remark~\ref{rem:global}.

\begin{proposition}\label{prop:M}
If $\msc{C}$ is of finite type, and $\msc{M}$ is an exact module category over $\msc{C}$, then $\msc{M}$ is also of finite type and of bounded Krull dimension $\Kdim\msc{M}\leq \Kdim\msc{C}$.
\end{proposition}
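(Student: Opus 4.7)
The plan is to mimic the proof of Proposition~\ref{prop:fgcat}, but using the internal Hom adjunction from the $\msc{C}$-action on $\msc{M}$ in place of an adjoint tensor functor. Fix $V$ in $\msc{M}$. The action functor $-\ot V:\msc{C}\to \msc{M}$ is exact (since $\msc{C}$ is rigid) and, by the exactness hypothesis on $\msc{M}$, sends projectives to projectives. By standard adjoint-functor considerations, or by~\cite[Corollary 3.15]{etingofostrik04}, this functor admits an exact right adjoint $\underline{\Hom}(V,-):\msc{M}\to \msc{C}$ (the internal Hom).

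Exactness of both $-\ot V$ and $\underline{\Hom}(V,-)$ upgrades the module-category adjunction to an isomorphism on Ext groups
\[
\Ext^\b_\msc{M}(V,W)\ \cong\ H^\b\bigl(\msc{C},\underline{\Hom}(V,W)\bigr)
\]
for every $W$ in $\msc{M}$. The next step is to verify that this isomorphism intertwines the $H^\b(\msc{C},\1)$-module structures: the right-hand side carries the usual Yoneda action on the source, while the left-hand side inherits an action through the algebra map $-\ot V:H^\b(\msc{C},\1)^{op}\to \Ext^\b_\msc{M}(V,V)$ (which is well-defined since $-\ot V$ is an exact functor). This compatibility is a routine repetition of the calculation proving Lemma~\ref{lem:385}, with the adjunction $(\ot,\mrm{Hom})$ replaced by $(-\ot V, \underline{\Hom}(V,-))$, and is the one bookkeeping step I would be most careful with, as it is the formal heart of the argument.

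Assuming this compatibility, the finite type hypothesis on $\msc{C}$ makes $H^\b(\msc{C},\underline{\Hom}(V,W))$ finitely generated over $H^\b(\msc{C},\1)$, and hence $\Ext^\b_\msc{M}(V,W)$ is a finite $H^\b(\msc{C},\1)$-module. Specializing to $W=V$, the algebra map $H^\b(\msc{C},\1)\to \Ext^\b_\msc{M}(V,V)$ has image in the graded center (by the same braiding-with-the-identity argument that was used for $-\ot V$ in $\msc{C}$), so $\Ext^\b_\msc{M}(V,V)$ is module-finite over a finitely generated central subalgebra. This immediately gives that $\Ext^\b_\msc{M}(V,V)$ is a finitely generated algebra and is finite over its own center, verifying the global finite type property from Remark~\ref{rem:global}.

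Finally, for the Krull dimension bound, the finiteness of $\Ext^\b_\msc{M}(V,V)$ as a module over (the image of) $H^\b(\msc{C},\1)$ forces
\[
\GKdim \Ext^\b_\msc{M}(V,V)\ \leq\ \GKdim H^\b(\msc{C},\1)\ =\ \Kdim H^{ev}(\msc{C},\1)\ =\ \Kdim\msc{C},
\]
the middle equality because $H^\b(\msc{C},\1)$ is graded commutative and finitely generated. Taking the supremum over $V$ in $\msc{M}$ yields $\Kdim\msc{M}\leq \Kdim\msc{C}$, completing the plan.
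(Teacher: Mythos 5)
Your proposal is correct and takes essentially the same route as the paper's (sketched) proof: both rest on the fact that $-\ot V:\msc{C}\to\msc{M}$ is exact, preserves projectives by exactness of $\msc{M}$, and admits an exact right adjoint (the internal Hom) by~\cite{etingofostrik04}, so that $\Ext^\b_\msc{M}(V,W)\cong H^\b(\msc{C},\underline{\Hom}(V,W))$ compatibly with the central $H^\b(\msc{C},\1)$-action, after which one argues exactly as in Proposition~\ref{prop:fgcat}. You merely fill in the details that the paper leaves as a sketch.
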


\begin{proof}[Sketch proof]
One argues directly, via a formula as in~\eqref{eq:346}, to find that for each $Y$ in $\msc{M}$ the map $-\ot Y:H^\ast(\msc{C},\1)\to \Ext_\msc{M}(Y,Y)$ has central image.  Since the functor $-\ot Y:\msc{C}\to \msc{M}$ preserves projectives and admits an exact right adjoint~\cite[Corollaries 3.15, 3.22; Lemma 3.21]{etingofostrik04} one can argue as above to obtain the proposed result.
\end{proof}

The proposition obviously implies a broader version of the finite generation conjecture for both tensor categories and module categories.  In Hopf language, exact module categories over some $\rep(A)$ correspond to simple $A$-comodule algebras~\cite{andruskiewitschmombelli07}.  In particular, representation $\rep(T)$ for comodule subalgebras $T\subset A$ provide exact module categories over $\rep(A)$, and such $\rep(T)$ are therefore of finite type whenever $\rep(A)$ is of finite type.

\section{The Drinfeld center, (de-)equivariantization, and generic examples}
\label{sect:appI}

We apply Proposition~\ref{prop:fgcat} to relate the cohomology of dual categories $\msc{C}^\ast_\msc{M}$ to the cohomology of the Drinfeld center $Z(\msc{C})$.  In the Hopf setting, the Drinfeld center is the representation category of the Drinfeld double $Z(\rep A)\cong \rep D(A)$~\cite[Proposition 7.14.6]{egno15} and, as explained in the introduction, the dual categories $(\rep A)^\ast_\msc{M}$ include representation categories of the linear dual $A^\ast$ and arbitrary cocycle twists $A_\sigma$.

We discuss behavior of cohomology under (de-)equivariantization, and give a number of new examples of finite type tensor categories in finite characteristic.  In Section~\ref{sect:braidedII}, we return to the subject of (de-)equivariantization and give a much stronger result regarding the behavior of cohomology under the combined processes of (de-)equivariantization and taking the Drinfeld center.

\subsection{Duality via exact module categories}
\label{sect:intro_duals}

Following~\cite{ostrik03,etingofostrik04}, we define for an exact module category $\msc{M}$ over $\msc{C}$ the dual $\msc{C}^\ast_\msc{M}$ as the endomorphism category
\[
\msc{C}^\ast_\msc{M}:=\End_{\msc{C}\text{-mod}}(\msc{M},\msc{M}),
\]
i.e.\ the category of right exact, $k$-linear, $\msc{C}$-module endomorphisms of $\msc{M}$.  This category is $k$-linear and monoidal under composition of functors.  Furthermore, exactness of $\msc{M}$ implies that $\msc{C}^\ast_\msc{M}$ is finite and rigid~\cite{etingofostrik04}, and therefore a tensor category.
\par

The relation $\msc{C}\sim_\mrm{Mor}\msc{D}\ \Leftrightarrow\ \exists$ exact $\msc{M}$ for which $\msc{D}^\mrm{cop}\cong\msc{C}^\ast_\msc{M}$ is known to be an equivalence relation (Morita equivalence)~\cite[Proposition 7.12.18]{egno15}.  Also, $\msc{C}^\ast_\msc{M}$ is of the same (Frobenius-Perron) dimension as $\msc{C}$, and there is an obvious (exact) action of $\msc{C}^\ast_\msc{M}$ on $\msc{M}$ so that $(\msc{C}^\ast_\msc{M})^\ast_\msc{M}\cong \msc{C}$~\cite{etingofostrik04}. In the case of a Hopf algebra $\msc{C}=\rep(A)$ particular choices of $\msc{M}$ recover familiar examples of linear duality and cocycle twists, as explained in the introduction (see e.g.~\cite[Ex 7.12.27]{egno15} and Lemma~\ref{lem:1025}).  

\subsection{The Drinfeld center and stability of cohomology under duality}

We apply Proposition~\ref{prop:fgcat} to the forgetful functor $F:Z(\C)\to \C$ to obtain the following result.

\begin{corollary}\label{cor:Z2C}
If the Drinfeld center $Z(\C)$ of $\msc{C}$ is of finite type then $\msc{C}$ is of finite type, as is any dual $\C^\ast_\msc{M}$ of $\C$ with respect to an exact module category $\msc{M}$. Furthermore, the Krull dimensions are uniformly bounded
\[
\Kdim \C^\ast_\msc{M}\leq \Kdim Z(\C).
\]
\end{corollary}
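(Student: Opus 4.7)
The plan is to deduce the corollary as a direct application of Proposition~\ref{prop:fgcat}, by exhibiting surjective tensor functors out of $Z(\C)$ landing in $\C$ and, more generally, in each dual $\C^\ast_\msc{M}$.

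For $\C$ itself, the canonical forgetful functor $F:Z(\C)\to \C$, sending a half-braided object $(V,\gamma_{V,-})$ to its underlying object $V$, is a tensor functor. It is well known to be surjective: $F$ admits an exact right adjoint $I:\C\to Z(\C)$, and the counit $FI(V)\to V$ is a split epimorphism for each $V$ in $\C$ (this is a direct consequence of the coend description of $I$, where $\1\otimes V\otimes \1 = V$ appears as a summand of $FI(V)$), so every object of $\C$ is a summand of an object in the image of $F$. Proposition~\ref{prop:fgcat} then immediately yields that $\C$ is of finite type with $\Kdim \C \leq \Kdim Z(\C)$.

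For an arbitrary exact module category $\msc{M}$ over $\C$, I would invoke the surjective tensor functor $Z(\C)\to \C^\ast_\msc{M}$ constructed in \cite[Proposition~8.5.3]{egno15}, which assigns to an object of $Z(\C)$ the $\C$-module endofunctor of $\msc{M}$ determined by its half-braiding. A second application of Proposition~\ref{prop:fgcat} then gives $\C^\ast_\msc{M}$ of finite type with $\Kdim \C^\ast_\msc{M} \leq \Kdim Z(\C)$, uniformly in $\msc{M}$, which is the content of the corollary.

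The entire argument is thus a straightforward reduction to the results of Section~\ref{sect:HC}, and essentially no technical obstacle arises beyond locating the relevant surjectivity facts in the literature. The real substance has already been absorbed into Proposition~\ref{prop:fgcat}; the corollary simply records that surjectivity of $Z(\C)\to \C^\ast_\msc{M}$ is the right abstract mechanism to propagate cohomological finiteness from the center to all Morita duals simultaneously.
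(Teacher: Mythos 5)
Your proposal is correct and follows essentially the same route as the paper: the paper's proof also reduces everything to Proposition~\ref{prop:fgcat} via a surjective tensor functor $Z(\C)\to\C^\ast_\msc{M}$, obtained there from the braided equivalence $Z(\C^\ast_\msc{M})\cong Z(\C)^{\mrm{rev}}$ together with surjectivity of the forgetful functor on the center, which is the same functor you invoke from \cite[Proposition~8.5.3]{egno15}. Your separate treatment of the case $\msc{M}=\msc{C}$ via the counit splitting is a correct (and slightly more detailed) justification of a surjectivity fact the paper simply cites.
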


In a more concise language, the corollary says that the entire categorical Morita equivalence class of $\C$ is of finite type whenever the center $Z(\C)$ is of finite type.

\begin{proof}
For any exact $\C$-module category $\M$, we have an equivalence of braided tensor categories $Z(\C^\ast_\M)\cong Z(\C)^\mrm{rev}$~\cite[Corollary 3.35]{etingofostrik04}.  Hence there is a surjective tensor functor $F:Z(\C)\to \msc{C}^\ast_\msc{M}$ by~\cite[Proposition\ 3.39]{etingofostrik04}.  So we apply Proposition~\ref{prop:fgcat} to see that $\msc{C}^\ast_\msc{M}$ is of finite type and with bounded Krull dimension as proposed.
\end{proof}

For Hopf algebras we have

\begin{corollary}\label{cor:DA}
If the Drinfeld double $D(A)$ of a Hopf algebra $A$ has finitely generated cohomology then
\begin{itemize}
\item any cocycle twist $A_\sigma$ of $A$ has finitely generated cohomology;
\item the dual Hopf algebra $A^\ast$ has finitely generated cohomology;
\item any cocycle twist of the dual $(A^\ast)_J$ has finitely generated cohomology;
\end{itemize}
and the Krull dimensions are uniformly bounded
\[
\Kdim H^\b(A_\sigma,k),\ \Kdim H^\b(A^\ast,k),\ \Kdim H^\b((A^\ast)_J,k)\ \leq\!\ \Kdim H^\b(D(A),k).
\]
\end{corollary}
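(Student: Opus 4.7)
The plan is to translate Corollary~\ref{cor:Z2C} into Hopf algebraic language.  First I would identify the Drinfeld center $Z(\rep(A))$ with $\rep(D(A))$ as braided tensor categories, so that the hypothesis ``$D(A)$ has finitely generated cohomology'' is precisely the statement that $Z(\rep(A))$ is of finite type with $\Kdim Z(\rep(A))=\Kdim H^\b(D(A),k)$.  Applying Corollary~\ref{cor:Z2C} then yields that every dual $\rep(A)^\ast_\msc{M}$ with respect to an exact $\rep(A)$-module category $\msc{M}$ is of finite type and satisfies the uniform bound $\Kdim \rep(A)^\ast_\msc{M}\leq \Kdim H^\b(D(A),k)$.

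Next I would exhibit, for each of the three Hopf algebras $A_\sigma$, $A^\ast$, and $(A^\ast)_J$, an exact $\rep(A)$-module category $\msc{M}$ realizing $\rep(A)^\ast_\msc{M}\cong \rep(B)^\mrm{cop}$, where $B$ is the corresponding Hopf algebra.  As recalled in Subsection~\ref{stability of cohomology-Hopf}, the appropriate choices are $\msc{M}=Vect$ with its canonical action for $B=A^\ast$, the $J$-twisted module category $Vect_J$ for $B=(A^\ast)_J$, and the module category $\msc{M}(\sigma)$ built from the $2$-cocycle $\sigma$ for $B=A_\sigma$.  Each of these is a rank-one module category and therefore automatically exact.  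This identification of the three Hopf algebras as duals of $\rep(A)$ with respect to the stated module categories is the only real content of the proof; it is classical (cf.~\cite[Ex 7.12.27]{egno15} and the discussion preceding the corollary), so the step amounts to bookkeeping rather than serious work.

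Finally I would observe that cohomology is insensitive to the $\mrm{cop}$ operation: reversing the tensor product does not alter the underlying $k$-linear category nor the Yoneda composition of extensions of the unit, so $H^\b(\rep(B)^\mrm{cop},V)=H^\b(\rep(B),V)$ as graded modules over $H^\b(\rep(B),\1)$ for every $V$.  Consequently the finite type property and the Krull dimension bound transfer from $\rep(A)^\ast_\msc{M}\cong \rep(B)^\mrm{cop}$ to the original Hopf algebras $A_\sigma$, $A^\ast$, and $(A^\ast)_J$, yielding all three assertions of the corollary simultaneously.
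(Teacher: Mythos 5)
Your proposal is correct and follows exactly the route the paper intends: apply Corollary~\ref{cor:Z2C} via the identification $Z(\rep(A))\cong\rep(D(A))$, realize $A_\sigma$, $A^\ast$, and $(A^\ast)_J$ as duals of $\rep(A)$ with respect to the exact module categories $\msc{M}(\sigma)$, $Vect$, and $Vect_J$ (all semisimple, hence exact), and note that the $\mrm{cop}$ operation leaves cohomology unchanged. This matches the paper's treatment, which presents the corollary as an immediate consequence of Corollary~\ref{cor:Z2C} together with the identifications recalled in Subsection~\ref{stability of cohomology-Hopf}.
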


We note that in this case the categorical equivalence between $D(A)$ and $D(A_\sigma)$ can be realized rather concretely, as in~\cite{majidoeckl99,benkartetal10}.  We apply the above corollaries to non-degenerate tensor categories (resp.\ factorizable Hopf algebras) in Section~\ref{sect:nondeg}.

\subsection{Equivariantization and de-equivariantization}

For this subsection we assume $\mathrm{char}(k)= 0$.  This is to avoid a situation in which $\mrm{char}(k)\mid |G|$, for a given finite group $G$.
\par

Let $\msc{E}$ be a braided tensor category.  A {\it central embedding} of $\msc{E}$ into a tensor category $\D$ is defined as a fully faithful tensor functor $Q:\msc{E}\to \D$ along with a choice of braided lifting $\tilde{Q}:\msc{E}\to Z(\D)$.
\par

Recall that for any central embedding $\rep(G)\to \D$ we can define the de-equivariantization $\D_G$, which is the tensor category of $\O(G)$-modules in $\D$, where $\O(G)$ is the linear dual of the group algebra.  As an inverse operation to de-equivariantization, we can form the equivariantization $\C^G$ of any tensor category $\C$ which is equipped with an action of a finite group $G$~\cite{dgno10}.  More precisely, we consider $\rho:G\to \underline{\Aut}_{\ot}(\C)$ a group action on $\C$, where $\underline{\Aut}_{\ot}(\C)$ denotes the $2$-group of tensor autoequivalences of $\C$.  Then the equivariantization $\C^G$ consists of objects $V$ in $\C$ equipped with compatible isomorphisms $g^V:\rho(g)V\to V$, for all $g\in G$.  Morphisms $l:V\to W$ between objects in $\C^G$ are exactly those maps in $\C$ which commute with the isomorphisms $g^?$.

\begin{theorem}\label{thm:HGC}
Suppose $F:\D\to \C$ is a de-equivariantization of $\D$ with respect to a central embedding $\rep(G)\to \D$, or equivalently a equivariantization of $\C$ with respect to a $G$-action.  Then $\D$ is of finite type if and only if $\C$ is of finite type, and in this case the Krull dimensions agree $\Kdim\msc{D}=\Kdim\msc{C}$.
\end{theorem}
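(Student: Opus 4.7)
The plan is to combine Proposition~\ref{prop:fgcat} with classical invariant theory for finite group actions in invertible characteristic. For the forward direction, the de-equivariantization functor $F:\D\to\C$ is a surjective tensor functor---every object of $\C=\D_G$ is a quotient of a free $\O(G)$-module $F(V)=V\otimes\O(G)$ via the module-action map. Proposition~\ref{prop:fgcat} then immediately gives that $\C$ is of finite type with $\Kdim\C\leq\Kdim\D$ whenever $\D$ is of finite type.

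For the converse I would work in the equivariantization presentation $\D\simeq\C^G$, so that $F$ becomes the forgetful functor $\C^G\to\C$. This forgetful functor admits an exact two-sided adjoint $I:\C\to\D$ given on objects by $V\mapsto\bigoplus_{g\in G}\rho(g)V$, and since $\mrm{char}(k)\nmid|G|$ the adjunction unit $V\to IF(V)$ for $V\in\D$ is functorially split by the averaging idempotent $\tfrac{1}{|G|}\sum_{g}g\cdot(-)$. Combined with the canonical $G$-action on the cohomology of $\C$ inherited from the $G$-action on $\C$, this yields identifications
\[
H^\b(\D,V)\cong H^\b(\C,F(V))^{G}
\]
of $H^\b(\D,\1)$-modules for every $V\in\D$, and in particular an identification $H^\b(\D,\1)\cong H^\b(\C,\1)^G$ of graded $k$-algebras.

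Assume now that $\C$ is of finite type. Applying the Hilbert--Noether theorem to the $G$-action by graded algebra automorphisms on the finitely generated graded-commutative algebra $H^\b(\C,\1)$---valid because $|G|$ is invertible in $k$---the invariant subring $H^\b(\C,\1)^G$ is itself finitely generated as a $k$-algebra and $H^\b(\C,\1)$ is a finite module over it; in particular the two have the same Krull dimension. Transferring along the identifications above, $H^\b(\D,\1)$ is finitely generated with $\Kdim\D=\Kdim\C$, and together with the inequality from the forward step this establishes the claimed equality. For general $V\in\D$, the hypothesis gives that $H^\b(\C,F(V))$ is finitely generated over $H^\b(\C,\1)$ and hence finite over the subring $H^\b(\D,\1)$; its invariant submodule $H^\b(\D,V)\cong H^\b(\C,F(V))^{G}$ is then finitely generated since $H^\b(\D,\1)$ is now Noetherian.

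The main point requiring care, and which I would verify first, is the functorial compatibility asserted in the second paragraph: that the $G$-action on $\C$ lifts to an action on the bifunctor $\Ext^\b_\C(\1,-)$ by graded module automorphisms that respects the Yoneda product, and that the resulting isomorphism $H^\b(\D,V)\cong H^\b(\C,F(V))^G$ is truly one of $H^\b(\D,\1)$-modules (and of graded algebras when $V=\1$). Once this compatibility is in place, everything else reduces to the standard invariant theory of finite groups.
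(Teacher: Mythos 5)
Your proposal is correct and follows essentially the same route as the paper: surjectivity of the de-equivariantization functor plus Proposition~\ref{prop:fgcat} for one direction, and the identification $H^\b(\msc{D},V)=H^\b(\msc{C},FV)^G$ combined with Hilbert--Noether finiteness of invariant rings for the converse. The compatibility point you flag at the end is exactly the content of the paper's observation that $\Hom_{\msc{C}^G}(V,W)=\Hom_{\msc{C}}(V,W)^G$ with $G$ acting by $g\cdot f=g^W(\rho(g)f)(g^V)^{-1}$, together with exactness of $(-)^G$.
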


Before providing the proof of Theorem~\ref{thm:HGC} we establishing some background materials.  Recall from~\cite{dgno10} that for the equivariantization $F:\C^G\to \C$, the induction $I:\C\to \C^G$ sends each object in $\C$ to its orbit under the $G$-action.  In particular, each $V$ in $\C$ is a summand of $FI(V)$.  It follows that the forgetful functor $F$ is surjective.  By writing instead $\C^G=\D$, $\C=\D_G$, we have that any de-equivariantization $F:\D\to \D_G$ is surjective.  Whence we apply Proposition~\ref{prop:fgcat} to arrive at the following result.

\begin{lemma}\label{lem:HD_G}
Whenever $\D$ is of finite type, the de-equivariantization $\D_G$ is of finite type.
\end{lemma}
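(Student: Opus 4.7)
The plan is to show that the de-equivariantization functor $F:\D\to \D_G$ is a surjective tensor functor and then invoke Proposition~\ref{prop:fgcat} directly. The preceding paragraph already supplies the key observation: if we view $\D$ as an equivariantization $\C^G$ with $\C=\D_G$, then the forgetful functor $F:\C^G\to \C$ admits an induction functor $I:\C\to \C^G$ (sending an object to its $G$-orbit) such that each $V$ in $\C$ appears as a summand of $FI(V)$. In particular, every object of $\C=\D_G$ is a subquotient (in fact a summand) of the image of some object under $F$, so $F$ is surjective in the sense of Definition preceding Lemma~\ref{lemma:surj_func_faithful}.

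Once surjectivity is established, Proposition~\ref{prop:fgcat} applies verbatim: it guarantees that $\D_G$ is of finite type whenever $\D$ is, and moreover provides the Krull dimension bound $\Kdim \D_G\leq \Kdim \D$. This is essentially all that the lemma asserts. The only tiny subtlety worth noting in the write-up is to clarify that the equivariantization--de-equivariantization correspondence (which is an equivalence under the characteristic~$0$ assumption imposed at the start of the subsection) lets us freely interchange the two viewpoints, so that the hypothesis ``$\D$ is of finite type'' can be combined with the previously verified surjectivity of $F:\D\to \D_G$.

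I expect no genuine obstacle here: the lemma is a direct corollary of Proposition~\ref{prop:fgcat} once surjectivity of the forgetful/de-equivariantization functor is noted, and that surjectivity has already been argued in the lead-in paragraph via the existence of the induction functor $I$ and the identification of $V$ as a summand of $FI(V)$. The only mild care needed is to cite the right reference from~\cite{dgno10} for the induction functor and the $V\hookrightarrow FI(V)$ splitting (which uses $\mrm{char}(k)=0$, i.e.\ the $|G|$-invertibility of the averaging projector). The stronger converse statement from Theorem~\ref{thm:HGC} — that finite type of $\D_G$ implies finite type of $\D$ — is deferred to later in the paper and is not needed for this lemma.
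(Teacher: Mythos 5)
Your proposal is correct and matches the paper's own argument exactly: the lead-in paragraph establishes surjectivity of the de-equivariantization functor $F:\D\to\D_G$ via the induction functor $I$ and the splitting $V\hookrightarrow FI(V)$, and the lemma then follows by applying Proposition~\ref{prop:fgcat}. Nothing further is needed.
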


Note that for the equivariantization $\msc{C}^G$ there is an identification $\Hom_{\C^G}(V,W)=\Hom_{\C}(V,W)^G$, where $G$ acts by $g\cdot f=g^W(\rho(g)f)(g^V)^{-1}$.  By exactness of the invariants functor $(-)^G$ we have $\Ext^\b_{\C^G}(V,W)=\Ext^\b_\C(V,W)^G$, and in particular $H^\b(\C^G,V)=H^\b(\C,V)^G$.  We now prove the theorem.

\begin{proof}[Proof of Theorem~\ref{thm:HGC}]
One direction follows directly from Lemma~\ref{lem:HD_G}.  That is, $\C$ is of finite type whenever $\D$ is. 

For the converse, suppose $\msc{C}$ is of finite type.  We have $H^\b(\D,V)=H^\b(\C,FV)^G$ for each $V$ in $\msc{D}$.  Since $H^\b(\C,\1)$ is finitely generated, the invariants $H^\b(\C,\1)^G = H^\b(\D,\1)$ are also finitely generated, and $H^\b(\D,\1)\to H^\b(\C,\1)$ is a finite algebra extension~\cite{springer}.  Hence the Krull dimensions of these algebras are equal.  All that is left is to prove that each $H^\b(\D,V)$ is a finitely generated module over $H^\b(\D,\1)$.
\par

Consider any object $V$ in $\msc{D}$.  Since $H^\b(\C,\1)$ is finite over $H^\b(\D,\1)$, the finite type property for $\msc{C}$ implies that each $H^\b(\C,FV)$ is finite over $H^\b(\D,\1)$. Since $H^\b(\D,\1)$ is Noetherian, the submodule $H^\b(\D,V)=H^\b(\C,FV)^G$ is therefore finite over $H^\b(\D,\1)$ as well.  Hence $\msc{D}$ is of finite type.
\end{proof}

\begin{remark}
As the familiar reader may recognize, Theorem~\ref{thm:HGC} is also provable by simply considering the identification $H^\b(\D,V)=H^\b(\C,FV)^G$ and applying basic commutative algebra.
\end{remark}

\subsection{Examples in finite characteristic: Frobenius kernels}
\label{sect:O_sig}

We give here some examples in finite characteristic.  Generic examples in characteristic $0$ are given in Sections~\ref{sect:char0}, and more explicit examples appear in Section~\ref{sect:uqT}.
\par

Let $k$ be a field of (finite) odd characteristic $p$.\footnote{This assumption on the characteristic is not optimal.  See~\cite[Section\ 4.3]{friedlandernegron18}.}  Let $\mbb{G}$ be a smooth algebraic group over $k$, and $\mbb{G}_{(r)}$ denote the $r$-th Frobenius kernel in $\mbb{G}$.  Rather, $\mbb{G}_{(r)}$ is the group scheme theoretic kernel of the $r$-th Frobenius map $\mbb{G}\to \mbb{G}^{(r)}$.  We let $\O=\O(\mbb{G}_{(r)})$ denote the (commutative) algebra of global functions on $\mbb{G}_{(r)}$, and $k\mbb{G}_{(r)}$ denote the vector space dual $k\mbb{G}_{(r)}=\O^\ast$.
\par

In~\cite{friedlandernegron18}, Friedlander and the first author show that the double $D(k\mbb{G}_{(r)})$ has finitely generated cohomology.

\begin{theorem}[{\cite[Theorem\ 5.3]{friedlandernegron18}}]\label{thm:fn}
For an arbitrary smooth algebraic group $\mbb{G}$ in characteristic $p$, the Drinfeld double $D(k\mbb{G}_{(r)})$ has finitely generated cohomology.  Furthermore, we calculate the Krull dimension
\[
\Kdim H^\b(D(k\mbb{G}_{(r)}),k)=\Kdim H^\b(k\mbb{G}_{(r)},k)+\dim\mbb{G}.
\]
\end{theorem}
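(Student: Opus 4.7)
The plan is to realize $D(k\mbb{G}_{(r)})$ as a smash product and then run a Lyndon--Hochschild--Serre type spectral sequence, feeding in the Friedlander--Suslin finite generation theorem for infinitesimal group schemes as the main external input.

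First I would record the standard identification $D(k\mbb{G}_{(r)}) \cong \O(\mbb{G}_{(r)})\,\#\,k\mbb{G}_{(r)}$ as an algebra, where $k\mbb{G}_{(r)}$ acts on $\O(\mbb{G}_{(r)})$ by the adjoint (conjugation) action; categorically, $\rep D(k\mbb{G}_{(r)}) = Z(\rep k\mbb{G}_{(r)})$ is the category of $\mbb{G}_{(r)}$-equivariant quasi-coherent sheaves on $\mbb{G}_{(r)}$ under the adjoint action, with trivial $D$-module corresponding to the skyscraper at the identity. This identification produces two Hopf subalgebra inclusions $k\mbb{G}_{(r)} \hookrightarrow D(k\mbb{G}_{(r)})$ and $\O(\mbb{G}_{(r)}) \hookrightarrow D(k\mbb{G}_{(r)})$, together with a Hopf quotient $D(k\mbb{G}_{(r)}) \twoheadrightarrow k\mbb{G}_{(r)}$ fitting into a Hopf algebra extension $\O(\mbb{G}_{(r)}) \to D(k\mbb{G}_{(r)}) \to k\mbb{G}_{(r)}$.

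This extension yields a multiplicative convergent Lyndon--Hochschild--Serre type spectral sequence
\[
E_2^{p,q} = H^p(k\mbb{G}_{(r)}, H^q(\O(\mbb{G}_{(r)}), k)) \ \Longrightarrow\ H^{p+q}(D(k\mbb{G}_{(r)}), k),
\]
with $k\mbb{G}_{(r)}$ acting on the coefficients via the induced coadjoint action. The coefficient algebra is tractable: since $\mbb{G}$ is smooth of dimension $d = \dim \mbb{G}$, the local algebra $\O(\mbb{G}_{(r)})$ is the complete intersection $k[x_1,\ldots,x_d]/(x_1^{p^r},\ldots,x_d^{p^r})$, whose cohomology (for odd $p$) is the polynomial-exterior algebra $k[\xi_1,\ldots,\xi_d] \otimes \Lambda[\eta_1,\ldots,\eta_d]$ with $|\xi_i|=2$ and $|\eta_i|=1$. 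In particular $H^\b(\O(\mbb{G}_{(r)}), k)$ is finitely generated of Krull dimension $d$, and each graded piece is a finite-dimensional rational $k\mbb{G}_{(r)}$-module. The Friedlander--Suslin theorem \cite{friedlandersuslin97} then makes every row of $E_2$ finitely generated over $H^\b(k\mbb{G}_{(r)}, k)$, and the whole $E_2$-page finitely generated over the Noetherian algebra $H^\b(k\mbb{G}_{(r)}, k) \otimes H^\b(\O(\mbb{G}_{(r)}), k)^{k\mbb{G}_{(r)}}$.

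Finite generation of the abutment follows from a standard Noetherian spectral sequence argument: each $E_r$, and therefore $E_\infty$, inherits finite generation, which transports to $H^\b(D(k\mbb{G}_{(r)}), k)$ through the convergent filtration. The upper bound $\Kdim H^\b(D(k\mbb{G}_{(r)}), k) \leq \Kdim H^\b(k\mbb{G}_{(r)}, k) + d$ is read off of $E_2$, as its two Noetherian tensor factors glue along the finite coadjoint action without any Krull drop. The bottom row $E_2^{\b,0} = H^\b(k\mbb{G}_{(r)}, k)$ consists entirely of permanent cycles (all outgoing differentials have zero target), recovering $\Kdim H^\b(k\mbb{G}_{(r)}, k)$ worth of algebraically independent classes in the abutment; the matching lower bound then requires producing $d$ additional algebraically independent classes coming from the vertical direction. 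The main obstacle, and where the bulk of the real work lies, is exactly the production of these $d$ extra classes: one must show that an appropriately large portion of the vertical edge $E_2^{0,\b} = H^\b(\O(\mbb{G}_{(r)}), k)^{k\mbb{G}_{(r)}}$ survives higher differentials, equivalently that enough of the generators $\xi_i$ admit lifts to $H^\b(D(k\mbb{G}_{(r)}), k)$ restricting correctly under $\O(\mbb{G}_{(r)}) \hookrightarrow D(k\mbb{G}_{(r)})$. This is the familiar transgression obstruction in multiplicative spectral sequences, and handling it cleanly in the present setting would presumably require either a direct geometric construction of the desired lifts (via explicit Yetter--Drinfeld module extensions built from the adjoint action) or a deformation argument degenerating $D(k\mbb{G}_{(r)})$ to a cocommutative algebra where Friedlander--Suslin applies wholesale and the $d$ extra classes become manifest.
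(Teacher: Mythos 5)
This statement is not proved in the paper at all: it is imported verbatim from \cite[Theorem 5.3]{friedlandernegron18}, so the only meaningful comparison is with the proof given there. Your skeleton — the identification $D(k\mbb{G}_{(r)})\cong \O(\mbb{G}_{(r)})\,\#\,k\mbb{G}_{(r)}$ with the adjoint action, the Lyndon--Hochschild--Serre spectral sequence for the normal Hopf subalgebra $\O(\mbb{G}_{(r)})$, the complete-intersection computation of $\Ext^\b_{\O(\mbb{G}_{(r)})}(k,k)$ using smoothness of $\mbb{G}$, and Friedlander--Suslin as the external engine for the base — is indeed the skeleton of that proof, and you have correctly located the crux in the survival of the polynomial generators $\xi_1,\dots,\xi_d$ of the fiber cohomology.

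There is nonetheless a genuine gap, and it sits earlier in the argument than where you place it. You assert that finite generation of the abutment ``follows from a standard Noetherian spectral sequence argument'' from Noetherianity of $E_2$, reserving the permanent-cycle problem for the lower Krull bound only. This is backwards. Noetherianity of $E_2$ does not pass to $E_3$ or $E_\infty$: the kernel of a differential is merely a subalgebra, and subalgebras of Noetherian graded-commutative rings need not be Noetherian. The standard argument (Evens, Friedlander--Suslin) requires exhibiting a Noetherian subalgebra of \emph{permanent cycles} over which every page is a finite module; here that subalgebra must contain, up to finite extension, both $H^\b(k\mbb{G}_{(r)},k)$ (which is permanent, being the bottom edge) and a polynomial algebra on lifts of powers of the $\xi_i$ (which is not automatic). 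So producing the $d$ extra classes is needed for finite generation itself, not merely for the dimension count --- and that production is precisely the content of the cited theorem, where Friedlander and Negron construct explicit $\mbb{G}_{(r)}$-equivariant degree-two classes (valued in a Frobenius twist of the coadjoint representation, built from the closed embedding $\mbb{G}_{(r)}\subset \mbb{G}$; this is where smoothness of $\mbb{G}$ enters essentially) whose restrictions generate the polynomial part of $\Ext^\b_{\O(\mbb{G}_{(r)})}(k,k)$. Since neither of the two routes you gesture at is carried out, the proposal as written does not close.
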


Here $\dim\mbb{G}$ is the dimension of $\mbb{G}$ as a variety.  As another point of interest, Gelaki has classified exact module categories for arbitrary finite group schemes in terms of cohomological data~\cite{gelaki15}.  We present his result in the particular case of a Frobenius kernel in a smooth algebraic group $\mbb{G}$.

\begin{theorem}[{\cite[Theorem\ 3.9]{gelaki15}}]
Exact module categories over $\mathrm{Coh}(\mbb{G}_{(r)})=\rep(\O)$ are classified by pairs consisting of a choice of closed subgroup $H\subset \mbb{G}_{(r)}$ and $2$-cocycle $\psi:kH\ot kH\to k$.  For any pair $(H,\psi)$ the associated module category $\M(H,\psi)$ is the category of $H$-equivariant sheaves on $\mbb{G}_{(r)}$, under the translation action $\mbb{G}_{(r)}\times H\to \mbb{G}_{(r)}$, with associativity given by $\psi$.
\end{theorem}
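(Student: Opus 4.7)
The plan is to adapt Ostrik's classification of exact module categories over $\mrm{Vec}_G$ for ordinary finite groups to the infinitesimal group scheme setting, with the Andruskiewitsch--Mombelli correspondence as the essential abstract input. That correspondence identifies exact indecomposable $\rep(\O)$-module categories with Morita equivalence classes of right $\O$-simple right $\O$-comodule algebras $B$ having trivial coinvariants.

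First I would translate the data. Since $\O=\O(\mbb{G}_{(r)})$ is commutative, a right $\O$-comodule algebra is the same as a $k$-algebra $B$ equipped with a (right) action of $\mbb{G}_{(r)}$ by algebra automorphisms, and right $\O$-simplicity becomes the condition that $B$ carries no proper nontrivial $\mbb{G}_{(r)}$-stable two-sided ideal. The problem is thus reduced to classifying simple $\mbb{G}_{(r)}$-algebras, up to equivariant Morita equivalence.

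Given such a simple $\mbb{G}_{(r)}$-algebra $B$, I would extract the pair $(H,\psi)$ as follows. The center $Z(B)$ inherits a $\mbb{G}_{(r)}$-action, and simplicity of $B$ forces $\mrm{Spec}\,Z(B)$ to be a transitive $\mbb{G}_{(r)}$-scheme, hence isomorphic to $\mbb{G}_{(r)}/H$ for a uniquely determined closed subgroup $H\subset\mbb{G}_{(r)}$. After a Morita reduction, $B$ may be taken to be an Azumaya algebra over $\O(\mbb{G}_{(r)}/H)$; restricting to the fiber over a chosen point of $\mbb{G}_{(r)}/H$ yields a central simple $k$-algebra carrying an $H$-action by inner automorphisms. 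Over an algebraically closed base this algebra is a matrix algebra, and the inner $H$-action lifts to a projective representation $H\to\mrm{PGL}(V)$ whose obstruction to lifting to $\mrm{GL}(V)$ defines the class $\psi\in H^2(H,k^\times)$. Conversely, given a pair $(H,\psi)$, the category $\msc{M}(H,\psi)$ of $\psi$-twisted $H$-equivariant sheaves on $\mbb{G}_{(r)}$ inherits a $\rep(\O)$-module structure from the translation action, and the standard descent equivalence $\mrm{Coh}(\mbb{G}_{(r)}/H)\cong\mrm{Coh}^H(\mbb{G}_{(r)})$ (applied to $Z(B)$-modules and incorporating the cocycle $\psi$ to account for the non-central part of $B$) verifies that the two constructions are mutually inverse.

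The main obstacle is the scheme-theoretic extraction of $H$: since $\mbb{G}_{(r)}$ is typically infinitesimal (hence non-reduced), one cannot appeal to set-theoretic orbit arguments as in Ostrik's original proof. Instead one must use flat/faithfully flat descent and torsor-theoretic methods to recognize $\mrm{Spec}\,Z(B)$ as a homogeneous space $\mbb{G}_{(r)}/H$, and then analyze the Azumaya structure of $B$ over its center in a cocycle-theoretic way to produce $\psi$. Once this is in place, the bijection between module categories and pairs $(H,\psi)$ follows by unwinding the definitions.
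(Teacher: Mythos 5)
A preliminary remark: the paper you are reading does not prove this statement — it is imported verbatim from Gelaki \cite{gelaki15} — so there is no in-text argument to compare against, and your outline has to be measured against Gelaki's published proof. At the level of architecture your plan is the right one and matches the shape of that proof: reduce via the Andruskiewitsch--Mombelli correspondence to $\mbb{G}_{(r)}$-simple algebras $B$ with $B^{\mbb{G}_{(r)}}=k$, identify $\operatorname{Spec}Z(B)$ with a quotient $\mbb{G}_{(r)}/H$, and extract $\psi$ from the twisted structure of $B$ over its center. The translation of comodule algebras into algebras with a group-scheme action, and the observation that $G$-simplicity of $B$ forces $G$-simplicity of $Z(B)$, are both correct.

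There are, however, two genuine gaps, located precisely where the infinitesimal setting departs from Ostrik's. First, you assert that after Morita reduction $B$ is Azumaya over $\O(\mbb{G}_{(r)}/H)$. In the finite-group version this is essentially free: a $G$-simple algebra under a constant group is semisimple (its Jacobson radical is an automorphism-stable, hence $G$-stable, ideal and so vanishes), after which $B$ is a product of matrix algebras permuted transitively by $G$ and the projective-representation analysis is immediate. For infinitesimal $\mbb{G}_{(r)}$ this first step fails outright: the radical of a comodule algebra need not be a subcomodule --- $B=\O(\mbb{G}_{(r)})$ itself is local, non-semisimple, and $\mbb{G}_{(r)}$-simple under the translation coaction --- so there is no a priori reason that $B$ is projective over $Z(B)$ or that its fiber at the closed point of $\mbb{G}_{(r)}/H$ is central simple. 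Establishing this is the heart of the theorem, and your sketch supplies no argument for it; by contrast, the extraction of $H$ itself, which you single out as the main obstacle, is comparatively standard (orbits of finite group schemes are homogeneous spaces). Second, the cocycle in the statement is a Hopf $2$-cocycle $\psi\colon kH\ot kH\to k$, i.e.\ an invertible element of $\O(H\times H)$ satisfying the algebraic cocycle identity --- the Hochschild cohomology $H^2(H,\mathbb{G}_m)$ of the group \emph{scheme}. Your class ``$\psi\in H^2(H,k^\times)$,'' produced as the obstruction to lifting a projective representation, is vacuous for infinitesimal $H$ if read as cohomology of the (trivial) group of $k$-points; the representation, the lift, and the obstruction must all be set up scheme-theoretically, one must check that the obstruction is realized by an algebraic cocycle (using that the $\mathbb{G}_m$-torsor $\mrm{GL}(V)\to\mrm{PGL}(V)$ trivializes over the finite local scheme $H$), and one must verify that the resulting crossed products exhaust all $B$. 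The converse direction and the determination of when two pairs $(H,\psi)$ yield equivalent module categories are likewise only gestured at.
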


In the case $\psi=1$ the module category $\M(H,1)$ is equivalent to the category of coherent sheaves on the quotient $G/H=\opn{Spec}(\O(G)^H)$.  This equivalence is derived from the fact that the quotient $G\to G/H$ is $H$-Galois, so that we may apply descent to find $\mathrm{Coh}(G)^H\cong \mathrm{Coh}(G/H)$.  The module structure on $\mrm{Coh}(G/H)$ is induced by the pushforward functor $\mathrm{Coh}(G)\to \mathrm{Coh}(G/H)$.
\par

One applies Proposition~\ref{prop:fgcat} and Theorem~\ref{thm:fn} to arrive at the following corollary.

\begin{corollary}\label{cor:repOM}
Let $\mbb{G}$ be a smooth algebraic group in characteristic $p$.  For any pair $(H,\psi)$ consisting of a closed subgroup $H$ of $\mbb{G}_{(r)}$, and $2$-cocycle $\psi$ for $H$, the corresponding dual category $\rep(\O)^\ast_{\M(H,\psi)}$ is of finite type.  Furthermore, the Krull dimensions of the duals are uniformly bounded
\[
\Kdim \rep(\O)^\ast_{\M(H,\psi)}\leq \Kdim \rep(\mbb{G}_{(r)})+\dim\mbb{G}.
\]
\end{corollary}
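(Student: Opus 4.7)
The plan is to apply Corollary~\ref{cor:Z2C} to the category $\msc{C}=\rep(\O)$. It suffices to show that the Drinfeld center $Z(\rep(\O))$ is of finite type with Krull dimension equal to $\Kdim\rep(\mbb{G}_{(r)})+\dim\mbb{G}$; the desired conclusion for each $\M(H,\psi)$ then follows immediately because $\M(H,\psi)$ is an exact $\rep(\O)$-module category by Gelaki's classification.

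First I would identify the center. As recalled in the introduction, any finite-dimensional Hopf algebra $A$ satisfies the Morita duality $\rep(A)^\ast_{Vect}\cong \rep(A^\ast)^{\mrm{cop}}$. Applied to $A=\O=\O(\mbb{G}_{(r)})$, with $A^\ast=k\mbb{G}_{(r)}$, this gives that $\rep(\O)$ and $\rep(k\mbb{G}_{(r)})^{\mrm{cop}}$ are Morita equivalent. Since the Drinfeld center is a Morita invariant of finite tensor categories (\cite[Corollary 3.35]{etingofostrik04}), we obtain an equivalence of braided tensor categories
\[
Z(\rep(\O))\simeq Z(\rep(k\mbb{G}_{(r)})^{\mrm{cop}})\simeq \rep(D(k\mbb{G}_{(r)}))^{\mrm{rev}}.
\]
Since neither passage to $\mrm{cop}$ nor to $\mrm{rev}$ affects the underlying $k$-linear abelian category, $Z(\rep(\O))$ and $\rep(D(k\mbb{G}_{(r)}))$ share the same cohomology algebras and Krull dimension.

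Next I would invoke Theorem~\ref{thm:fn}, which asserts that $\rep(D(k\mbb{G}_{(r)}))$ is of finite type with
\[
\Kdim\rep(D(k\mbb{G}_{(r)}))=\Kdim H^\b(k\mbb{G}_{(r)},k)+\dim\mbb{G}=\Kdim\rep(\mbb{G}_{(r)})+\dim\mbb{G}.
\]
Together with the identification of the previous step, this shows that $Z(\rep(\O))$ is of finite type with Krull dimension $\Kdim\rep(\mbb{G}_{(r)})+\dim\mbb{G}$. Applying Corollary~\ref{cor:Z2C} then yields that every dual $\rep(\O)^\ast_{\M(H,\psi)}$ is of finite type, with the stated uniform bound.

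There is no serious obstacle: the corollary is essentially the packaging of Theorem~\ref{thm:fn} with the Morita invariance of the center provided by Corollary~\ref{cor:Z2C}. The only point requiring mild care is the identification $Z(\rep(\O))\simeq \rep(D(k\mbb{G}_{(r)}))^{\mrm{rev}}$, which follows formally from Morita-duality between $\rep(\O)$ and $\rep(k\mbb{G}_{(r)})^{\mrm{cop}}$; once this is in place, both the finite type property and the Krull-dimension bound transfer directly.
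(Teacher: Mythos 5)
Your proposal is correct and follows essentially the same route the paper intends: identify $Z(\rep(\O))$ with $\rep(D(k\mbb{G}_{(r)}))$ (up to $\mrm{rev}$, which does not affect cohomology) via Morita invariance of the center, invoke Theorem~\ref{thm:fn} for the finite type property and the Krull dimension of the double, and conclude with Corollary~\ref{cor:Z2C} (itself an application of Proposition~\ref{prop:fgcat}) using Gelaki's classification to know that the $\M(H,\psi)$ exhaust the exact module categories.
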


Of course, it is already very interesting to consider cocycle twists of $\O$.  We recall that $\rep D(\O)\cong \rep D(\O_\sigma)$ for any $2$-cocycle $\sigma$~\cite{majidoeckl99,benkartetal10}.

\begin{corollary}\label{cor:O_sig}
For any smooth algebraic group $\mbb{G}$, Frobenius kernel $\mbb{G}_{(r)}$, and $2$-cocycle $\sigma$ for the coordinate algebra $\O=\O(\mbb{G}_{(r)})$, the twisted algebra $\O_\sigma$ has finitely generated cohomology.  Furthermore,
\[
\Kdim \rep(\O_\sigma)\leq \Kdim \rep(\mbb{G}_{(r)})+\dim\mbb{G}.
\]
\end{corollary}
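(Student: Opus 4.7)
My plan is to derive the corollary by specializing Corollary~\ref{cor:DA} to the input Hopf algebra $A=k\mbb{G}_{(r)}$, whose linear dual is $A^\ast=\O$. With this choice, the family of ``cocycle twists of the dual'' $(A^\ast)_J$ covered by Corollary~\ref{cor:DA} is exactly the family $\O_\sigma$ appearing in the statement. Corollary~\ref{cor:DA} then provides both finite generation of $H^\b(\O_\sigma,k)$ and the uniform bound
\[
\Kdim \rep(\O_\sigma)\leq \Kdim H^\b(D(k\mbb{G}_{(r)}),k),
\]
conditional on $D(k\mbb{G}_{(r)})$ having finitely generated cohomology.

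Next, I would invoke Theorem~\ref{thm:fn} from~\cite{friedlandernegron18}, which supplies that finite generation hypothesis and in addition computes
\[
\Kdim H^\b(D(k\mbb{G}_{(r)}),k)=\Kdim H^\b(k\mbb{G}_{(r)},k)+\dim\mbb{G}.
\]
Since $\rep(\mbb{G}_{(r)})=\rep(k\mbb{G}_{(r)})$ by definition, the right-hand side equals $\Kdim \rep(\mbb{G}_{(r)})+\dim\mbb{G}$, producing the claimed bound after substitution into the inequality from the first step.

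Underneath, Corollary~\ref{cor:DA} is itself an instance of the Drinfeld center argument of Corollary~\ref{cor:Z2C}: it uses the cocycle twist invariance $\rep D(\O)\cong \rep D(\O_\sigma)$ cited immediately before the corollary from~\cite{majidoeckl99,benkartetal10}, together with Morita invariance of the center (via the exact $\rep(\O)$-module category $\msc{M}=Vect$) to identify $Z(\rep \O)$ with $Z(\rep k\mbb{G}_{(r)})$ up to reversing the braiding. The main point one has to watch is therefore essentially bookkeeping: verifying that the cocycle twist language of Corollary~\ref{cor:DA} applied to $A=k\mbb{G}_{(r)}$ coincides with the twists $\O_\sigma$ in the statement, and that the chain of equivalences carries the finite type property from $\rep D(k\mbb{G}_{(r)})$ down to $Z(\rep \O_\sigma)$ and then, via the surjective forgetful functor $Z(\rep\O_\sigma)\to \rep(\O_\sigma)$ of Corollary~\ref{cor:Z2C}, to $\rep(\O_\sigma)$ itself. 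The reverse braiding appearing from Morita invariance of the center is harmless, as it does not alter the cohomology algebra of the unit.
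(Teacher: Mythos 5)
Your proposal is correct and follows essentially the same route as the paper: the corollary is an immediate application of Corollary~\ref{cor:DA} (equivalently, of Corollary~\ref{cor:repOM}, since cocycle twists of $\O$ arise as duals with respect to particular exact module categories) combined with Theorem~\ref{thm:fn}, which supplies both the finite generation of $H^\b(D(k\mbb{G}_{(r)}),k)$ and the Krull dimension computation. Your identification of the twists $(A^\ast)_J$ for $A=k\mbb{G}_{(r)}$ with the $\O_\sigma$ of the statement, and the remark that the braiding reversal coming from Morita invariance of the center is harmless for cohomology, are both accurate.
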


We note that the twisted algebra $\O_\sigma$ is neither commutative nor cocommutative in general.  Gelaki provides in~\cite[Example 6.14]{gelaki15} an explicit example of a cocycle twist $\sigma$ of the algebra of functions for the semi-direct product $(\mbb{G}_a\rtimes \mbb{G}_m)_{(1)}$ for which the corresponding cocycle twisted algebra is neither commutative nor cocommutative.  As an algebra, one has explicitly
\[
\O\left((\mbb{G}_a\rtimes \mbb{G}_m)_{(1)}\right)_{\sigma}=k\langle x, t\rangle/(x^p, t^p-1,[x,t]-t^2+2t-1),
\]
while the coalgebra structure is unchanged, and hence remains non-cocommutative.
\par

This single cocycle proliferates to produce an infinite class of such Hopf algebras.  Indeed, given any embedding $(\mbb{G}_a\rtimes \mbb{G}_m)_{(1)}\to \mbb{G}_{(r)}$, for arbitrary smooth $\mbb{G}$, we can restrict this cocycle to $\mbb{G}_{(r)}$ to arrive at a Hopf algebra in characteristic $p$ which is neither commutative nor cocommutative.  Of course, there should be many more interesting example of cycle twisted algebras which are not coming from the above small example as well.
\par

We can also complete some work from~\cite[Section\ 5.3]{friedlandernegron18} by applying Corollary~\ref{cor:fgsubalg}.  For smooth $\mbb{G}$, any $r>0$, and group scheme quotient $\mbb{G}_{(r)}\to G'$, one can form the relative double $D(G',\mbb{G}_{(r)})$, which as an algebra is the smash product $\O(G')\# k\mbb{G}_{(r)}$ under the adjont action of $\mbb{G}_{(r)}$ on $G'$.  This algebra has representation category equivalent to the relative center of the corresponding module category $\mrm{rep}(G')$ over $\mrm{rep}(\mbb{G}_{(r)})$ (see Section~\ref{sect:braidedI}).  The following result was proved for $G'$ of the form $\mbb{G}_{(r)}/\mbb{G}_{(r-i)}$ in~\cite{friedlandernegron18}.

\begin{corollary}[{cf.~\cite[Theorem\ 5.7]{friedlandernegron18}}]
For any smooth algebraic group $\mbb{G}$, Frobenius kernel $\mbb{G}_{(r)}$, and quotient map $\mbb{G}_{(r)}\to G'$ of group schemes, the relative double $D(G',\mbb{G}_{(r)})$ has finitely generated cohomology.  Additionally, the Krull dimension is bounded
\[
\Kdim H^\b(D(G',\mbb{G}_{(r)}),k)\leq \Kdim \rep(\mbb{G}_{(r)})+\dim\mbb{G}.
\]
\end{corollary}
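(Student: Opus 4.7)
The plan is to realize the relative double $D(G',\mbb{G}_{(r)})$ as a Hopf subalgebra of the full Drinfeld double $D(k\mbb{G}_{(r)})$, and then to invoke Corollary~\ref{cor:fgsubalg} together with Theorem~\ref{thm:fn} in order to deduce both finite generation of cohomology and the Krull dimension bound simultaneously.

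First, I would observe that the quotient $\pi:\mbb{G}_{(r)}\to G'$ of finite group schemes dualizes to a Hopf inclusion $\pi^{\ast}:\O(G')\hookrightarrow \O(\mbb{G}_{(r)})$. Because $G'$ is a genuine group scheme quotient, the adjoint (conjugation) action of $\mbb{G}_{(r)}$ on itself descends along $\pi$ to an action on $G'$; equivalently, $\pi^{\ast}$ is equivariant with respect to the adjoint $\mbb{G}_{(r)}$-coactions on $\O(\mbb{G}_{(r)})$ and $\O(G')$. This equivariance is precisely the compatibility required to promote the algebra inclusion $\O(G')\# k\mbb{G}_{(r)}\hookrightarrow \O(\mbb{G}_{(r)})\# k\mbb{G}_{(r)}$ to a map of Hopf algebras $D(G',\mbb{G}_{(r)})\hookrightarrow D(k\mbb{G}_{(r)})$, since the tensor-coalgebra structure on each relative double restricts correctly along a coalgebra inclusion in the first factor, and the antipode is then forced.

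With this Hopf inclusion in hand, Theorem~\ref{thm:fn} supplies both finite generation of $H^\b(D(k\mbb{G}_{(r)}),k)$ and the identification
\[
\Kdim H^\b(D(k\mbb{G}_{(r)}),k)=\Kdim H^\b(k\mbb{G}_{(r)},k)+\dim\mbb{G}=\Kdim\rep(\mbb{G}_{(r)})+\dim\mbb{G}.
\]
Corollary~\ref{cor:fgsubalg}, applied to the inclusion $D(G',\mbb{G}_{(r)})\hookrightarrow D(k\mbb{G}_{(r)})$, then transfers finite generation to $D(G',\mbb{G}_{(r)})$ and bounds $\Kdim H^\b(D(G',\mbb{G}_{(r)}),k)\leq \Kdim H^\b(D(k\mbb{G}_{(r)}),k)$, which is exactly the claim.

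The one point deserving explicit verification is the Hopf-algebraic (rather than merely algebraic) nature of the smash-product inclusion. This ultimately reduces to checking $\mbb{G}_{(r)}$-equivariance of $\pi^\ast$, which is a formal consequence of naturality of the adjoint action under group scheme quotients; all remaining steps are immediate applications of results already established in the paper.
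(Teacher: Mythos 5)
Your proposal is correct and follows essentially the same route as the paper: the paper's proof consists precisely of noting the Hopf inclusion $D(G',\mbb{G}_{(r)})\to D(k\mbb{G}_{(r)})$ induced by $\O(G')\hookrightarrow\O(\mbb{G}_{(r)})$ and then applying Corollary~\ref{cor:fgsubalg} together with Theorem~\ref{thm:fn}. Your additional verification that the smash-product inclusion is genuinely Hopf-algebraic (via equivariance of $\pi^\ast$ for the adjoint action) is a detail the paper leaves implicit, and it is correctly handled.
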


\begin{proof}
We have the Hopf inclusion $D(G',\mbb{G}_{(r)})\to D(\mbb{G}_{(r)})$ and apply Corollary~\ref{cor:fgsubalg} in conjunction with Theorem~\ref{thm:fn}.
\end{proof}

\begin{remark}
To our knowledge, the cocycle twisted algebras $\O(\mbb{G}_{(r)})_\sigma$, in addition to the doubles of~\cite{friedlandernegron18}, provide the first examples of noncommutative and noncocommutative Hopf algebras in characteristic $p$ for which the cohomology is known to be finitely generated (see also~\cite{nguyenwangwitherspoon19,erdmannsolbergwang}, where the authors consider cohomology with trivial coefficients). 
\end{remark}

\begin{remark}
The material of this subsection focuses on infinitesimal group schemes.  One could, at the other extreme, consider finite (discrete) groups in characteristic $p$.  In this setting, again, module categories have been classified~\cite[Section\ 4]{etingofostrik04}.  Indeed, the work of~\cite{etingofostrik04} precedes and motivates the work of~\cite{gelaki15}.  It is easily checked that the double $D(kG)$ of a finite group $G$ has finitely generated cohomology and $\Kdim \rep(D(kG))=\Kdim \rep(G)$.  So we see that all duals $\rep(G)^\ast_\msc{M}$ are of finite type for an arbitrary finite group $G$.
\end{remark}

\subsection{Examples in characteristic $0$: pointed Hopf algebras}
\label{sect:char0}

We consider the pointed Hopf algebras $u(\mfk{D})=u(\mcl{D},\lambda,\mu)$ of Andruskiewitsch and Schneider~\cite{andruskiewitschschneider10}, which are Hopf algebras over $\mbb{C}$.  We call such $u(\mfk{D})$ Cartan-type pointed Hopf algebras, as they are cocycle deformations of bosonizations of Nichols algebras of Cartan-type braided vector spaces.  These Hopf algebras are generalizations of small quantum groups.  The algebras of~\cite{andruskiewitschschneider10} are ``generic" among all finite dimensional pointed Hopf algebras with abelian group of grouplikes (cf.~\cite{angiono13}).

\begin{theorem}\label{thm:char0}
Consider a Cartan-type pointed Hopf algebra $u(\mfk{D})$.  For any exact $\rep(u(\mfk{D}))$-module category $\msc{M}$, the dual $\rep(u(\mfk{D}))^\ast_\msc{M}$ is of finite type.  Furthermore, if we let $\Phi$ denote the root system associated to the data $\mfk{D}$, we have
\[
\Kdim \mrm{rep}\left(u(\mfk{D})\right)^\ast_\msc{M}\leq 2|\Phi^+|,
\]
for arbitrary exact $\msc{M}$.
\end{theorem}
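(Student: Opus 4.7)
The plan is to reduce the statement to a finite generation result for the Drinfeld double of $u(\mathfrak{D})$ and then invoke Corollary~\ref{cor:DA} (equivalently, Corollary~\ref{cor:Z2C} applied to $\msc{C}=\rep(u(\mathfrak{D}))$). The strategy has three main ingredients: an input from~\cite{mpsw10}, a structural identification of $D(u(\mathfrak{D}))$ as again a Cartan-type pointed Hopf algebra, and a Krull dimension count.

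First I would recall from~\cite{mpsw10} that any Cartan-type pointed Hopf algebra $u(\mathfrak{D})$ has finitely generated cohomology, with $\Kdim H^\bullet(u(\mathfrak{D}),k)$ equal to the number $|\Phi^+|$ of positive roots of the associated root system $\Phi$. This is the essential cohomological input, since $u(\mathfrak{D})$ is a cocycle deformation of a bosonization $\mathfrak{B}(V)\#kG$ of a Nichols algebra of Cartan type, and~\cite{mpsw10} produces a polynomial subalgebra in the cohomology whose image is finite and whose Krull dimension matches the count of positive roots.

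Next I would observe that the Drinfeld double $D(u(\mathfrak{D}))$ is again a Cartan-type pointed Hopf algebra, but with root system $\Phi\sqcup \Phi$ (and appropriately doubled datum $\mathfrak{D}\sqcup \mathfrak{D}^{\mathrm{op}}$), so that~\cite{mpsw10} applies again to give
\[
\Kdim H^\bullet\bigl(D(u(\mathfrak{D})),k\bigr)\ =\ 2|\Phi^+|.
\]
This structural identification of the double as a Cartan-type pointed Hopf algebra is the step I expect to be the main technical obstacle; one needs to exhibit the double as (an instance of) the Andruskiewitsch--Schneider construction for a suitable doubled datum, so that the finite generation theorem of~\cite{mpsw10} genuinely applies to it. Equivalently, one verifies that $D(u(\mathfrak{D}))$ is pointed, that its infinitesimal braiding is again of Cartan type, and that its coradical filtration has the form required for the MPSW argument.

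Finally, since the Drinfeld center satisfies $Z(\rep(u(\mathfrak{D})))\cong \rep(D(u(\mathfrak{D})))$, the previous step shows that $Z(\rep(u(\mathfrak{D})))$ is of finite type with Krull dimension $2|\Phi^+|$. Corollary~\ref{cor:Z2C} then yields, for every exact module category $\msc{M}$, that $\rep(u(\mathfrak{D}))^\ast_{\msc{M}}$ is of finite type and satisfies
\[
\Kdim \rep\bigl(u(\mathfrak{D})\bigr)^\ast_\msc{M}\ \leq\ \Kdim Z\bigl(\rep(u(\mathfrak{D}))\bigr)\ =\ 2|\Phi^+|,
\]
which is exactly the bound claimed. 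As a sanity check, the resulting inequality matches the braided-category bound $\Kdim \msc{C}^\ast_\msc{M}\leq 2\Kdim\msc{C}$ from Theorem~\ref{thm:braidedII} applied with $\Kdim\rep(u(\mathfrak{D}))=|\Phi^+|$, which is consistent with $u(\mathfrak{D})$ admitting a quasitriangular structure in the Cartan-type setting.
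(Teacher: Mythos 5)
Your strategy is exactly the paper's: show $Z(\rep(u(\mfk{D})))\cong\rep(D(u(\mfk{D})))$ is of finite type with Krull dimension at most $2|\Phi^+|$ by recognizing the double as (Morita equivalent to) a Cartan-type pointed Hopf algebra to which~\cite{mpsw10} applies, then invoke Corollary~\ref{cor:Z2C}. You have, however, only flagged the one step that carries all the content --- the identification of $D(u(\mfk{D}))$ as again Cartan-type --- rather than proved it, so let me record how the paper fills it in. First one reduces to the case where $u(\mfk{D})$ is coradically graded: $u(\mfk{D})$ is a cocycle deformation of $\operatorname{gr}u(\mfk{D})$, hence in the same Morita class, and the center is a Morita invariant. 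In the graded case $u(\mfk{D})=B(V)\rtimes G$ is a bosonization, and by Doi--Takeuchi~\cite{doitakeuchi94} its double is a cocycle twist
\[
D=\left(B(V)\rtimes G\ \ot\ B(V^\ast)^{cop}\rtimes G^\vee\right)_\sigma .
\]
One then checks that $V$ and $V^\ast$ have the same braiding matrix (decompose into simultaneous eigenspaces for the commuting $G$- and $G^\vee$-actions), and that $B(V)^{cop}\cong B(W)$ with $W$ the braided vector space with entrywise inverted braiding matrix, so the right-hand factor is again a Cartan-type Nichols algebra; hence $D$ is Cartan-type and the argument of~\cite[Proof of Theorem 6.3]{mpsw10} gives $\Kdim H^\b(D,\mbb{C})\leq 2|\Phi^+|=\Kdim H^\b(B(V)\ot B(V^\ast),\mbb{C})$. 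Note the paper only claims an inequality here, not the equality $\Kdim H^\b(D(u(\mfk{D})),k)=2|\Phi^+|$ you assert; also your closing sanity check via Theorem~\ref{thm:braidedII} presumes a quasitriangular structure on $u(\mfk{D})$, which is not available for general data $\mfk{D}$, though this does not affect the proof.
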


\begin{proof}
Take $\msc{C}=\rep(u(\mfk{D}))$ and take $G=G(u(\mfk{D}))$.  We will show that $Z(\msc{C})$ is of finite type and of Krull dimension $\leq 2|\Phi^+|$.  We may assume that $u(\mfk{D})$ is coradically graded, as $u(\mfk{D})$ is a cocycle deformation of $\operatorname{gr} u(\mfk{D})$, and is hence in the same Morita equivalence class.  In this case $u(\mfk{D})$ is the bosonization $B(V)\rtimes G$ of the Nichols algebra of Cartan-type braided vector space $V$.  The Drinfeld double $D$ of such coradically graded $u(\mfk{D})$ is then a cocycle deformation of the bosonizations
\[
D=\left(B(V)\rtimes G\ot B(V^\ast)^{cop}\rtimes G^\vee\right)_\sigma,
\]
by~\cite{doitakeuchi94}.  For the cooposite, we invert the $G$-action on $B(V)$, which reverses the comultiplication on the $G^\vee$-bosonization.
\par

We note that $V$ and $V^\ast$ have the same braiding matrix, so that $B(V)\cong B(V^\ast)$ as algebras.  One can see this by decomposing $V$ into simultaneous eigenspaces for the (commuting) actions of $G$ and $G^\vee$.  Also $B(V)^{cop}\cong B(W)$ where $W=V$ with entrywise inverted braiding matrix.  Hence $D$ is a Cartan-type pointed Hopf algebra.  It follows by~\cite[Proof of Theorem 6.3]{mpsw10} that $D$ has finitely generated cohomology of Krull dimension less than or equal to $2|\Phi^+|=\Kdim H^\b(B(V)\ot B(V^\ast),\mbb{C})$.  Hence $Z(\msc{C})$ is of finite type and of Krull dimension $\leq 2|\Phi^+|$, and the result now follows from Corollary~\ref{cor:Z2C}.
\end{proof}

In finite characteristic we considered cocycle deformations of function algebras of infinitesimal group schemes.  For pointed Hopf algebras in characteristic $0$, cocycle deformations of the Cartan-type $u(\mfk{D})$ do not produce new examples of Hopf algebras with finitely generated cohomology, as such algebras are stable under cocycle deformation and hence are covered in~\cite{mpsw10}.  However, we discuss the specific examples of dynamical quantum groups in Section~\ref{sect:uqT}.

\section{Deligne products and non-degenerate categories}
\label{sect:nondeg}

In this section we establish some basic results concerning Deligne products of finite type categories and base change.  We also show that if $\C$ is non-degenerate (braided) and of finite type, then every dual category $\C^\ast_\M$ with respect to an exact module category $\M$ is also of finite type.  This means that the finite type property is preserved by Morita equivalences in the non-degenerate case. At times in this section it will be convenient to employ an equivalence $\msc{C}\simeq \mrm{rep}(A)$ to express an arbitrary tensor category as the category of representations over a Hopf {\it algebroid} $A$.  We recall the notion of a Hopf algebroid in Appendix~\ref{sect:A}, and reconstruct a standard equivalence $\msc{C}\simeq \mrm{rep}(A)$ at Lemma~\ref{lem:CasA} therin.

\subsection{Some standard lemmas}
\label{sect:lems}

Let us collect a few standard results, which seem not to have appeared in an organized manner in the literature.

\begin{lemma}[{cf.~\cite{roquier}}]\label{lem:irreps}
Suppose the cohomology $H^\b(\C,\1)$ is a Noetherian algebra.  Then the following are equivalent:
\begin{enumerate}
\item[(i)] The cohomology $H^\b(\C,V)$ is finitely generated as a $H^\b(\C,\1)$-module for each simple object $V$ in $\C$.
\item[(ii)] The cohomology $H^\b(\C,W)$ is finite generated as a $H^\b(\C,\1)$-module for arbitrary $W$ in $\C$.
\end{enumerate}
\end{lemma}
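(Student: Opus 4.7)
The direction (ii) $\Rightarrow$ (i) is immediate, since simple objects are among the ``arbitrary'' objects allowed in (ii). All the work lies in (i) $\Rightarrow$ (ii), and the plan is to induct on the composition length of $W$, which is finite since $\msc{C}$ is a finite tensor category. The length-one case is precisely the hypothesis (i); for the inductive step, I will break $W$ by a short exact sequence and feed it through the long exact sequence in cohomology.

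More concretely, for $W$ of length $n > 1$, I would choose a simple subobject $V \hookrightarrow W$ and form the short exact sequence $0 \to V \to W \to W' \to 0$ with $W'$ of length $n-1$. Applying $H^\b(\msc{C},-)$ gives a long exact sequence, which by Lemma~\ref{lem:Ext_bifun} lives in the category of graded $H^\b(\msc{C},\1)$-modules (including the connecting maps, since those are induced by Yoneda composition and commute with the $-\ot\1$ action used to define the module structure). The key observation to record is that in fact the entire long exact sequence is a sequence of $H^\b(\msc{C},\1)$-linear maps.

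From the three-term exact piece $H^\b(\msc{C},V) \to H^\b(\msc{C},W) \to H^\b(\msc{C},W')$ I would extract a short exact sequence $0 \to K \to H^\b(\msc{C},W) \to I \to 0$, where $I$ is a submodule of $H^\b(\msc{C},W')$ and $K$ is a quotient of $H^\b(\msc{C},V)$. The inductive hypothesis gives finite generation of $H^\b(\msc{C},W')$, so Noetherianity of $H^\b(\msc{C},\1)$ yields finite generation of the submodule $I$; finite generation of $K$ is immediate as a quotient of the finitely generated $H^\b(\msc{C},V)$. Since extensions of finitely generated modules are finitely generated, $H^\b(\msc{C},W)$ is finitely generated, closing the induction.

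There is no real obstacle here: the only point requiring care is verifying that the connecting maps in the long exact sequence are $H^\b(\msc{C},\1)$-linear, which is what Lemma~\ref{lem:Ext_bifun} (via the Yoneda description of the action recorded in Lemma~\ref{lem:385}) is designed to provide. The Noetherian hypothesis is used crucially to pass finite generation from $H^\b(\msc{C},W')$ to its submodule $I$.
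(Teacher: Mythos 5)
Your proof is correct and follows essentially the same route as the paper: induction on length, a short exact sequence splitting off a simple constituent, exactness of the three-term piece of the long exact sequence in the middle, and then finite generation of the kernel (as a quotient of a finitely generated module) and of the image (as a submodule of a finitely generated module over a Noetherian ring). The only cosmetic differences are that you take the simple object as a subobject rather than a quotient, and you remark on linearity of the connecting maps, which is not actually needed since only the functorially induced maps enter the three-term argument.
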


\begin{proof}
Obviously (ii) implies (i).  Suppose now that (i) holds.  We proceed by induction on the length of $W$.  As our base case, (i) tells us the cohomology $H^\b(\C,W)$ is finitely generated whenever $W$ is length $1$.  Now suppose the cohomology is finitely generated for each object of length $<l$, and consider a length $l$ object $W$.  We place $W$ in an exact sequence $0\to W'\to W\to V\to 0$, with $W'$ and $V$ of length $<l$, and get an exact sequence on cohomology $H^\b(\C,W')\to H^\b(\C,W)\to H^\b(\C,V)$.  (Exactness at the middle term follows from the standard long exact sequence on cohomology.)
\par

Since $H^\b(\C,\1)$ is Noetherian, and $H^\b(\C,V)$ is a finitely generated module, the image $N$ of $H^\b(\C,W)$ in $H^\b(\C,V)$ is finitely generated, as is the image $M$ of $H^\b(\C,W')$ in $H^\b(\C,W)$.  Hence we have an exact sequence $0\to M\to H^\b(\C,W)\to N\to 0$ of $H^\b(\C,\1)$-modules, with $M$ and $N$ finitely generated.  It follows that $H^\b(\C,W)$ is finitely generated.  Now (ii) follows by induction.
\end{proof}

For a Hopf algebroid $A$ over $R$ we can change base along any field extension $k\to k'$ to produce a new weak Hopf algebroid $A_{k'}$ over $R_{k'}$.  For any tensor category $\C$ we define $\C_{k'}$ by choosing an equivalence $\C\cong \rep(A)$ with $A$ a Hopf algebroid and taking $\C_{k'}=\rep(A_{k'})$.  One can check that the base change is unique up to tensor equivalence.  We say a given tensor category over $k$ is {\it defined over} a subfield $k''\subset k$ if it is equivalent to the base change $\msc{C}\cong (\msc{C}'')_k$ of a tensor category $\msc{C}''$ over $k''$.

\begin{lemma}\label{lem:basechange}
Let $k\to k'$ be a field extension and $\msc{C}$ be a tensor category over $k$.  If the base change $\msc{C}_{k'}$ is of finite type over $k'$ then $\C$ is of finite type over $k$.  In this case $\Kdim\msc{C}=\Kdim\msc{C}_{k'}$.  Furthermore, if $\msc{C}$ is defined over a perfect field then the converse implication holds as well.
\end{lemma}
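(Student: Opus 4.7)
The plan is to exploit the fact that cohomology commutes with flat base change along $k \to k'$, and then to invoke standard descent for finite generation over field extensions. The perfect field hypothesis will enter only in the converse direction, where it ensures that the Jacobson radical of a finite-dimensional algebra commutes with base change.

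Choosing an equivalence $\msc{C} \simeq \rep(A)$ for some finite Hopf algebroid $A$, so that $\msc{C}_{k'} \simeq \rep(A \otimes_k k')$, I would first establish the base-change identity
\[
H^\b(\msc{C}_{k'},\, V\otimes_k k') \;\cong\; H^\b(\msc{C},V)\otimes_k k',
\]
valid for every $V$ in $\msc{C}$ and compatible with the identification $H^\b(\msc{C}_{k'},\1) = H^\b(\msc{C},\1)\otimes_k k'$ at the module-structure level. This follows by taking a projective resolution $P_\b \to \1$ in $\msc{C}$, noting that $P_\b \otimes_k k' \to \1$ remains a projective resolution in $\msc{C}_{k'}$ by flatness, and observing that $\Hom_\msc{C}(P_i,V) \otimes_k k' \cong \Hom_{\msc{C}_{k'}}(P_i \otimes_k k',\, V\otimes_k k')$ since each $P_i$ is finitely presented.

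For the forward implication, the identity at $V = \1$ shows $H^\b(\msc{C},\1)\otimes_k k'$ is a finitely generated $k'$-algebra; faithfully flat descent (lifting any finite $k'$-algebra generating set to finitely many elements of $H^\b(\msc{C},\1)$ and invoking faithful flatness of $k \to k'$ to upgrade equality after base change to equality of subalgebras) yields finite generation of $H^\b(\msc{C},\1)$ over $k$. The same principle applied to the module-level identity gives finite generation of $H^\b(\msc{C},V)$ over $H^\b(\msc{C},\1)$ for each $V$. Krull dimension then transfers via the standard fact $\Kdim(B\otimes_k k') = \Kdim B$ for finitely generated commutative $k$-algebras $B$ under arbitrary field extensions, applied to $B = H^{ev}(\msc{C},\1)$.

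For the converse, write $\msc{C} = (\msc{C}_0)_k$ with $\msc{C}_0 \simeq \rep(A_0)$ defined over a perfect subfield $k_0 \subseteq k$, so that $\msc{C}_{k'} = (\msc{C}_0)_{k'}$ via the composite $k_0 \to k \to k'$. Applying the forward implication to $k_0 \to k$ gives that $\msc{C}_0$ is of finite type. By Lemma~\ref{lem:irreps} it now suffices to control $H^\b(\msc{C}_{k'},W)$ for each simple $W$ in $\msc{C}_{k'}$. Perfectness of $k_0$ implies $A_0/\mrm{rad}(A_0)$ is a separable $k_0$-algebra, and hence $(A_0/\mrm{rad}(A_0)) \otimes_{k_0} k'$ is semisimple; comparing with the nilpotent ideal $\mrm{rad}(A_0) \otimes_{k_0} k'$ forces
\[
\mrm{rad}(A_0 \otimes_{k_0} k') = \mrm{rad}(A_0) \otimes_{k_0} k'.
\]
Consequently $V_0 \otimes_{k_0} k'$ is semisimple in $\msc{C}_{k'}$ for every simple $V_0$ of $\msc{C}_0$, and the collection of their summands exhausts all simples of $\msc{C}_{k'}$ (which index the simple factors of the base-changed semisimple quotient). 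Thus each $H^\b(\msc{C}_{k'},W)$ is a direct summand of $H^\b(\msc{C}_0,V_0) \otimes_{k_0} k'$ for some simple $V_0$, and finite over $H^\b(\msc{C}_{k'},\1) = H^\b(\msc{C}_0,\1) \otimes_{k_0} k'$ by finite type of $\msc{C}_0$.

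The hard part will be the converse direction, and specifically the summand decomposition of the $V_0 \otimes_{k_0} k'$. This is exactly where perfection of $k_0$ is indispensable: without it the radical need not commute with base change, simples of $\msc{C}_{k'}$ may sit as proper subquotients of the $V_0 \otimes_{k_0} k'$, and the clean summand argument would have to be replaced by a long-exact-sequence and length-induction argument that is circular absent an independent handle on the kernels.
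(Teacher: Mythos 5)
Your proof is correct and follows essentially the same route as the paper: the cohomological base-change identity plus faithfully flat descent for the forward direction and the Krull dimension claim, and, for the converse, the observation that over a perfect field of definition every simple of $\msc{C}_{k'}$ is a summand of a base-changed simple, so that Lemma~\ref{lem:irreps} applies. The only difference is that you spell out the separability/radical argument behind that last observation, which the paper asserts without proof.
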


\begin{proof}
Suppose that $\msc{C}_{k'}$ is of finite type.  Since $H^\b(\C_{k'},\1_{k'})$ is finitely generated then we can choose $N>0$ so that $H^\b(\C_{k'},\1_{k'})$ is generated in degrees $\leq N$.  Hence the natural map $\Sym(H^{\leq N}(\C_{k'},\1_{k'}))\to H^\b(\C_{k'},\1_{k'})$ is surjective.  By the diagram
\[
\xymatrix{
k'\ot \Sym(H^{\leq N}(\C,\1))\ar[d]_\cong\ar[r]& k'\ot H^\b(\C,\1)\ar[d]_\cong\\
\Sym(H^{\leq N}(\C_{k'},\1_{k'}))\ar[r]& H^\b(\C_{k'},\1_{k'}),
}
\]
and faithful flatness of $k'$ over $k$, we conclude that the map $\Sym(H^{\leq N}(\C,\1))\to H^\b(\C,\1)$ is surjective.  Since $H^\b(\C,\1)$ is finite dimensional in each degree, it follows that $H^\b(\C,\1)$ is a finitely generated algebra.  A similar argument shows that each $H^\b(\C,V)$ is a finitely generated module over $H^\b(\C,\1)$.
\par

Conversely, suppose $\msc{C}$ is defined over a perfect field and let $(-)_{k'}:\C\to \C_{k'}$ denote the base change functor.  We have an identification of algebras $H^\b(\C_{k'},\1_{k'})=k'\ot H^\b(\C,\1)$, and an identification of $H^\b(\C_{k'},\1_{k'})$-modules $H^\b(\C_{k'},V_{k'})=k'\ot H^\b(\C,V)$ for every object $V$ in $\C$.  Our perfectness assumption implies that each simple object in the base change $\msc{C}_{k'}$ is a summand of some $V_{k'}$, for a simple $V$ in $\msc{C}$.  One can therefore apply Lemma~\ref{lem:irreps} to see that $\msc{C}_{k'}$ is of finite type whenever $\msc{C}$ is of finite type.
\par

As for the Krull dimension, any finite map $k[X_1,\dots,X_n]\to H^\b(\msc{C},\1)$ changes base to a finite map $k'[X_1,\dots,X_n]\to H^\b(\msc{C}_{k'},\1)$.  Whence $\Kdim\msc{C}_{k'}=\Kdim\msc{C}$, by Noether normalization.
\end{proof}

\begin{lemma}\label{lem:tp}
If $\msc{C}$ and $\msc{C}'$ are defined over a perfect field then the following conditions are equivalent:
\begin{enumerate}
\item[(i)] Both $\C$ and $\C'$ are of finite type;
\item[(ii)] The Deligne product $\C\bt\C'$ is of finite type.
\end{enumerate}
When either {\rm (i)} or {\rm (ii)} holds we have $\mrm{Kdim}( \msc{C}\bt\msc{C}')=\Kdim\msc{C}+\Kdim\msc{C}'$.
\end{lemma}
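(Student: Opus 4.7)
The plan hinges on a K\"unneth-type identification
\[
H^\b(\msc{C}\bt\msc{C}', V\bt V') \;\cong\; H^\b(\msc{C}, V) \ot_k H^\b(\msc{C}', V')
\]
of graded modules over $H^\b(\msc{C}\bt\msc{C}', \1\bt\1) \cong H^\b(\msc{C}, \1) \ot_k H^\b(\msc{C}', \1)$, for all objects $V$ in $\msc{C}$ and $V'$ in $\msc{C}'$.  To obtain this, I would take projective resolutions $P^\b\to \1$ in $\msc{C}$ and $Q^\b\to \1$ in $\msc{C}'$; since the Deligne product of projectives over a field is projective and since $\bt$ is exact in each variable, the total complex $P^\b\bt Q^\b$ is a projective resolution of $\1\bt\1$ in $\msc{C}\bt\msc{C}'$.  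The classical K\"unneth formula for complexes of vector spaces then yields the displayed isomorphism, and its compatibility with the Yoneda product (as in Lemma~\ref{lem:385}) is automatic from the construction.

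For the implication (i) $\Rightarrow$ (ii), I would first invoke Lemma~\ref{lem:basechange} together with the perfectness hypothesis to reduce to the case that $k$ is algebraically closed.  Over an algebraically closed field, the simples of $\msc{C}\bt\msc{C}'$ are exactly the objects $V\bt V'$ with $V, V'$ simple in the respective factors.  The K\"unneth identification then presents $H^\b(\msc{C}\bt\msc{C}', V\bt V')$ as a tensor product of finitely generated modules over a tensor product of finitely generated (hence Noetherian) algebras, which is therefore finitely generated.  Taking $V=\1$, $V'=\1$ gives finite generation of $H^\b(\msc{C}\bt\msc{C}',\1)$ as an algebra, and one upgrades from simples to arbitrary objects by Lemma~\ref{lem:irreps}.

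For (ii) $\Rightarrow$ (i), I would use the tensor embedding $\iota\colon\msc{C}\to\msc{C}\bt\msc{C}'$, $V\mapsto V\bt\1$, together with the graded algebra augmentation $\epsilon\colon H^\b(\msc{C}',\1)\twoheadrightarrow H^0(\msc{C}',\1)=k$ given by projection to degree zero.  Under K\"unneth these assemble into a retraction
\[
H^\b(\msc{C},V) \;\xrightarrow{\ \iota_\ast\ }\; H^\b(\msc{C}\bt\msc{C}', V\bt\1) \;\xrightarrow{\ \mrm{id}\,\ot\,\epsilon\ }\; H^\b(\msc{C},V)
\]
compatible with the corresponding retraction of $H^\b(\msc{C}\bt\msc{C}',\1)$ onto $H^\b(\msc{C},\1)$.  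At $V=\1$ this exhibits $H^\b(\msc{C},\1)$ as a quotient of the finitely generated algebra $H^\b(\msc{C}\bt\msc{C}',\1)$, hence finitely generated; for general $V$, writing $I = H^\b(\msc{C},\1)\ot H^{>0}(\msc{C}',\1)$ for the kernel of the augmentation, we have $H^\b(\msc{C}\bt\msc{C}',V\bt\1)/I\cdot H^\b(\msc{C}\bt\msc{C}',V\bt\1)\cong H^\b(\msc{C},V)$, displaying it as a finitely generated module over $H^\b(\msc{C}\bt\msc{C}',\1)/I = H^\b(\msc{C},\1)$.  By symmetry $\msc{C}'$ is also of finite type.

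The Krull dimension equality follows from the K\"unneth identification at the level of $H^\b(\msc{C}\bt\msc{C}',\1)$ together with the standard fact that $\Kdim(A\ot_k B) = \Kdim A + \Kdim B$ for finitely generated commutative $k$-algebras (applied to the even subalgebras, since odd degree elements are nilpotent).  The main technical hurdle is establishing the K\"unneth isomorphism as an identification of $H^\b(\msc{C}\bt\msc{C}',\1)$-modules compatible with the Yoneda action, and ensuring its behavior under base change; the hypothesis that $\msc{C}$ and $\msc{C}'$ are defined over a perfect field is used exactly to invoke Lemma~\ref{lem:basechange} so that one may argue simple-by-simple after passing to the algebraic closure in direction (i) $\Rightarrow$ (ii).
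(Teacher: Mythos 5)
Your proposal is correct and follows essentially the same route as the paper: base change to the algebraic closure via Lemma~\ref{lem:basechange}, the K\"unneth identifications $H^\b(\msc{C}\bt\msc{C}',V\bt V')\cong H^\b(\msc{C},V)\ot H^\b(\msc{C}',V')$ obtained from products of projective resolutions, Lemma~\ref{lem:irreps} to pass from simples to arbitrary objects, the surjection $H^\b(\msc{C}\bt\msc{C}',V\bt\1)\twoheadrightarrow H^\b(\msc{C},V)$ for the converse, and Noether normalization for the Krull dimensions. The only cosmetic difference is that you phrase the converse via the augmentation/retraction, whereas the paper states the induced surjections directly; these are the same argument.
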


\begin{proof}
We have for the algebraic closure $k\to \bar{k}$ that $(\C\bt\C')_{\bar{k}}\simeq\C_{\bar{k}}\bt\C'_{\bar{k}}$, and hence we may assume $k$ is algebraically closed, by Lemma~\ref{lem:basechange}.  Notice that the second product is ``over $\bar{k}$".  In this case the simples of $\C\bt \C'$ are exactly the products $V\bt V'$ of simples $V$ from $\C$ and $V'$ from $\C'$.  We have a canonical isomorphism of algebras
\begin{equation}\label{eq:386}
H^\b(\C,\1)\ot H^\b(\C',\1)\overset{\cong}\to H^\b(\C\bt\C',\1),\ \ [f]\ot[g]\mapsto [f\ot g].
\end{equation}
Similarly, for simple objects $V$ and $V'$ in $\C$ and $\C'$ there is an isomorphism of $H^\b(\C\bt\C',\1)$-modules
\begin{equation}\label{eq:390}
H^\b(\C,V)\ot H^\b(\C',V')\overset{\cong}\to H^\b(\C\bt\C',V\bt V'),
\end{equation}
where $H^\b(\C\bt \C',\1)$ acts on $H^\b(\C,V)\ot H^\b(\C',V')$ via the algebra identification~\eqref{eq:386}.
\par

The fact that these maps are isomorphisms essentially follows from the fact that the product of projective resolutions $P\to\mathbf{1}_\C$ and $P'\to \mathbf{1}_{\C'}$ produces a projective resolution $P\bt P'\to \mathbf{1}_{\C\bt\C'}$.
\par

By the above information, and Lemma~\ref{lem:irreps}, we see that (i) implies (ii).  Suppose now that (ii) holds.  The identification~\eqref{eq:386} implies a surjective algebra map $H^\b(\C\bt\C',\1)\to H^\b(\C,\1)$, and hence that $H^\b(\C,\1)$ is a finitely generated algebra.  From~\eqref{eq:386} and~\eqref{eq:390} we see also that for any object $V$ in $\C$, we have a surjection $H^\b(\C\bt \C',V\bt\mathbf{1}_{\C'})\to H^\b(\C,V)$ of $H^\b(\C\bt\C',\1)$-modules, where we let $H^\b(\C\bt\C',\1)$ act on the codomain via the projection to $H^\b(\msc{C},\1)$.  Since $H^\b(\C\bt \C',V\bt\mathbf{1}_{\C'})$ is finitely generated over $H^\b(\C\bt\C',\1)$, we conclude that $H^\b(\C\bt\C',V)$ is finitely generated over $H^\b(\C\bt\C',\1)$ as well, and hence over $H^\b(\C,\1)$.  Hence $\msc{C}$ is of finite type, and the analogous argument shows $\msc{C}'$ is of finite type as well.
\par

The Krull dimension calculation follows by Noether normalization.
\end{proof}

\subsection{Results for non-degenerate categories}

Recall that a braided tensor category $\C$ is called non-degenerate if any object $V$ for which the square of the braiding $c_{-,V}c_{V,-}:V\ot-\to V\ot -$ is the identity is trivial, that is $V\cong \1^{\oplus n}$.  Non-degeneracy is equivalent to the condition that the canonical braided tensor functor
\[
\C\bt\C^{\mrm{rev}}\to Z(\C),\ \ X\bt \1\mapsto (X,c_{X,-}),\ \1\bt X\mapsto (X,c_{-,X}^{-1})
\]
is an equivalence~\cite{egno15}.  For the category $\rep(A)$ of representations for a quasitriangular Hopf algebra $A$, we have that $\rep(A)$ is non-degenerate if and only if $A$ is factorizable (see e.g.~\cite[Theorem 13.6.1]{radford11}).

\begin{proposition}\label{prop:Cduals}
Suppose $\C$ is non-degenerate, of finite type, and defined over a perfect field.  Then the center $Z(\msc{C})$ is of finite type and of Krull dimension $2\Kdim\msc{C}$.  Furthermore, in this case any dual $\C^\ast_\msc{M}$ with respect to an exact module category $\msc{M}$ is of finite type, and the Krull dimensions are uniformly bounded $\Kdim \msc{C}^\ast_\msc{M}\leq 2\Kdim\msc{C}$.
\end{proposition}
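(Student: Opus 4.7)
The plan is to realize $Z(\msc{C})$ as a Deligne product via the non-degeneracy hypothesis, then apply the tensor-product finite type result of Lemma~\ref{lem:tp} and the center-to-dual transfer of Corollary~\ref{cor:Z2C}. The structural input is already present in the excerpt: non-degeneracy of $\msc{C}$ is equivalent to the canonical braided tensor functor
\[
\msc{C}\bt\msc{C}^{\mrm{rev}}\overset{\sim}\to Z(\msc{C})
\]
being an equivalence. So the whole argument is essentially a book-keeping exercise, once one verifies that $\msc{C}^{\mrm{rev}}$ inherits the finite type property from $\msc{C}$.

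First I would observe that $\msc{C}^{\mrm{rev}}$ is definitionally equal to $\msc{C}$ as a monoidal $k$-linear category; only the braiding has been altered. Since cohomology $H^\b(-,V)=\Ext^\b_{-}(\1,V)$ and its module structure depend only on the underlying abelian/monoidal structure, we have equalities $H^\b(\msc{C}^{\mrm{rev}},\1)=H^\b(\msc{C},\1)$ as graded algebras and $H^\b(\msc{C}^{\mrm{rev}},V)=H^\b(\msc{C},V)$ as modules over this algebra. In particular $\msc{C}^{\mrm{rev}}$ is of finite type and $\Kdim\msc{C}^{\mrm{rev}}=\Kdim\msc{C}$, and $\msc{C}^{\mrm{rev}}$ is defined over the same perfect field over which $\msc{C}$ is defined.

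Next, I would apply Lemma~\ref{lem:tp} to the Deligne product $\msc{C}\bt\msc{C}^{\mrm{rev}}$: since both factors are of finite type and defined over a perfect field, the product is of finite type with
\[
\Kdim(\msc{C}\bt\msc{C}^{\mrm{rev}})\!\ =\!\ \Kdim\msc{C}+\Kdim\msc{C}^{\mrm{rev}}\!\ =\!\ 2\Kdim\msc{C}.
\]
Transporting along the equivalence $Z(\msc{C})\simeq \msc{C}\bt\msc{C}^{\mrm{rev}}$, we conclude that $Z(\msc{C})$ is of finite type and of Krull dimension $2\Kdim\msc{C}$, which gives the first claim of the proposition.

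For the second claim, I would simply invoke Corollary~\ref{cor:Z2C}: finite type of the center $Z(\msc{C})$ implies finite type of every dual $\msc{C}^\ast_\msc{M}$ with respect to an exact $\msc{C}$-module category $\msc{M}$, together with the uniform Krull dimension bound $\Kdim\msc{C}^\ast_\msc{M}\leq \Kdim Z(\msc{C})=2\Kdim\msc{C}$, as desired. There is no genuine obstacle here; the only delicate point is ensuring the perfect field hypothesis is available when invoking Lemma~\ref{lem:tp}, which is precisely the reason for including it in the proposition's hypotheses.
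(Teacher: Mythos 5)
Your proposal is correct and follows exactly the paper's argument: identify $Z(\msc{C})\simeq\msc{C}\bt\msc{C}^{\mrm{rev}}$ via non-degeneracy, apply Lemma~\ref{lem:tp} to get the finite type property and Krull dimension $2\Kdim\msc{C}$ for the center, and conclude via Corollary~\ref{cor:Z2C}. The extra remark that $\msc{C}^{\mrm{rev}}$ has the same cohomology as $\msc{C}$ (since only the braiding changes) is a correct, if implicit, point that the paper leaves unstated.
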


\begin{proof}
Since $\C$ is non-degenerate, we have a tensor equivalence $Z(\C)\cong \C\bt\C^{\mrm{rev}}$. Whence we conclude that $Z(\C)$ has finitely generated cohomology and is of Krull dimension $2\Kdim\msc{C}$, by Lemma~\ref{lem:tp}.  The claim about duals now follows by Corollary~\ref{cor:Z2C}.
\end{proof}

Similarly, if $A$ is a factorizable Hopf algebra with finitely generated cohomology, then all of the conclusions of Corollary~\ref{cor:DA} hold, with the appropriate new bound of Krull dimensions $\leq 2\Kdim H^\b(A,k)$.

\subsection{An equivalence of two conjectures}

One sees from Lemma~\ref{lem:tp} that the stability conjecture has an equivalent expression via the Drinfeld center.

\begin{proposition}\label{prop:conjs}
For tensor categories over a perfect field the following two conjectures are equivalent:
\begin{enumerate}
\item[(A)] Suppose that $\C$ is a tensor category of finite type, and that $\msc{M}$ is an exact $\msc{C}$-module category.  Then the dual $\C^\ast_\M$ is also of finite type and $\Kdim\msc{C}^\ast_\msc{M}=\Kdim\msc{C}$.
\item[(B)] Suppose that $\C$ is a tensor category of finite type.  Then the Drinfeld center $Z(\C)$ is also of finite type and $\Kdim Z(\msc{C})=2\Kdim\msc{C}$.
\end{enumerate}
\end{proposition}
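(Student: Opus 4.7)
The proof plan rests on two structural facts. First, it is standard that the regular bimodule structure exhibits $\msc{C}$ as an exact module category over $\msc{C}\bt\msc{C}^{\mrm{cop}}$, with a tensor equivalence $(\msc{C}\bt\msc{C}^{\mrm{cop}})^\ast_\msc{C}\cong Z(\msc{C})$. Second, as already cited in the proof of Corollary~\ref{cor:Z2C}, for any exact $\msc{C}$-module category $\msc{M}$ one has a braided equivalence $Z(\msc{C}^\ast_\msc{M})\cong Z(\msc{C})^{\mrm{rev}}$. Reversing a braiding does not affect the underlying monoidal structure and hence does not alter the cohomology algebra, so in particular $\Kdim Z(\msc{C}^\ast_\msc{M}) = \Kdim Z(\msc{C})$.

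For the direction $(A)\Rightarrow(B)$, the plan is to feed $\msc{C}\bt\msc{C}^{\mrm{cop}}$, equipped with the regular module category $\msc{C}$, into hypothesis (A). Given $\msc{C}$ of finite type, Lemma~\ref{lem:tp} provides that $\msc{C}\bt\msc{C}^{\mrm{cop}}$ is of finite type with $\Kdim(\msc{C}\bt\msc{C}^{\mrm{cop}})=2\Kdim\msc{C}$. Hypothesis (A) then yields that $(\msc{C}\bt\msc{C}^{\mrm{cop}})^\ast_\msc{C}\cong Z(\msc{C})$ is of finite type of the same Krull dimension $2\Kdim\msc{C}$, which is exactly (B).

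For the direction $(B)\Rightarrow(A)$, I would proceed in two stages. First, (B) combined with Corollary~\ref{cor:Z2C} shows immediately that each dual $\msc{C}^\ast_\msc{M}$ is of finite type with $\Kdim\msc{C}^\ast_\msc{M}\leq \Kdim Z(\msc{C})=2\Kdim\msc{C}$. Second, to promote this one-sided bound to the stated equality $\Kdim\msc{C}^\ast_\msc{M}=\Kdim\msc{C}$, one reapplies (B), now to the finite type category $\msc{C}^\ast_\msc{M}$: this yields $\Kdim Z(\msc{C}^\ast_\msc{M})=2\Kdim\msc{C}^\ast_\msc{M}$. Combining with $Z(\msc{C}^\ast_\msc{M})\cong Z(\msc{C})^{\mrm{rev}}$ and $\Kdim Z(\msc{C})=2\Kdim\msc{C}$ from the first application of (B) forces $2\Kdim\msc{C}^\ast_\msc{M}=2\Kdim\msc{C}$, as required.

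The main obstacle is exactly this Krull dimension equality in the direction $(B)\Rightarrow(A)$: Corollary~\ref{cor:Z2C} only produces a one-sided inequality, and closing the gap requires genuinely invoking (B) twice, once on $\msc{C}$ and once on the dual $\msc{C}^\ast_\msc{M}$, so that the invariance $Z(\msc{C}^\ast_\msc{M})\cong Z(\msc{C})^{\mrm{rev}}$ can be used to relate the two Krull dimensions. The perfect-field hypothesis enters only via Lemma~\ref{lem:tp} in the direction $(A)\Rightarrow(B)$.
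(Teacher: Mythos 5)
Your proposal is correct and follows essentially the same route as the paper: the direction $(A)\Rightarrow(B)$ uses Lemma~\ref{lem:tp} together with the equivalence $(\msc{C}\bt\msc{C}^{\mrm{cop}})^\ast_\msc{C}\cong Z(\msc{C})$, and the direction $(B)\Rightarrow(A)$ applies $(B)$ twice together with Corollary~\ref{cor:Z2C} and the invariance $Z(\msc{C}^\ast_\msc{M})\cong Z(\msc{C})^{\mrm{rev}}$ to obtain the chain $\Kdim\msc{C}^\ast_\msc{M}=\tfrac{1}{2}\Kdim Z(\msc{C}^\ast_\msc{M})=\tfrac{1}{2}\Kdim Z(\msc{C})=\Kdim\msc{C}$. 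Your write-up merely makes explicit a few steps the paper leaves implicit (e.g.\ that reversing the braiding does not affect cohomology).
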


\begin{proof}
Suppose Conjecture A is true and that $\C$ is a finite type tensor category.  Then, by Lemma~\ref{lem:tp}, the product $\C\bt\C^\mrm{cop}$ is of finite type.  We now apply A, in conjunction with Lemma~\ref{lem:tp}, to the tensor equivalence $(\C\bt\C^\mrm{cop})^\ast_\C\cong Z(\C)$~\cite[Proposition 7.13.8]{egno15} to find that $Z(\C)$ is of finite type and that $\Kdim Z(\msc{C})=2\Kdim\msc{C}$.  Hence Conjecture B holds.
\par

Suppose now that Conjecture B holds, i.e.\ that the finite type property is stable under taking centers.  Suppose $\msc{C}$ is of finite type and that $\msc{M}$ is an exact module category over $\msc{C}$.  By B and Corollary~\ref{cor:Z2C} the dual $\msc{C}^\ast_\msc{M}$ is of finite type, and we apply B again to find
\[
\Kdim\msc{C}^\ast_\msc{M}=\frac{1}{2}\Kdim Z(\msc{C}^\ast_\msc{M})=\frac{1}{2}\Kdim Z(\msc{C})=\Kdim\msc{C}.
\]
Hence we establish Conjecture A.
\end{proof}

A similar argument can be applied to obtain a variant of the above proposition for the weak stability conjecture.

\begin{proposition}\label{prop:wconjs}
For tensor categories over a perfect field the following two conjectures are equivalent:
\begin{enumerate}
\item[(wA)] Suppose that $\C$ is a tensor category of finite type, and that $\msc{M}$ is an exact $\msc{C}$-module category.  Then the dual $\C^\ast_\M$ is also of finite type.  Furthermore, there is a polynomial $P\in \mbb{R}_{\geq 0}[X]$ such that $\Kdim\msc{C}^\ast_\msc{M}\leq P(\Kdim\msc{C})$.
\item[(wB)] Suppose that $\C$ is a tensor category of finite type.  Then the Drinfeld center $Z(\C)$ is also of finite type.  Furthermore, there is a polynomial $Q\in \mbb{R}_{\geq 0}[X]$ such that $\Kdim Z(\msc{C})\leq Q(\Kdim\msc{C})$.
\end{enumerate}
\end{proposition}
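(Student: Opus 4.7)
The plan is to mimic the argument given for Proposition~\ref{prop:conjs}, replacing the sharp equalities of Krull dimensions by polynomial bounds and checking that the bounds compose appropriately in each direction. The two main ingredients will again be the finite type behavior of the Deligne product established in Lemma~\ref{lem:tp}, namely that $\msc{C}\bt\msc{C}^\mrm{cop}$ is of finite type of Krull dimension $2\Kdim\msc{C}$, together with the tensor equivalence $(\msc{C}\bt\msc{C}^\mrm{cop})^\ast_\msc{C}\cong Z(\msc{C})$ of~\cite[Proposition 7.13.8]{egno15} and Corollary~\ref{cor:Z2C}.

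For the direction (wA)$\Rightarrow$(wB): suppose $\msc{C}$ is of finite type over the perfect base field. By Lemma~\ref{lem:tp}, $\msc{C}\bt\msc{C}^\mrm{cop}$ is of finite type with $\Kdim(\msc{C}\bt\msc{C}^\mrm{cop})=2\Kdim\msc{C}$. The category $\msc{C}$, viewed as a module category over $\msc{C}\bt\msc{C}^\mrm{cop}$ via simultaneous left and right multiplication, is exact, and the corresponding dual is $Z(\msc{C})$. Applying (wA) to the pair $(\msc{C}\bt\msc{C}^\mrm{cop},\msc{C})$ immediately yields the finite type property for $Z(\msc{C})$, together with the bound
\[
\Kdim Z(\msc{C})\leq P(2\Kdim\msc{C}).
\]
Hence (wB) holds with the polynomial $Q(X):=P(2X)$.

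For the reverse direction (wB)$\Rightarrow$(wA): suppose $\msc{C}$ is of finite type, and let $\msc{M}$ be an exact $\msc{C}$-module category. By (wB), $Z(\msc{C})$ is of finite type with $\Kdim Z(\msc{C})\leq Q(\Kdim\msc{C})$. Corollary~\ref{cor:Z2C} then gives that $\msc{C}^\ast_\msc{M}$ is of finite type with
\[
\Kdim \msc{C}^\ast_\msc{M}\leq\Kdim Z(\msc{C})\leq Q(\Kdim\msc{C}).
\]
Thus (wA) holds with the same polynomial $P(X):=Q(X)$.

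There is no genuine obstacle here: once one recognizes that both conjectures trade information through the pair $(\msc{C}\bt\msc{C}^\mrm{cop},\msc{C})$, the polynomial bounds propagate by simple substitution. The only mild subtlety is the use of perfectness to invoke Lemma~\ref{lem:tp} in the first implication, which matches the hypothesis of Proposition~\ref{prop:conjs}.
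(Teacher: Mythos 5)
Your proof is correct and follows exactly the route the paper intends: it adapts the argument of Proposition~\ref{prop:conjs}, using Lemma~\ref{lem:tp} together with the equivalence $(\msc{C}\bt\msc{C}^\mrm{cop})^\ast_\msc{C}\cong Z(\msc{C})$ for one direction and Corollary~\ref{cor:Z2C} for the other, with the polynomial bounds composing as $Q(X)=P(2X)$ and $P=Q$ respectively. The only (welcome) simplification is that the reverse direction no longer needs the double application of (B) via $Z(\msc{C}^\ast_\msc{M})\cong Z(\msc{C})^{\mrm{rev}}$ used in the strong version, since the one-sided bound from Corollary~\ref{cor:Z2C} already suffices.
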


In the above statements the polynomials $P,\ Q\in\mbb{R}_{\geq 0}[X]$ are to be independent of choice of $\msc{C}$ and $\msc{M}$.  As mentioned in the introduction, Conjecture wB says that the Krull dimension of the center $Z(\msc{C})$ grows sub-exponentially as a function of $\Kdim\msc{C}$.

\section{Dynamical quantum groups}
\label{sect:uqT}

In this section we give some specific examples in characteristic $0$, following the generic conclusions of Theorem~\ref{thm:char0}.  We consider dynamical twists of small quantum groups.  General information on dynamical twists can be found in~\cite{etingofnikshych01,mombelli07}, as well as Appendix~\ref{sect:A} here.  We employ relations between dynamical twists, module categories, and weak Hopf algebras which we outlined in detail in the Appendix.  These relations should be known to experts, but seemingly have not appeared explicitly in the literature to this point.
\par

We fix $\mfk{g}$ a simply-laced, simple, Lie algebra over $\mbb{C}$, i.e.\ $\g$ of type $A$, $D$, $E$, and fix $\Gamma$ a set of simple roots for $\mfk{g}$.  By an abuse of notation, we identify $\Gamma$ with the Dynkin diagram for $\mfk{g}$.
\par

We suppose that $q$ is a root of unity of sufficiently large prime order $l=\operatorname{ord}(q)$.  Specifically, the order of $q$ should be coprime to a finite collection of numbers which depends on the Dynkin type for $\mfk{g}$, in accordance with~\cite[Section\ 5.2]{etingofnikshych01}.  We consider the small quantum group $u_q(\mfk{g})$ as defined in~\cite{lusztig90,lusztig90II}.

\subsection{Small quantum groups and cohomology}

Recall that $u_q(\g)$ is a finite dimensional, factorizable, pointed Hopf algebra.  The algebra $u_q(\g)$ is generated by grouplikes $K_\alpha$ and skew primitives $E_\alpha$ and $F_\alpha$, for $\alpha\in \Gamma$.  The group of grouplikes $G=G(u_q(\g))$ is abelian, and there is a group isomorphism $(\mathbb{Z}/l\mathbb{Z})^{\Gamma}\to G$, $(n_\alpha)_\alpha\mapsto \prod_\alpha K_\alpha^{n_\alpha}$.  We refer the reader to Lusztig's original papers~\cite{lusztig90,lusztig90II} for more information.
\par

By results of Ginzburg-Kumar, the small quantum group $u_q(\g)$ has finitely generated cohomology.  The results of Ginzburg-Kumar were followed by the more general results of~\cite{mpsw10}, which we employed previously in the proof of Theorem~\ref{thm:char0}.

\begin{theorem}[\cite{ginzburgkumar93}, cf.~\cite{bnpp14}]
The cohomology $H^\b(u_q(\g),\mbb{C})$ is a finitely generated algebra, and for any $u_q(\mfk{g})$-representation $V$ the cohomology $H^\b(u_q(\g),V)$ is a finitely generated module over $H^\b(u_q(\g),\mbb{C})$.  Furthermore, there is an algebra isomorphism 
\[
H^\b(u_q(\g),\mbb{C})\cong\O(\mrm{Nil}(\mfk{g})),
\]
where $\mrm{Nil}(\mfk{g})$ is the cone of ad-nilpotent elements in $\mfk{g}$.
\end{theorem}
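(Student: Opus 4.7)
The plan is to follow the approach of Ginzburg-Kumar: exploit the triangular decomposition $u_q(\mfk{g})=u_q(\mfk{n}^-)\cdot u_q(\mfk{h})\cdot u_q(\mfk{n}^+)$ to build Koszul-type resolutions of the trivial module, compute the cohomology of each piece, and then assemble the answer via spectral sequences. The final identification with $\O(\mrm{Nil}(\mfk{g}))$ will come from the Springer resolution.

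First I would handle the positive nilpotent part $u_q^+:=u_q(\mfk{n}^+)$. The braided PBW theorem for $u_q^+$ provides a quantum Chevalley-Eilenberg resolution $K^\b=u_q^+\ot\Lambda^\b(\mfk{n}^+)\to\mbb{C}$, with differentials built from the $q$-commutators among the $E_\alpha$. Applying $\Hom_{u_q^+}(-,\mbb{C})$ and computing the induced differentials on $\Lambda^\b((\mfk{n}^+)^\ast)$ yields vanishing of all such differentials; this is the key combinatorial calculation, and it depends delicately on $l=\opn{ord}(q)$ being coprime to the structural integers attached to $\mfk{g}$. One thereby identifies
\[
H^\b(u_q^+,\mbb{C})\cong \Sym((\mfk{n}^+)^\ast)=\O(\mfk{n}^+),
\]
concentrated in even degrees and carrying the natural $G$-action; the analogous computation applies to $u_q^-:=u_q(\mfk{n}^-)$.

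Next I would ascend from $u_q^+$ to the Borel $u_q(\mfk{b}^+)\cong u_q^+\rtimes u_q(\mfk{h})$, and then to the full quantum group. Since $u_q(\mfk{h})\cong\mbb{C}G$ is semisimple in characteristic $0$, the Lyndon-Hochschild-Serre spectral sequence for the normal Hopf inclusion $u_q^+\hookrightarrow u_q(\mfk{b}^+)$ degenerates and produces $H^\b(u_q(\mfk{b}^+),\mbb{C})=\O(\mfk{n}^+)^G$. The passage to the full quantum group is more delicate, as $u_q(\mfk{b}^+)$ is not normal in $u_q(\g)$. Here I would appeal to freeness of $u_q(\g)$ over $u_q(\mfk{b}^+)$ (immediate from the triangular decomposition) and a Shapiro-style identification, then expand the resulting coinduced module using the analogous computation for $u_q^-$. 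The total cohomology assembles, in a classically $G_\mfk{g}$-equivariant fashion, into the global sections of the vector bundle $G_\mfk{g}\times_{B_\mfk{g}}\mfk{n}^+$ over the flag variety $G_\mfk{g}/B_\mfk{g}$; by Springer's theorem this affine coordinate ring is precisely $\O(\mrm{Nil}(\mfk{g}))$.

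For the module statement, the same spectral-sequence framework applies with $V$ in place of $\mbb{C}$. Finite dimensionality of $V$ ensures that $\Ext^\b_{u_q^+}(\mbb{C},V)$ is finitely generated over $\O(\mfk{n}^+)$, and semisimplicity of $u_q(\mfk{h})$ together with Noetherianity of the target rings propagates finite generation through the remaining spectral sequences; one concludes that $H^\b(u_q(\g),V)$ is finitely generated over $H^\b(u_q(\g),\mbb{C})$. The principal obstruction throughout is the first Koszul-dual computation: controlling the quantum integers $[n]_q$ appearing in the Chevalley-Eilenberg differentials at the given root of unity, and verifying that the resulting complex genuinely computes $H^\b(u_q^+,\mbb{C})$. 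This is precisely where the congruence hypotheses on $l=\opn{ord}(q)$ cited from~\cite{etingofnikshych01} play a decisive role.
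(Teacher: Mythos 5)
This theorem is not proved in the paper; it is quoted verbatim from Ginzburg--Kumar, and your outline does reproduce the architecture of that proof (nilpotent part, then Borel, then induction over $G/B$ and the Springer resolution, with normality of the nilpotent cone giving the final identification). However, the step you yourself single out as the crux is misstated in a way that would make the argument collapse as written. The complex $K^\b=u_q^+\ot\Lambda^\b(\mfk{n}^+)$ with Chevalley--Eilenberg differentials built from $q$-commutators is a resolution of $\mbb{C}$ over the De Concini--Kac (generic or unrestricted) algebra, not over the small quantum nilpotent algebra $u_q^+$: the latter is finite dimensional and non-semisimple, so $\mbb{C}$ has infinite projective dimension over it, and no complex of length $\dim\mfk{n}^+$ can compute its cohomology. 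Indeed your own claimed answer $\Sym((\mfk{n}^+)^\ast)$ is nonzero in arbitrarily high degrees, contradicting the use of a finite resolution. The correct Koszul-type resolution (equivalently, the May spectral sequence attached to the De Concini--Kac filtration) carries an additional polynomial tensor factor recording the truncation relations $E_\beta^l=0$.

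Moreover the conclusion of that step is also wrong: $H^\b(u_q(\mfk{n}^+),\mbb{C})$ is \emph{not} $\Sym((\mfk{n}^+)^\ast)$ concentrated in even degrees. Already for $\mfk{g}=\mfk{sl}_2$ one has $u_q(\mfk{n}^+)=\mbb{C}[E]/(E^l)$, whose cohomology is $\Lambda(y)\ot\mbb{C}[x]$ with $\deg y=1$ and $\deg x=2$, nonzero in every degree. The clean answer appears only after passing to the Borel $u_q(\mfk{b}^+)$ and taking invariants under the grouplikes: the odd exterior generators carry the nontrivial weights $-\beta$ and die, while the degree-$2$ polynomial generators carry weights $-l\beta$, which are trivial, and survive, yielding $H^\b(u_q(\mfk{b}^+),\mbb{C})\cong \Sym((\mfk{n}^+)^\ast)$. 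With that correction the rest of your outline — degeneration of the spectral sequence over the semisimple torus part, the induction spectral sequence with $E_2$-page $H^i(G/B,S^\b(\mfk{n}^\ast))$, Kempf-type vanishing for the Springer bundle $G\times_B\mfk{n}^+$, and normality of $\mrm{Nil}(\mfk{g})$ — is the standard Ginzburg--Kumar argument, and the finite generation statement for general coefficients $V$ does follow as you indicate.
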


We note that the above theorem does not require $\mfk{g}$ to be simply-laced.  The remainder of the section is dedicated to an explanation of how one can apply Theorem~\ref{thm:char0}, or alternatively Proposition~\ref{prop:Cduals}, to find that Etingof-Varchenko style dynamical quantum groups~\cite{etingofnikshych01} (cf.~\cite{etingofvarchenko99}) have finitely generated cohomology.

\subsection{A review of dynamical quantum groups and Belavin-Drinfeld triples~\cite{etingofnikshych01}}

\begin{definition}[\cite{belavindrinfeld98}]
A Belavin-Drinfeld triple for $\g$ is a triple $(\Gamma_1,\Gamma_2,T)$, where $\Gamma_1,\Gamma_2\subset \Gamma$ are subgraphs and $T:\Gamma_1\to \Gamma_2$ is a choice of graph isomorphism.  We say a triple $(\Gamma_1,\Gamma_2,T)$ is {\it nilpotent} if for each $\alpha\in\Gamma_1$ there is a positive power $N$ of $T$ so that $T^N(\alpha)\in \Gamma\setminus \Gamma_1$.
\end{definition}

Consider an arbitrary triple $T=(\Gamma_1,\Gamma_2,T)$ for $\g$, i.e.\ one which is not necessarily nilpotent.  One associates to $T$ a dynamical twist
\[
J_{T}:G^\vee\to u_q(\mfk{g})\ot u_q(\mfk{g}),
\]
which is built out of the $R$-matrix for $u_q(\mfk{g})$ and an extension of $T$ to a Hopf map between the quantum Borels $T_+:u_q(\mfk{b}_+)\to u_q(\mfk{b}_-)$, with $T_+(E_\alpha)=E_{T(\alpha)}$ when $\alpha\in \Gamma_1$ and $T_+(E_\alpha)=0$ otherwise.  We refer the reader to~\cite{etingofnikshych01} for the precise construction, and to~\cite{uqJ} for an explicit presentation when $T$ is nilpotent (see also~\cite{ESS}).
\par

As with any dynamical twist, $J_T$ induces a non-trivial associator for the natural $\mrm{rep}(u_q(\mfk{g}))$-action on $\mrm{rep}(G)$ and we obtain new exact, indecomposable $\mrm{rep}(u_q(\mfk{g}))$-module category structure $\msc{M}_T$ on $\mrm{rep}(G)$ (see Section~\ref{sect:dynamM}).  We take the dual to arrive at a tensor category
\[
\mrm{rep}(u_q(\mfk{g}))^\ast_{\msc{M}_T}.
\]
One can explicitly construct from $\msc{M}_T$ a weak Hopf algebra $u_q(\mfk{g},T)$ which is equipped with a canonical equivalence $\mrm{rep}(u_q(\mfk{g},T))\simeq \mrm{rep}(u_q(\mfk{g}))^\ast_{\msc{M}_T}$ (see Lemma~\ref{lem:1025}).  The weak Hopf algebra $u_q(\mfk{g},T)$ is called the (Etingof-Varchenko style) dynamical quantum group associated to the triple $T$~\cite{etingofvarchenko99,etingofnikshych01}.\footnote{Our weak Hopf algebra $u_q(\mfk{g},T)$ is isomorphic to the weak Hopf algebra $D_{\msc{J}_T}$ of~\cite{etingofnikshych01}.  Its dual is the weak Hopf algebra $u_q(\mfk{g})^{J_T}$ of Appendix~\ref{sect:A}}

\begin{remark}
In general, the weak Hopf algebra $u_q(\mfk{g},T)$ realizes solutions to the quantum dynamical, i.e.\ parameter dependent, Yang-Baxter equation.  Furthermore, the dynamical $R$-matrix for the analogously defined dynamical Drinfeld-Jimbo quantum group $U_\hbar(\mfk{g},T)$ (or rather its dual) was employed to provide explicit quantizations of classical $r$-matrices for Lie algebras produced by Belavin and Drinfeld~\cite{ESS,belavindrinfeld98}.  One can see~\cite{etingof02} for a survey on the topic.
\end{remark}

The structure of the category $\mrm{rep}(u_q(\mfk{g}))^\ast_{\msc{M}_T}$ is not well understood at general $T$.  However, for nilpotent $T$ it is clear that the (quantum) parabolics $u_q(\mfk{p}_i)\subset u_q(\mfk{g})$ associated to the subgraphs $\Gamma_i$ appearing in the given triple play a significant role in determining the structure of the dual~\cite{uqJ}.  In the extreme cases of an empty triple $T$ (i.e.\ when $\Gamma_1=\Gamma_2=\emptyset$) or when $T$ an automorphism of the Dynkin diagram the category $\mrm{rep}(u_q(\mfk{g}))^\ast_{\msc{M}_T}$ ``degenerates" to standard quantum groups, and we can understand the situation clearly.

\begin{theorem}[{\cite[Theorem\ 5.4.1]{etingofnikshych01}}]
Let $T$ be a Belavin-Drinfeld triple for $\mfk{g}$.
\begin{enumerate}
\item[(i)] When $T$ is empty, there is an equivalence $\mrm{rep}(u_q(\mfk{g}))^\ast_{\msc{M}_T}\overset{\sim}\to \mrm{rep}(u_q(\mfk{g})^\ast)^{G\times G^{op}}$ of $k$-linear categories, where $G$ acts on $\mrm{rep}(u_q(\mfk{g})^\ast)$ via left and right translation.
\item[(ii)] When $T$ is an automorphism of the Dynkin diagram there is an equivalence of tensor categories $\mrm{rep}(u_q(\mfk{g}))^\ast_{\msc{M}_T}\overset{\sim}\to \mrm{rep}(u_q(\mfk{g}))$.
\end{enumerate}
\end{theorem}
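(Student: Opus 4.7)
The plan is to identify the module category $\msc{M}_T$ explicitly in each extremal case, and then compute its dual directly from the formula $\msc{C}^\ast_\msc{M}=\End_{\msc{C}\text{-mod}}(\msc{M},\msc{M})$.

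For part (i), when $T$ is empty the Hopf map $T_+:u_q(\mfk{b}_+)\to u_q(\mfk{b}_-)$ vanishes on all positive root generators, so $J_\emptyset$ degenerates to the ``trivial'' dynamical twist, in which $\mrm{rep}(G)$ carries only the $u_q(\g)$-action obtained by pulling back along the Hopf projection $u_q(\g)\twoheadrightarrow kG$ composed with the standard $\mrm{Vect}$-action on $\mrm{rep}(G)$.  In particular $\msc{M}_\emptyset$ is decomposable as a $\mrm{rep}(u_q(\g))$-module category, which is consistent with the statement that the resulting equivalence is only one of $k$-linear categories rather than of tensor categories.  I would then compute the endomorphisms using the Tannakian (or Hopf-algebroid) description: a $\mrm{rep}(u_q(\g))$-linear endofunctor of $\mrm{rep}(G)$ is determined by an object of $\mrm{rep}(u_q(\g)^\ast)$ equipped with two commuting $G$-equivariance structures, one recording commutativity with the $kG\subset u_q(\g)$-action (the ``left'' translation), and the other recording the intrinsic $kG$-structure on objects of $\mrm{rep}(G)$ (the ``right'' translation).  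Unpacking yields the proposed equivalence with $\mrm{rep}(u_q(\g)^\ast)^{G\times G^{op}}$.

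For part (ii), when $T$ is an automorphism of the Dynkin diagram the map $T_+$ extends uniquely to a Hopf algebra automorphism $\tilde T:u_q(\g)\to u_q(\g)$, and one can use $\tilde T$ to rewrite $J_T$ as a coboundary of a natural element constructed from $\tilde T$ and the $R$-matrix.  This reduces $J_T$, up to gauge equivalence of dynamical twists, to a genuine (non-dynamical) Hopf $2$-cocycle on $u_q(\g)$, i.e. a Drinfeld twist.  Consequently $\msc{M}_T$ is equivalent as a $\mrm{rep}(u_q(\g))$-module category to the regular module category $\mrm{rep}(u_q(\g))$, and hence
\[
\mrm{rep}(u_q(\g))^\ast_{\msc{M}_T}\ \simeq\ \mrm{rep}(u_q(\g))^{\mrm{cop}}\ \simeq\ \mrm{rep}(u_q(\g)),
\]
the second equivalence being induced by the antipode (composed with $\tilde T$) on $u_q(\g)$.

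The main obstacle in both parts is the careful cocycle-level analysis required to reduce $J_T$ to either the trivial dynamical twist (part (i)) or to a Hopf $2$-cocycle implemented by $\tilde T$ (part (ii)).  Once $\msc{M}_T$ has been identified in each case in terms of this standard Hopf-algebraic data, the identification of the dual tensor (or multi-tensor) category is a formal consequence of Tannakian reconstruction together with the general description of endomorphisms of a cocycle-twisted module category.
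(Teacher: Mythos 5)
Your part (i) follows the same route as the paper: for $T=\emptyset$ the dynamical twist is constant of value $1\ot 1$, and one identifies $\rep(u_q(\g))$-module endofunctors of $\rep(G)$ with $u_q(\g)^\ast$-representations carrying two commuting $G$-equivariance structures — this is precisely Proposition~\ref{prop:const} (Andruskiewitsch--Mombelli), which you are re-deriving in sketch form.  One side remark is off, though: $\msc{M}_\emptyset$ is \emph{not} decomposable (every $\msc{M}_T$ is indecomposable, since the $E_\alpha,F_\alpha$ connect all $G$-isotypic components of $\rep(G)$), and decomposability is not why the equivalence is only $k$-linear.  The real reason is that $\rep(u_q(\g)^\ast)^{G\times G^{op}}$ carries no tensor structure under $\ot_k$, because the translation actions of $G$ on $u_q(\g)^\ast$ are not Hopf automorphisms; the tensor structure lives on a Hopf algebroid presentation of the smash product.

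Part (ii) has a genuine gap.  The pivotal claim — that once $J_T$ is reduced to a constant Drinfeld twist, $\msc{M}_T$ becomes equivalent to the regular module category $\rep(u_q(\g))$ — cannot hold.  Whatever twist one uses, the underlying abelian category of $\msc{M}_T$ is $\rep(G)$, which is semisimple with $|G|=l^{|\Gamma|}$ simples, while the regular module category $\rep(u_q(\g))$ is non-semisimple; an equivalence of module categories is in particular a $k$-linear equivalence, so no such identification exists.  Moreover, even granting a reduction of $J_T$ to a constant twist $J^c$ (which is not available here — for $T$ a diagram automorphism the twist is genuinely parameter-dependent), the dual of $\msc{M}(J^c)$ would have the shape of part (i), namely an equivariantization of $\rep(u_q(\g)^\ast_{J^c})$ by $G\times G^{op}$, and this is genuinely different from $\rep(u_q(\g))$: the paper contrasts the two outcomes by their cohomological spectra, $\mfk{n}_+\times\mfk{n}_-$ versus the nilpotent cone (Remark~\ref{rem:q_group}).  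The actual argument is of a different nature.  One does not identify $\msc{M}_T$ with anything simpler; instead one invokes the self-duality statement of~\cite[Theorem\ 5.4.1]{etingofnikshych01}, which identifies the weak Hopf algebra $u_q(\mfk{g},T)$ presenting the dual category with the Xu-style twisted algebroid $u_q(\mfk{g})^{J_T}$, and then applies the tautological Szlach\'anyi equivalence $\mrm{Sz}(J_T):\rep(u_q(\g))\overset{\sim}\to\rep(u_q(\g)^{J_T})$ of Appendix~\ref{sect:A}.  That self-duality is a nontrivial computation with the weak Hopf (co)algebra structure and is not visible at the level of the module category alone.
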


Note the presence of the linear dual in statement (i).

\begin{proof}
(i) The dynamical twist in this case is constant of value $1$, and hence the dynamical quantum group category is equivalent to the proposed equivariantization of $\mrm{rep}(u_q(\mfk{g})^\ast)$, by Proposition~\ref{prop:const}.  (ii) By~\cite[Theorem\ 5.4.1]{etingofnikshych01}, in the case in which $T$ is an automorphism the dynamical quantum group $u_q(\mfk{g},T)^\ast$ is isomorphic to the Xu-style dynamical twisted algebra $u_q(\mfk{g})^{J_T}$ (see Appendix~\ref{sect:dynamM}).  We have then a canonical equivalence of tensor categories
\[
\mrm{Sz}(J_T):\mrm{rep}(u_q(\mfk{g}))\overset{\sim}\to \mrm{rep}(u_q(\mfk{g})^{J_T})
\]
(see again Appedix~\ref{sect:dynamM}).
\end{proof}

At these two extremes two distinct subspaces in $\mfk{g}$ appear as the reduced spectra of cohomology.  For the equivariantization $\mrm{rep}(u_q(\mfk{g})^\ast)^{G\times G^{op}}$ the unit is the equivariant $u_q(\mfk{g})^\ast$-representation $\1=k[G]$, and one can calculate that the specturm of cohomology is the sum of the positive and negative nilpotent subalgebras $\mfk{n}_\pm\subset \mfk{g}$,
\[
\opn{Spec}H^\b(\mrm{rep}(u_q(\mfk{g})^\ast)^{G\times G^{op}},\1)_\mrm{red}=\mfk{n}_+\times \mfk{n}_-,
\]
using~\cite{ginzburgkumar93}.  For $u_q(\mfk{g})$,~\cite{ginzburgkumar93} tells us directly that the spectrum is the nilpotent cone $\mrm{Nil}(\mfk{g})$.

\begin{remark}\label{rem:q_group}
One can consider the above examples, or the example of the quantum group $u_q(\mfk{g})$ versus the product $u_q(\mfk{b}_+)\ot u_q(\mfk{b}_-)$~\cite[Remark 7.14.12]{egno15}, to see that more refined properties of cohomology than Krull dimension are in general not preserved under duality.  Indeed, the spectrum of cohomology for $u_q(\mfk{g})$, which is the nilpotent cone, is highly singular in general, while the spectrum of cohomology for $u_q(\mfk{b}_+)\ot u_q(\mfk{b}_-)$ is an affine space.
\end{remark}

\subsection{Cohomology of dynamical quantum groups}

From Proposition~\ref{prop:Cduals} we derive finite generation of cohomology for dynamical quantum groups.

\begin{corollary}
Given any Belavin-Drinfeld triple $T$ for $\mfk{g}$, the associated dynamical quantum group $u_q(\mfk{g},T)$ has finitely generated cohomology.  Furthermore, the Krull dimension is bounded as
\[
\Kdim \mrm{rep}\left(u_q(\mfk{g},T)\right)\leq 2\dim \mfk{g}/\mfk{h}.
\]
\end{corollary}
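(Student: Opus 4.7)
The plan is to set $\msc{C} := \mrm{rep}(u_q(\mfk{g}))$ and simply feed it into Proposition~\ref{prop:Cduals}, once the dynamical quantum group $u_q(\mfk{g},T)$ has been recognized as an appropriate categorical dual of $\msc{C}$. Concretely, I would first recall from the exposition here (and from Appendix~\ref{sect:A}, via Lemma~\ref{lem:1025}) that the dynamical twist $J_T$ produces an exact indecomposable $\msc{C}$-module category $\msc{M}_T$ on $\mrm{rep}(G)$, and that there is a tensor equivalence
\[
\mrm{rep}(u_q(\mfk{g},T)) \;\simeq\; \msc{C}^\ast_{\msc{M}_T}.
\]
Hence the corollary reduces to checking finite type and a Krull dimension bound for this particular dual of $\msc{C}$.

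Next I would verify that $\msc{C}$ satisfies the hypotheses of Proposition~\ref{prop:Cduals}. The base field $\mbb{C}$ is perfect and of characteristic $0$, so $\msc{C}$ is trivially defined over a perfect field. The small quantum group $u_q(\mfk{g})$ is factorizable (Lusztig), which is equivalent to $\msc{C}$ being non-degenerate as a braided tensor category. The Ginzburg--Kumar theorem quoted in this section gives that $\msc{C}$ is of finite type with cohomology algebra $H^\b(u_q(\mfk{g}),\mbb{C}) \cong \O(\mrm{Nil}(\mfk{g}))$, and since the nilpotent cone of a simple Lie algebra has dimension $2|\Phi^+| = \dim\mfk{g}/\mfk{h}$, one obtains
\[
\Kdim \msc{C} \;=\; \dim \mrm{Nil}(\mfk{g}) \;=\; \dim \mfk{g}/\mfk{h}.
\]

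With the hypotheses in place, Proposition~\ref{prop:Cduals} immediately asserts that every dual $\msc{C}^\ast_\msc{M}$ is of finite type with $\Kdim \msc{C}^\ast_\msc{M} \leq 2\Kdim\msc{C}$. Specializing to $\msc{M} = \msc{M}_T$ and using the identification from the first step yields the desired bound $\Kdim \mrm{rep}(u_q(\mfk{g},T)) \leq 2\dim \mfk{g}/\mfk{h}$. The only nontrivial conceptual ingredient is the identification of the dynamical quantum group with a categorical dual of $\mrm{rep}(u_q(\mfk{g}))$, which is not a bespoke calculation here but rather a direct application of the machinery relating dynamical twists to exact module categories developed in the appendix; once that translation is in hand the rest of the proof is a one-line invocation of Proposition~\ref{prop:Cduals}.
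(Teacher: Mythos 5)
Your proposal is correct and follows exactly the paper's own argument: identify $\mrm{rep}(u_q(\mfk{g},T))$ with the dual $\mrm{rep}(u_q(\mfk{g}))^\ast_{\msc{M}_T}$ via the dynamical-twist module category, then apply Proposition~\ref{prop:Cduals} (using factorizability of $u_q(\mfk{g})$ and the Ginzburg--Kumar computation $\Kdim\rep(u_q(\mfk{g}))=\dim\mrm{Nil}(\mfk{g})=\dim\mfk{g}/\mfk{h}$). The only difference is that you spell out the hypothesis checks (perfect base field, non-degeneracy) which the paper leaves implicit.
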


\begin{proof}
We have the equivalence $\mrm{rep}(u_q(\mfk{g},T))\simeq \mrm{rep}(u_q(\mfk{g}))^\ast_{\msc{M}_T}$ and apply Proposition~\ref{prop:Cduals}, or Theorem~\ref{thm:char0}.  Our calculation of dimension comes from the fact that $\Kdim\rep(u_q(\mfk{g}))=\dim\mrm{Nil}(\mfk{g})=\dim \mfk{g}/\mfk{h}$.
\end{proof}

At the moment we do not have an explicit understanding of the spectrum of cohomology at general $T$.  From the results at the extreme ends, where the spectra are $\mfk{n}_+\times \mfk{n}_-$ and $\mrm{Nil}(\mfk{g})$, we expect that the spectra should admit a general description in terms of purely Lie theoretic data, and should be of constant dimension $\dim\mfk{g}/\mfk{h}$.  In particular, we expect that the parabolics associated to the $\Gamma_i$ will appear in such a formulation.

\section{Finite type braided categories I: a preliminary discussion}
\label{sect:braidedI}

Take $k$ algebraically closed and of characteristic $0$.  This assumption will hold generically for the remainder of this work, although in Sections~\ref{sect:ses} and~\ref{sect:Gext} the assumption can be relaxed.  By our base change result, Lemma~\ref{lem:basechange}, the effective restriction here is that $\msc{C}$ should be a tensor category over a base field of characteristic $0$.
\par

We now turn our focus from specific occurrences of weak stability to a discussion of the weak stability conjecture in general, with an emphasis on tensor categories which admit a braiding.  Recall the following basic definition of M\"uger~\cite{muger03II}.

\begin{definition}
For a braided tensor category $\C$, the M\"uger center $Z_2(\C)$ is the full subcategory generated by all objects $V$ in $\msc{C}$ for which the square brading $c_{-,V}c_{V,-}\in \mrm{End}_\mrm{Fun}(V\ot -)$ is the identity.
\end{definition}

The main purpose of the present section is to outline a plan for addressing the cohomology of braided categories in characteristic $0$.  We outline our general approach in Section~\ref{sect:1237}, after establishing some background material.  We carry out the plan outlined here in the remaining sections of the paper.

\subsection{M\"uger centers and (super-)Tannakian categories}
\label{sect:Mu}

By Deligne's theorem~\cite{deligne90,deligne02}, any (finite) symmetric braided tensor category $\msc{E}$ (over an algebraically closed field of characteristic $0$) is equivalent to the category $\mrm{srep}(SG)$ of super-representations of a finite supergroup $SG$.  In Hopf theoretic terms, $\mrm{srep}(SG)$ is the symmetric category of representations of a triangular Hopf algebra of the form $\mrm{Wedge}(V)\rtimes G$, where $G$ is a finite group, $V$ is a $G$-representation of $(z,1)$-skew primitives, where $z$ in the center of $G$ with $z^2=1$, and the $R$-matrix is $R_z=\frac{1}{2}(1+z\ot 1+1\ot z-z\ot z)$ (cf.~\cite{andrusetingofgelaki01}).

If we suppose that $\msc{E}$ is semisimple then $V$ must vanish, and $\msc{E}$ is equivalent to the symmetric category $\rep(G,R_z)$.

\begin{remark}
The braiding induced by $R_z$ is just the usual signed super-swap, where the even and odd parity components of a representation are given by the $\pm 1$ eigenspaces for the action of $z$.  Note that when $z=1$ this braiding is trivial, as all representations are concentrated in even degree.
\end{remark}

When $z$ is not $1$, the existence of objects in $\mrm{srep}(SG)$ with self braiding $c_{VV}=-id_{V\ot V}$ obstructs the existence of a symmetric fiber functor to $Vect$.  So if $\msc{E}$ admits a symmetric fiber functor to $Vect$ then $\msc{E}$ is isomorphic to $\rep(G)$ for some finite group $G$, with the trivial braiding.  In particular, $\msc{E}$ must be semisimple in this case.
\par

A symmetric category $\msc{E}$ is called {\it Tannakian} if it admits a symmetric fiber functor to $Vect$, and non-Tannakian, or {\it super-Tannakian}, otherwise.  All super-Tannakian categories considered in this work will be, or will be proved to be, semisimple, i.e.\ of the form $\msc{E}\cong \rep(G,R_z)$.
\par

In the sections that follow we address weak stability of cohomology for braided categories whose M\"uger centers are semisimple, which we refer to colloquially as categories with a semisimple degeneracy.  Our investigations bifurcate into an analysis of categories with Tannakian M\"uger center, and categories with semisimple, super-Tannakian, M\"uger center.

\subsection{Relative centers}

Consider a tensor category $\msc{C}$ and a surjective tensor functor $F:\msc{C}\to \msc{D}$.  The functor $F$ is faithful, by Lemma~\ref{lemma:surj_func_faithful}.  For convenience, we identify $\msc{C}$ with its image in $\msc{D}$.
\par

We have that $\msc{D}$ is an exact module category over both $\msc{C}$ and $\msc{C}\bt\msc{D}^{\mrm{cop}}$.  Following~\cite{gelakinaidunikshych09}, we define the {\it relative center} $Z_\C(\D)$ as follows:
\begin{itemize}
\item The objects of $Z_\C(\D)$ are pairs $(V, \gamma_V),$ where $V$ is an object of $\D$ and $\gamma_V: V\otimes - \overset{\cong}\to -\otimes V$ is a natural isomorphism between the functors $V\ot-,\ -\ot V:\msc{C}\to \msc{D}$ which is compatible with the associators and unit in the expected way.  For any such pair $(V,\gamma_V)$, we call $\gamma_V$ the half-braiding.
\item Morphisms in $Z_\C(\D)$ are maps $f:V\to W$ in $\msc{D}$ which are compatible with the chosen half braidings $\gamma_{V,-}$ and $\gamma_{W,-}$ in the sense that the diagram
\[
\xymatrix{
V\ot X\ar[r]^{\gamma_{V,X}}\ar[d]_{f\ot id} & X\ot V\ar[d]^{id\ot f}\\
W\ot X\ar[r]^{\gamma_{W,X}} & X\ot W
}
\]
commutes for all $X\in\C$.
\item The tensor product $(V,\gamma_{V,-})\ot (W,\gamma_{W,-})$ is the object $V\ot W$ in $\msc{D}$ with the obvious half braiding.
\end{itemize}

We note, as in~\cite{gelakinaidunikshych09}, that there is an equivalence $(\C\bt \D^{\mrm{cop}})^*_\D\overset{\sim}\to Z_\C(\D)$ which sends a $\C\bt\D^{op}$-module endofunctor $L$ of $\D$ to the pair consisting of the object $V=L(\1)$ along with the half-braiding
\[
\gamma_{V,X}:V\ot X=L(\1)\ot X\overset{\cong}\to L(X)\overset{\cong}\to X\ot L(\1)=X\ot V,
\]
for $X\in \C$. We have a pair of functors
\begin{equation}\label{eq:1189}
\xymatrixrowsep{3mm}
\xymatrix{
Z(\C)\ar[dr]^(.45){q} & & Z(\D)\ar[dl]_(.45){r},\\
 & Z_\C(\D) &
}
\end{equation}
where $q$ is the map taking a pair of an object in $\C$ with a half braiding to the same object in $\D$ with the same half braiding, and $r$ restricts the half braiding on an object in $\D$ to objects in $\C$.

\subsection{Relative centers and de-equivariantization}

Let $\C$ be a braided tensor category in characteristic $0$ and consider a central embedding $\rep(G)\to \msc{C}$ which has image in the M\"uger center of $\msc{C}$.  In this case the de-equivariantization $\C_G$ inherits a braiding from $\C$.
\par

Since any de-equivariantization functor is surjective, we may consider the relative center $Z_{\C}(\C_G)$.

\begin{lemma}\label{lem:1219}
For any central embedding $\mrm{rep}(G)\to \msc{C}$ there is a tensor equivalence $Z_\msc{C}(\msc{C}_G)\simeq Z(\msc{C})_G$, where the de-equivariantization of $Z(\msc{C})$ is induced by the braided inclusion $\rep(G)\to \C\to Z(\msc{C})$.
\end{lemma}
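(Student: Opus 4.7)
I would construct explicit mutually inverse tensor functors
\[
\Phi\colon Z_\msc{C}(\msc{C}_G)\to Z(\msc{C})_G,\qquad \Psi\colon Z(\msc{C})_G\to Z_\msc{C}(\msc{C}_G),
\]
each acting essentially as the identity on underlying data. Write $A=\mcl{O}(G)$, viewed as an algebra in $\msc{C}$ via the given central embedding and in $Z(\msc{C})$ via the lift $\mrm{rep}(G)\to Z_2(\msc{C})\hookrightarrow Z(\msc{C})$, which exists because the central embedding has image in the M\"uger center. An object of $Z_\msc{C}(\msc{C}_G)$ is a pair $(V,\gamma_V)$ with $V$ an $A$-module in $\msc{C}$ and $\gamma_V\colon V\ot X\to X\ot V$ a family of $\msc{C}_G$-morphisms natural in $X\in\msc{C}$; an object of $Z(\msc{C})_G$ is a triple $(V,\gamma_V,\alpha)$ with $(V,\gamma_V)\in Z(\msc{C})$ and $\alpha\colon A\ot V\to V$ an $A$-action realised inside $Z(\msc{C})$.

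The bridge between the two descriptions is the following observation. If $V\in\msc{C}_G$ and $\bar X$ denotes the image of $X\in\msc{C}$ under the free functor $\msc{C}\to\msc{C}_G$, then the underlying $\msc{C}$-object of $V\ot\bar X$ is $V\ot_\msc{C} X$ with $A$-action inherited from $V$, while the underlying $\msc{C}$-object of $\bar X\ot V$ is $X\ot_\msc{C} V$ with $A$-action obtained by using the braiding $c_{A,X}$ of $\msc{C}$ (available because $A$ lies in $Z_2(\msc{C})$) to move $A$ past $X$ and then acting on $V$. Under this identification, a $\msc{C}_G$-morphism $V\ot X\to X\ot V$ is the same thing as a $\msc{C}$-morphism intertwining these two $A$-actions, i.e.\ an $A$-linear half-braiding. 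Consequently, starting from $(V,\gamma_V)\in Z_\msc{C}(\msc{C}_G)$ and forgetting the $A$-linearity of $\gamma_V$ produces a half-braiding on $V\in\msc{C}$ and hence an object of $Z(\msc{C})$; the $A$-action on $V$ then becomes a morphism in $Z(\msc{C})$, because the condition that it commute with each $\gamma_{V,X}$ in the sense above is exactly the condition that $\alpha\colon A\ot V\to V$ be $Z(\msc{C})$-equivariant. This defines $\Phi$, and $\Psi$ is defined by reversing the reasoning.

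That $\Phi$ and $\Psi$ are mutually inverse is immediate from their constructions, and both preserve the tensor structure since in each of $Z_\msc{C}(\msc{C}_G)$ and $Z(\msc{C})_G$ the tensor product is $\ot_A$ taken at the level of underlying $\msc{C}$-objects, with half-braiding obtained by composing the half-braidings of the factors. The main obstacle will be the careful bookkeeping of coherence axioms: one must verify strictly that $A$-linearity of the half-braiding in $\msc{C}_G$ is equivalent to the $A$-action being a $Z(\msc{C})$-morphism. This step uses in an essential way that the half-braiding of $A\in Z(\msc{C})$ is induced by the braiding of $\msc{C}$; without the hypothesis that the central embedding lands in the M\"uger center, the half-braiding on $A$ would not be canonically determined and the equivalence would collapse.
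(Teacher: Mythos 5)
Your proposal is correct and follows essentially the same route as the paper's (sketch) proof: both identify $V\ot_{\mcl{O}}F(X)\cong V\ot X$ and $F(X)\ot_{\mcl{O}}V\cong X\ot V$ (the latter using the half-braiding of $\mcl{O}(G)$ to carry the action past $X$), so that a relative half-braiding is exactly an $\mcl{O}(G)$-linear half-braiding in $\msc{C}$, and observe that this $\mcl{O}(G)$-linearity is precisely the condition that the action map be a morphism in $Z(\msc{C})$. The only slight imprecision is your closing remark: a central embedding by definition comes with a chosen braided lift to $Z(\msc{C})$, so the half-braiding on $\mcl{O}(G)$ is part of the data even without Müger-centrality, though in the setting of this section it is indeed the braiding of $\msc{C}$.
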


The result should be well-known, and so we only sketch the proof.
\begin{proof}[Sketch proof]
There is a tensor functor $F:Z_\msc{C}(\msc{C}_G)\to Z(\msc{C})_G$ which sends an object $(V,\mrm{act}_V,\gamma_V)$ in the relative center, where $\mrm{act}_V:\O\ot V\to V$ is the action of $\O=\O(G)$, to the object $(V,\gamma_V)$ in $Z(\msc{C})$ with the action $\mrm{act}_V$.  The fact that $(V,\mrm{act}_V,\gamma_V)$ is an object in $Z_\msc{C}(\msc{C}_G)$ exactly implies that $\mrm{act}_V:\O\ot V\to V$ is a morphism in $Z(\msc{C})$.  So, we see that $((V,\gamma_V),\mrm{act}_V)$ is indeed an object in $Z(\msc{C})_G$.  One similarly constructs the inverse $F^{-1}$ to see that $F$ is an equivalence.
\end{proof}

\begin{corollary}\label{cor:relcen_finitetype}
For any braided tensor category $\C$ and any central embedding $\mrm{rep}(G)\to \msc{C}$, the center $Z(\C)$ is of finite type if and only if the relative center $Z_\C(\C_G)$ is of finite type. Furthermore $\Kdim Z(\msc{C})=\Kdim Z_\C(\msc{C}_G)$.
\end{corollary}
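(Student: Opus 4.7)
The plan is essentially to reduce the statement to a straight application of the de-equivariantization theorem (Theorem \ref{thm:HGC}) by using the identification of the relative center with a de-equivariantization of the ordinary center provided by Lemma \ref{lem:1219}.

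First, I would invoke the tensor equivalence $Z_{\msc{C}}(\msc{C}_G)\simeq Z(\msc{C})_G$ from Lemma \ref{lem:1219}. The hypothesis that $\rep(G)$ embeds centrally into the braided category $\msc{C}$ with image in the M\"uger center is precisely what is needed to lift the central embedding to a braided central embedding $\rep(G)\to \msc{C}\to Z(\msc{C})$, so the right-hand side $Z(\msc{C})_G$ makes sense as a de-equivariantization. In particular, the forgetful functor $Z(\msc{C})\to Z(\msc{C})_G$ realizes $Z(\msc{C})_G$ as a de-equivariantization of $Z(\msc{C})$, equivalently $Z(\msc{C})$ is an equivariantization of $Z(\msc{C})_G$ by the induced $G$-action.

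Second, I would apply Theorem \ref{thm:HGC} to the pair $\msc{D}=Z(\msc{C})$ and $\msc{C}_{\text{new}}=Z(\msc{C})_G$. That theorem asserts the finite type property passes between a category and its de-equivariantization by a finite group (in characteristic zero, as assumed throughout this section), and furthermore that the Krull dimensions agree. Combined with the equivalence of Lemma \ref{lem:1219}, this yields
\[
Z(\msc{C})\text{ is of finite type}\iff Z(\msc{C})_G\text{ is of finite type}\iff Z_\msc{C}(\msc{C}_G)\text{ is of finite type},
\]
and $\Kdim Z(\msc{C})=\Kdim Z(\msc{C})_G=\Kdim Z_\msc{C}(\msc{C}_G)$.

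Since both ingredients are already established earlier in the paper, there is no serious obstacle; the only mild subtlety is checking that the central embedding $\rep(G)\to Z(\msc{C})$ used to form $Z(\msc{C})_G$ in Lemma \ref{lem:1219} genuinely factors through the M\"uger center of $Z(\msc{C})$, which is guaranteed by the hypothesis that the original embedding $\rep(G)\to \msc{C}$ has image in $Z_2(\msc{C})$ together with the fact that $\msc{C}\to Z(\msc{C})$ sends M\"uger-central objects to objects centralizing all of $Z(\msc{C})$. Beyond this bookkeeping, the corollary is immediate.
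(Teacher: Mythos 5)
Your argument is exactly the paper's proof, which reads in its entirety ``Apply Theorem~\ref{thm:HGC} and Lemma~\ref{lem:1219}'': identify $Z_\msc{C}(\msc{C}_G)\simeq Z(\msc{C})_G$ and then transfer the finite type property and Krull dimension across the (de-)equivariantization. One caveat on your closing remark: the image of $\rep(G)$ in $Z(\msc{C})$ does \emph{not} land in the M\"uger center of $Z(\msc{C})$ --- that center is trivial, since $Z(\msc{C})$ is non-degenerate --- and no such condition is needed to form $Z(\msc{C})_G$; what the de-equivariantization requires is only a central embedding of $\rep(G)$ into $Z(\msc{C})$ (a braided lifting to $Z(Z(\msc{C}))$), which is automatic because $\rep(G)\to Z(\msc{C})$ is a braided functor into a braided category, and is in any case already built into the statement of Lemma~\ref{lem:1219}. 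This slip does not affect the validity of the main argument.
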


\begin{proof}
Apply Theorem~\ref{thm:HGC} and Lemma~\ref{lem:1219}.
\end{proof}

\subsection{The approach of Section~\ref{sect:braidedII}}
\label{sect:1237}

Consider a braided tensor category $\msc{C}$ and a central embedding $\rep(G)\to \msc{C}$ with image equal to the M\"uger center $Z_2(\msc{C})$ in $\msc{C}$.  Suppose $\msc{C}$ is of finite type.  Our goal is to show that the Drinfeld center $Z(\msc{C})$ is of finite type, from which we deduce the finite type property for arbitrary duals $\msc{C}^\ast_\msc{M}$.  However, we cannot access the center directly here, as was the case in the non-degenerate setting.  We therefore employ the relative center $Z_\msc{C}(\msc{C}_G)$ as an intermediary between $\msc{C}$ and $Z(\msc{C})$, in the manner outlined below.
\par

Recall the following essential theorem from~\cite{dgno10}, which is implicit in the initial works of \cite{bruguieres00,muger00}.

\begin{theorem}[{\cite{bruguieres00,muger00,dgno10}}]\label{modular deequivariantization}
Suppose $\rep(G)\to\C$ is a braided equivalence onto $Z_2(\msc{C})$.  Then the de-equivariantization $\C_G$ is non-degenerate.
\end{theorem}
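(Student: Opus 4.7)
The plan is to identify $\C_G$ with the category of $\O$-modules in $\C$ (writing $\O = \O(G)$ for the regular algebra coming from the embedding $\rep(G)\hookrightarrow\C$) and to exploit the free module functor $F\colon \C\to \C_G$, $V\mapsto \O\otimes V$. Since $\O$ lies in $Z_2(\C)$, the two $\O$-actions on $M\otimes N$ agree after applying the braiding, so the braiding of $\C$ descends along the coequalizer defining $\otimes_\O$ to give a braiding on $\C_G$; with this braiding $F$ is a surjective braided tensor functor. The goal is then to show that every $(X,\rho_X)\in Z_2(\C_G)$ is isomorphic to $\1_{\C_G}^{\oplus n}$ for some $n\geq 0$.

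The first step is to reduce the M\"uger condition from $\C_G$ back to $\C$. For any $V$ in $\C$, one has canonical identifications $X\otimes_\O F(V)\cong X\otimes V$ and $F(V)\otimes_\O X\cong V\otimes X$ in $\C$, under which the braidings $c^{\C_G}_{X,F(V)}$ and $c^{\C_G}_{F(V),X}$ correspond to $c^\C_{X,V}$ and $c^\C_{V,X}$ respectively. Hence the assumption that $X$ centralizes $F(V)$ in $\C_G$ translates precisely to $c^\C_{V,X}\circ c^\C_{X,V}=\mathrm{id}$ in $\C$. Since $F$ is surjective and centralizer conditions pass to subquotients, this forces the underlying object of $X$ to lie in $Z_2(\C)$, which by hypothesis is the image of $\rep(G)$.

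Given this, $X$ is an $\O$-module in the subcategory $\rep(G)\subset\C$, that is, a $G$-equivariant $\O(G)$-module. By standard equivariant descent along the free $G$-torsor $G\to\mathrm{pt}$, any such module has the form $\O\otimes V$ for a vector space $V$, i.e.\ is free of rank $\dim V$ as an $\O$-module. Translated back into $\C_G$, this says $X\cong \1_{\C_G}^{\oplus\dim V}$, which establishes non-degeneracy of $\C_G$.

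The main technical point to verify carefully is the descent of the braiding: that $c^\C$ really does coequalize the two $\O$-actions to yield a well-defined $c^{\C_G}$, and that the canonical isomorphisms $X\otimes_\O F(V)\cong X\otimes V$ intertwine $c^{\C_G}$ with $c^\C$. This is a routine manipulation with coequalizers which depends only on $\O\in Z_2(\C)$, but it is the linchpin of the argument; once it is in hand, the reduction to $Z_2(\C)$ and the final descent to $\mathrm{Vect}$ are classical.
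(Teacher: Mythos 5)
The paper does not actually prove this theorem --- it is quoted directly from \cite{bruguieres00,muger00,dgno10} --- and your argument is precisely the standard one from those sources: identify $\C_G$ with $\O(G)$-modules in $\C$, check that the double braiding of $X\in Z_2(\C_G)$ with a free module $F(V)=\O\ot V$ computes the double braiding of the underlying object of $X$ with $V$ in $\C$, conclude that $X$ is supported on $Z_2(\C)\simeq\rep(G)$, and finish by descent along the translation torsor $G\to\mathrm{pt}$. The one cosmetic slip is the appeal to surjectivity of $F$ in the reduction step: it is not needed there, since $X\in Z_2(\C_G)$ already centralizes every free module $F(V)$ with $V$ ranging over all of $\C$, and that alone forces the underlying object of $X$ into $Z_2(\C)$.
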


Suppose as before that $\msc{C}$ is of finite type.  By transfer of cohomology along de-equivariantization, Proposition~\ref{prop:Cduals}, and the above theorem, we have that $Z(\msc{C}_G)$ is of finite type and of Krull dimension $\Kdim Z(\msc{C}_G)=2 \Kdim\msc{C}$.
\par

As in~\eqref{eq:1189}, we have the diagram
\[
\xymatrixrowsep{3mm}
\xymatrix{
Z(\C)\ar[dr]^(.45){q} & & Z(\C_G)\ar[dl]_(.45){r}\\
 & Z_\C(\C_G) & .
}
\]
Recall that we want to verify the finite type property for $Z(\msc{C})$.  Since $Z_{\msc{C}}(\msc{C}_G)$ is a de-equivariantization of $Z(\msc{C})$, we can transfer cohomology {\it up} from $Z_\msc{C}(\msc{C}_G)$ to $Z(\msc{C})$.  So $Z(\msc{C})$ is of finite type whenever the relative center is of finite type.  Therefore, we need to find a way to transfer cohomology {\it down} from the usual center $Z(\msc{C}_G)$ to $Z_\msc{C}(\msc{C}_G)$.
\par

We achieve the desired transfer of cohomology along $r$ by extending to a (relative) exact sequence
\[
Z(\msc{C}_G)\overset{r}\to Z_\msc{C}(\msc{C}_G)\to \msc{Q},
\]
where $\msc{Q}$ is a yet to be defined monoidal category, and employing a generalized Lyndon-Hochschild-Serre spectral sequence to obtain the cohomology of $Z_\msc{C}(\msc{C}_G)$ from the cohomology of $Z(\msc{C}_G)$.  In Section~\ref{sect:ses} below we provide the necessary background on short exact sequences, and in Section~\ref{sect:braidedII} we realize the above outline to find that $Z(\msc{C})$ is of finite type whenever $\msc{C}$ is of finite type and has Tannakian M\"uger center.  In Section~\ref{sect:braidedIII} we enhance the above approach to address categories $\msc{C}$ with possibly non-Tannakian, semisimple, M\"uger center.

\section{The Lyndon-Hochschild-Serre spectral sequence for exact sequences of categories}
\label{sect:ses}

For this section, we let $k$ be algebraically closed and of arbitrary characteristic.  We will, however, only apply the results of this section to cases in which $\mrm{char}(k)=0$.
\par

We show that any exact sequence of tensor categories
\[
\msc{B}\to\msc{C}\to\msc{D}\bt\End_k(\msc{M}),
\]
in the generalized sense of Etingof and Gelaki~\cite{etingofgelaki17} (cf. Brugui\`eres and Natale~\cite{bruguieresnatale11}), gives rise to a spectral sequence
\[
``H^i(\msc{B},H^j(\msc{D},V))"\ \Rightarrow\ H^{i+j}(\msc{C},V).
\]
In order to do this, we must first specify what exactly is on the left hand side of the above equation.  The issue here is that the $\msc{D}$-invariants functor is a functor from $\msc{D}$ to $Vect$, not one from $\msc{C}$ to $\msc{B}$.  So we define an appropriate ``$\msc{D}$-invariants functor" in this setting, which we denote $\mfk{H}^0_\msc{C}(\msc{D},-):\msc{C}\to \msc{B}$.  Of course, for exact sequences of finite groups our spectral sequence reduces to the familiar Lyndon-Hochschild-Serre spectral sequence.

\subsection{Exact sequences (following~\cite{etingofgelaki17})}

Recall that a $k$-linear category is, for us, an abelian category enriched over $Vect$.  We have the standard defintion,

\begin{definition}
For a finite $k$-linear category $\msc{M}$ we let $\End_k(\msc{M})$ denote the $k$-linear monoidal category of right exact $k$-linear endofunctors of $\msc{M}$.
\end{definition}

\begin{remark}
The finiteness assumption here is inessential, but appropriate for our analysis.
\end{remark}

The category $\End_k(\msc{M})$ is monoidal under composition and has unit $id_\msc{M}$.  If we write $\msc{M}=\rep(R)$, for a finite dimensional algebra $R$, then the functor $\mathrm{bimod}(R)\to \End_k(\msc{M})$, $M\mapsto M\ot_R-$ is an equivalence of $k$-linear monoidal categories, by classical Morita theory.  In particular, $\End_k(\msc{M})$ is Artinian and Noetherian.
\par

Consider an arbitrary tensor category $\msc{D}$ and the corresponding Deligne product $\msc{D}\bt \End_k(\msc{M})$.  We embed $\End_k(\msc{M})$ in $\msc{D}\bt \End_k(\msc{M})$ as $X\mapsto \mbf{1}_{\msc{D}}\ot X$.  The following lemma will help us make sense of the notion of a ``normal" map to $\msc{D}\bt \End_k(\msc{M})$.

\begin{lemma}\label{lem:134}
Every object $W$ in $\msc{D}\bt\End_k(\msc{M})$ admits a unique maximal subobject $W'$ in (the image of) $\End_k(\msc{M})$.  Furthermore, $W'$ can be defined as the maximal subobject in $W$ which admits a surjective map $\1\ot X\to W$ from an object $X$ in $\End_k(\msc{M})$.
\end{lemma}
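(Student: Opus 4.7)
The plan is to describe the embedded subcategory $\End_k(\msc{M})\subset\msc{D}\bt\End_k(\msc{M})$ via a trivial-action condition and then obtain $W'$ as an invariants construction. Choose a projective generator $P$ of $\msc{M}$, set $R=\End_\msc{M}(P)^{op}$, and realize $\msc{D}\simeq\rep(H)$ for some quasi-Hopf algebra (or Hopf algebroid, as in Appendix~\ref{sect:A}) $H$. Then $\msc{D}\bt\End_k(\msc{M})\simeq\rep(H\ot R^e)$, where $R^e=R\ot R^{op}$, and the embedding of $\End_k(\msc{M})$ corresponds to pullback along the projection $H\ot R^e\twoheadrightarrow R^e$ determined by the counit $\epsilon:H\to k$. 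Under this identification, the objects in (the image of) $\End_k(\msc{M})$ are exactly those modules on which $H$ acts through $\epsilon$.

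For the first assertion, given $W\in\rep(H\ot R^e)$ I would set
\[
W':=W^{H,\epsilon}=\{w\in W:h\cdot w=\epsilon(h)w\text{ for all }h\in H\},
\]
which is automatically an $R^e$-submodule because the $H$- and $R^e$-actions commute. By construction $W'$ contains every subobject of $W$ on which $H$ acts trivially, so it is the unique maximal subobject of $W$ lying in the image of $\End_k(\msc{M})$.

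For the second assertion, any morphism $f:\1\ot X\to W$ in the Deligne product has source on which $H$ acts through $\epsilon$, hence its image is a submodule of $W$ with trivial $H$-action, which must sit inside $W'$. Conversely, the inclusion $W'\hookrightarrow W$ is itself such a morphism (taking $X=W'$), and its image is all of $W'$. Thus $W'$ equals the maximum over the images of morphisms $\1\ot X\to W$ from objects of $\End_k(\msc{M})$. I do not foresee a genuine obstacle: the lemma is a formal bookkeeping exercise once the Hopf-algebroid realization is fixed, the only minor caveat being that $\End_k(\msc{M})$ is multi-tensor rather than tensor when $\msc{M}$ is decomposable, but this plays no role in the invariants argument.
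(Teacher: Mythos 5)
Your invariants construction is the right idea, but the reduction you use to set it up has a genuine gap. You realize $\msc{D}\simeq\rep(H)$ for a quasi-Hopf algebra $H$ so that $\1$ becomes the one-dimensional module $k$ with $H$ acting through $\epsilon:H\to k$, and then characterize the image of $\End_k(\msc{M})$ as the modules with trivial $H$-action. A general finite tensor category is \emph{not} the representation category of a quasi-Hopf algebra (only integral ones are; non-integral examples such as the Fibonacci category exist), and the lemma is stated for arbitrary $\msc{D}$. Your parenthetical fallback to a Hopf algebroid does not repair this: for a Hopf algebroid $H$ over a base $S$ the counit is a map $H\to S$, the unit object of $\rep(H)$ is $S$ with a prescribed action rather than a one-dimensional module, and there is no character $\epsilon:H\to k$ through which to take invariants. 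So the set $W^{H,\epsilon}$ is not defined in the generality you need, and the claim that the image of $\End_k(\msc{M})$ consists of the ``$\epsilon$-trivial'' modules has no meaning there.

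The fix — and this is what the paper does — is to forget the monoidal structure entirely and use only that $\1$ is a simple object with $\End_\msc{D}(\1)=k$: choose a Morita equivalence $\msc{D}\cong\rep(\Pi)$ with $\Pi$ a \emph{basic} algebra, so that the simple object $\1$ corresponds to a one-dimensional $\Pi$-module $ke$, i.e.\ $\Pi$ acts on $\1$ through a character. With that in hand your argument goes through verbatim (and is essentially the paper's: the paper uses the identity $\sum_i e\ot x_i=e\ot(\sum_i x_i)$ to see that kernels and images of maps $\1\ot X\to W$ stay in the image of $\End_k(\msc{M})$, which is the same computation as your invariants). Alternatively, one can avoid one-dimensionality altogether by taking $W'$ to be the $\1$-isotypic part of the $\Pi$-socle of $W$ and using the evaluation isomorphism $\1\ot\Hom_\Pi(\1,W')\to W'$, which only needs $\End(\1)=k$. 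Your treatment of the second assertion is fine once the first is repaired.
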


\begin{proof}
If any such maximal object exists it provides a universal morphism $\mbf{1}\ot X(W)\to W$ from an object $X(W)$ in $\End_k(\msc{M})$.  Consider now the maximal subobject $W'\subset W$ which is the image of some $\1\ot X\to W$.  Such a $W'$ exists as $\msc{D}\bt\End_k(\msc{M})$ is Noetherian and for any two $\1\ot X\to W$ and $\1\ot Y\to W$ the coproduct map is expressible as a map from $\1\ot (X\oplus Y)$.
\par

We claim that the kernel of any projection $\1\ot X\to W'$ is of the form $\1\ot Y$ for some subobject $Y$ in $X$.  This is easy to see if we adopt a Morita equivalence $\msc{D}\cong \rep(\Pi)$ for a basic algebra $\Pi$, and $\msc{M}=\rep(R)$, so that $\msc{D}\bt \End_k(\msc{M})\cong\rep(\Pi\ot (R\ot R^{op}))$.  Under such an equivalence $\1$ is identified with a $1$-dimensional representation $\1=k e$ for $\Pi$, and the kernel of any map $\1\ot X\to W'$ is a span of vectors $\sum_i e\ot x_i=e\ot(\sum_i x_i)$.  Hence the kernel is of the form $\1\ot Y$.  By exactness of the tensor product this gives $W'\cong \1\ot X(W)$ for $X(W)=X/Y$.
\end{proof}

The following definitions is due to Etingof and Gelaki~\cite{etingofgelaki17}, (cf.~the earlier work of Brugui\`eres and Natale~\cite{bruguieresnatale11}).

\begin{definition}[\cite{etingofgelaki17}]
Let $\msc{M}$ be a finite $k$-linear abelian category, and consider tensor categories $\msc{C}$ and $\msc{D}$.  We say an exact monoidal functor $F:\msc{C}\to \msc{D}\bt\End_k(\msc{M})$ is normal relative to $\msc{M}$ if for each $V$ in $\msc{C}$ there is a subobject $V_{\msc{D}\text{-}\mrm{triv}}\subset V$ such that
\[
F(V_{\msc{D}\text{-}\mrm{triv}})\subset F(V)
\]
is the maximal subobject in $F(V)$ contained in $\End_k(\msc{M})$.
\end{definition}

\begin{definition}[\cite{etingofgelaki17}]
Let $\msc{M}$ be a finite $k$-linear abelian category.  Let 
\begin{equation}\label{eq:seq}
\msc{B}\overset{i}\to \msc{C}\overset{F}\to \msc{D}\boxtimes \End_k(\msc{M})
\end{equation}
be a composition of a tensor functor $i:\msc{B}\to \msc{C}$ with an exact monoidal functor $F:\msc{C}\to \msc{D}\bt\End_k(\msc{M})$.  We say the sequence~\eqref{eq:seq} is an exact sequence with respect to $\msc{M}$ if the following conditions hold:
\begin{enumerate}
\item[(a)] The tensor functor $i$ is a fully faithful embedding.
\item[(b)] The monoidal functor $F$ is normal relative to $\msc{M}$, and surjective.
\item[(c)] The category $\msc{B}$ is the kernel of $F$ relative to $\msc{M}$, i.e.\ $\msc{B}$ is the full subcategory in $\msc{C}$ consisting of all objects which map into $\End_k(\msc{M})$ under $F$.
\item[(d)] The $\msc{B}$-action on $\msc{M}$ induced by the functor $\msc{B}\to \End_k(\msc{M})$ makes $\msc{M}$ into an exact indecomposable $\msc{B}$-module category.
\end{enumerate}
\end{definition}

\begin{remark}
When convenient, we identify $\msc{B}$ with its image $i(\msc{B})$ in $\msc{C}$. Indeed, we already did this in the previous definition in item (c).
\end{remark}
\begin{remark}
It is natural, from the algebraic perspective, to only consider exact sequences with respect to the trivial module category $\msc{M}=Vect$.  This is the perspective taken in the initial work of Brugui\`eres and Natale~\cite{bruguieresnatale11}.  However, if we take the {\it dual} of such a nice sequence $\msc{Q}\to \msc{R}\to \msc{S}$ with respect to an exact indecomposable $\msc{S}$-module category $\msc{N}$, which is a completely natural thing to do, we arrive at a sequence
\[
\msc{S}^\ast_\msc{N}\to \msc{R}^\ast_\msc{N}\to \msc{Q}^\ast_{Vect}\bt\End_k(\msc{N}).
\]
This new sequence will be exact with respect to $\msc{M}$~\cite[Theorem\ 4.1]{etingofgelaki17}, but generally not exact in the sense of~\cite{bruguieresnatale11}.  This duality property is essential to our study.
\end{remark}

\subsection{Homological properties of the relative invariants}

\begin{lemma}\label{lem:156}
If $\msc{B}\overset{i}\to \msc{C}\overset{F}\to \msc{D}\boxtimes \End_k(\msc{M})$ is an exact sequence relative to $\msc{M}$, then $F$ is faithful.
\end{lemma}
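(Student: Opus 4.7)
The plan is to reduce faithfulness to the statement that $F$ sends no nonzero object of $\msc{C}$ to $0$, since $F$ is exact: for any morphism $f$, exactness gives $F(\mrm{im}(f))=\mrm{im}(F(f))$, so $F(f)=0$ forces $F(\mrm{im}(f))=0$, and if this implies $\mrm{im}(f)=0$ then $f=0$. So the task reduces to showing that if $V\in\msc{C}$ with $F(V)=0$ then $V=0$.

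First I would use the rigid structure of $\msc{C}$ rather than invoke Frobenius-Perron dimension, since the target $\msc{D}\bt\End_k(\msc{M})$ is only a multi-tensor category and citing part (i) of Lemma~\ref{lemma:surj_func_faithful} directly requires a small amount of care. The key observation is that for any nonzero $V$ in the tensor category $\msc{C}$, the evaluation $\mrm{ev}^l_V\colon V^\ast\ot V\to \1_\msc{C}$ is a nonzero morphism: otherwise the snake identity would force $\mrm{id}_V$ to vanish. Since $\1_\msc{C}$ is simple, the image of $\mrm{ev}^l_V$ is either $0$ or all of $\1_\msc{C}$, and nonzero-ness of the map rules out the former. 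Hence $\mrm{ev}^l_V$ is surjective.

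Next I would apply $F$ to this surjection. As $F$ is exact and monoidal, with $F(\1_\msc{C})\cong \1_{\msc{D}\bt\End_k(\msc{M})}=\1_\msc{D}\bt\mrm{id}_\msc{M}$, one obtains a surjection
\[
F(V^\ast)\ot F(V)\twoheadrightarrow \1_\msc{D}\bt\mrm{id}_\msc{M}.
\]
Condition (d) of the exact sequence says that $\msc{M}$ is an exact indecomposable $\msc{B}$-module category; in particular $\msc{M}$ is nonzero, and hence $\mrm{id}_\msc{M}$ is a nonzero object of $\End_k(\msc{M})$. Consequently $\1_\msc{D}\bt\mrm{id}_\msc{M}$ is a nonzero object of $\msc{D}\bt\End_k(\msc{M})$, so the codomain above is nonzero. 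This forces $F(V)\neq 0$, as desired.

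The only mildly delicate points are (a) the surjectivity of $\mrm{ev}^l_V$, which uses simplicity of $\1_\msc{C}$ and the rigid structure but not surjectivity or normality of $F$, and (b) confirming that $\1_\msc{D}\bt\mrm{id}_\msc{M}$ is nonzero, which is immediate from indecomposability of $\msc{M}$. Notably the argument uses neither the subcategory $\msc{B}$ nor the normality hypothesis on $F$, so the lemma holds for any exact monoidal functor out of a tensor category whose target has nonzero unit.
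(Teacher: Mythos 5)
Your proof is correct, but it takes a genuinely different route from the paper's. The paper identifies $\msc{D}\bt\End_k(\msc{M})$ with $\End_{\msc{D}^\mrm{cop}}(\msc{D})\bt\End_k(\msc{M})$, views $F$ as the restriction of an action of $\msc{D}^\mrm{cop}\bt\msc{C}$ on $\msc{D}\bt\msc{M}$, and then cites the exactness of this module category (\cite[Corollary 2.5]{etingofgelaki17}) to conclude that the action functor, and hence $F$, is faithful; this leans on the module-category machinery already assembled around exact sequences. You instead run the standard rigidity argument: reduce faithfulness to reflection of zero objects via exactness of $F$, observe that $ev^l_V$ is epi for $V\neq 0$ because $\1_\msc{C}$ is simple and the zigzag identity forbids $ev^l_V=0$, and push this epimorphism through the exact monoidal functor onto the nonzero unit $\1_\msc{D}\bt \mrm{id}_\msc{M}$ of the target. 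Your observation that the target's unit is not simple but only needs to be nonzero is exactly the right amount of care, and your closing remark is accurate: your argument establishes the stronger statement that any exact monoidal functor from a finite tensor category into a multi-tensor category (indeed, into any abelian monoidal category with biadditive tensor product and nonzero unit) is faithful, using neither $\msc{B}$, nor normality, nor surjectivity of $F$. What the paper's route buys is brevity given the imported results of~\cite{etingofgelaki17}; what yours buys is self-containedness and greater generality.
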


\begin{proof}
As is explained in~\cite{etingofgelaki17}, we have 
\[
\msc{D}\bt\End_k(\msc{M})\cong \End_{\msc{D}^\mrm{cop}}(\msc{D})\bt\End_k(\msc{M}).
\]
It follows that the map $F:\msc{C}\to \msc{D}\bt \End_k(\msc{M})$ produces an action of $\msc{C}$ on $\msc{D}\bt \msc{M}$ which commutes with the action of $\msc{D}^\mrm{cop}$ on $\msc{D}$.  Hence $F$ is naturally given by restricting the codomain of an action map
\begin{equation}\label{eq:168}
\msc{D}^\mrm{cop}\bt\msc{C}\to \End_k(\msc{D}\bt\msc{M}).
\end{equation}
By~\cite[Corollary 2.5]{etingofgelaki17}, $\msc{D}\bt\msc{M}$ is exact over $\msc{D}^\mrm{cop}\bt\msc{C}$, and hence the functor~\eqref{eq:168} is faithful.  As a consequence we find that the restriction $\msc{C}\to \End_k(\msc{D}\bt\msc{M})$ to $\msc{C}\subset \msc{D}^\mrm{cop}\bt\msc{C}$ is faithful, as is its factoring $F:\msc{C}\to \msc{D}\bt \End_k(\msc{M})$ through $\End_{\msc{D}^\mrm{cop}}(\msc{D})\bt\End_k(\msc{M})$.
\end{proof}

\begin{lemma}\label{lem:172}
Let $F:\msc{C}\to \msc{D}\boxtimes \End_k(\msc{M})$ be a faithful monoidal functor which is normal relative to $\msc{M}$.  Then the following properties hold:
\begin{enumerate}
\item[(i)] Every object $V$ in $\msc{C}$ admits a subobject $V_{\msc{D}\text{-}\mrm{triv}}\subset V$ such that $F(V_{\msc{D}\text{-}\mrm{triv}})$ is the unique maximal subobject in $F(V)$ which lies in $\End_k(\msc{M})$.  Furthermore, $V_{\msc{D}\text{-}\mrm{triv}}$ is determined uniquely by this property.
\item[(ii)] For any map $f:V\to W$ in $\msc{C}$, the restriction $f|_{V_{\msc{D}\text{-}triv}}$ factors (uniquely) through $W_{\msc{D}\text{-}\mathrm{triv}}$.
\end{enumerate}
That is to say, the operation $V\mapsto V_{\msc{D}\text{-}\mrm{triv}}$ is an endofunctor of $\msc{C}$ with image in the kernel of $F$.
\end{lemma}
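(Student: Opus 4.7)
The plan is to reduce both claims to a single basic principle: an exact faithful functor $F$ reflects containment of subobjects, i.e.\ if $A$ and $B$ are subobjects of $V\in\msc{C}$ and $F(A)\subset F(B)$ as subobjects of $F(V)$, then $A\subset B$. To see this, note that $F$ exact implies $F(A+B)=F(A)+F(B)=F(B)$, so the inclusion $B\hookrightarrow A+B$ becomes an isomorphism after $F$; since $F$ is exact and faithful it reflects isomorphisms (the cokernel of $B\hookrightarrow A+B$ vanishes after $F$, hence already vanishes), giving $A+B=B$ and therefore $A\subset B$.

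For part (i), Lemma~\ref{lem:134} gives a \emph{unique} maximal subobject $M(V)\subset F(V)$ lying in $\End_k(\msc{M})$, and normality of $F$ provides some $V_{\msc{D}\text{-}\mrm{triv}}\subset V$ with $F(V_{\msc{D}\text{-}\mrm{triv}})=M(V)$; this handles existence. For uniqueness, if $V'$ and $V''$ are two subobjects of $V$ with $F(V')=F(V'')=M(V)$, then applying the reflection principle in both directions (using $F(V')\subset F(V'')$ and $F(V'')\subset F(V')$) gives $V'=V''$.

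For part (ii), given $f:V\to W$, form the image $f(V_{\msc{D}\text{-}\mrm{triv}})\subset W$. Applying $F$ and using exactness, $F(f(V_{\msc{D}\text{-}\mrm{triv}}))$ is a quotient of $F(V_{\msc{D}\text{-}\mrm{triv}})=M(V)$, which lies in $\End_k(\msc{M})$; since $\End_k(\msc{M})$ is closed under quotients inside $\msc{D}\boxtimes\End_k(\msc{M})$, this image lies in $\End_k(\msc{M})$. By the maximality property of $M(W)=F(W_{\msc{D}\text{-}\mrm{triv}})$ we obtain $F(f(V_{\msc{D}\text{-}\mrm{triv}}))\subset F(W_{\msc{D}\text{-}\mrm{triv}})$ as subobjects of $F(W)$; invoking the reflection principle once more yields $f(V_{\msc{D}\text{-}\mrm{triv}})\subset W_{\msc{D}\text{-}\mrm{triv}}$, and so $f|_{V_{\msc{D}\text{-}\mrm{triv}}}$ factors through $W_{\msc{D}\text{-}\mrm{triv}}$. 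The factorization is unique because $W_{\msc{D}\text{-}\mrm{triv}}\hookrightarrow W$ is a monomorphism.

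The only delicate step is the reflection principle itself, which relies on $F$ being simultaneously exact and faithful; the remainder of the argument is essentially formal manipulation with images, sums, and the uniqueness clause from Lemma~\ref{lem:134}. Note that no hypothesis beyond faithfulness, monoidality, and normality (which implicitly carries exactness, per the Etingof--Gelaki convention) is needed, so the resulting $V\mapsto V_{\msc{D}\text{-}\mrm{triv}}$ is automatically functorial with image contained in the kernel of $F$.
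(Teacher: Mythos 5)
Your proof is correct and follows essentially the same route as the paper: existence comes from normality together with the uniqueness clause of Lemma~\ref{lem:134}, and both uniqueness and functoriality are obtained by using maximality in the target and then using faithfulness plus exactness of $F$ to reflect the resulting containment back to $\msc{C}$. The only difference is presentational — you package the reflection step as a general lemma about sums of subobjects, whereas the paper phrases it as the vanishing of the composite $V'\to V\to V/V_{\msc{D}\text{-}\mrm{triv}}$ — and your spelled-out argument for (ii), which the paper leaves as ``a similar argument,'' is accurate.
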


\begin{proof}
(i) Normality ensures that such a $V_{\msc{D}\text{-}\mrm{triv}}$ exists.  For uniqueness, suppose another submodule $V'\subset V$ is mapped to an object in $\End(\msc{M})$ under $F$, and consider the sequence
\[
F(V')\to F(V)\to F(V)/F(V_{\msc{D}\text{-}\mrm{triv}}).
\]
By maximality of $F(V_{\msc{D}\text{-}\mrm{triv}})$, the above sequence is $0$.  By faithfulness of $F$ the sequence $V'\to V\to V/V_{\msc{D}\text{-}\mrm{triv}}$ is also $0$.  Hence $V'\subset V_{\msc{D}\text{-}\mrm{triv}}$.  This implies uniqueness of $V_{\msc{D}\text{-}\mrm{triv}}$.  (ii) Functoriality of $(-)_{\msc{D}\text{-}\mrm{triv}}$ with respect to morphisms follows by a similar argument.
\end{proof}

Lemmas~\ref{lem:156} and~\ref{lem:172} together tell us that any exact sequence
\begin{equation}\label{eq:184}
\msc{B}\overset{i}\to \msc{C}\overset{F}\to \msc{D}\boxtimes \End_k(\msc{M})
\end{equation}
produces a functor $\msc{C}\to \msc{B}$ which assigns to any object in $\msc{C}$ its maximal $\msc{D}$-trivial subobject.  The fact that this subobject lies in $\msc{B}$ follows from the identification $\msc{B}=\ker(F)$.

\begin{definition}
For an exact sequence~\eqref{eq:184}, relative to an exact $\msc{B}$-module category $\msc{M}$, we define the functor $\mfk{H}^0_{\msc{C}}(\msc{D},-)$ by taking
\[
\mfk{H}^0_{\msc{C}}(\msc{D},-):\msc{C}\to \msc{B},\ \ V\mapsto V_{\msc{D}\text{-}\mrm{triv}},
\]
where $V_{\msc{D}\text{-}\mrm{triv}}$ is as in Lemma~\ref{lem:172}.  We call $\mfk{H}^0_\msc{C}(\msc{D},-)$ the {\it $\msc{D}$-relative invariants}.
\end{definition}

\begin{lemma}\label{lem:properties}
For an exact sequence~\eqref{eq:184}, the relative invariants functor $\mfk{H}^0_\msc{C}(\msc{D},-)$ has the following properties:
\begin{enumerate}
\item[(i)] $\mfk{H}^0_\msc{C}(\msc{D},-)$ is left exact.
\item[(ii)] $\mfk{H}^0_{\msc{C}}(\msc{D},-)$ is exact when $\msc{D}$ is a fusion category.
\item[(iii)] $\mfk{H}^0_{\msc{C}}(\msc{D},-)$ sends injectives in $\msc{C}$ to injectives in $\msc{B}$.
\item[(iv)] There is a natural identification of functors
\[
\Hom_\msc{B}(\1_{\msc{B}},\mfk{H}^0_\msc{C}(\msc{D},-))=\Hom_{\msc{C}}(\1_\msc{C},-).
\]
\end{enumerate}
\end{lemma}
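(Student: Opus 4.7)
The plan is to recognize $\mfk{H}^0_\msc{C}(\msc{D},-)$ as the right adjoint to the inclusion $i:\msc{B}\hookrightarrow\msc{C}$; granting this, (i), (iii), (iv) follow formally, and only (ii) needs additional input. The first step is to verify that $\msc{B}$ is closed under subquotients in $\msc{C}$. By definition, $X$ lies in $\msc{B}$ precisely when $F(X)$ lies in the essential image of $\End_k(\msc{M})\hookrightarrow\msc{D}\bt\End_k(\msc{M})$, $Y\mapsto\1_\msc{D}\ot Y$. In the Morita presentation used in the proof of Lemma~\ref{lem:134}, $\1_\msc{D}\ot Y$ corresponds to a module supported on a one-dimensional idempotent summand of the coefficient algebra, and the same argument shows that both subobjects and quotients of $\1_\msc{D}\ot Y$ are again of the form $\1_\msc{D}\ot Y'$; so $\msc{B}$ is closed under subobjects and quotients in $\msc{C}$. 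Now for any $B$ in $\msc{B}$ and $V$ in $\msc{C}$, a map $f:i(B)\to V$ has image $f(B)\subset V$ lying in $\msc{B}$, i.e.\ $f(B)$ is a $\msc{D}$-trivial subobject of $V$; by maximality in Lemma~\ref{lem:172}(i) this $f$ factors uniquely through $V_{\msc{D}\text{-}\mrm{triv}}$. Conversely, each map $B\to V_{\msc{D}\text{-}\mrm{triv}}$ in $\msc{B}$ composes with the inclusion to yield a map $i(B)\to V$. These operations are mutually inverse, producing a natural bijection
\[
\Hom_\msc{C}(i(B),V)\cong\Hom_\msc{B}(B,\mfk{H}^0_\msc{C}(\msc{D},V)),
\]
that is, the adjunction $i\dashv \mfk{H}^0_\msc{C}(\msc{D},-)$.

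Part (i) is now immediate since right adjoints between abelian categories are left exact. Since $i$ is a tensor functor it is in particular exact, so its right adjoint sends injectives to injectives, giving (iii). Setting $B=\1_\msc{B}$ in the adjunction and using $i(\1_\msc{B})=\1_\msc{C}$ yields (iv).

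For (ii), suppose $\msc{D}$ is fusion with simple objects $\{D_\alpha\}_\alpha$ and $D_0=\1_\msc{D}$. Every object of $\msc{D}\bt\End_k(\msc{M})$ decomposes uniquely as $\bigoplus_\alpha D_\alpha\ot X_\alpha$, and the vanishing of Hom-spaces between distinct simples in $\msc{D}$ forces the maximal subobject lying in $\End_k(\msc{M})$ to be exactly the direct summand $\1_\msc{D}\ot X_0$. The projection $W\mapsto X_0$ is thus a $k$-linear exact functor $\msc{D}\bt\End_k(\msc{M})\to\End_k(\msc{M})$. Given a surjection $g:V\twoheadrightarrow W$ in $\msc{C}$, exactness of $F$ combined with exactness of this projection shows that the induced map $F(V_{\msc{D}\text{-}\mrm{triv}})\to F(W_{\msc{D}\text{-}\mrm{triv}})$ is surjective. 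Letting $Y\subset W$ denote the image of $V_{\msc{D}\text{-}\mrm{triv}}$ in $W$, one has that $F(Y)$ equals the image of $F(V_{\msc{D}\text{-}\mrm{triv}})$ in $F(W)$, which is $F(W_{\msc{D}\text{-}\mrm{triv}})$. By the uniqueness clause of Lemma~\ref{lem:172}(i), $Y=W_{\msc{D}\text{-}\mrm{triv}}$, so $V_{\msc{D}\text{-}\mrm{triv}}\to W_{\msc{D}\text{-}\mrm{triv}}$ is surjective; combined with (i) this proves exactness. The main subtlety is the appeal to uniqueness, which relies crucially on the faithfulness of $F$ furnished by Lemma~\ref{lem:156}.
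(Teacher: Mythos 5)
Your proof is correct and follows essentially the same route as the paper: the paper also identifies $\mfk{H}^0_\msc{C}(\msc{D},-)$ via the universal property $\Hom_\msc{C}(L,V)=\Hom_\msc{B}(L,\mfk{H}^0_\msc{C}(\msc{D},V))$ for $L$ in $\msc{B}$ (which is exactly your adjunction $i\dashv\mfk{H}^0_\msc{C}(\msc{D},-)$) to get (iii) and (iv), and proves (ii) by the same decomposition of $\msc{D}\bt\End_k(\msc{M})$ into simple $\msc{D}$-components together with faithfulness of $F$. The only difference is cosmetic: you deduce (i) formally from the adjunction, whereas the paper argues (i) directly with Lemma~\ref{lem:134} and the faithful exactness of $F$; both are valid.
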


\begin{proof}
(i) For any exact sequence $0\to L\to M\to N\to 0$ it is clear that $\mfk{H}^0_\msc{C}(\msc{D},L)\to \mfk{H}^0_\msc{C}(\msc{D},M)$ is injective, via the maximal image interpretation of Lemma~\ref{lem:134} and the fact that $F$ is faithfully exact~\cite[Pp 1195]{{etingofgelaki17}}.  Now, after applying $F$, the kernel of $F\mfk{H}^0_\msc{C}(\msc{D},M)\to F\mfk{H}^0_\msc{C}(\msc{D},N)$ is an object of the form $\1\ot Z$, which necessarily factors through $F(L)$, and hence lies in $F\mfk{H}^0_\msc{C}(\msc{D},L)$ by Lemma~\ref{lem:134}.  So the kernel of $F\mfk{H}^0_\msc{C}(\msc{D},M)\to F\mfk{H}^0_\msc{C}(\msc{D},N)$ is contained in $F\mfk{H}^0_\msc{C}(\msc{D},L)$.  The opposite containment is immediate, and we therefore have that $F\mfk{H}^0_\msc{C}(\msc{D},L)$ is mapped isomorphically onto the kernel of $F\mfk{H}^0_\msc{C}(\msc{D},M)\to F\mfk{H}^0_\msc{C}(\msc{D},N)$.  Faithfulness of $F$ implies that $\mfk{H}^0_\msc{C}(\msc{D},L)$ is the kernel of $\mfk{H}^0_\msc{C}(\msc{D},M)\to \mfk{H}^0_\msc{C}(\msc{D},N)$, as desired.
\par

(ii) When $\msc{D}$ is fusion, we may enumerate the simples $\{V_0,V_1,\dots, V_n\}$ with $V_0=\1$.  In this case all objects of the Deligne product $\msc{D}\bt \End_k(\msc{M})$ are direct sums of objects of the form $V_i\ot X$, and there are no nonzero maps between $V_i\ot X$ and $V_j\ot Y$ for $i\neq j$.  Therefore taking the trivial summand $V_{\mrm{triv}}\ot X$, which provides the maximal subobject in $\End_k(\msc{M})$, is an exact operation.  It follows that for an exact sequence $0\to L\to M\to N\to 0$ in $\msc{C}$ the sequence
\[
0\to \mfk{H}^0_\msc{C}(\msc{D},L)\to \mfk{H}^0_\msc{C}(\msc{D},W)\to \mfk{H}^0_\msc{C}(\msc{D},N)\to 0
\]
is exact in $\msc{B}$, since its image under $F$ is exact.
\par

(iii) The functor $\mfk{H}^0_\msc{C}(\msc{D},-)$ can alternatively be defined as the functor which sends an object $V$ to the universal map $\mfk{H}^0_\msc{C}(\msc{D},V)\to V$ from an object in $\msc{B}$.  So for any object $L$ in $\msc{B}$ we have $\Hom_{\msc{C}}(L,V)=\Hom_{\msc{B}}(L,\mfk{H}^0_\msc{C}(\msc{D},V))$.
\par

Suppose that $V$ is injective in $\msc{C}$ and that $0\to L\to M\to N\to 0$ is an exact sequence in $\msc{B}$.  Then we have the diagram
\[
{\small
\xymatrixcolsep{5mm}
\xymatrix{
& \Hom_{\msc{B}}(N,\mfk{H}^0_\msc{C}(\msc{D},V))\ar[r]\ar[d]^\cong & \Hom_{\msc{B}}(M,\mfk{H}^0_\msc{C}(\msc{D},V)) \ar[r]\ar[d]^\cong & \Hom_{\msc{B}}(L,\mfk{H}^0_\msc{C}(\msc{D},V))\ar[d]^\cong &\\
0\ar[r]& \Hom_{\msc{C}}(N,V)\ar[r] & \Hom_{\msc{C}}(M,V) \ar[r] & \Hom_{\msc{C}}(L,V)\ar[r] & 0,
}}
\]
and conclude that the top row is exact.  Whence $\mfk{H}^0_\msc{C}(\msc{D},V)$ is seen to be injective in $\msc{B}$.
\par

(iv) Since $\1_\msc{C}=i(\1_\msc{B})$ is an object in (the image of) $\msc{B}$, the universal map perspective employed in (iii) gives a natural identification
\[
\Hom_\msc{C}(\1_\msc{C},V)= \Hom_\msc{B}(\1_\msc{B},\mfk{H}^0_\msc{C}(\msc{D},V)),
\]
for arbitrary $V$ in $\msc{C}$.
\end{proof}

\subsection{Lyndon-Hochschild-Serre spectral sequence}  Consider as before a relative exact sequence~\eqref{eq:184}.  By point (i) of Lemma~\ref{lem:properties} we can derive the functor $\mfk{H}^0_\msc{C}(\msc{D},-):\msc{C}\to \msc{B}$ to get a triangulated functor
\[
\mrm{R}\mfk{H}^0_\msc{C}(\msc{D},-):\mrm{D}^b(\msc{C})\to \mrm{D}^+(\msc{B}).
\]
We denote the corresponding $i$-th derived functor
\[
\mfk{H}^i_\msc{C}(\msc{D},-)=\mrm{R}^i\mfk{H}^0_\msc{C}(\msc{D},-).
\]
Each $\mfk{H}^i_\msc{C}(\msc{D},-)$ is a functor taking values in the category $\msc{B}$.
\par

The object $\mrm{R}\mfk{H}^0_\msc{C}(\msc{D},\mbf{1})$ admits an algebra structure in $\mrm{D}^+(\msc{B})$ via the usual yoga.  Namely, one takes an injective resolution $\1_\msc{C}\to I$, so that $\mrm{R}\mfk{H}^0_\msc{C}(\msc{D},\mbf{1})=\mfk{H}^0_\msc{C}(\msc{D},I)$, and chooses a homotopy equivalence $I\ot I\to I$ to get
\[
\mrm{mult}_{\mrm{R}\mfk{H}^0_\msc{C}(\msc{D},\mbf{1})}:
\mfk{H}^0_\msc{C}(\msc{D},I)\ot \mfk{H}^0_\msc{C}(\msc{D},I)\to \mfk{H}^0_\msc{C}(\msc{D},I\ot I)\to \mfk{H}^0_\msc{C}(\msc{D},I).
\]
We similarly get an action of $\mrm{R}\mfk{H}^0_\msc{C}(\msc{D},\mbf{1})$ on $\mrm{R}\mfk{H}^0_\msc{C}(\msc{D},V)$ for each $V$ in $\msc{C}$.

\begin{proposition}\label{prop:ss}
Let $\msc{B}\to \msc{C}\to \msc{D}\bt\End_k(\msc{M})$ be an exact sequence with respect to $\msc{M}$.  There is a convergent multiplicative spectral sequence
\[
E^{i,j}_2=H^i(\msc{B},\mfk{H}^j_\msc{C}(\msc{D},\mbf{1}))\ \Rightarrow\ H^{i+j}(\msc{C},\mbf{1}),
\]
and for each $V$ in $\msc{C}$ there is a spectral sequence
\[
{^V\! E}^{i,j}_2=H^i(\msc{B},\mfk{H}^j_\msc{C}(\msc{D},V))\ \Rightarrow\ H^{i+j}(\msc{C},V)
\]
which is a module over $E^{\b,\b}_\ast$.
\end{proposition}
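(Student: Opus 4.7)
The plan is to realize this spectral sequence as an instance of the Grothendieck composition-of-functors spectral sequence. Set $F := \mfk{H}^0_\msc{C}(\msc{D},-):\msc{C}\to \msc{B}$ and $G := \Hom_\msc{B}(\1_\msc{B},-):\msc{B}\to \mrm{Vect}$. Lemma~\ref{lem:properties}(iv) identifies $G\circ F$ with $\Hom_\msc{C}(\1_\msc{C},-)$, whose right derived functors compute $H^\b(\msc{C},-)$, while parts (i) and (iii) of the same lemma say that $F$ is left exact and sends injectives to injectives, hence in particular to $G$-acyclic objects. These are precisely the hypotheses of the Grothendieck spectral sequence, yielding
\[
E_2^{i,j} \,=\, R^iG\bigl(R^jF(V)\bigr) \,=\, H^i(\msc{B},\mfk{H}^j_\msc{C}(\msc{D},V)) \,\Rightarrow\, H^{i+j}(\msc{C},V).
\]

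To make the construction concrete and amenable to multiplicative considerations, I would fix an injective resolution $\1_\msc{C}\to I^\b$ in $\msc{C}$ and then a Cartan-Eilenberg resolution $F(I^\b)\to J^{\b,\b}$ in $\msc{B}$. By Lemma~\ref{lem:properties}(iii), each $F(I^i)$ is injective, so $\mrm{Tot}\bigl(G(J^{\b,\b})\bigr)$ computes $H^\b(\msc{C},\1)$, and its standard column filtration yields the required $E_2$-page. For a general object $V$, the analogous construction starting from an injective resolution $V\to K_V^\b$ produces ${^V\!E}_{\ast,\ast}^{\b,\b}$.

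For the multiplicative and module structures I would proceed as in the discussion preceding the proposition: lift the canonical isomorphism $\1\ot\1\cong\1$ to a homotopy-associative DG-algebra structure on $I^\b$, and lift $\1\ot V\cong V$ to a DG-module structure on $K_V^\b$ over $I^\b$. The functor $F$ is lax monoidal — any morphism from an object of $\msc{B}$ into $V\ot W$ factors uniquely through the maximal such subobject $F(V\ot W)$, which produces a canonical natural pairing $F(V)\ot F(W)\to F(V\ot W)$ — so these structures transfer to $F(I^\b)$ and $F(K_V^\b)$. Choosing the Cartan-Eilenberg resolutions compatibly, the associated total complexes acquire compatible DG-algebra and DG-module structures which respect the column filtration, and induce the required multiplicative and module structures on $E_{\ast,\ast}^{\b,\b}$ and ${^V\!E}_{\ast,\ast}^{\b,\b}$. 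The main obstacle is bookkeeping: arranging the Cartan-Eilenberg resolutions in $\msc{B}$ so that the pairing inherited from $\msc{C}$ via $F$ rectifies to a strict multiplication at the level of $J^{\b,\b}$ without disturbing the filtration. Convergence and the identification of $E_2$ are entirely routine given Lemma~\ref{lem:properties}, so I would verify multiplicativity directly on the $E_2$-page and invoke naturality of the filtration to propagate it through the spectral sequence.
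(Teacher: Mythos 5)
Your proposal is correct and follows essentially the same route as the paper: the paper also obtains the spectral sequence as the Grothendieck spectral sequence for the composite $\Hom_\msc{B}(\1_\msc{B},-)\circ\mfk{H}^0_\msc{C}(\msc{D},-)$, using parts (iii) and (iv) of Lemma~\ref{lem:properties}, and handles the multiplicative and module structures by the same lax-monoidal/Cartan--Eilenberg mechanism you sketch (delegated there to a citation of \cite[Proposition 5.1]{friedlandernegron18}).
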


\begin{proof}
By point (iii) of Lemma~\ref{lem:properties}, the proposed spectral sequence arrises as a Grothendieck spectral sequence.  The multiplicative properties follow by~\cite[Proposition 5.1]{friedlandernegron18}.
\end{proof}

We only use the above spectral sequence in the case in which $\msc{D}$ is fusion.  In this case the spectral sequence degenerates to give a $k$-algebra identification
\[
H^\b(\msc{B},\mbf{1}_\msc{B})=H^\b(\msc{B},\mfk{H}^0_\msc{C}(\msc{D},\mbf{1}_\msc{C}))=H^\b(\msc{C},\mbf{1}_\msc{C}),
\]
and an identification of graded $H^\b(\msc{B},\1)$-modules
\[
H^\b(\msc{B},\mfk{H}^0_\msc{C}(\msc{D},V))=H^\b(\msc{C},V).
\]
Both of these identifications are induced by the exact embedding $i:\msc{B}\to \msc{C}$.  We record these findings in a corollary.

\begin{corollary}\label{cor:ss}
Let $\msc{B}\to \msc{C}\to \msc{D}\bt\End_k(\msc{M})$ be an exact sequence with respect to $\msc{M}$ and suppose that $\msc{D}$ is a fusion category.  Then there is a natural identification of $k$-algebras $H^\b(\msc{B},\mbf{1})=H^\b(\msc{C},\mbf{1})$.  Furthermore, for any object $V$ in $\msc{C}$, there is an identification of $H^\b(\msc{C},\mbf{1})$-modules $H^\b(\msc{B},\mfk{H}^0_\msc{C}(\msc{D},V))=H^\b(\msc{C},V)$, where the cohomology of $\msc{C}$ acts on the right hand side of this equality via its identification with the cohomology of $\msc{B}$.
\end{corollary}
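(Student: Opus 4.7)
The plan is to use exactness of the relative invariants functor in the fusion case to degenerate the Grothendieck-type spectral sequence from Proposition~\ref{prop:ss}. First I would invoke Lemma~\ref{lem:properties}(ii), which states that $\mfk{H}^0_\msc{C}(\msc{D},-)$ is exact whenever $\msc{D}$ is fusion. Exactness ensures that the higher right derived functors vanish, i.e.\ $\mfk{H}^j_\msc{C}(\msc{D},-) = 0$ for all $j \geq 1$. Consequently, both spectral sequences produced in Proposition~\ref{prop:ss} are concentrated on the row $j=0$, so they collapse at the $E_2$-page and yield isomorphisms
\[
H^i(\msc{B},\mfk{H}^0_\msc{C}(\msc{D},\1)) \ \overset{\cong}\to\ H^i(\msc{C},\1)
\quad\text{and}\quad
H^i(\msc{B},\mfk{H}^0_\msc{C}(\msc{D},V)) \ \overset{\cong}\to\ H^i(\msc{C},V).
\]

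Next I would identify $\mfk{H}^0_\msc{C}(\msc{D},\1_\msc{C})$ with $\1_\msc{B}$. Since $\1_\msc{C} = i(\1_\msc{B})$ already lies in the image of $i$, it is its own maximal $\msc{D}$-trivial subobject; equivalently, viewing $\mfk{H}^0_\msc{C}(\msc{D},-)$ as the right adjoint to $i$ (Lemma~\ref{lem:properties}(iv)), the unit $i(\1_\msc{B})\to \1_\msc{C}$ is an iso. Plugging this into the collapsed sequence for $\1$ yields the algebra identification $H^\b(\msc{B},\1) = H^\b(\msc{C},\1)$; the multiplicative structure is preserved because the spectral sequence of Proposition~\ref{prop:ss} is multiplicative, and the edge map at $j=0$ is a ring map that coincides with the pullback induced by the exact embedding $i:\msc{B}\to\msc{C}$.

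For the module statement, I would apply the same collapse to the sequence ${^V\! E}^{i,j}_2 = H^i(\msc{B},\mfk{H}^j_\msc{C}(\msc{D},V))\Rightarrow H^{i+j}(\msc{C},V)$, yielding $H^\b(\msc{B},\mfk{H}^0_\msc{C}(\msc{D},V)) = H^\b(\msc{C},V)$. The compatibility of the module structures follows from the fact, recorded in Proposition~\ref{prop:ss}, that ${^V\! E}^{\b,\b}_\ast$ is a module over $E^{\b,\b}_\ast$; this compatibility descends to the degenerate page under the collapse.

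The only part requiring any care is checking that the degeneration isomorphism is genuinely the map induced by $i$; this is standard for Grothendieck spectral sequences once one notes that the functorial identification $\Hom_\msc{C}(\1_\msc{C},-) = \Hom_\msc{B}(\1_\msc{B},\mfk{H}^0_\msc{C}(\msc{D},-))$ of Lemma~\ref{lem:properties}(iv), when derived in the situation where $\mfk{H}^0_\msc{C}(\msc{D},-)$ is exact, reduces to the assertion that restriction along $i$ computes $\Ext^\b_\msc{C}(\1,-)$ on objects of $\msc{B}$. Given the tools already established in the paper, no substantial obstacle is expected.
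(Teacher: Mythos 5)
Your proposal is correct and follows essentially the same route as the paper: exactness of $\mfk{H}^0_\msc{C}(\msc{D},-)$ in the fusion case (Lemma~\ref{lem:properties}(ii)) kills the higher derived functors, the spectral sequences of Proposition~\ref{prop:ss} collapse on the row $j=0$, and the resulting identifications (using $\mfk{H}^0_\msc{C}(\msc{D},\1_\msc{C})\cong\1_\msc{B}$) are induced by the embedding $i$, with multiplicativity and the module structure inherited from the spectral sequence.
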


\section{Finite type braided categories II: Tannakian M\"uger center}
\label{sect:braidedII}

We fix $k$ an algebraically closed field of characteristic $0$. We follow the outline of Section~\ref{sect:1237} to verify stability of cohomology for finite type braided tensor categories with Tannakian M\"uger center.  We describe some practical methods for determining whether or not the M\"uger center of a given braided category is Tannakian in Section~\ref{sect:tann_check}.

\subsection{De-equivariantization and doubling for braided tensor categories}

Let $\rep(G)\to \msc{C}$ be a central embedding, with $G$ a finite group, and consider the de-equivariantization $\msc{C}_G$.  We then have the exact sequence
\[
\rep(G)\to \msc{C}\to \msc{C}_G
\]
with respect to $Vect$ and tensor with $\msc{C}_G^\mrm{cop}$ on the right to get another exact sequence
\[
\rep(G)\to \msc{C}\bt \msc{C}_G^\mrm{cop}\to \msc{C}_G\bt\msc{C}_G^\mrm{cop}
\]
with respect to $Vect$.  By~\cite[Theorem\ 4.1]{etingofgelaki17} we may take the dual with respect to the indecomposable exact $\msc{C}_G$-bimodule category $\msc{C}_G$ to get another exact sequence
\begin{equation}\label{eq:323}
Z(\msc{C}_G)\overset{r}\to Z_{\msc{C}}(\msc{C}_G)\to Vect^G\bt \End_k(\msc{C}_G),
\end{equation}
now with respect to $\msc{C}_G$.

\begin{theorem}\label{thm:ZCvZC_G}
Suppose $k$ is algebraically closed and of characteristic $0$. Let $\rep(G)\to \msc{C}$ be a central embedding, with $G$ a finite group.  Then the center $Z(\msc{C})$ is of finite type if and only if the center of the de-equivariantization $Z(\msc{C}_G)$ is of finite type.  In this case $\Kdim Z(\msc{C})=\Kdim Z(\msc{C}_G)$.
\end{theorem}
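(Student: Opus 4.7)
The plan is to use the two bridges already constructed in this section: Corollary~\ref{cor:relcen_finitetype}, which links $Z(\msc{C})$ to the relative center $Z_\msc{C}(\msc{C}_G)$, and the exact sequence~\eqref{eq:323}
\[
Z(\msc{C}_G)\overset{r}\to Z_\msc{C}(\msc{C}_G)\to Vect^G\bt\End_k(\msc{C}_G),
\]
which links $Z(\msc{C}_G)$ to $Z_\msc{C}(\msc{C}_G)$. Since $\opn{char}(k)=0$ and $G$ is finite, $Vect^G$ is a fusion category, so the degenerate Lyndon-Hochschild-Serre identifications of Corollary~\ref{cor:ss} are available for~\eqref{eq:323}. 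The strategy is simply to shuttle the finite type property and the Krull dimension through the intermediate category $Z_\msc{C}(\msc{C}_G)$.

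Concretely, I would first record from Corollary~\ref{cor:ss} the algebra isomorphism
\[
H^\b(Z_\msc{C}(\msc{C}_G),\1)\cong H^\b(Z(\msc{C}_G),\1)
\]
induced by $r$, together with the compatible module-level identification
\[
H^\b(Z(\msc{C}_G),\mfk{H}^0_{Z_\msc{C}(\msc{C}_G)}(Vect^G,V))\cong H^\b(Z_\msc{C}(\msc{C}_G),V)
\]
for every $V$ in $Z_\msc{C}(\msc{C}_G)$, and observe that the relative invariants $\mfk{H}^0_{Z_\msc{C}(\msc{C}_G)}(Vect^G,V)$ land in $Z(\msc{C}_G)$ by construction.

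Assuming $Z(\msc{C})$ is of finite type, Corollary~\ref{cor:relcen_finitetype} gives finite type for $Z_\msc{C}(\msc{C}_G)$, and the algebra isomorphism transports finite generation down to $H^\b(Z(\msc{C}_G),\1)$. For any $W$ in $Z(\msc{C}_G)$, the image $r(W)$ already lies in the kernel $Z(\msc{C}_G)$ and is its own maximal such subobject, so Lemma~\ref{lem:172} forces $\mfk{H}^0_{Z_\msc{C}(\msc{C}_G)}(Vect^G,r(W))=W$; applying the module identification at $V=r(W)$ exhibits $H^\b(Z(\msc{C}_G),W)$ as finite over $H^\b(Z(\msc{C}_G),\1)$, whence $Z(\msc{C}_G)$ is of finite type. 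Conversely, if $Z(\msc{C}_G)$ is of finite type, then for each $V$ in $Z_\msc{C}(\msc{C}_G)$ the cohomology $H^\b(Z(\msc{C}_G),\mfk{H}^0(V))$ is finitely generated, and the identification forces the same for $H^\b(Z_\msc{C}(\msc{C}_G),V)$, so $Z_\msc{C}(\msc{C}_G)$ is of finite type and Corollary~\ref{cor:relcen_finitetype} yields finite type for $Z(\msc{C})$. Chaining the two equalities $\Kdim Z(\msc{C})=\Kdim Z_\msc{C}(\msc{C}_G)$ (Corollary~\ref{cor:relcen_finitetype}) and $\Kdim Z_\msc{C}(\msc{C}_G)=\Kdim Z(\msc{C}_G)$ (the algebra isomorphism) finishes the Krull dimension claim.

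The only non-routine point in the argument is the identification $\mfk{H}^0_{Z_\msc{C}(\msc{C}_G)}(Vect^G,r(W))=W$ for $W$ in $Z(\msc{C}_G)$; I expect this to fall out of the universal characterization in Lemma~\ref{lem:172} together with the fact that $Z(\msc{C}_G)$ is by definition the kernel of the projection to $Vect^G\bt\End_k(\msc{C}_G)$, but it should be checked explicitly before assembling the spectral sequence machinery. Everything else is essentially formal given the two corollaries.
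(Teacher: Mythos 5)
Your proposal is correct and follows the paper's own route exactly: the paper likewise deduces the equivalence between $Z(\msc{C}_G)$ and $Z_\msc{C}(\msc{C}_G)$ from the relative exact sequence~\eqref{eq:323} together with Corollary~\ref{cor:ss}, and then passes between $Z_\msc{C}(\msc{C}_G)$ and $Z(\msc{C})$ via Lemma~\ref{lem:1219} and Theorem~\ref{thm:HGC} (i.e.\ Corollary~\ref{cor:relcen_finitetype}). The one point you flag for checking, $\mfk{H}^0_{Z_\msc{C}(\msc{C}_G)}(Vect^G,r(W))=W$, is immediate from the definition of the kernel in an exact sequence together with the uniqueness statement in Lemma~\ref{lem:172}, so your argument is complete.
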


\begin{proof}
Via the exact sequence~\eqref{eq:323} and Corollary~\ref{cor:ss} we see that the relative center $Z_\msc{C}(\msc{C}_G)$ is of finite type if and only if $Z(\msc{C}_G)$ is of finite type, in which case the Krull dimensions agree.  By Lemma~\ref{lem:1219}, $Z_\msc{C}(\msc{C}_G)$ is a de-equivariantization of $Z(\msc{C})$.  Thus we apply Theorem~\ref{thm:HGC} to see that $Z(\msc{C})$ is of finite type if and only if $Z_\msc{C}(\msc{C}_G)$ is of finite type, and again the Krull dimensions agree.
\end{proof}

\subsection{Weak stability of cohomology under duality and doubling}

\begin{theorem}\label{thm:braidedI}
Suppose $\msc{C}$ is braided and of finite type over an algebraically closed field $k$ of characteristic $0$.  Suppose also that the M\"uger center of $\msc{C}$ is Tannakian.  Then, for every exact $\msc{C}$-module category $\msc{M}$, the dual $\msc{C}^\ast_\msc{M}$ is also of finite type.  Furthermore, the Krull dimensions are uniformly bounded
\[
\Kdim\msc{C}^\ast_\msc{M}\leq 2 \Kdim\msc{C}.
\]
\end{theorem}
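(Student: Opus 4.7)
The plan is to assemble the pieces that Section \ref{sect:braidedI} and the preceding sections have set up, reducing to the non-degenerate case via de-equivariantization and then transferring back via Theorem \ref{thm:ZCvZC_G}. Specifically, I would show that $Z(\msc{C})$ is of finite type with $\Kdim Z(\msc{C}) \leq 2\Kdim\msc{C}$, and then invoke Corollary \ref{cor:Z2C} to conclude the statement for arbitrary duals $\msc{C}^\ast_\msc{M}$.

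First, I would use the Tannakian hypothesis. Since the M\"uger center $Z_2(\msc{C})$ is Tannakian and lies centrally in $\msc{C}$, we obtain a central embedding $\rep(G) \to \msc{C}$ with image precisely $Z_2(\msc{C})$, for some finite group $G$. Applying the de-equivariantization along this embedding yields the braided category $\msc{C}_G$, which by Theorem \ref{modular deequivariantization} has trivial M\"uger center, i.e.\ is non-degenerate. Since the de-equivariantization functor $\msc{C} \to \msc{C}_G$ is surjective (equivalently, $\msc{C}$ is the equivariantization of $\msc{C}_G$ by $G$), Theorem \ref{thm:HGC} gives that $\msc{C}_G$ is of finite type with $\Kdim\msc{C}_G = \Kdim\msc{C}$.

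Next, I would exploit non-degeneracy. Because $\msc{C}_G$ is non-degenerate and of finite type, Proposition \ref{prop:Cduals} applies (the base field is algebraically closed, hence perfect) and we obtain that $Z(\msc{C}_G)$ is of finite type with
\[
\Kdim Z(\msc{C}_G) = 2\Kdim\msc{C}_G = 2\Kdim\msc{C}.
\]
Now I would transfer this information from $Z(\msc{C}_G)$ back to $Z(\msc{C})$ using Theorem \ref{thm:ZCvZC_G}, applied to the central embedding $\rep(G) \to \msc{C}$: this gives that $Z(\msc{C})$ is of finite type and
\[
\Kdim Z(\msc{C}) = \Kdim Z(\msc{C}_G) = 2\Kdim\msc{C}.
\]

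Finally, since $Z(\msc{C})$ is of finite type, Corollary \ref{cor:Z2C} immediately implies that for any exact $\msc{C}$-module category $\msc{M}$ the dual $\msc{C}^\ast_\msc{M}$ is of finite type, with
\[
\Kdim\msc{C}^\ast_\msc{M} \leq \Kdim Z(\msc{C}) = 2\Kdim\msc{C},
\]
yielding the desired uniform bound. The real content here has already been absorbed into Theorem \ref{thm:ZCvZC_G}; the main obstacle was proving that result, which required the relative-center machinery and the Lyndon-Hochschild-Serre spectral sequence for the exact sequence $Z(\msc{C}_G) \to Z_\msc{C}(\msc{C}_G) \to Vect^G \bt \End_k(\msc{C}_G)$ (fusion on the right), together with Lemma \ref{lem:1219} identifying $Z_\msc{C}(\msc{C}_G)$ as a de-equivariantization of $Z(\msc{C})$. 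Once that bridge is in place, the present theorem is essentially a routing argument along $\msc{C} \rightsquigarrow \msc{C}_G \rightsquigarrow Z(\msc{C}_G) \rightsquigarrow Z(\msc{C}) \rightsquigarrow \msc{C}^\ast_\msc{M}$.
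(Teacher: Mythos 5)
Your proposal is correct and follows exactly the route the paper takes: reduce to showing $Z(\msc{C})$ is of finite type of Krull dimension $2\Kdim\msc{C}$ via Corollary~\ref{cor:Z2C}, pass to the non-degenerate de-equivariantization $\msc{C}_G$ using Theorems~\ref{thm:HGC} and~\ref{modular deequivariantization}, apply Proposition~\ref{prop:Cduals} to $Z(\msc{C}_G)$, and transfer back with Theorem~\ref{thm:ZCvZC_G}. No discrepancies to report.
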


\begin{proof}
It suffices to show that the center $Z(\msc{C})$ is of finite type and has Krull dimension $\Kdim Z(\msc{C})=2 \Kdim\msc{C}$, by Corollary~\ref{cor:Z2C}.  We adopt a braided identification $\mrm{rep}(G)\cong Z_2(\msc{C})$. 
\par

By Theorem~\ref{thm:HGC}, the de-equivariantization is also of finite type and of the same Krull dimension as $\msc{C}$.  Recall that $\C_G$ is non-degenerate in this case (see Theorem \ref{modular deequivariantization}).  It follows that $Z(\msc{C}_G)$ is of finite type and has Krull dimension
\[
\Kdim Z(\msc{C}_G)=2\Kdim\msc{C}_G=2\Kdim\msc{C},
\]
by Proposition~\ref{prop:Cduals}.  Whence $Z(\msc{C})$ is of finite type and of the prescribed Krull dimension, by Theorem~\ref{thm:ZCvZC_G}.
\end{proof}


\subsection{Practical checks: Tannakian vs super-Tannakian type}
\label{sect:tann_check}

Throughout this subsection we maintain the assumptions $k=\bar{k}$, $\mrm{char}(k)=0$.  From the material of Section~\ref{sect:Mu} one sees that any symmetric tensor category $\msc{E}$ over $k$ has the Chevalley property.  That is to say, the full subcategory $\bar{\msc{E}}\subset\msc{E}$ generated by the simples is a tensor subcategory.

\begin{lemma}\cite[Corollary 2.50(i)]{dgno10}\label{odd Tann}
Let $\msc{E}$ be a symmetric tensor category.  If $\opn{FPdim}(\msc{E})$ is odd, then $\msc{E}$ is Tannakian. 
\end{lemma}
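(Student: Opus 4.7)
My plan is to deduce the result directly from Deligne's classification theorem recalled in Section~\ref{sect:Mu}, together with the multiplicativity of Frobenius--Perron dimension.

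First I would invoke Deligne's theorem to choose a tensor equivalence $\msc{E}\simeq \mrm{srep}(SG)$, where $SG$ is a finite supergroup corresponding to data $(G,V,z)$: a finite group $G$, a central involution $z\in Z(G)$ with $z^2=1$, and a $G$-representation $V$ consisting of $(z,1)$-skew primitives, so that the underlying triangular Hopf algebra is $H=\mrm{Wedge}(V)\rtimes kG$ with $R$-matrix $R_z=\tfrac{1}{2}(1\ot 1+z\ot 1+1\ot z-z\ot z)$.

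Next, since $\msc{E}\simeq \rep(H)$ as $k$-linear monoidal categories, the Frobenius--Perron dimension is computed as
\[
\opn{FPdim}(\msc{E})=\dim_k H=2^{\dim V}\cdot |G|.
\]
Assuming $\opn{FPdim}(\msc{E})$ is odd, this forces both $\dim V=0$ and $|G|$ odd. In particular $H=kG$ with $G$ of odd order, so $G$ contains no element of order $2$; the constraint $z^2=1$ therefore gives $z=\mbf{1}$. Substituting $z=\mbf{1}$ into the formula for $R_z$ yields $R_z=\mbf{1}\ot\mbf{1}$, so the symmetric braiding on $\msc{E}$ is the trivial one.

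Finally, I would conclude that $\msc{E}\simeq \rep(G)$ as symmetric tensor categories, with the standard symmetric structure inherited from $Vect$. The forgetful functor $\rep(G)\to Vect$ is then a symmetric fiber functor, which by the definition recalled in Section~\ref{sect:Mu} exhibits $\msc{E}$ as a Tannakian category. I do not anticipate a real obstacle here: once Deligne's theorem is invoked, the argument is just an arithmetic observation about the dimension formula $\dim H=2^{\dim V}|G|$ combined with the fact that odd order groups have no $2$-torsion.
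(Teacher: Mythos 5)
Your proof is correct and follows essentially the same route as the paper's: both invoke Deligne's classification and observe that a non-Tannakian symmetric category forces a factor of $2$ into the Frobenius--Perron dimension, which oddness rules out. The only cosmetic difference is that the paper first passes to the fusion subcategory $\bar{\msc{E}}$ generated by the simples, identifies it with $\rep(G,R_z)$, and uses $\FPdim(\bar{\msc{E}})\mid\FPdim(\msc{E})$, whereas you read off $\FPdim(\msc{E})=2^{\dim V}\lvert G\rvert$ from the whole Hopf algebra $\mrm{Wedge}(V)\rtimes kG$ at once (killing $V$ and the $2$-torsion containing $z$ simultaneously); both reductions are valid.
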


\begin{proof}
Let $\bar{\msc{E}}$ denote the fusion subcategory generated by the simplesof $\msc{E}$.  We have $\bar{\msc{E}}\cong \rep(G,R_z)$, where $z\in G$ is such that $z^2=1$, and $z=1$ if and only if $\msc{E}$ is Tannakian.  If $z\neq 1$ then $G$ has an order $2$ subgroup, and hence $2\mid |G|=\opn{FPdim}(\bar{\msc{E}})$.  Since $\msc{E}$ is integral this implies $2\mid \opn{FPdim}(\msc{E})$, which is explicitly not the case.  Hence $z=1$ and $\msc{E}$ is Tannakian.
\end{proof}

\begin{corollary}\label{cor:2}
Suppose $\msc{C}$ is a braided tensor category of odd Frobenius-Perron dimension.  Then the M\"uger center of $\msc{C}$ is Tannakian.
\end{corollary}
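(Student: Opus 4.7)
The plan is to reduce to Lemma~\ref{odd Tann} by showing that the M\"uger center $Z_2(\msc{C})$ inherits the hypothesis of odd Frobenius--Perron dimension from $\msc{C}$. First I would observe that $Z_2(\msc{C})$ is by construction a symmetric braided tensor subcategory of $\msc{C}$, so the lemma is applicable as soon as we know that $\opn{FPdim}\bigl(Z_2(\msc{C})\bigr)$ is a positive odd integer.

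Next I would invoke the standard divisibility result for tensor subcategories: if $\msc{D}\subset\msc{C}$ is a tensor subcategory of a finite tensor category and $\opn{FPdim}(\msc{C})\in\mbb{Z}$, then $\opn{FPdim}(\msc{D})$ is a positive integer dividing $\opn{FPdim}(\msc{C})$. Applied to $\msc{D}=Z_2(\msc{C})$ and using our hypothesis that $\opn{FPdim}(\msc{C})$ is odd, this yields that $\opn{FPdim}\bigl(Z_2(\msc{C})\bigr)$ is likewise odd. In particular $Z_2(\msc{C})$ is integral, and Lemma~\ref{odd Tann} then directly concludes that $Z_2(\msc{C})$ is Tannakian.

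The only nontrivial ingredient is the divisibility assertion, but this is a known fact (Etingof--Nikshych--Ostrik): the quotient $\opn{FPdim}(\msc{C})/\opn{FPdim}(\msc{D})$ is an algebraic integer, hence an integer when both FP dimensions are rational. I would anticipate no further obstacle: the argument is essentially a two-line deduction from Lemma~\ref{odd Tann} together with this standard divisibility, once one observes that the M\"uger center is indeed a tensor subcategory (which is clear from its definition, as the half-braiding conditions are preserved under tensor products, subobjects, and duals).
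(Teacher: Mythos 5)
Your proposal is correct and follows essentially the same route as the paper: the paper's proof likewise notes that the categories involved are weakly integral, invokes the divisibility $\FPdim(Z_2(\msc{C}))\mid\FPdim(\msc{C})$ to conclude the M\"uger center has odd Frobenius--Perron dimension, and then applies Lemma~\ref{odd Tann}.
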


\begin{proof}
Let $\msc{E}$ denote the M\"uger center of $\msc{C}$.  Since the categories are weakly integral, we have that $\opn{FPdim}(\msc{E})|\opn{FPdim}(\msc{C})$.  So $\msc{E}$ must be of odd dimension and therefore Tannakian, by Lemma \ref{odd Tann}.
\end{proof}

The following lemma is standard.  We repeat the proof for the convenience of the reader.

\begin{lemma}\label{lem:1529}
Let $\msc{C}$ be a braided tensor category.  Suppose that $\mrm{Irr}(Z_2(\msc{C}))$ contains no objects $V$ which solve the equation $c_{V,V}=-id_{V\ot V}$, or more generally that $\mrm{Irr}(\msc{C})$ contains no such objects.  Then $Z_2(\msc{C})$ is Tannakian.
\end{lemma}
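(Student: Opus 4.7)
My plan is to argue the contrapositive, invoking Deligne's classification of symmetric finite tensor categories as recalled in Section~\ref{sect:Mu}. Since $Z_2(\msc{C})$ is symmetric, Deligne identifies $Z_2(\msc{C})\simeq \mrm{srep}(SG)$ for a finite supergroup $SG=(G,z)$, with Tannakianness equivalent to $z=1$. Assuming the contrary, $z\neq 1$, I aim to produce a simple object $V$ in $Z_2(\msc{C})\subseteq\msc{C}$ satisfying $c_{V,V}=-id_{V\otimes V}$, which will contradict either form of the hypothesis simultaneously, since any simple object of $Z_2(\msc{C})$ remains simple in $\msc{C}$ via the fully faithful embedding.

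I would first reduce to the case where $V$ is simple, which is essentially a formal decomposition argument. Given any splitting $V=W_1\oplus\cdots\oplus W_n$ into nonzero summands, $V\otimes V$ decomposes as $\bigoplus_{i,j}W_i\otimes W_j$, and the braiding carries the summand $W_i\otimes W_j$ into the distinct summand $W_j\otimes W_i$ whenever $i\neq j$, while $-id_{V\otimes V}$ preserves each summand. Equating the two maps component by component forces all off-diagonal contributions to vanish, which is impossible for $n>1$ with nonzero $W_i$. Hence $V$ must be simple, and the diagonal comparison already gives $c_{W,W}=-id$ on that single simple constituent. Consequently the failure of the equation on all simple objects of $Z_2(\msc{C})$ (or of $\msc{C}$) is equivalent to its failure on every nonzero object of $Z_2(\msc{C})$.

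It then remains to exhibit such a simple object whenever $z\neq 1$. Using the semisimple presentation $Z_2(\msc{C})\simeq \rep(G,R_z)$ from Section~\ref{sect:Mu}, a direct computation with $R_z=\tfrac{1}{2}(1+z\otimes 1+1\otimes z-z\otimes z)$ shows that any one-dimensional character $\chi:G\to k^\times$ with $\chi(z)=-1$ gives an object $V$ on which $R_z$ acts as $-1$, so that $c_{V,V}(v\otimes v')=\tau(R_z\cdot(v\otimes v'))=-v'\otimes v=-id_{V\otimes V}(v\otimes v')$ on the one-dimensional $V\otimes V$. The existence of such a ``fermion'' in a non-Tannakian symmetric category is precisely the obstruction to a symmetric fiber functor to $Vect$ highlighted in Section~\ref{sect:Mu}. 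The principal obstacle is this existence step: one must extract the fermion from the classificatory content of Deligne's theorem, after which the conclusion follows by the preceding reduction and the fully faithfulness of $Z_2(\msc{C})\hookrightarrow\msc{C}$.
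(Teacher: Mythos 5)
Your overall strategy coincides with the paper's: pass to the contrapositive, invoke Deligne's classification so that non-Tannakian means $z\neq 1$ in $\rep(G,R_z)$, and then exhibit a simple object whose self-braiding is $-id$. The reduction in your second paragraph is harmless but not needed, since any witness you construct is already simple (and $Z_2(\msc{C})$ is closed under subquotients, so its simples are simples of $\msc{C}$). The real problem is the step you yourself flag as ``the principal obstacle'': you never establish the existence of a one-dimensional character $\chi$ of $G$ with $\chi(z)=-1$, and in fact no such character need exist. Take $G=Q_8$ with $z$ its central element of order $2$: every linear character of $Q_8$ factors through $Q_8/[Q_8,Q_8]\cong(\mbb{Z}/2)^2$, which kills $z$, so $\chi(z)=1$ for all of them; the only irreducible on which $z$ acts by $-1$ is the two-dimensional one. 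So the witness your argument calls for is simply not there in general, and the gap cannot be closed along the route you propose.

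The paper's proof avoids the linear-character issue: since $(1-z)(1+z)=0$, the element $1+z$ is a zero divisor in $k[G]$ once $z\neq 1$, so $k[G]/k[G](1+z)$ is a nonzero representation, and any simple summand $V$ of it has $z$ acting by $-1$. This always produces a simple ``odd'' object, with no control on its dimension. Note, however, that your computation of the braiding is the accurate one: for an odd simple $V$ one gets $c_{V,V}=-\tau_{V,V}$ (minus the swap), which literally equals $-id_{V\ot V}$ only when $\dim V=1$; for the two-dimensional $Q_8$-representation above, $c_{V,V}\neq -id_{V\ot V}$. So the equation in the statement must be read as ``$z$ acts by $-1$ on $V$'' (equivalently $c_{V,V}=-\tau_{V,V}$) for the paper's argument, and indeed the lemma itself, to be correct; under the literal reading, your instinct that only one-dimensional objects can satisfy the displayed equation is sound, and that is precisely why your construction cannot be repaired without changing what is being constructed. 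In short: same approach, but the existence step is a genuine gap, and the correct fix is the zero-divisor argument on $k[G]$ rather than a linear character.
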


\begin{proof}
Take $\msc{E}=Z_2(\msc{C})$.  Then the fusion category $\bar{\msc{E}}$ generated by the simples in $\msc{E}$ is of the form $\rep(G,R_z)$, with $\mrm{ord}(z)=2$ whenever $\msc{E}$ is non-Tannakian.  When $z\neq 1$ then $1+z$ is not a unit in the group ring $k[G]$, since $(1-z)(1+z)=0$.  So the quotient representation $k[G]/k[G](1+z)$ is non-zero and any simple summand $V$ of this representation is such that $c_{V,V}=-id_{V\ot V}$.  Thus, if $\mrm{Irr}(\msc{E})$, or more generally $\mrm{Irr}(\msc{C})$, has no solutions to the equation $c_{V,V}=-id_{V\ot V}$ then $\msc{E}$ must be Tannakian.
\end{proof}

Following~\cite{etingof02}, we define the (braided) quasiexponent $\mrm{qexp}_\mrm{br}(\msc{C})$ of a braided tensor category $\msc{C}$ is the minimal integer $N$ such that the double braiding $(c^2_{V,W})^N$ is a unipotent automorphism, at arbitrary $V$ and $W$.  This number was shown to be finite at~\cite[Theorem.~4.1]{etingof02}.\footnote{In~\cite{etingof02}, Etingof shows precisely that the double braiding is unipotent at any given pair of objects.  However, unipotency at each pair of simples implies global unipotency.  Also, we slightly deviate from~\cite{etingof02}, where the quasiexponent is defined via the center $Z(\msc{C})$, which requires no braiding assumption on $\msc{C}$.  Whence our use of the term ``braided" quasiexponent.} 

\begin{lemma}
If $\mrm{qexp}_\mrm{br}(\msc{C})$ is odd then the M\"uger center of $\msc{C}$ is Tannakian.
\end{lemma}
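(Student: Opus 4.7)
The plan is to verify the contrapositive: if $Z_2(\msc{C})$ is not Tannakian, then $\mrm{qexp}_{\mrm{br}}(\msc{C})$ must be even. By Deligne's theorem and the discussion in Section~\ref{sect:Mu}, non-Tannakianness of the M\"uger center amounts to an identification $\overline{Z_2(\msc{C})} \simeq \rep(G,R_z)$ with $z \in G$ a central involution, $z \neq 1$. By Lemma~\ref{lem:1529}, this produces a simple object $V \in \mrm{Irr}(\msc{C})$ with $c_{V,V} = -\mrm{id}_{V\ot V}$, which can be taken to lie in $Z_2(\msc{C})$ and which corresponds to a simple odd-parity object in $\rep(G,R_z)$.

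First I would reformulate the braided quasiexponent in terms of a canonical twist. In characteristic $0$, a braided finite tensor category admits a natural balancing transformation $\theta:\mrm{id}_\msc{C}\to \mrm{id}_\msc{C}$, produced from the Drinfeld isomorphism together with the pivotal structure, satisfying the ribbon-type identity $c^2_{V,W} = \theta_{V\ot W}\circ(\theta_V\ot\theta_W)^{-1}$. Using this identity and Etingof's finiteness theorem~\cite[Theorem~4.1]{etingof02}, the condition ``$(c^2_{V,W})^N$ is unipotent for all $V, W$" becomes equivalent to the condition ``$\theta_V^N$ is unipotent for every simple $V$". In particular $\mrm{qexp}_\mrm{br}(\msc{C})$ equals the least common multiple of the multiplicative orders of the eigenvalues of $\theta$ on simples.

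Second, I would pin down the twist on the distinguished object $V\in Z_2(\msc{C})$. Restriction of $\theta$ to the M\"uger center is a balancing on $\overline{Z_2(\msc{C})}\simeq \rep(G,R_z)$; the canonical such balancing is given by the action of the central element $z$, and on the odd simple $V$ this yields $\theta_V = -\mrm{id}_V$. Alternatively this can be extracted from the relation $\theta_{V\ot V}=(\theta_V\ot\theta_V)\circ c^2_{V,V}=\theta_V\ot\theta_V$ (since $c^2_{V,V}=\mrm{id}$) together with the fact that $\dim(V)=-\dim_k(V)$ in the super-Tannakian sense. Consequently $-1$ appears as an eigenvalue of $\theta$.

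Third, since $-1$ has multiplicative order $2$, the computation above forces $2\mid \mrm{qexp}_\mrm{br}(\msc{C})$, contradicting the hypothesis that it is odd. Hence no $V$ with $c_{V,V}=-\mrm{id}$ can exist in $\mrm{Irr}(\msc{C})$, and Lemma~\ref{lem:1529} yields Tannakianness of $Z_2(\msc{C})$. The main obstacle is the first step: the identification of $\mrm{qexp}_\mrm{br}(\msc{C})$ with the l.c.m.\ of orders of twist eigenvalues requires a canonical balancing on $\msc{C}$, which is delicate for a general braided tensor category. In practice one argues by lifting to $Z(\msc{C})$, where a canonical ribbon structure is available, and then restricting the eigenvalue computation back down to $V\in Z_2(\msc{C})\subset \msc{C}$ via the braided embedding $\msc{C}\to Z(\msc{C})$, $X\mapsto (X,c_{X,-})$.
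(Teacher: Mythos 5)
Your proof takes a genuinely different route from the paper's, and the detour through the twist is where it breaks down. The paper's argument is two lines: it asserts (a) that any non-Tannakian symmetric category $\msc{E}$ has $\mrm{qexp}_\mrm{br}(\msc{E})=2$, and (b) that for any braided embedding $\msc{E}\to\msc{C}$ one has $\mrm{qexp}_\mrm{br}(\msc{E})\mid \mrm{qexp}_\mrm{br}(\msc{C})$; since the M\"uger center embeds braided-ly, oddness of $\mrm{qexp}_\mrm{br}(\msc{C})$ then rules out a super-Tannakian M\"uger center. No balancing is invoked anywhere.

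The gap in your argument is the asserted identification of $\mrm{qexp}_\mrm{br}(\msc{C})$ with the least common multiple of the multiplicative orders of the eigenvalues of a balancing $\theta$ on simples. Set aside the (real) issue that a general finite braided tensor category is not known to admit a pivotal structure, hence not known to admit any balancing. Even granting one, the implication you need --- ``$\theta$ has eigenvalue $-1$ on some simple $V$, hence $2\mid\mrm{qexp}_\mrm{br}(\msc{C})$'' --- fails, because a balancing is auxiliary data not determined by the monodromies. The object $V$ you produce is transparent: $c^2_{V,W}=id_{V\ot W}$ for every $W$, so the ribbon identity only tells you $\theta_{V\ot W}=\theta_V\ot\theta_W$, which is perfectly consistent with $\theta_V=-id_V$ and imposes no condition on the unipotency of any power of any double braiding. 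Concretely, $\rep(\mbb{Z}/2\mbb{Z})$ with its trivial (Tannakian) braiding carries the balancing given by the action of the generator, with eigenvalue $-1$ on the sign representation, while every monodromy is the identity; a twist eigenvalue of $-1$ simply does not see $\mrm{qexp}_\mrm{br}$. Your proposed fix of lifting to $Z(\msc{C})$ computes a different invariant --- Etingof's original quasiexponent, defined via the center, which for $\rep(G)$ equals $\exp(G)$ while $\mrm{qexp}_\mrm{br}(\rep(G))=1$ --- so the eigenvalue computation there cannot be ``restricted back down'' to the braided quasiexponent of $\msc{C}$. The repair is to argue as the paper does: show directly that the super-Tannakian subcategory has braided quasiexponent $2$ (this is where the sign in the self-braiding $c_{V,V}=-id_{V\ot V}$, rather than a twist eigenvalue, must enter) and then use that a braided embedding can only divide the quasiexponent.
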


\begin{proof}
For any non-Tannakian symmetric category $\msc{E}$ we have $\mrm{qexp}_\mrm{br}(\msc{E})=2$.  Furthermore, one can readily check that for any braided embedding $\msc{E}\to \msc{C}$ we have $\mrm{qexp}_\mrm{br}(\msc{E})\mid \mrm{qexp}_\mrm{br}(\msc{C})$.  Thus, if the quasiexponent of $\msc{C}$ is odd then $\msc{C}$ admits no braided embedding from a super-Tannakian category.
\end{proof}

To summarize.

\begin{corollary}\label{cor:braidedI}
Suppose $\msc{C}$ is braided and of finite type over a field of characteristic $0$.  Let $\msc{M}$ be an exact $\msc{C}$-module category and consider the dual $\msc{C}^\ast_{\msc{M}}$.  The category $\msc{C}^\ast_{\msc{M}}$ is of finite type provided any of the following properties hold for $\msc{C}$:
\begin{itemize}
\item $\C$ is odd-dimensional.
\item The quasiexponent $\mrm{qexp}_\mrm{br}(\msc{C})$ is odd.
\item There are no simples in $Z_2(\msc{C})$ with $c_{V,V}=-id_{V\ot V}$.
\item There are no simples in $\msc{C}$ with $c_{V,V}=-id_{V\ot V}$.
\item $Z_2(\C)$ is trivial, that is $\C$ is non-degenerate.
\item $Z_2(\C)$ is Tannakian.
\end{itemize}
\end{corollary}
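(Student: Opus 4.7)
The plan is essentially bookkeeping: each of the six listed conditions reduces, via a previously established result, to the hypothesis of Theorem~\ref{thm:braidedI} that $Z_2(\msc{C})$ is Tannakian. Once this reduction is made, the finite-type conclusion (and the bound $\Kdim\msc{C}^\ast_\msc{M}\leq 2\Kdim\msc{C}$) follows directly from Theorem~\ref{thm:braidedI}. So the strategy is to run down the list and cite the appropriate lemma in each case.

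First I would dispense with the two trivial cases. Condition (vi) is literally the hypothesis of Theorem~\ref{thm:braidedI}, so nothing is required. Condition (v), non-degeneracy, gives $Z_2(\msc{C})\cong \mrm{Vect}\cong \rep(1)$, which is Tannakian in a trivial way; alternatively one could invoke Proposition~\ref{prop:Cduals} directly here, bypassing Theorem~\ref{thm:braidedI}, but it is not necessary.

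Next I would handle the remaining four conditions by direct citation. For condition (i), odd Frobenius--Perron dimension of $\msc{C}$ implies $Z_2(\msc{C})$ is Tannakian by Corollary~\ref{cor:2}. For condition (ii), odd braided quasiexponent of $\msc{C}$ implies $Z_2(\msc{C})$ is Tannakian by the unnamed lemma immediately preceding Corollary~\ref{cor:braidedI} (which observes that any non-Tannakian symmetric category has quasiexponent $2$, and that quasiexponents divide along braided embeddings). For condition (iv), the absence of simples $V$ in $\msc{C}$ satisfying $c_{V,V}=-\mrm{id}_{V\otimes V}$ trivially implies the same statement for the full braided subcategory $Z_2(\msc{C})\subset \msc{C}$, so (iv) reduces to (iii). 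Finally, condition (iii) is precisely the hypothesis of Lemma~\ref{lem:1529}, which yields that $Z_2(\msc{C})$ is Tannakian.

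Having reduced every condition to Tannakian M\"uger center, I would conclude by invoking Theorem~\ref{thm:braidedI}, which gives the finite-type property for $\msc{C}^\ast_\msc{M}$ for every exact module category $\msc{M}$. There is no main obstacle here; the corollary is a consolidation statement, and all the substantive arguments have been carried out in Corollary~\ref{cor:2}, the quasiexponent lemma, Lemma~\ref{lem:1529}, and Theorem~\ref{thm:braidedI} itself.
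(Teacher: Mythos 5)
Your proposal is correct and matches the paper's intent exactly: the corollary is stated as a summary with no separate proof, and the intended argument is precisely your reduction of each bullet to the Tannakian hypothesis of Theorem~\ref{thm:braidedI} via Corollary~\ref{cor:2}, the quasiexponent lemma, and Lemma~\ref{lem:1529} (with the non-degenerate case handled either trivially or by Proposition~\ref{prop:Cduals}).
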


\section{Finite type braided categories III: semisimple M\"uger center}
\label{sect:braidedIII}

We fix $k$ algebraically closed and of characteristic $0$.  We have seen previously that if $\msc{C}$ of finite type and braided with Tannakian M\"uger center, then all duals $\msc{C}^\ast_\msc{M}$ are also of finite type.  We extend this result to allow (more generally) for any semisimple M\"uger center.

\begin{theorem}\label{thm:braidedII}
Let $\msc{C}$ be a braided tensor category of finite type over an algebraically closed field of characteristic $0$. Suppose that the M\"uger center of $\msc{C}$ is semisimple.  Then, for any exact module category $\msc{M}$, the dual category $\msc{C}^\ast_\msc{M}$ is also of finite type and the Krull dimension is bounded as
\[
\Kdim\msc{C}^\ast_\msc{M}\leq 2 \Kdim\msc{C}.
\] 
\end{theorem}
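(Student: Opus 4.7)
The plan is to reduce, via Corollary~\ref{cor:Z2C}, to the statement that $Z(\msc{C})$ is of finite type with $\Kdim Z(\msc{C}) \leq 2\Kdim \msc{C}$. Theorem~\ref{thm:braidedI} already settles the Tannakian case, so the focus is the super-Tannakian case, in which $Z_2(\msc{C}) \cong \rep(G, R_z)$ with $z \neq 1$ a central involution. The first move is to de-equivariantize $\msc{C}$ along the maximal Tannakian subcategory $\rep(G/\langle z\rangle) \hookrightarrow Z_2(\msc{C})$ to obtain $\msc{C}_0 := \msc{C}_{G/\langle z\rangle}$, whose M\"uger center is $\opn{sVec}$. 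By Theorems~\ref{thm:HGC} and~\ref{thm:ZCvZC_G} the finite type property and the Krull dimension of the center transfer between $\msc{C}$ and $\msc{C}_0$, so it is enough to treat the slightly degenerate case $Z_2(\msc{C}_0) \cong \opn{sVec}$.

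To handle that case, I would construct a minimal non-degenerate braided extension $\widetilde{\msc{C}_0} \supset \msc{C}_0$: a braided finite tensor category of Frobenius--Perron dimension $2\FPdim(\msc{C}_0)$ in which $\msc{C}_0$ sits as the trivial component of a $\mbb{Z}/2$-grading, with trivial M\"uger center. In the fusion setting this is the minimal modular extension of Drinfeld--Gelaki--Nikshych--Ostrik, obtained by adjoining an Ising-type invertible object whose half-braiding absorbs the fermion generating $\opn{sVec}$. The $\mbb{Z}/2$-grading forces every self-extension of $\1$ in $\widetilde{\msc{C}_0}$ to remain in $\msc{C}_0$, giving $H^\bullet(\widetilde{\msc{C}_0}, \1) = H^\bullet(\msc{C}_0, \1)$ as algebras; an analogous computation, using that $\msc{C}_{-} \cong g \otimes \msc{C}_0$ for an invertible $g$ in the odd component, shows $\widetilde{\msc{C}_0}$ is of finite type with $\Kdim \widetilde{\msc{C}_0} = \Kdim \msc{C}_0$. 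Applying Proposition~\ref{prop:Cduals} to the non-degenerate $\widetilde{\msc{C}_0}$ then gives $\Kdim Z(\widetilde{\msc{C}_0}) = 2\Kdim \widetilde{\msc{C}_0} = 2\Kdim \msc{C}_0$.

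Finally, to descend from $Z(\widetilde{\msc{C}_0})$ to $Z(\msc{C}_0)$, I would invoke the Gelaki--Naidu--Nikshych description of the center of a graded extension: there is a $\mbb{Z}/2$-graded braided category $Z_{\mbb{Z}/2}(\msc{C}_0)$ whose identity component is $Z(\msc{C}_0)$ and whose $\mbb{Z}/2$-equivariantization is $Z(\widetilde{\msc{C}_0})$. Theorem~\ref{thm:HGC} then transfers finite type and Krull dimension to $Z_{\mbb{Z}/2}(\msc{C}_0)$, and the graded identification of cohomology of the previous paragraph (since $\1 \in Z(\msc{C}_0)$ is concentrated in the identity component of $Z_{\mbb{Z}/2}(\msc{C}_0)$) transfers them in turn to $Z(\msc{C}_0)$, yielding the required bound.

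The main obstacle is the construction and finite-type property of $\widetilde{\msc{C}_0}$ in the absence of semisimplicity: outside the fusion world the existence of minimal modular extensions is not classical, and the Ising-type adjoining argument must be carried out carefully in the presence of non-simple projectives. This is presumably where the input of Eric Rowell acknowledged in the paper becomes essential, and everything else reduces to a bookkeeping application of the machinery of Sections~\ref{sect:HC}--\ref{sect:braidedII}.
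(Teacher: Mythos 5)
Your first reduction (de-equivariantize by the maximal Tannakian subcategory of $Z_2(\msc{C})$, using Theorems~\ref{thm:HGC} and~\ref{thm:ZCvZC_G}, to reach the slightly degenerate case $Z_2(\msc{C}_0)\simeq sVect$) is exactly the paper's, and your instinct that minimal non-degenerate extensions and the $\mbb{Z}/2$-grading are the right tools is also correct. But there is a genuine gap, and it is precisely the one you flag yourself: your argument requires \emph{constructing} a minimal non-degenerate extension $\widetilde{\msc{C}_0}\supset\msc{C}_0$ and proving it is of finite type. No such existence theorem is available: even for braided \emph{fusion} categories the existence of minimal non-degenerate extensions of slightly degenerate categories is a deep result (and in the Tannakian-center case there are counterexamples to existence), and nothing of the sort is known for non-semisimple finite tensor categories. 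Two smaller points reinforce that this step cannot be repaired as stated: an ``Ising-type'' object $\sigma$ with $\sigma\ot\sigma\cong\1\oplus f$ is not invertible, and the odd component of a minimal extension need not be of the form $g\ot\msc{C}_0$ for an invertible $g$ (it is not even true for the Ising extensions of $sVect$ itself), so the cohomology comparison $H^\b(\widetilde{\msc{C}_0},\1)=H^\b(\msc{C}_0,\1)$ you want does not follow from the mechanism you describe — though it would follow from Lemma~\ref{lem:Gext} once a finite-type $\mbb{Z}/2$-extension were in hand.

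The paper's proof (Lemma~\ref{lem:1728}) sidesteps the existence problem by applying the minimal-extension picture one level up, where the extension already exists: the Drinfeld center $Z(\msc{C}_0)$ is non-degenerate for free, and the M\"uger centralizer $Z(\msc{C}_0,sVect)$ of $sVect$ inside it receives a surjective tensor functor from $\msc{C}_0\bt\msc{C}_0^{\mrm{rev}}$ (Proposition~\ref{prop:dno}), hence is of finite type of Krull dimension $2\Kdim\msc{C}_0$ by Proposition~\ref{prop:fgcat} and Corollary~\ref{cor:1679}. A Frobenius--Perron dimension count shows $Z(\msc{C}_0,sVect)\hookrightarrow Z(\msc{C}_0)$ is itself a minimal non-degenerate extension, so by the fermionic grading (Lemma~\ref{lem:Z2ferm}) $Z(\msc{C}_0)$ is a $\mbb{Z}/2\mbb{Z}$-extension of $Z(\msc{C}_0,sVect)$ and Lemma~\ref{lem:Gext} transfers the finite type property. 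If you can reorganize your argument so that the category being minimally extended is $Z(\msc{C}_0,sVect)=\msc{C}_+\vee\msc{C}_-$ rather than $\msc{C}_0$, the rest of your outline (including the descent via graded extensions) goes through.
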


We first discuss $G$-extensions, and minimal non-degenerate extensions for slightly degenerate categories.  We then employ some results of~\cite{davydovnikshychostrik13} to prove Theorem~\ref{thm:braidedII}.  After proving Theorem~\ref{thm:braidedII}, we briefly discuss means of determining semisimplicity of M\"uger centers.

\subsection{Cohomology and $G$-extensions}
\label{sect:Gext}

Fix $G$ a finite group.  A $G$-grading on a tensor category $\msc{D}$ is a choice of decomposition $\msc{D}=\oplus_{g\in G}\msc{D}_g$ such that $\msc{D}_g\ot\msc{D}_h\subset \msc{D}_{gh}$, where the components are full, orthogonal, $k$-linear subcategories of $\msc{D}$.  We call such a grading on $\msc{D}$ faithful if the $\msc{D}_g$ are nonzero for each $g\in G$.
\par

An embedding $i:\msc{C}\to \msc{D}$ of tensor categories is called a {\it $G$-extension} if $\msc{D}$ comes equipped with a faithful grading by $G$ under which $\msc{C}$ is identified with the neutral component $\msc{D}_1$ via $i$.

\begin{lemma}\label{lem:Gext}
Let $i:\msc{C}\to \msc{D}$ be a $G$-extension. Then $\msc{C}$ is of finite type if and only if $\msc{D}$ is of finite type and, in this case, the Krull dimensions agree.
\end{lemma}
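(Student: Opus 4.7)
The plan is to establish the algebra isomorphism $H^\b(\msc{D},\mbf{1}) = H^\b(\msc{C},\mbf{1})$ together with, for every $V \in \msc{D}$ with homogeneous decomposition $V = \bigoplus_{g\in G} V_g$ (each $V_g \in \msc{D}_g$), a compatible isomorphism of graded modules $H^\b(\msc{D},V) \cong H^\b(\msc{C}, V_1)$, where $V_1$ is the neutral component lying in $\msc{D}_1 = \msc{C}$. Granted these identifications, both directions of the claim follow at once: if $\msc{C}$ is of finite type then $H^\b(\msc{D},\mbf{1})$ is finitely generated and each $H^\b(\msc{D}, V) \cong H^\b(\msc{C}, V_1)$ is finite over it; conversely, if $\msc{D}$ is of finite type, then for any $W \in \msc{C}$ one has $H^\b(\msc{C}, W) = H^\b(\msc{D}, W)$, finite over $H^\b(\msc{C}, \mbf{1}) = H^\b(\msc{D}, \mbf{1})$. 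The equality of Krull dimensions is immediate from the identification of algebras.

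The key step is to build a projective resolution of $\mbf{1}$ in $\msc{D}$ that lies entirely in $\msc{C}$. Let $P \twoheadrightarrow \mbf{1}$ be a projective cover in $\msc{D}$ and decompose $P = \bigoplus_g P_g$. The orthogonality $\Hom_\msc{D}(\msc{D}_g,\msc{D}_h) = 0$ for $g \neq h$ forces the surjection onto $\mbf{1} \in \msc{D}_1$ to factor through the summand $P_1$, which is projective in $\msc{D}$ as a direct summand of a projective, and also projective in $\msc{C}$ since $i$ is a full embedding. The kernel of $P_1 \twoheadrightarrow \mbf{1}$ is a subobject of $P_1$ in $\msc{D}$; applying the orthogonality to its homogeneous decomposition shows that any such subobject must itself lie in $\msc{D}_1 = \msc{C}$. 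Iterating produces the desired resolution $P_\b \to \mbf{1}$, simultaneously projective in $\msc{D}$ and in $\msc{C}$.

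With this resolution in hand, the orthogonality of the grading yields $\Hom_\msc{D}(P_n, V) = \Hom_\msc{D}(P_n, V_1) = \Hom_\msc{C}(P_n, V_1)$ for every $V \in \msc{D}$ and every $n$, and the proposed isomorphism $\Ext^\b_\msc{D}(\mbf{1}, V) \cong \Ext^\b_\msc{C}(\mbf{1}, V_1)$ follows upon taking cohomology. Since the Yoneda product is computed from this same resolution on both sides, the identification is one of algebras in the case $V = \mbf{1}$ and one of $H^\b(\msc{C},\mbf{1})$-modules in general. The only conceivable obstacle is the transfer of projectivity between $\msc{C}$ and $\msc{D}$, and this is resolved by the orthogonal decomposition of projectives together with fullness of $i$; once this is dispatched the remaining assembly is formal.
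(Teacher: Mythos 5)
Your proof is correct and rests on exactly the same observation as the paper's: the orthogonal decomposition $\msc{D}=\oplus_{g}\msc{D}_g$ together with $\mbf{1}\in\msc{D}_1=\msc{C}$ forces all extensions from $\mbf{1}$ to live in the neutral component. The paper states this directly as the composite isomorphism $H^\b(\msc{C},pV)\to H^\b(\msc{D},pV)\to H^\b(\msc{D},V)$, whereas you verify it by building a projective resolution of $\mbf{1}$ supported in $\msc{D}_1$; this is just an explicit unpacking of the same argument.
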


\begin{proof}
Let $p:\msc{D}\to \msc{D}_1=\msc{C}$ be the $k$-linear projection onto the neutral component.  Since $\msc{D}$ decomposes as an abelian category $\msc{D}=\oplus_{g\in G}\msc{D}_g$, and since $\mbf{1}$ is an object in $\msc{C}$, we see that the inclusion
\begin{equation}\label{eq:103}
H^\b(\msc{C},pV)\overset{i}\to H^\b(\msc{D},pV)\to H^\b(\msc{D},V)
\end{equation}
is an isomorphism for each $V$ in $\msc{D}$.  Indeed, both of the maps in the composition are isomorphisms. In particular, the inclusion $i:\msc{C}\to \msc{D}$ identifies the algebras $H^\b(\msc{C},\1)$ and $H^\b(\msc{D},\1)$, and~\eqref{eq:103} is an isomorphism of $H^\b(\msc{C},\1)=H^\b(\msc{D},\1)$-modules.  So we see that $\msc{C}$ is of finite type if and only if $\msc{D}$ is of finite type, and that the Krull dimensions agree in this case.
\end{proof}

\begin{remark}
One can show furthermore that for any $G$-extension $\msc{C}\to\msc{D}$, the double $Z(\msc{C})$ is of finite type if and only if $Z(\msc{D})$ is of finite type.  Indeed, there is a $k$-linear decomposition $Z(\msc{D})=Z_1\oplus Z_1^\perp$, where $Z_1$ is the preimage of $\msc{C}$ along the forgetful functor $Z(\msc{D})\to \msc{D}$, and one uses~\cite[Corollary 3.7]{gelakinaidunikshych09} to obtain $Z(\msc{C})$ as a de-equivariantization of $Z_1$ (cf.~\cite[Theorem 2.4]{gelakisebbag}).
\end{remark}

\subsection{Minimal extensions and the fermionic grading~\cite{bghnprw} (cf.~\cite[Section\ 3]{gelakisebbag})}

Consider a braided tensor category $\msc{C}$ with M\"uger center isomorphic to $sVect$.  By a minimal non-degenerate extension of $\msc{C}$ we mean a braided embedding $\msc{C}\to \msc{D}$ for which $\msc{D}$ is non-degenerate and of Frobenius-Perron dimension $2 \opn{FPdim}(\msc{C})$.  The $2$ here comes from the Frobenius-Perron dimension of the degeneracy $sVec\cong Z_2(\msc{C})$.
\par

Fix $\msc{C}$ a braided tensor category with M\"uger center isomorphic to $sVect$ and $\msc{C}\to \msc{D}$ a minimal non-degenerate extension.  Let $f\in G(\msc{C})$ be the unique non-trivial simple in $Z_2(\msc{C})$.  Tensoring by $f$ is an automorphism of $\msc{D}$, and the double braiding $c^2_{f,-}$ is a natural automorphism of the functor $f\ot-:\msc{D}\to \msc{D}$.  We take $\mbf{c}\in \mrm{Aut}_{\mrm{Fun}}(id_\msc{D})$ to be the unique natural automorphism of the identity functor so that $f\ot \mbf{c}=c^2_{f,-}$.

\begin{lemma}
The automorphism $\mbf{c}$ is such that $\mbf{c}^2=1$.
\end{lemma}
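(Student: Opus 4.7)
The plan is to reduce the statement $\mbf{c}^2 = 1$ to the claim $(c^2_{f,V})^2 = id_{f\otimes V}$ for every simple object $V$ of $\msc{D}$. Via the defining relation $f\otimes \mbf{c}_V = c^2_{f,V}$ and the invertibility of tensoring by $f$, this is equivalent to $\mbf{c}_V^2 = id_V$ on simples, which will upgrade to $\mbf{c}^2 = id$ as a natural transformation by semisimplicity of the fusion category $\msc{D}$.

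The main ingredient is that $f\otimes f \cong \mathbf{1}$ in $\msc{D}$, which holds because $f$ is the odd generator of $Z_2(\msc{C})\cong sVect$. Naturality of the double braiding applied to this isomorphism gives $c^2_{f\otimes f, V} = id_{(f\otimes f)\otimes V}$ for each $V$. On the other hand, expanding via the hexagon axioms for $c$ (with associators suppressed) produces
\[
c^2_{f\otimes f, V} = (id_f\otimes c_{V,f})(c^2_{f,V}\otimes id_f)(id_f\otimes c_{f,V}),
\]
and rearranging yields the key identity
\[
c^2_{f,V}\otimes id_f = id_f\otimes (c^2_{V,f})^{-1}
\]
as endomorphisms of $f\otimes V\otimes f$.

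For the final step, I will use the invertibility of $f$. When $V$ is simple, both $f\otimes V$ and $V\otimes f$ are simple, and the braidings $c_{f,V}:f\otimes V\to V\otimes f$ and $c_{V,f}^{-1}:f\otimes V\to V\otimes f$ are two isomorphisms between these simples, hence differ by a scalar; a brief computation then gives $c^2_{f,V} = c^2_{V,f} = \beta_V\cdot id$ for a single scalar $\beta_V\in k^\times$. Substituting into the key identity collapses it to $\beta_V = \beta_V^{-1}$, so $\beta_V^2 = 1$ and $\mbf{c}_V^2 = id_V$ for every simple $V$. I do not anticipate any genuine obstacle here; the only minor point requiring care is the bookkeeping of associators and hexagon terms when expanding $c^2_{f\otimes f, V}$, which is most easily managed by the standard reduction to a strict monoidal structure.
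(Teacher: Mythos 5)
Your derivation of the key identity $c^2_{f,V}\otimes id_f = id_f\otimes (c^2_{V,f})^{-1}$ from $f\otimes f\cong\1$ and the hexagon is correct and is exactly the computation the paper performs. The genuine gap is in your final step: you reduce to simple objects and then invoke ``semisimplicity of the fusion category $\msc{D}$'' to upgrade $\mbf{c}_V^2=id_V$ from simples to a statement about the natural transformation $\mbf{c}^2$. But $\msc{D}$ is not a fusion category in this lemma: it is a minimal non-degenerate extension of a finite (generally non-semisimple) braided tensor category, and in the application (Lemma~\ref{lem:1728}) one takes $\msc{D}=Z(\msc{C})$ for $\msc{C}$ of finite type, which is semisimple only in degenerate cases the paper is not interested in. For a non-semisimple finite abelian category, a natural endomorphism of the identity functor is \emph{not} determined by its values on simple objects (for $\rep\bigl(k[x]/(x^2)\bigr)$ such endomorphisms form the two-dimensional algebra $k[x]/(x^2)$, of which only a one-dimensional quotient is visible on the unique simple), so $\mbf{c}_V^2=id_V$ for all simple $V$ does not imply $\mbf{c}^2=1$. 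The Schur-type argument producing the scalar $\beta_V$ fails for the same reason once $V$ is not simple.

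The fix is short and stays entirely within your framework: for \emph{arbitrary} $V$, naturality of $c_{f,-}$ applied to the morphism $\mbf{c}_V\colon V\to V$ gives $c_{f,V}\circ(f\otimes\mbf{c}_V)=(\mbf{c}_V\otimes f)\circ c_{f,V}$, whence $c^2_{V,f}=c_{f,V}\,c_{V,f}=c_{f,V}\,c^2_{f,V}\,c_{f,V}^{-1}=\mbf{c}_V\otimes id_f$. Substituting this and $c^2_{f,V}=f\otimes\mbf{c}_V$ into your key identity yields $f\otimes\mbf{c}_V\otimes f=f\otimes\mbf{c}_V^{-1}\otimes f$ as endomorphisms of $f\otimes V\otimes f$, and cancelling the invertible object $f$ on both sides gives $\mbf{c}_V^2=id_V$ for every object $V$. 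This is, in substance, the chain of equalities in the paper's proof, which never restricts to simples.
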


\begin{proof}
Let $V$ be an arbitrary object in $\msc{D}$.  Since $f\ot f\cong \1$ we have
\[
\begin{array}{rl}
f^2\ot id_{V}=c^2_{f^2,V}&=(f\ot c_{V,f})(c_{V,f}\ot f)(c_{f,V}\ot f)(f\ot c_{f,V})\\
&=(f\ot c_{V,f})(c^2_{f,V}\ot f)(f\ot c_{f,V})\\
&=(f\ot c_{V,f})(f\ot \mbf{c}_V\ot f)(f\ot c_{f,V})\\
&=(f\ot c_{V,f})(f\ot c_{f,V})(f^{2}\ot \mbf{c}_V)\\
&=f^{2}\ot \mbf{c}_V^2.
\end{array}
\]
In the above calculation, we have employed MacLane's strictness theorem to ignore the associators. Since the above equations holds at all $V$ in $\msc{D}$, we find $\mbf{c}^2=1$.
\end{proof}

The global operators $(1\pm \mbf{c})$ satisfy
\[
(1-\mbf{c})(1+\mbf{c})=0,\ \ (1+\mbf{c})+(1-\mbf{c})=2,\ \ (1\pm\mbf{c})^2=2(1\pm\mbf{c}).
\]
Hence the global operators $p_{\pm 1}:=\frac{1}{2}(1\pm \mbf{c})$ provide orthogonal idempotents which sum to $1$.  We therefore get a $k$-linear splitting
\[
\msc{D}=\msc{D}_1\oplus \msc{D}_{-1},\ \ \text{where}\ \ \msc{D}_{\pm 1}=p_{\pm 1}\msc{D}.
\]

\begin{lemma}[{cf.~\cite{bghnprw}}]
The decomposition $\msc{D}=\msc{D}_1\oplus \msc{D}_{-1}$ is a faithful $\mbb{Z}/2\mbb{Z}$-grading.  In addition $\msc{C}\subset \msc{D}_1$.
\end{lemma}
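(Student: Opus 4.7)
My plan has three pieces: showing the decomposition respects tensor products, placing $\msc{C}$ in the neutral component, and checking that both components are nonzero.

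The crux of the argument is to show that $\mbf{c}$ is a \emph{monoidal} natural automorphism of the identity functor on $\msc{D}$, i.e.\ $\mbf{c}_{V\ot W}=\mbf{c}_V\ot\mbf{c}_W$ for all $V,W$ in $\msc{D}$. I would verify this by a direct hexagon computation. Suppressing associators via Mac Lane's coherence theorem, I expand
\[
c_{f,V\ot W}=(id_V\ot c_{f,W})(c_{f,V}\ot id_W),\quad c_{V\ot W,f}=(c_{V,f}\ot id_W)(id_V\ot c_{W,f})
\]
and compose to obtain $c^2_{f,V\ot W}=(c_{V,f}\ot id_W)(id_V\ot c^2_{f,W})(c_{f,V}\ot id_W)$. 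Substituting $c^2_{f,W}=id_f\ot\mbf{c}_W$ and observing that this central factor commutes with $c_{f,V}\ot id_W$ (as the two act on disjoint tensor factors), I arrive at
\[
c^2_{f,V\ot W}=(c^2_{f,V}\ot id_W)(id_{f\ot V}\ot\mbf{c}_W)=f\ot(\mbf{c}_V\ot\mbf{c}_W).
\]
Since $f\ot-$ is an autoequivalence of $\msc{D}$ (as $f\ot f\cong\mbf{1}$), the defining relation $f\ot\mbf{c}_{V\ot W}=c^2_{f,V\ot W}$ then yields $\mbf{c}_{V\ot W}=\mbf{c}_V\ot\mbf{c}_W$.

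With monoidality in hand, the grading property is immediate: if $\mbf{c}_V=\epsilon\cdot id_V$ and $\mbf{c}_W=\eta\cdot id_W$ then $\mbf{c}_{V\ot W}=\epsilon\eta\cdot id_{V\ot W}$, so $\msc{D}_\epsilon\ot\msc{D}_\eta\subset\msc{D}_{\epsilon\eta}$. For the inclusion $\msc{C}\subset \msc{D}_1$: any $V$ in $\msc{C}$ satisfies $c^2_{f,V}=id_{f\ot V}$ since $f$ lies in the M\"uger center of $\msc{C}$, forcing $\mbf{c}_V=id_V$. In particular $\mbf{1}\in\msc{D}_1$, so the neutral component is nonzero.

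Finally, I would establish faithfulness by contradiction. If $\msc{D}_{-1}=0$ then $\mbf{c}_V=id_V$ for every $V$ in $\msc{D}$, which by the defining relation forces $c^2_{f,V}=id_{f\ot V}$ for all $V$, i.e.\ $f\in Z_2(\msc{D})$. But non-degeneracy of $\msc{D}$ gives $Z_2(\msc{D})\simeq Vect$, forcing $f\cong\mbf{1}$ and contradicting that $f$ is a non-trivial simple. The main obstacle is the hexagon bookkeeping of the first step, which is formal but must be handled carefully; everything afterwards follows cleanly from monoidality of $\mbf{c}$ together with the non-degeneracy hypothesis on the minimal extension $\msc{D}$.
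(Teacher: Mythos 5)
Your proof is correct and follows the same route as the paper: the tensor-compatibility of the grading is exactly the paper's appeal to "the braid relation" (you simply write out the hexagon computation showing $\mbf{c}$ is monoidal), and your arguments for $\msc{C}\subset\msc{D}_1$ and for faithfulness via non-degeneracy of $\msc{D}$ coincide with the paper's. The only item you leave implicit is the orthogonality of $\msc{D}_1$ and $\msc{D}_{-1}$ as $k$-linear subcategories, which the paper records via the idempotents $p_{\pm1}$ but which was already set up before the lemma.
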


\begin{proof}
The fact that $p_{\pm 1}$ are orthogonal idempotents implies that there are no maps, and no non-trivial extensions, between objects in $\msc{D}_1$ and objects in $\msc{D}_{-1}$.  So the decomposition is a decomposition of $\msc{D}$ as a $k$-linear category.  The fact that $\msc{D}_{a}\msc{D}_b=\msc{D}_{ab}$ follows from the braid relation.
\par

Since $f$ is in the M\"uger center of $\msc{C}$ we have $c^2_{f,-}|_\msc{C}=f\ot id$ and hence $\mbf{c}|_\msc{C}=id_\msc{C}$.  So $\msc{C}\subset \msc{D}_1$.  To see that the grading is faithful, i.e.\ that $\msc{D}_{-1}\neq 0$, we note that if $\msc{D}_{-1}$ vanished then $f$ would provide a non-trivial simple object in the M\"uger center of $\msc{D}$, which cannot occur since $\msc{D}$ is non-degenerate.
\end{proof}

We call the above defined $\mbb{Z}/2\mbb{Z}$-grading on the minimal non-degenerate extension $\msc{D}$ the {\it fermionic grading}.  The existence of such a grading in the fusion setting was established first in~\cite{bghnprw}.

\begin{lemma}[{\cite[Proposition\ 8.20]{etingofnikshychostrik05}}]\label{lem:235}
If $\msc{D}$ is faithfully graded by a finite group $G$ then $\opn{FPdim}(\msc{D})=|G|\opn{FPdim}(\msc{D}_1)$.
\end{lemma}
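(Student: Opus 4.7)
The plan is to extract the equality of dimensions across graded components from the Perron--Frobenius eigenvector in the Grothendieck ring $\mrm{Gr}(\msc{D})$. Recall that for any finite tensor category the regular element $R_\msc{D}=\sum_{V\in\mrm{Irr}(\msc{D})}\opn{FPdim}(V)[P(V)]\in\mrm{Gr}(\msc{D})$, where $P(V)$ denotes the projective cover of $V$, has total dimension $\opn{FPdim}(R_\msc{D})=\opn{FPdim}(\msc{D})$ and is characterised (up to positive scalar) by the identity
\[
[X]\cdot R_\msc{D}=\opn{FPdim}(X)\,R_\msc{D}\qquad\text{for every object }X\text{ in }\msc{D}.
\]
This is the only nonelementary input, and I would quote it from the general theory of finite tensor categories (e.g.\ the material underlying \cite{etingofnikshychostrik05}).

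Under the $G$-grading $\msc{D}=\bigoplus_g\msc{D}_g$, each $\msc{D}_g$ is a $k$-linear direct summand of $\msc{D}$; hence the projective cover of a simple object of $\msc{D}_g$ already lies in $\msc{D}_g$, and we may split the regular element as $R_\msc{D}=\sum_{g\in G} R_g$, with
\[
R_g\ :=\ \sum_{V\in\mrm{Irr}(\msc{D}_g)}\opn{FPdim}(V)[P(V)],\qquad \opn{FPdim}(R_g)=\opn{FPdim}(\msc{D}_g).
\]
For $X\in\msc{D}_h$ and $V\in\msc{D}_g$, the product $[X]\cdot[P(V)]$ lies in the summand $\mrm{Gr}(\msc{D}_{hg})$, so comparing graded components in the eigenvalue identity above yields
\[
[X]\cdot R_g\ =\ \opn{FPdim}(X)\,R_{hg}\qquad\text{for all }g,h\in G\text{ and }X\in\msc{D}_h.
\]

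Now apply the ring homomorphism $\opn{FPdim}:\mrm{Gr}(\msc{D})\to\mbb{R}$ to both sides to obtain the scalar equality $\opn{FPdim}(X)\opn{FPdim}(\msc{D}_g)=\opn{FPdim}(X)\opn{FPdim}(\msc{D}_{hg})$. Faithfulness of the grading guarantees that every $\msc{D}_h$ contains a nonzero object $X$, necessarily with $\opn{FPdim}(X)>0$, so we may cancel to conclude $\opn{FPdim}(\msc{D}_g)=\opn{FPdim}(\msc{D}_{hg})$ for all $g,h\in G$. Hence every $\opn{FPdim}(\msc{D}_g)$ equals $\opn{FPdim}(\msc{D}_1)$, and summing over $G$ produces $\opn{FPdim}(\msc{D})=\sum_g\opn{FPdim}(\msc{D}_g)=|G|\opn{FPdim}(\msc{D}_1)$.

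I do not anticipate a serious obstacle here: once one accepts the Perron--Frobenius characterisation of $R_\msc{D}$, the remaining steps---splitting $R_\msc{D}$ along the grading, comparing graded components of the eigenvector equation, and cancelling $\opn{FPdim}(X)$ using faithfulness---are formal. The \emph{mild} technical point to verify is that the $k$-linear decomposition $\msc{D}=\bigoplus_g\msc{D}_g$ passes to a grading of the Grothendieck ring compatible with the tensor product, but this is immediate from the axioms of a $G$-grading.
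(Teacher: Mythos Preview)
Your proof is correct and follows essentially the same approach as the paper: the paper's proof simply refers back to \cite[Proposition~8.20]{etingofnikshychostrik05} and notes that in the finite (non-fusion) setting one should substitute the regular object of \cite[Definition~6.1.6]{egno15} for the element $R$ used there, which is precisely the $R_\msc{D}$ you employ. You have spelled out the details of that argument accurately.
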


\begin{proof}
The proof is just as in~\cite{etingofnikshychostrik05}, except one uses the regular object of~\cite[Definition 6.1.6]{egno15} in place of the element $R$.
\end{proof}

\begin{lemma}\label{lem:Z2ferm}
The embedding $\msc{C}\to \msc{D}_1$ is an equivalence.  That is to say, any minimal non-degenerate extension $\msc{C}\to \msc{D}$ is a $\mbb{Z}/2\mbb{Z}$-extension of $\msc{C}$ with respect to the fermionic grading.
\end{lemma}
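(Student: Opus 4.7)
The plan is to reduce the statement to a dimension count. Since $\msc{C}\to \msc{D}$ is a tensor embedding with image contained in $\msc{D}_1$, and $\msc{D}_1$ is a full tensor subcategory of $\msc{D}$ (being the neutral component of a grading), the inclusion $\msc{C}\to \msc{D}_1$ is a fully faithful tensor functor between finite tensor categories. To show that it is an equivalence it therefore suffices, by the standard fact that a fully faithful tensor functor between finite tensor categories is an equivalence once Frobenius-Perron dimensions agree (see~\cite[Proposition 6.3.3]{egno15}), to verify the equality $\opn{FPdim}(\msc{D}_1)=\opn{FPdim}(\msc{C})$.

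First I would apply Lemma~\ref{lem:235} to the faithful $\mbb{Z}/2\mbb{Z}$-grading $\msc{D}=\msc{D}_1\oplus \msc{D}_{-1}$ established in the preceding lemma, which yields
\[
\opn{FPdim}(\msc{D})=2\opn{FPdim}(\msc{D}_1).
\]
Next, by the definition of a minimal non-degenerate extension we have $\opn{FPdim}(\msc{D})=2\opn{FPdim}(\msc{C})$. Combining these two identities immediately gives $\opn{FPdim}(\msc{D}_1)=\opn{FPdim}(\msc{C})$, and hence the tensor inclusion $\msc{C}\hookrightarrow \msc{D}_1$ is an equivalence.

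There is essentially no hard step here; the only point to double-check is that $\msc{D}_1$ is indeed a finite tensor category (so that the FP-dimension comparison criterion for full tensor embeddings applies). This is automatic: $\msc{D}_1$ is a full $k$-linear Serre subcategory of $\msc{D}$ closed under the tensor product (by the grading property $\msc{D}_a\otimes \msc{D}_b\subset \msc{D}_{ab}$), contains the unit (since $\1\in \msc{C}\subset\msc{D}_1$), and is closed under duals (since duals respect the grading in any $G$-extension). Hence $\msc{D}_1$ inherits the structure of a finite tensor category from $\msc{D}$, and the dimension argument concludes the proof.
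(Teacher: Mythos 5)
Your proof is correct and is essentially identical to the paper's: both use Lemma~\ref{lem:235} to get $\opn{FPdim}(\msc{D})=2\opn{FPdim}(\msc{D}_1)$, combine it with the minimality condition $\opn{FPdim}(\msc{D})=2\opn{FPdim}(\msc{C})$, and conclude via the Frobenius--Perron dimension criterion of~\cite[Proposition 6.3.3]{egno15}. The extra check that $\msc{D}_1$ is itself a finite tensor category is a harmless (and correct) elaboration the paper leaves implicit.
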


\begin{proof}
By Lemma~\ref{lem:235}, $\operatorname{FPdim}(\msc{D})=2\operatorname{FPdim}(\msc{D}_1)$.  By our minimality assumption we also have $\opn{FPdim}(\msc{D})=2\opn{FPdim}(\msc{C})$.  Hence $\opn{FPdim}(\msc{C})=\opn{FPdim}(\msc{D}_1)$.  Agreement of Frobenius-Perron dimension implies that the embedding $\msc{C}\to \msc{D}_1$ is an equivalence~\cite[Proposition\ 6.3.3]{egno15}.
\end{proof}

\subsection{The Drinfeld center of a slightly degenerate category}

Consider a braided tensor category $\msc{C}$ with semisimple M\"uger center $\msc{E}$. Let $Z(\msc{C},\msc{E})$ be the M\"uger centralizer of $\msc{E}$ in $Z(\msc{C})$.  To be clear, we embed $\msc{E}$ in $Z(\msc{C})$ via the map $\msc{C}\to Z(\msc{C})$ induced by the braiding, and we consider the full subcategory $Z(\msc{C},\msc{E})$ of all $X$ in $Z(\msc{C})$ such that $\gamma_{X,V}\gamma_{V,X}=id_{V\ot X}$ for every $V$ in $\msc{E}$, where $\gamma$ denotes the braiding on $Z(\msc{C})$.
\par

The following proposition is a straightforward generalization of~\cite[Corollary\ 4.4]{davydovnikshychostrik13}.

\begin{proposition}\label{prop:dno}
Let $\msc{C}$ be a braided tensor category with M\"uger center $\msc{E}$.  Then
\begin{enumerate}
\item[(i)] The canonical functor $F:\msc{C}\bt\msc{C}^\mrm{rev}\to Z(\msc{C})$ is a surjection onto $Z(\msc{C},\msc{E})$.
\item[(ii)] The inclusion $\msc{E}\to Z(\msc{C},\msc{E})$ is an equivalence onto the M\"uger center of $Z(\msc{C},\msc{E})$.
\item[(iii)] When $\msc{E}\simeq sVect$, the surjection $\msc{C}\bt\msc{C}^\mrm{rev}\to Z(\msc{C},sVect)$ extends to an exact sequence $\mrm{rep}(\mbb{Z}/2\mbb{Z})\to \msc{C}\bt\msc{C}^\mrm{rev}\to Z(\msc{C},sVect)$.
\end{enumerate}
\end{proposition}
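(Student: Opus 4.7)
The plan is to adapt the strategy of~\cite{davydovnikshychostrik13}, combining explicit half-braiding computations with Frobenius-Perron dimension counting. Throughout I will use the description $F(X\bt Y)=X\ot Y$ equipped with the half-braiding $\gamma_{X\ot Y,Z}=(c_{X,Z}\ot \mrm{id}_Y)\circ(\mrm{id}_X\ot c^{-1}_{Z,Y})$, and I write $\iota_1:\msc{C}\to Z(\msc{C})$, $\iota_2:\msc{C}^\mrm{rev}\to Z(\msc{C})$ for the two evident braided embeddings $\iota_1(V)=(V,c_{V,-})$ and $\iota_2(V)=(V,c^{-1}_{-,V})$, with $\msc{C}_1,\msc{C}_2\subset Z(\msc{C})$ their images. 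Note that the image of $F$ coincides with the subcategory $\msc{C}_1\vee\msc{C}_2$ generated by $\msc{C}_1$ and $\msc{C}_2$.

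For (i), the verification that $\msc{C}_1\vee\msc{C}_2\subset Z(\msc{C},\msc{E})$ is a direct hexagon calculation: for $V\in\msc{E}$ the double braiding of $(V,c_{V,-})$ against $F(X\bt Y)$ unwinds via the hexagon axioms and collapses to the identity upon applying the defining relation $c_{-,V}c_{V,-}=\mrm{id}$. For surjectivity, I will observe that $\msc{C}_1$ and $\msc{C}_2$ are mutual M\"uger centralizers inside $Z(\msc{C})$---since an object $X\in Z(\msc{C})$ lies in the centralizer of $\msc{C}_1$ exactly when its half-braiding agrees with $c^{-1}_{-,X}$, the defining feature of $\iota_2$---and that $\msc{C}_1\cap\msc{C}_2=\msc{E}$ since $\iota_1$ and $\iota_2$ agree on $V$ precisely when $c_{V,-}=c^{-1}_{-,V}$. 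A standard join/intersection count inside the non-degenerate category $Z(\msc{C})$ then yields $\opn{FPdim}(\msc{C}_1\vee\msc{C}_2)=\opn{FPdim}(\msc{C})^2/\opn{FPdim}\msc{E}$, while M\"uger's theorem applied to $\msc{E}\subset Z(\msc{C})$ gives the same value for $\opn{FPdim}Z(\msc{C},\msc{E})$, forcing $\msc{C}_1\vee\msc{C}_2=Z(\msc{C},\msc{E})$.

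For (ii), the inclusion $\msc{E}\subset Z_2(Z(\msc{C},\msc{E}))$ is immediate from $\msc{E}\subset Z_2(\msc{C})$ together with the fact that $\iota_1$ and $\iota_2$ assign the same half-braiding to objects of $\msc{E}$. Conversely, any $X\in Z_2(Z(\msc{C},\msc{E}))$ must double-braid trivially against every object of $\msc{C}_1$ and of $\msc{C}_2$: the first condition forces $\gamma_{X,V}=c^{-1}_{V,X}$ for all $V\in\msc{C}$, while the second forces $\gamma_{X,V}=c_{X,V}$, so that $c_{V,X}c_{X,V}=\mrm{id}$ and $X\in\msc{E}$.

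For (iii), when $\msc{E}=sVect$ with generating fermion $f$, a direct hexagon computation shows $F(f\bt f)\cong\1_{Z(\msc{C})}$: after using $f\ot f\cong\1$ and the $\msc{E}$-relation $c^{-1}_{-,f}=c_{f,-}$, the half-braiding on $F(f\bt f)$ reduces to $c_{f\ot f,-}=c_{\1,-}=\mrm{id}$. Hence the invertible object $f\bt f$ generates a copy of $\rep(\mbb{Z}/2\mbb{Z})$ inside $\ker F$, and the FPdim count from (i) gives $\opn{FPdim}(\ker F)=\opn{FPdim}(sVect)=2$, forcing equality. The remaining exact-sequence axioms of~\cite{etingofgelaki17}---normality of $F$ and the required exact indecomposable $\rep(\mbb{Z}/2\mbb{Z})$-module structure---follow the standard template. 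I anticipate the main technical obstacle to be the FPdim bookkeeping in (i): the join/intersection formula for braided subcategories and M\"uger's centralizer theorem are naturally formulated in the fusion setting, so they must be applied carefully in the (possibly non-semisimple) finite braided category $Z(\msc{C})$; this should reduce to the underlying semisimple subcategory structure via the semisimplicity of $\msc{E}$, but requires careful justification.
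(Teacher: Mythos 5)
Your proposal is correct and follows essentially the same route as the paper's proof (both are modelled on~\cite{davydovnikshychostrik13}): identify the image of $F$ with the join $\msc{C}_+\vee\msc{C}_-$ of the two embedded copies of $\msc{C}$, show these are mutual centralizers with intersection $\msc{E}$, and deduce (ii) from $(\msc{C}_+\vee\msc{C}_-)'=\msc{C}_+\cap\msc{C}_-=\msc{E}$. The one substantive difference is the surjectivity step in (i): you compare Frobenius--Perron dimensions of $\msc{C}_+\vee\msc{C}_-$ and of $Z(\msc{C},\msc{E})=\msc{E}'$, whereas the paper argues directly with the double centralizer theorem: once $(\msc{C}_+\vee\msc{C}_-)'=\msc{E}$ is known, non-degeneracy of $Z(\msc{C})$ gives $\msc{C}_+\vee\msc{C}_-=(\msc{C}_+\vee\msc{C}_-)''=\msc{E}'=Z(\msc{C},\msc{E})$ with no dimension count at all. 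Your FPdim route is valid but is really a repackaging of the same input, since the join formula $\opn{FPdim}(\msc{A}\vee\msc{B})=\opn{FPdim}(\msc{A})\opn{FPdim}(\msc{B})/\opn{FPdim}(\msc{A}\cap\msc{B})$ for mutually centralizing subcategories is itself derived from the double centralizer theorem together with the identity $\opn{FPdim}(\msc{A})\opn{FPdim}(\msc{A}')=\opn{FPdim}(\msc{D})$ in the non-degenerate ambient category.

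On the obstacle you flag at the end: your instinct that M\"uger's centralizer theory is classically a fusion-category statement is right, but the fix you suggest --- reducing to ``the underlying semisimple subcategory structure'' via semisimplicity of $\msc{E}$ --- would not work, because $Z(\msc{C})$, the subcategories $\msc{C}_\pm$, and their join are genuinely non-semisimple, so the relevant centralizers and joins cannot be computed inside a fusion subcategory. The correct resolution is a citation: Shimizu~\cite{shimizu18} extends the double centralizer theorem and the dimension formula to arbitrary braided finite tensor categories, and the paper invokes precisely \cite[Theorem~4.9]{shimizu18} for (i)--(ii) and \cite[Lemma~4.8]{shimizu18} for the dimension count in (iii). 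Similarly, for (iii) your appeal to ``the standard template'' is made precise by \cite[Theorem~3.4]{etingofgelaki17}, which converts the surjectivity of $F$, the factoring of $F\circ i$ through $Vect$, and the FPdim equality into exactness of the sequence. With those two citations supplied, your argument is complete.
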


\begin{proof}
Let $\msc{C}_+$ and $\msc{C}_-$ denote the images of $\msc{C}$ and $\msc{C}^\mrm{rev}$ in $Z(\msc{C})$ under the embeddings given by the braidings.  For a subcategory $\msc{D}$, we let $\msc{D}'$ denote its centralizer.
\par

(i) \& (ii) The image of $F$ is the tensor subcategory $\msc{C}_+\vee\msc{C}_-$ generated by $\msc{C}_+$ and $\msc{C}_-$.  We note that the centralizer of $\msc{C}_\pm$ in $Z(\msc{C})$ is $\msc{C}_\mp$, and hence $(\msc{C}_+\vee\msc{C}_-)'\subset \msc{C}_+\cap\msc{C}_-=\msc{E}$.  The opposite containment follows from the fact that $\msc{E}$ centralizes both $\msc{C}_+$ and $\msc{C}_-$.  Hence $(\msc{C}_+\vee\msc{C}_-)'=\msc{E}$ and, since $Z(\msc{C})$ is non-degenerate, we have
\[
Z(\msc{C},\msc{E})=\msc{E}'=(\msc{C}_+\vee\msc{C}_-)''=\msc{C}_+\vee\msc{C}_-,
\]
by \cite[Theorem\ 4.9]{shimizu18}.  It follows that $F$ provides a surjective map onto $Z(\msc{C},\msc{E})$.
\par

(iii) Let $f$ denote the fermion in $sVect\subset \msc{C}$ and let $\zeta$ denote the unique non-trivial simple in $\mrm{rep}(\mbb{Z}/2\mbb{Z})$.  We define a braided embedding $i:\mrm{rep}(\mbb{Z}/2\mbb{Z})\to\msc{C}\bt\msc{C}^\mrm{rev}$ by taking $i(\zeta)=f\bt f$.  Since $\zeta\ot\zeta\cong \1$, $F(\mrm{rep}(\mbb{Z}/2\mbb{Z}))$ is the trivial subcategory in $Z(\msc{C},\msc{E})$.  Whence we have the sequence
\begin{equation}\label{eq:1708}
\mrm{rep}(\mbb{Z}/2\mbb{Z})\overset{i}\to \msc{C}\bt\msc{C}^\mrm{rev}\overset{F}\to Z(\msc{C},\msc{E}),
\end{equation}
with $F$ surjective and $Fi$ factoring through the fiber functor $\mrm{rep}(\mbb{Z}/2\mbb{Z})\to Vect$.
\par

Now, by~\cite[Lemma\ 4.8]{shimizu18} we have
\[
\opn{FPdim}(Z(\msc{C},\msc{E}))=\frac{\opn{FPdim}(\msc{C}_+)\opn{FPdim}(\msc{C}_-)}{\opn{FPdim}(sVect)}=\frac{\opn{FPdim}(\msc{C}\bt\msc{C}^\mrm{rev})}{\opn{FPdim}(\mrm{rep}(\mbb{Z}/2\mbb{Z}))}.
\]
By~\cite[Theorem\ 3.4]{etingofgelaki17} it follows that the sequence~\eqref{eq:1708} is exact.
\end{proof}

For our purposes, one can replace (iii) with the equally useful statement that $F$ induces an equivalence $\msc{C}\bt_{sVect}\msc{C}^\mrm{rev}\cong Z(\msc{C},sVect)$~\cite{davydovnikshychostrik13}.  Here we define the product over $sVect$ as the de-equivariantization of $\msc{C}\bt\msc{C}$ by the Tannakian subcategory $\mrm{rep}(\mbb{Z}/2\mbb{Z})$.

\begin{corollary}\label{cor:1679}
If $\msc{C}$ is of finite type, and the M\"uger center of $\msc{C}$ is equivalent to $sVect$, then $Z(\msc{C},sVect)$ is also of finite type and $\Kdim Z(\msc{C},sVect)=2 \Kdim\msc{C}$.
\end{corollary}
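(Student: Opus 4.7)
The strategy is to realize $Z(\msc{C},sVect)$ as a $\mbb{Z}/2\mbb{Z}$-de-equivariantization of the Deligne product $\msc{C}\bt\msc{C}^\mrm{rev}$ and then combine Lemma~\ref{lem:tp} with Theorem~\ref{thm:HGC}. The key input is the exact sequence
\[
\mrm{rep}(\mbb{Z}/2\mbb{Z})\to \msc{C}\bt\msc{C}^\mrm{rev}\to Z(\msc{C},sVect)
\]
from Proposition~\ref{prop:dno}(iii), which is a relative exact sequence with respect to $Vect$; equivalently, as in the remark following Proposition~\ref{prop:dno}, one has a tensor equivalence $Z(\msc{C},sVect)\simeq (\msc{C}\bt\msc{C}^\mrm{rev})_{\mbb{Z}/2\mbb{Z}}$, where the central embedding $\rep(\mbb{Z}/2\mbb{Z})\hookrightarrow \msc{C}\bt\msc{C}^\mrm{rev}$ sends the nontrivial simple $\zeta$ to $f\bt f$, with $f$ the fermion generating $Z_2(\msc{C})\simeq sVect$.

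First I would apply Lemma~\ref{lem:tp} to the pair $(\msc{C},\msc{C}^\mrm{rev})$. Since $\msc{C}$ is of finite type over the algebraically closed (and in particular perfect) field $k$, and since reversing the braiding does not alter the underlying $k$-linear tensor structure of $\msc{C}$, the reverse $\msc{C}^\mrm{rev}$ is automatically of finite type with the same Krull dimension as $\msc{C}$. Lemma~\ref{lem:tp} then gives that $\msc{C}\bt\msc{C}^\mrm{rev}$ is of finite type and
\[
\Kdim(\msc{C}\bt\msc{C}^\mrm{rev})=\Kdim\msc{C}+\Kdim\msc{C}^\mrm{rev}=2\Kdim\msc{C}.
\]

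Second I would invoke Theorem~\ref{thm:HGC} for the de-equivariantization $\msc{C}\bt\msc{C}^\mrm{rev}\to (\msc{C}\bt\msc{C}^\mrm{rev})_{\mbb{Z}/2\mbb{Z}}\simeq Z(\msc{C},sVect)$: finite type and Krull dimension are preserved under de-equivariantization by a finite group in characteristic $0$, so $Z(\msc{C},sVect)$ is of finite type and
\[
\Kdim Z(\msc{C},sVect)=\Kdim(\msc{C}\bt\msc{C}^\mrm{rev})=2\Kdim\msc{C},
\]
as claimed. The one point worth checking carefully is that Proposition~\ref{prop:dno}(iii) really exhibits $Z(\msc{C},sVect)$ as a de-equivariantization in the precise sense needed for Theorem~\ref{thm:HGC}, i.e.\ that the embedding $\zeta\mapsto f\bt f$ lifts to a braided functor into the M\"uger center of $\msc{C}\bt\msc{C}^\mrm{rev}$. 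This is immediate because $f\in Z_2(\msc{C})$ centralizes everything in $\msc{C}$ (and also everything in $\msc{C}^\mrm{rev}$, which has the same underlying tensor structure), so $f\bt f$ lies in the M\"uger center of the Deligne product. No further obstacle is expected; the corollary is a direct assembly of Proposition~\ref{prop:dno}, Lemma~\ref{lem:tp}, and Theorem~\ref{thm:HGC}.
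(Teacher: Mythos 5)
Your proposal is correct and follows essentially the same route as the paper: both exploit the surjection $F:\msc{C}\bt\msc{C}^\mrm{rev}\to Z(\msc{C},sVect)$ together with Lemma~\ref{lem:tp} for the Krull dimension of the Deligne product, and both deduce $\Kdim Z(\msc{C},sVect)=2\Kdim\msc{C}$ from the identification $H^\b(\msc{C}\bt\msc{C}^\mrm{rev},\1)=H^\b(Z(\msc{C},sVect),\1)^{\mbb{Z}/2\mbb{Z}}$. The only difference is packaging: you route the argument through the de-equivariantization equivalence $Z(\msc{C},sVect)\simeq(\msc{C}\bt\msc{C}^\mrm{rev})_{\mbb{Z}/2\mbb{Z}}$ and Theorem~\ref{thm:HGC}, whereas the paper cites Proposition~\ref{prop:fgcat} and the spectral sequence of Proposition~\ref{prop:ss} directly, which amounts to the same two ingredients.
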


\begin{proof}
The surjective tensor functor $F:\msc{C}\bt\msc{C}^\mrm{rev}\to Z(\msc{C},sVect)$ implies that $Z(\msc{C},sVect)$ is of finite type, by Proposition~\ref{prop:fgcat}.  Also, the spectral sequence of Proposition~\ref{prop:ss} gives 
\[
H^\b(\msc{C}\bt\msc{C}^\mrm{rev},\1)=H^\b(Z(\msc{C},sVect),\1)^{\mbb{Z}/2\mbb{Z}},
\]
from which we deduce the Krull dimensions.
\end{proof}

\begin{lemma}\label{lem:1728}
Suppose $\msc{C}$ is a braided tensor category of finite type with M\"uger center equivalent to $sVect$. Then the Drinfeld center $Z(\msc{C})$ is of finite type and $\Kdim Z(\msc{C})=2 \Kdim \msc{C}$. 
\end{lemma}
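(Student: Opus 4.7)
The plan is to recognize $Z(\msc{C})$ as a minimal non-degenerate extension of the M\"uger centralizer $Z(\msc{C},sVect)$, and then apply Lemma~\ref{lem:Z2ferm} in tandem with Lemma~\ref{lem:Gext}. Corollary~\ref{cor:1679} already handles the cohomology of $Z(\msc{C},sVect)$, so the only real task is to transfer the finite type property and Krull dimension across the inclusion $Z(\msc{C},sVect)\hookrightarrow Z(\msc{C})$.

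First I would record three structural facts. (a) The Drinfeld center $Z(\msc{C})$ of a finite tensor category is non-degenerate and satisfies $\opn{FPdim}(Z(\msc{C}))=\opn{FPdim}(\msc{C})^2$. (b) By the Frobenius-Perron dimension formula employed in the proof of Proposition~\ref{prop:dno}(iii),
\[
\opn{FPdim}(Z(\msc{C},sVect))=\frac{\opn{FPdim}(\msc{C})^2}{\opn{FPdim}(sVect)}=\frac{\opn{FPdim}(\msc{C})^2}{2}.
\]
(c) By Proposition~\ref{prop:dno}(ii), the M\"uger center of $Z(\msc{C},sVect)$ is equivalent to $sVect$. Together, (a)--(c) say exactly that the inclusion $Z(\msc{C},sVect)\hookrightarrow Z(\msc{C})$ realizes $Z(\msc{C})$ as a minimal non-degenerate extension of the slightly degenerate category $Z(\msc{C},sVect)$, since $\opn{FPdim}(Z(\msc{C}))=2\,\opn{FPdim}(Z(\msc{C},sVect))$ and $Z(\msc{C})$ is non-degenerate.

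Next I would invoke Lemma~\ref{lem:Z2ferm}, which asserts that every minimal non-degenerate extension is a $\mbb{Z}/2\mbb{Z}$-extension via its fermionic grading. Hence $Z(\msc{C},sVect)\hookrightarrow Z(\msc{C})$ is a $\mbb{Z}/2\mbb{Z}$-extension. Lemma~\ref{lem:Gext} then implies $Z(\msc{C})$ is of finite type if and only if $Z(\msc{C},sVect)$ is, with matching Krull dimensions. Combined with Corollary~\ref{cor:1679}, which gives both the finite type property for $Z(\msc{C},sVect)$ and $\Kdim Z(\msc{C},sVect)=2\Kdim\msc{C}$, one concludes that $Z(\msc{C})$ is of finite type and $\Kdim Z(\msc{C})=2\Kdim\msc{C}$.

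The main (and essentially only) obstacle is the bookkeeping in the first step, that is, correctly identifying $Z(\msc{C})\supset Z(\msc{C},sVect)$ as a minimal non-degenerate extension. This reduces to the dimension computation above together with the standard facts that $Z(\msc{C})$ is non-degenerate and of Frobenius-Perron dimension $\opn{FPdim}(\msc{C})^2$ for any finite tensor category $\msc{C}$. After this, the argument is a short concatenation of results already established in the paper.
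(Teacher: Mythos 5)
Your proposal is correct and follows essentially the same route as the paper: identify $Z(\msc{C},sVect)\hookrightarrow Z(\msc{C})$ as a minimal non-degenerate extension via the dimension count $\opn{FPdim}(Z(\msc{C},sVect))=\opn{FPdim}(\msc{C})^2/2$ and Proposition~\ref{prop:dno}, then apply Lemma~\ref{lem:Z2ferm}, Lemma~\ref{lem:Gext}, and Corollary~\ref{cor:1679}. Your version is slightly more explicit about the standard facts that $Z(\msc{C})$ is non-degenerate of Frobenius--Perron dimension $\opn{FPdim}(\msc{C})^2$, which the paper leaves implicit.
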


\begin{proof}
By Proposition~\ref{prop:dno} we have that $Z(\msc{C},sVect)$ has M\"uger center $sVect$.  As we saw in the proof of Proposition~\ref{prop:dno}, we also have
\[
\opn{FPdim}(Z(\msc{C},sVect))=\frac{\opn{FPdim}(\msc{C})^2}{2}.
\]
Thus the inclusion $Z(\msc{C},sVect)\to Z(\msc{C})$ is a minimal non-degenerate extension and, by Lemma~\ref{lem:Z2ferm}, $Z(\msc{C})$ is a $\mbb{Z}/2\mbb{Z}$-extension of $Z(\msc{C},sVect)$.  By Lemma~\ref{lem:Gext} and Corollary~\ref{cor:1679} it follows that $Z(\msc{C})$ is of finite type and of the proposed Krull dimension.
\end{proof}

\subsection{Proof of Theorem~\ref{thm:braidedII}}
\label{sect:266}

\begin{proof}[Proof of Theorem~\ref{thm:braidedII}]
By Corollary \ref{cor:Z2C}, it suffices to show that the Drinfeld center $Z(\msc{C})$ is of finite type and that $\Kdim Z(\msc{C})=2\Kdim\msc{C}$.  By Theorem~\ref{thm:ZCvZC_G}, we may de-equivariantize by the maximal Tannakian part of the M\"uger center to assume $Z_2(\msc{C})\subseteq sVect$.  The result now follows by Proposition~\ref{prop:Cduals} (non-degenerate case) and Lemma~\ref{lem:1728} (slightly degenerate case).
\end{proof}

\subsection{Semisimplicity of the M\"uger center}
\label{sect:ssE}

In general, detecting semisimplicity of the M\"uger center of a given braided tensor category is a non-trivial problem.  Specifically, it can be difficult to tell if the M\"uger center is semisimple but not Tannakian (cf.~Corollary~\ref{cor:braidedI}).  We provide a minimal discussion of the topic here.

Since M\"uger central semisimple objects in a finite symmetric tensor category form a fusion subcategory isomorphic to $\rep (G,R_z)$, for some central element $z$ of order at most $2$, we understand that the quantum dimension of a M\"uger central simple $V$ is $\pm \operatorname{FPdim}(V)$.  This is due to the fact that all simples appear as irreducible summands of
\[
k[G]=k[G]\ot_{k[\mbb{Z}/2\mbb{Z}]}k[\mbb{Z}/2\mbb{Z}]=(k[G]\ot_{k[\mbb{Z}/2\mbb{Z}]}k)\oplus (k[G]\ot_{k[\mbb{Z}/2\mbb{Z}]}sgn),
\]
where $\mbb{Z}/2\mbb{Z}$ embeds in $G$ via $z$ and $sgn$ is the unique nontrivial simple representation for $\mbb{Z}/2\mbb{Z}$.  We call a simple $V$ positive (resp.\ negative) if $\operatorname{qdim}(V)>0$ (resp.\ $<0$).  We let $\operatorname{Irr}(\msc{C})_\pm$ denote the respective collections of positive or negative simples in $\msc{C}$.

\begin{lemma}\label{lem:4}
Let $\msc{C}$ be a weakly integral and braided tensor category, and let $\msc{E}$ denote its M\"uger center.  Let $\msc{E}_+$ denote the fusion subcategory generated by the positive simples in $\msc{E}$.  If $4\FPdim(\msc{E}_+)\nmid \opn{FPdim}(\msc{C})$, then the M\"uger center of $\msc{C}$ is semisimple.  In particular, if $4\nmid \FPdim(\msc{C})$ then the M\"uger center of $\msc{C}$ is semisimple.
\end{lemma}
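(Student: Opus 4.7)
I will prove the contrapositive: if $\msc{E}:=Z_2(\msc{C})$ is not semisimple, then $4\,\FPdim(\msc{E}_+)\mid \FPdim(\msc{C})$. By Deligne's theorem as recorded in Section~\ref{sect:Mu}, $\msc{E}$ is equivalent to $\mrm{srep}(SG)$ for a finite supergroup whose underlying triangular Hopf algebra has the form $\mrm{Wedge}(V)\rtimes G$ with $R$-matrix $R_z=\frac{1}{2}(1+z\ot 1+1\ot z-z\ot z)$ and $z^2=1$. Non-semisimplicity of $\msc{E}$ is equivalent to $V\neq 0$, and in characteristic $0$ this forces $z\neq 1$: if $z=1$ then $R_z=1\ot 1$ is the trivial braiding, and a direct computation $\Delta(v^2)=2(v\ot v)$ on a primitive $v\in V$ shows that no nonzero $V$ with $v^2=0$ can be accommodated. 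Hence $z$ must have order exactly $2$.

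Next I will compare Frobenius--Perron dimensions. The positive semisimple part $\msc{E}_+\subseteq\bar{\msc{E}}\simeq\rep(G,R_z)$ consists of those simples on which $z$ acts trivially, so it is canonically identified with the Tannakian subcategory $\rep(G/\langle z\rangle)$ of FP dimension $|G|/2$. Meanwhile $\FPdim(\msc{E})=\dim\bigl(\mrm{Wedge}(V)\rtimes G\bigr)=2^{\dim V}|G|$, so that
\[
\FPdim(\msc{E})\ =\ 2^{\dim V+1}\,\FPdim(\msc{E}_+).
\]
Since $\dim V\geq 1$, this already gives $4\FPdim(\msc{E}_+)\mid\FPdim(\msc{E})$.

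To close the argument I will invoke the divisibility $\FPdim(\msc{E})\mid\FPdim(\msc{C})$, which holds because any tensor subcategory of a weakly integral finite tensor category is itself weakly integral, and the FP dimension ratio $\FPdim(\msc{C})/\FPdim(\msc{E})$ is a positive algebraic integer, hence a positive rational integer. Chaining with the preceding step yields $4\FPdim(\msc{E}_+)\mid\FPdim(\msc{C})$, contradicting the hypothesis. For the ``in particular'' clause, $\msc{E}_+$ is a fusion subcategory of $\msc{C}$ with $\FPdim(\msc{E}_+)\in\mbb{Z}_{\geq 1}$ dividing $\FPdim(\msc{C})$, so $4\FPdim(\msc{E}_+)\mid\FPdim(\msc{C})$ would imply $4\mid\FPdim(\msc{C})$; contrapositively, $4\nmid\FPdim(\msc{C})$ implies $4\FPdim(\msc{E}_+)\nmid\FPdim(\msc{C})$, and the main assertion then applies.

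The only genuinely delicate point I anticipate is the implication $V\neq 0\Rightarrow z\neq 1$, which relies on characteristic zero and a small Hopf-algebraic verification of the type sketched in the first paragraph; alternatively, one may bypass it by first producing a simple $X$ in $\msc{E}$ with $c_{X,X}=-\mrm{id}$ as in Lemma~\ref{lem:1529}, and thereby forcing the appearance of a nontrivial order-two element $z\in G$ from the structural description of symmetric fusion categories.
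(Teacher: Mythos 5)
Your proof is correct and follows essentially the same route as the paper's: invoke Deligne's structure theorem to write a non-semisimple $\msc{E}$ as $\rep(\mrm{Wedge}(V)\rtimes G)$ with $V\neq 0$ and $z$ of order $2$, compute $\FPdim(\msc{E})=2^{\dim V+1}\FPdim(\msc{E}_+)$, and conclude by the divisibility $\FPdim(\msc{E})\mid\FPdim(\msc{C})$ coming from weak integrality. The extra details you supply (why $z\neq 1$ when $V\neq 0$, and the identification $\msc{E}_+\simeq\rep(G/\langle z\rangle)$) are correct and are left implicit in the paper.
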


\begin{proof}
Suppose that the M\"uger center $\msc{E}$ is not semisimple. Then $\msc{E}$ is the representation category of Hopf algebra of the form $\mrm{Wedge}(V)\rtimes G$, where $G$ is a finite group with a specified order $2$ element $z$ and $V$ nonvanishing, by Deligne~\cite{deligne90,deligne02} (see also~\cite{andrusetingofgelaki01}).  We have
\[
\FPdim(\msc{E})=|G|\dim(\mrm{Wedge}(V))=2\FPdim(\msc{E}_+)2^{\dim(V)}.
\]
If $\msc{C}$ is weakly integral this implies $4\FPdim(\msc{E}_+)\mid \FPdim(\msc{E})\mid \opn{FPdim}(\msc{C})$.
\end{proof}

Note that one need only find the M\"uger central simples in $\msc{C}$ in order to calculate $\FPdim(\msc{E}_+)$.  For it is simply given as the sum
\[
\FPdim(\msc{E}_+)=\sum_{V\in \operatorname{Irr}(\msc{E})_+}\FPdim(V)^2.
\]

\begin{lemma}\label{lem:1858}
A symmetric category $\msc{E}$ is {\it not} semisimple if and only if any of the following occur:
\begin{itemize}
\item $\operatorname{FPdim}(\msc{E})>\sum_{W\in \operatorname{Irr}(\msc{E})}\FPdim(W)^2$.
\item $\operatorname{FPdim}(\msc{E})> 2\sum_{V\in \operatorname{Irr}(\msc{E})_+}\FPdim(V)^2$.
\item $\Ext^1_\msc{E}(V,W)\neq 0$ for some positive simple $V$ and negative simple $W$.
\item $\Ext^1_\msc{E}(\1,W)\neq 0$ for some negative simple $W$.
\end{itemize}
\end{lemma}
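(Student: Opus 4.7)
The plan is to apply Deligne's theorem (recalled earlier in the paper) and write $\msc{E}\simeq\rep(\mrm{Wedge}(U)\rtimes G,R_z)$, where $G$ is a finite group, $z\in G$ is central with $z^2=1$, and $U$ is a $G$-module of $(z,1)$-skew-primitives; semisimplicity of $\msc{E}$ is equivalent to $U=0$. Each of the four conditions will be reduced to ``$U\neq 0$'' via elementary calculations with the Hopf algebra $H=\mrm{Wedge}(U)\rtimes G$, together with a single identification of the first $\Ext$-group.

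The first two conditions follow from direct dimension counts. One has $\FPdim(\msc{E})=\dim H=2^{\dim U}|G|$, and the simples of $\msc{E}$ are all pulled back from $G$, so $\sum_{W\in\opn{Irr}(\msc{E})}\FPdim(W)^2=|G|$. Thus condition (i) reads $2^{\dim U}|G|>|G|$, which is equivalent to $U\neq 0$. A simple $W$ of $\msc{E}$ is positive exactly when $z$ acts trivially on it, so $\sum_{V\in\opn{Irr}(\msc{E})_+}\FPdim(V)^2=|G/\langle z\rangle|$, which equals $|G|$ when $z=1$ and $|G|/2$ otherwise. Combined with the observation that, in characteristic zero, $U\neq 0$ forces $z$ to have order exactly $2$ (otherwise the $(z,1)$-primitives would be honest primitives of a finite-dimensional Hopf algebra, hence vanish), condition (ii) reduces to $2^{\dim U}|G|>|G|$, again equivalent to $U\neq 0$.

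For conditions (iii) and (iv), semisimplicity of $\msc{E}$ immediately implies the vanishing of all $\Ext^1$-groups, so only the converse direction is substantive. Assuming $U\neq 0$, the key computation is that
\[
\Ext^1_\msc{E}(\1,W)=\Hom_G(U,W)
\]
for each simple $W$. This identification follows from the Hopf-algebraic Lyndon--Hochschild--Serre spectral sequence applied to the normal Hopf subalgebra $\mrm{Wedge}(U)\subset H$ (using that $k[G]$ is semisimple in characteristic zero), together with the identification $H^1(\mrm{Wedge}(U),k)=U^*$ coming from the fact that $U$ is the space of primitives. Since $z$ acts as $-1$ on $U$, every simple $G$-summand $W$ of $U$ is negative in $\msc{E}$, and $\Hom_G(U,W)\neq 0$ for such $W$. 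This establishes condition (iv), and condition (iii) follows upon taking the positive simple to be $\1$. The main obstacle is the clean identification of $\Ext^1_H(\1,W)$ with $\Hom_G(U,W)$, which can be made rigorous either via the Hopf-extension spectral sequence or by starting from the Koszul resolution of the trivial $\mrm{Wedge}(U)$-module and then coinducing to $H$.
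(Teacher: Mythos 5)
Your proof is correct, but it reaches the two $\Ext$-conditions by a genuinely different route than the paper. The paper's argument is purely categorical: for the dimension counts it uses that the fusion subcategory $\bar{\msc{E}}$ generated by the simples has $\FPdim(\bar{\msc{E}})=\sum_W\FPdim(W)^2=2\FPdim(\msc{E}_+)$ and that $\msc{E}$ is semisimple iff $\msc{E}=\bar{\msc{E}}$ iff these Frobenius--Perron dimensions agree; for the third point it observes that the tensor subcategory generated by extensions of positive simples contains no negative simples, hence is Tannakian by the earlier detection lemma and therefore fusion, so all positive--positive (and, via $\Ext^1_\msc{E}(V,W)=\Ext^1_\msc{E}(\1,W\ot V^\ast)$, all negative--negative) extensions vanish and non-semisimplicity forces a positive-to-negative extension; the fourth point is then the same adjunction again. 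You instead invoke Deligne's theorem up front and compute everything in $H=\mrm{Wedge}(U)\rtimes G$, the crux being the identification $\Ext^1_\msc{E}(\1,W)=\Hom_G(U,W)$. Both are valid; the paper's route is shorter and recycles its Tannakian criterion, while yours yields strictly finer information (it pinpoints the negative simples extending $\1$ as exactly the $G$-constituents of $U$, which the paper's argument does not). Your dimension counts agree with the paper's after the translation $\sum_W\FPdim(W)^2=|G|$, $\FPdim(\msc{E}_+)=|G/\langle z\rangle|$, and your observation that $U\neq 0$ forces $\mrm{ord}(z)=2$ is needed and correctly supplied.

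One small imprecision: $\mrm{Wedge}(U)$ is not a Hopf subalgebra of $H$ when $z\neq 1$ (the coproduct of a skew-primitive involves $z$); it is only a normal left coideal subalgebra. This does not damage the argument --- the spectral sequence for the smash product $\mrm{Wedge}(U)\#k[G]$ collapses by semisimplicity of $k[G]$ to give $\Ext^\b_H(k,W)=\left(\Ext^\b_{\mrm{Wedge}(U)}(k,k)\ot W\right)^G$ regardless, and your fallback via the Koszul resolution also works --- but the phrase ``normal Hopf subalgebra'' should be corrected. You should also make explicit (or cite from the structure theory) that $z$ acts by $-1$ on $U$, since that is what makes the $G$-constituents of $U$ negative simples.
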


\begin{proof}
The first point follows from the fact that the additive subcategory $\bar{\msc{E}}$ generated by the simples in $\msc{E}$ forms a fusion subcategory, which is of Frobenius-Perron dimension $\sum_{W\in \operatorname{Irr}(\msc{E})}\FPdim(W)^2$.  The second point follows from the fact that $\FPdim(\bar{\msc{E}})=2\FPdim(\msc{E}_+)$.  For the third point note that the tensor subcategory generated by extensions of the positive simples has no negative simples, is Tannakian by Corollary~\ref{cor:braidedI}, and hence fusion.  Therefore all extensions between positive simples vanish.  Hence $\msc{E}$ is non-semisimple if and only there is some extension from a positive simple to a negative simple.  The final point follows by the formula $\Ext^1_\msc{E}(V,W)=\Ext^1_\msc{E}(\1,W\ot V^\ast)$.
\end{proof}


Of course, vanishing of M\"uger central extensions is seen most readily if extensions already vanish in $\msc{C}$.  Such vanishing occurs for degenerate quantum groups at even roots of unity, for example.  For instance, if we consider quantum $\mrm{PSL}(2)$ at $q=i$, then $\rep\mrm{PSL}(2)_i$ admits a minimal de-equivariantization to finite braided tensor category $dE:\rep\mrm{PSL}(2)_i\to\msc{C}_i$ with degeneracy $Z_2(\msc{C}_i)=sVect$~\cite{negron}.  In this case, extensions between representations in $Z_2(\msc{C}_i)$ already vanish in $\msc{C}_i$.

\begin{corollary}
Suppose that $\Ext^1_\msc{C}(V,W)$ vanishes for all positive simple $V$ and negative simple $W$ in the M\"uger center of $\msc{C}$.  Then the M\"uger center of $\msc{C}$ is semisimple.  Similarly, the M\"uger center is semisimple if $\Ext^1_\msc{C}(\1,W)$ vanishes for all negative central $W$.
\end{corollary}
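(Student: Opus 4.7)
The plan is to reduce the given hypothesis, which concerns $\Ext^1_\msc{C}$, to the corresponding hypothesis about $\Ext^1_\msc{E}$ for $\msc{E} := Z_2(\msc{C})$, and then to apply the third (respectively fourth) bullet of Lemma~\ref{lem:1858} directly. Concretely, I would show that for any $V,W$ in $\msc{E}$ there is a natural injection
\[
\Ext^1_\msc{E}(V,W)\hookrightarrow \Ext^1_\msc{C}(V,W),
\]
so that vanishing of the right-hand side forces vanishing of the left, and then the contrapositive of Lemma~\ref{lem:1858} finishes the job.

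To construct this injection I would interpret both groups through Yoneda extensions. Any short exact sequence $0\to W\to X\to V\to 0$ with all three terms in $\msc{E}$ is in particular such a sequence in $\msc{C}$, giving a well-defined forgetful map between equivalence classes. For injectivity, if such a sequence admits a section $s:V\to X$ in $\msc{C}$, then $s$ is a morphism in $\msc{C}$ between two objects of $\msc{E}$; since $\msc{E}$ is by construction a full subcategory of $\msc{C}$, $s$ is a morphism in $\msc{E}$ and therefore splits the sequence inside $\msc{E}$. Implicit in this argument is that $\msc{E}$ is itself abelian, which is standard: $Z_2(\msc{C})$ is closed under subobjects and quotients by naturality of the braiding (if $c_{-,V}c_{V,-}=\mrm{id}$ and $V'\hookrightarrow V$, then pulling back along the exact functor $V'\ot-\hookrightarrow V\ot-$ gives $c_{-,V'}c_{V',-}=\mrm{id}$, and dually for quotients).

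With the injection in hand, the two claims follow immediately. If $\Ext^1_\msc{C}(V,W)=0$ for every positive simple $V$ and negative simple $W$ in $\msc{E}$, then the same vanishing holds for $\Ext^1_\msc{E}(V,W)$, and the third bullet of Lemma~\ref{lem:1858} rules out non-semisimplicity of $\msc{E}$. For the second claim, one specializes $V=\1$ (which is positive) and invokes the fourth bullet of Lemma~\ref{lem:1858} in place of the third. I do not anticipate any serious obstacle: the corollary amounts to the essentially formal observation that Yoneda $\Ext^1$ in a full abelian subcategory embeds in Yoneda $\Ext^1$ in the ambient abelian category, combined with a direct application of the preceding lemma.
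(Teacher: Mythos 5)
Your proposal is correct and matches the paper's (implicit) argument: the corollary is stated as an immediate consequence of Lemma~\ref{lem:1858}, the point being exactly that $\Ext^1$ computed in the full abelian subcategory $Z_2(\msc{C})$ injects into $\Ext^1_\msc{C}$, so vanishing in $\msc{C}$ forces vanishing in the M\"uger center and the third (resp.\ fourth) bullet of that lemma applies. Your Yoneda-extension justification of the injection, including the closure of $Z_2(\msc{C})$ under subquotients via naturality of the braiding, is a sound spelling-out of what the paper leaves unstated.
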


For some examples of finite tensor categories with varying M\"uger center one can see~\cite{gainutdinovlentnerohrmann18}, where the authors consider ``pointed" quasi-Hopf algebras $A$ with ``grouplikes" given by a cocycle-twisted group algebra.  More generally, for any braided tensor category $\msc{K}$ (such as those given by abelian groups with a choice of quadratic form~\cite{joyalstreet93}, or those given by choices of braidings on $\rep G$ for possibly non-abelian $G$~\cite{davydov97,natale06}) and choice of Hopf algebra $R$ in $\msc{K}$, one can construct the center $\operatorname{YD}(\msc{K})^R_R$ of $\rep(R)$ relative to $\msc{K}$~\cite[Theorem 6.2]{shimizu18}.  This is a finite tensor category with M\"uger center equal to that of $\msc{K}$.  In particular, the M\"uger center of $\operatorname{YD}(\msc{K})^R_R$ is semisimple when $\msc{K}$ is a braided fusion category.  One can also consider, of course, products $\msc{C}\boxtimes \msc{W}$ of a non-degenerate braided category $\msc{C}$ with an arbitrary braided fusion category $\msc{W}$.

\appendix

\section{Dynamical twists and module categories}
\label{sect:A}

We discuss how one can understand dynamical twists, and dynamical cocycle twists, via module categories.  This material will certainly be unsurprising to experts.  One can see a light version of the below discussion in~\cite[Section 4.4]{ostrik03}, for example.  The material of this appendix is used in Section~\ref{sect:uqT}.

We first recall, briefly, relations between Hopf algebroids, weak Hopf algebras, and (finite) tensor categories.

\subsection{Hopf algebroids, weak Hopf algebras, and tensor categories}

We only give here a reminder of Hopf algebroids and weak Hopf algebras.  We refer the reader to~\cite{szlachanyi00,xu01,etingofnikshych01} for precise definitions.
\par

A (left) bialgebroid over a base algebra $R$ is an algebra $A$ equipped with a structure map $R\ot R^{op}\to A$, coassociative comultiplication $\Delta:A\to A\ot_R A$, and (left) counit $\epsilon:A\to R$.  The comultiplication is required to be an algebra map, although one needs to place some restrictions on the image of $\Delta$ in $A\ot_R A$ in order for this to make sense.  The structure $(\Delta,\epsilon)$ on $R\ot R^{op}\to A$ is equivalent to a choice of monoidal structure on $\rep(A)$ so that the forgetful functor $\rep(A)\to \mrm{bimod}(R)$ is strict monoidal.
\par

A Hopf algebroid $A$ over $R$ is a bialgebroid for which $\rep(A)$ is rigid.  Since right and left duals are unique, and preserved under monoidal functors, one can conclude that for any Hopf algebroid $A$ each object in $\rep(A)$ is projective over $R$~\cite[Exercise 2.10.16]{egno15}.  Of course, the tensor product $A\ot A'$ of Hopf algebroids over $R$ and $R'$ respectively is a Hopf algebroid over $R\ot R'$.
\par

We have the following basic fact, which is apparent from the work of Szlachanyi~\cite{szlachanyi00}.

\begin{lemma}\label{lem:CasA}
Any tensor category $\msc{C}$ admits a tensor equivalence $\msc{C}\overset{\sim}\to \mrm{rep}(A)$ for a Hopf algebroid $A$.
\end{lemma}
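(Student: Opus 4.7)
The plan is to realize $\msc{C}$ as representations of a Hopf algebroid via Tannaka-style reconstruction from a suitable bimodule-valued fiber functor. The high-level idea is standard: choose a finite-dimensional algebra $R$ so that $\msc{C}$ is equivalent to $\rep(R)$ merely as an abelian category, promote this $k$-linear equivalence to a strong monoidal fiber functor $F:\msc{C}\to \mrm{bimod}(R)$, and then invoke (a version of) the Tannaka reconstruction theorem for bialgebroids to produce $A$ with $\msc{C}\simeq \rep(A)$ as monoidal categories. Rigidity of $\msc{C}$ then forces $A$ to be a Hopf algebroid rather than just a bialgebroid.

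To carry this out, I would first choose a projective generator $P$ of $\msc{C}$ (say the direct sum of the indecomposable projective covers of all the simples) and set $R:=\End_\msc{C}(P)^{op}$, which is a finite-dimensional $k$-algebra by finiteness of $\msc{C}$. The ordinary Morita equivalence $\Hom_\msc{C}(P,-):\msc{C}\overset{\sim}\to \rep(R)$ already gives the underlying abelian equivalence. To upgrade to bimodules and to the tensor structure, I would define
\[
F(V):=\Hom_\msc{C}(P,P\ot V),
\]
with right $R$-action by precomposition with $\End_\msc{C}(P)$ and left $R$-action by postcomposition with $\psi\ot \mrm{id}_V$ for $\psi\in\End_\msc{C}(P)$; these commute, so $F(V)$ is genuinely a bimodule. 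The strong monoidal structure is the obvious map $F(V)\ot_R F(W)\to F(V\ot W)$, $f\ot g\mapsto (f\ot \mrm{id}_W)\circ g$, whose well-definedness on the tensor product over $R$ is a short check using the definition of the two $R$-actions. Exactness of $F$ follows because $P\ot -$ is exact (rigidity in $\msc{C}$ makes every object flat) and $\Hom_\msc{C}(P,-)$ is exact (projectivity of $P$); faithfulness follows from $\Hom_\msc{C}(P,-)$ being faithful.

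With $F$ in hand, one applies the Tannaka-type reconstruction for finite $k$-linear monoidal categories with a faithful, exact, strong monoidal functor to $\mrm{bimod}(R)$: the coend $A:=\int^{V}F(V)^{\ast}\ot_k F(V)$, taken in the appropriate bimodule-valued sense, acquires a natural bialgebroid structure over $R$ whose comultiplication and counit are dictated by the monoidal structure of $F$, and $F$ factors through a monoidal equivalence $\msc{C}\overset{\sim}\to \rep(A)$. This is precisely the content of~\cite{szlachanyi00}, to which the lemma refers, so I would simply cite it rather than recapitulate the (lengthy) coend calculation. Finally, rigidity of $\msc{C}$ transfers across the equivalence to $\rep(A)$, and the rigid structure is equivalent to the existence of an antipode-type map on $A$ turning the bialgebroid into a Hopf algebroid.

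The main obstacle is the reconstruction step in the third paragraph. Constructing a $k$-linear equivalence $\msc{C}\simeq \rep(R)$ is elementary, and the bimodule structure on $F(V)$ is bookkeeping, but ensuring that the coend carries the correct algebra, source/target, comultiplication, and counit making it a bialgebroid in Takeuchi's sense, and that the Hopf structure corresponds precisely to rigidity of $\msc{C}$, is the technical heart of the argument; this is exactly what Szlachanyi handles, and my plan is to apply his theorem as a black box.
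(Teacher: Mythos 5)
Your proposal is correct and follows essentially the same route as the paper's sketch: realize $\msc{C}$ as acting on the exact module category $\msc{M}=\rep(R)$ given by the regular representation (your $R=\End_\msc{C}(P)^{op}$ and $F(V)=\Hom_\msc{C}(P,P\ot V)$ is exactly the resulting monoidal functor $\msc{C}\to\End(\msc{M})\cong\mrm{bimod}(R)$), and then invoke Szlach\'anyi's reconstruction theorem as a black box to obtain the Hopf algebroid. The only difference is that you spell out the fiber functor and its bimodule/monoidal structure more explicitly than the paper does; aside from a minor left/right convention slip in the $R$-actions (with $R=\End_\msc{C}(P)^{op}$, precomposition gives the left action and postcomposition the right), this is the same argument.
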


\begin{proof}[Sketch proof]
One considers a finite dimensional algebra $R$ with an exact $\msc{C}$-module structure on $\msc{M}=\rep(R)$ then uses the corresponding representation $\rho:\msc{C}\to \End(\msc{M})=\mrm{bimod}(R)$ to construct the desired algebroid $A$ over $R$, as in~\cite[Theorem 1.8]{szlachanyi00}.  For example, one can take $R$ such that $\msc{M}=\msc{C}\cong \rep(R)$ and consider the regular representation for $\msc{C}$.
\end{proof}

When $A$ is a Hopf algebroid over a separable base $R$, one can also use a bimodule splitting $R\to R\ot R$ of the multiplication map to lift the comultiplication for $A$ to a map $\tilde{\Delta}:A\to A\ot A$.  When the unit in $\rep(A)$ is simple, with $\End_A(\mbf{1})=k$, we furthermore get a canonical map $\tilde{\epsilon}:A\to k$.  The resulting structure $(A,\tilde{\Delta},\tilde{\epsilon})$ is a {\it weak Hopf algebra} (see~\cite[Section 4]{ostrik03}).

\subsection{Dynamical twists and module categories}
\label{sect:dynamM}

We refer the reader to~\cite{mombelli07,etingofnikshych01} for basic information regarding dynamical twists.  We present some relationships between dynamical twists, module categories, and dynamical quantum group constructions from~\cite{xu01,etingofnikshych01}.  We consider a finite abelian group $\Lambda$ and suppose $\mrm{char}(k)\nmid|\Lambda|$.
\par

Suppose $A$ is a Hopf algebra and $\Lambda$ is an abelian subgroup in the group of grouplikes $G(A)$.  Let $J:\Lambda^\vee\to A\ot A$ be a dynamical twist.  In particular, $J$ is a map into the $\Lambda$-invariants $(A\ot A)^\Lambda$ under the (diagonal) adjoint action which solves a parameter dependent dual cocycle condition.
\par

The forgetful functor $\rep(A)\to \rep(\Lambda)$ induces an exact module category structure on $\rep(\Lambda)$ and we use $J$ to perturb the associativity and hence produce a new module category structure $\msc{M}(J)$ on $\rep(\Lambda)$.  Directly, the associativity is given by
\[
\mrm{assoc}_{\msc{M}(J)}:X\ot (Y\ot V)\to (X\ot Y)\ot V,\ \ x\ot y\ot v\mapsto \sum_{\chi\in \Lambda^\vee}(J(\chi)\ot P_\chi)(x\ot y\ot v),
\]
where $P_\chi$ is the usual idempotent $|\Lambda|^{-1}\sum_{\lambda\in \Lambda}\chi(\lambda)\lambda^{-1}$.  From the action of $\mrm{rep}(A)$ on $\msc{M}(J)$ we get a faithful monoidal embedding
\[
F(J):\rep(A)\to \mrm{bimod}(k[\Lambda]).
\]
We then follow Szlachanyi~\cite[Theorem 1.8]{szlachanyi00} to produce a Hopf algebroid $A^J$ over $k[\Lambda]$ equipped with an equivalence to $\rep(A)$ over $\mrm{bimod}(k[\Lambda])$,
\begin{equation}\label{eq:1001}
\xymatrix{
\rep(A)\ar[dr]_{F(J)}\ar[rr]^{\sim}_{\mrm{Sz}(J)}& &\rep(A^J)\ar[dl]^{\mrm{Forget}}\\
 & \mrm{bimod}(k[\Lambda]) &.
}
\end{equation}
As an algebra $A^J=A\ot\End_k(k[\Lambda])$.\footnote{We are actually using the {\it opposite} algebroid to the one constructed in~\cite{szlachanyi00}.}  More directly, $A^J$ is given in~\cite{szlachanyi00} as the endomorphism algebra $\End_A(A\ot\O(\Lambda))^{op}$ which is canonically identified with $A\ot \End_k(k[\Lambda])$ via the isomorphism
\[
A\ot \End_k(k[\Lambda])\to \End_A(A\ot \O(\Lambda))^{op},\ \ a\ot f\mapsto (-\cdot a)\ot f^\ast.
\]

Since the base $k[\Lambda]$ is separable, $A^J$ is identified with a weak Hopf algebra by splitting the multiplication for $k[\Lambda]$, as described in~\cite[Proof of Theorem 4.1]{ostrik03}.

\begin{lemma}\label{lem:1015}
For a dynamical twist $J:\Lambda^\vee\to A\ot A$, the weak Hopf algebra $A^J$ constructed from the corresponding exact module category $\msc{M}(J)$ is equal to the Xu style twisted weak Hopf algebra, as constructed in~\cite[Proposition 4.2.4]{etingofnikshych01}.
\end{lemma}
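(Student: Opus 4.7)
The plan is to exploit the uniqueness built into Szlachanyi's reconstruction. Both the algebroid $A^J$ arising from the module category $\msc{M}(J)$ via \eqref{eq:1001} and the Xu--style weak Hopf algebra of \cite[Proposition~4.2.4]{etingofnikshych01} have the same underlying algebra $A\otimes \End_k(k[\Lambda])$ with the same source and target maps, coming from the canonical embedding $k[\Lambda]\otimes k[\Lambda]^{op}\to \End_k(k[\Lambda])$. The counits likewise agree, because in both cases the counit sends $a\otimes f$ to the element of $k[\Lambda]$ determined by evaluation of $f$ together with $\epsilon_A(a)$. Hence the substance of the lemma is the coincidence of the two comultiplications, and this is what I would focus on.

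First I would recall that the coproduct on $A^J$ produced by Szlachanyi is characterized uniquely by the property that the forgetful functor $\rep(A^J)\to \mrm{bimod}(k[\Lambda])$ is strict monoidal and that $\mrm{Sz}(J)$ is a monoidal equivalence fitting into \eqref{eq:1001}. Concretely this says that the $A^J$-action on the $k[\Lambda]$-tensor product $V\otimes_{k[\Lambda]}W$ of two objects of $\msc{M}(J)$ must be the action whose underlying $A$-action is obtained from the diagonal action on $V\otimes W$ conjugated by $\mrm{assoc}_{\msc{M}(J)}$. By the uniqueness part of Szlachanyi's reconstruction it therefore suffices to verify that the Xu coproduct, when used to act on $V\otimes_{k[\Lambda]}W$, produces exactly this twisted $A$-action.

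Next I would unwind the Xu coproduct. By \cite[Proposition~4.2.4]{etingofnikshych01}, the coproduct on the relevant weak Hopf algebra is, up to the separable splitting of $k[\Lambda]\to k[\Lambda]\otimes k[\Lambda]$, the coproduct $\Delta_A$ premultiplied by $J(\chi)$ and grouped by the character projectors $P_\chi$. Acting on $x\otimes y\in V\otimes_{k[\Lambda]}W$ this yields $\sum_\chi \bigl(J(\chi)\Delta_A(a)\bigr)(x\otimes P_\chi y)$, which is precisely the formula built into $\mrm{assoc}_{\msc{M}(J)}$ as displayed in Section~\ref{sect:dynamM}. So at the level of formulas the two coproducts visibly match.

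The main obstacle will be bookkeeping with conventions. The footnote after \eqref{eq:1001} already warns that \cite{szlachanyi00} constructs the opposite algebroid, and the further passage from algebroid over $k[\Lambda]$ to weak Hopf algebra, via a bimodule splitting of the multiplication of the separable base, can flip tensor factors and swap $\chi$ with $\chi^{-1}$. I expect the delicate part to be aligning these conventions with those governing the dynamical cocycle condition on $J$ in \cite{etingofnikshych01}, so that the occurrences of $J(\chi)$ and $P_\chi$ land in the correct tensorands. Once the conventions are pinned down, the identification of the comultiplications, and hence of the two weak Hopf algebras, becomes automatic from the preceding paragraph.
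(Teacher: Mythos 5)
Your overall strategy is a genuinely different, and more conceptual, route than the paper's. The paper simply writes out both comultiplications on $A\ot\End_k(k[\Lambda])$ and observes that they coincide with the explicit formula
\[
\Delta^J(a\ot E_{\mu,\nu})=\sum_{\tau,\sigma}(1\ot P_{\sigma})J^{-1}(\mu)\Delta(a)J(\nu)(E_{\sigma\mu,\nu\tau}\ot P_\tau E_{\mu,\nu}).
\]
You instead invoke the uniqueness in Szlachanyi's reconstruction: the coproduct on $A^J$ is determined by requiring the forgetful functor $\rep(A^J)\to\mrm{bimod}(k[\Lambda])$ to be strict monoidal and $\mrm{Sz}(J)$ to be a monoidal equivalence, so it suffices to check that the Xu coproduct induces the same monoidal structure on the image of $F(J)$. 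That reduction is sound and, if carried out, would buy you a proof that is less dependent on matching explicit formulas; the paper's direct comparison buys the explicit formula itself, which is what actually certifies the identification.

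The gap is in your third paragraph, where the verification is supposed to happen. The formula you write, $\sum_\chi\bigl(J(\chi)\Delta_A(a)\bigr)(x\ot P_\chi y)$, is essentially the associator of $\msc{M}(J)$, not the Xu coproduct and not the action it induces on a tensor product of $A^J$-modules. The objects being tensored in $\rep(A^J)$ are $k[\Lambda]$-bimodules of the form $F(J)(X)=X\ot k[\Lambda]$, and transporting the diagonal $A$-action across the isomorphism $(X\ot k[\Lambda])\ot_{k[\Lambda]}(Y\ot k[\Lambda])\cong (X\ot Y)\ot k[\Lambda]$ built from $\mrm{assoc}_{\msc{M}(J)}$ produces a \emph{conjugated} action: this is why the correct coproduct carries both a left factor $J^{-1}(\mu)$ and a right factor $J(\nu)$, together with the matrix units $E_{\mu,\nu}$ recording the shift of the dynamical parameter. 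With only one $J(\chi)$ and no matrix-unit bookkeeping, the claimed ``visible match'' is not visible, and the conventions issue you defer to the final paragraph is not a peripheral nuisance but the entire content of the lemma. To close the argument you would need to actually compute the transported action and check it agrees with the conjugation formula above.
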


\begin{proof}[Sketch proof]
The constructions of~\cite{szlachanyi00} and~\cite{etingofnikshych01} are both explicit, and both of the proposed twisted weak Hopf algebras are equal to $A\ot\End_k(k[\Lambda])$ as algebras.  One simply writes down the coproduct for $A^J$ and the Xu style twitsted algebra and observed directly that they are equal.  Specifically, both comultiplications are given by the formula
\[
\Delta^J(a\ot E_{\mu,\nu})=\sum_{\tau,\sigma}(1\ot P_{\sigma})J^{-1}(\mu)\Delta(a)J(\nu)(E_{\sigma\mu,\nu\tau}\ot P_\tau E_{\mu,\nu}),
\]
where $E_{\mu,\nu}\in \End_k(k[\Lambda])$ is the elementary matrix which maps idempotents as $E_{\mu,\nu}(P_\tau)=\delta_{\nu,\tau}P_\mu$, and the sum is over all $\tau,\sigma\in \Lambda^\vee$.
\end{proof}

Recall that the vector space dual of a weak Hopf algebra is another weak Hopf algebra.  For the twisted weak Hopf algebra $A^J$ we write $A^\ast_J$ for the vector space dual.  We view $A^\ast_J$ as a ``dynamical cocycle twist" of $A^\ast$.  The dual $A^\ast_J$ is opposite to the Etingof-Varchenko style dynamical twisted algebra for the pair $(A,J)$~\cite[Theorem 4.3.1]{etingofnikshych01} (cf.~\cite{etingofvarchenko99}).

\begin{lemma}\label{lem:1025}
For $J$ and $A$ as in Lemma~\ref{lem:1015}, there is an equivalence of tensor categories $\mrm{Sz}^\ast(J):\rep(A^\ast_J)^{\mrm{cop}}\overset{\sim}\to \rep(A)^\ast_{\msc{M}(J)}$.
\end{lemma}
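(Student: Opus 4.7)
The plan is to realize $\mrm{Sz}^\ast(J)$ as the composition of two equivalences: one obtained by transporting along the Szlach\'anyi equivalence $\mrm{Sz}(J)$ of diagram~\eqref{eq:1001}, and one obtained from the standard duality between a weak Hopf algebra and its linear dual.

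First, I would observe that the module category $\msc{M}(J)$ was constructed precisely so that the $\rep(A)$-action on $\rep(k[\Lambda])$ is the one induced by the monoidal embedding $F(J):\rep(A)\to \mrm{bimod}(k[\Lambda])$. The commutativity of~\eqref{eq:1001} then identifies $\msc{M}(J)$, as a $\rep(A)$-module category, with $\rep(k[\Lambda])$ viewed as a $\rep(A^J)$-module category via the forgetful functor $\rep(A^J)\to \mrm{bimod}(k[\Lambda])$ (which is the canonical module category associated to the weak Hopf algebra $A^J$ over its target counital subalgebra $k[\Lambda]$). Applying $\End_{\rep(A)\text{-mod}}(-,-)$ and using $\mrm{Sz}(J)$, this yields a tensor equivalence
\[
\rep(A)^\ast_{\msc{M}(J)}\ \overset{\sim}\longrightarrow\ \rep(A^J)^\ast_{\rep(k[\Lambda])}.
\]

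Second, I would invoke the standard weak Hopf algebra duality, which is the natural extension of the classical identification $\rep(B)^\ast_{Vect}\cong \rep(B^\ast)^{\mrm{cop}}$ for a finite dimensional Hopf algebra $B$. Specifically: for a finite dimensional weak Hopf algebra $H$ with target counital subalgebra $H_t$ and canonical module category $\msc{N}=\rep(H_t)$ given by the forgetful functor $\rep(H)\to \mrm{bimod}(H_t)$, there is a canonical equivalence of tensor categories $\rep(H)^\ast_{\msc{N}}\simeq \rep(H^\ast)^{\mrm{cop}}$, obtained by sending a $\rep(H)$-module endofunctor $L$ of $\msc{N}$ to the $H^\ast$-module $L(H_t)$ with the obvious $H^\ast$-action. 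Applying this to $H=A^J$, whose dual weak Hopf algebra is $A^\ast_J$ by definition, produces
\[
\rep(A^J)^\ast_{\rep(k[\Lambda])}\ \overset{\sim}\longrightarrow\ \rep(A^\ast_J)^{\mrm{cop}}.
\]
Composing gives the desired $\mrm{Sz}^\ast(J)$.

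The main obstacle is the verification that the hypotheses of the weak Hopf algebra duality statement are met in the present setting. Concretely, one must check that the target counital subalgebra of $A^J$ is indeed $k[\Lambda]$ (embedded via the structure map $k[\Lambda]\ot k[\Lambda]^{op}\to A^J$) and that the associated canonical $\rep(A^J)$-module category is precisely $\rep(k[\Lambda])$ with the forgetful action. Both of these facts can be read off the explicit formula for the comultiplication on $A^J$ displayed in the proof of Lemma~\ref{lem:1015}, together with the fact that the equivalence of~\cite{szlachanyi00} realizes $\rep(A^J)$ as $\rep(A)$ acting on $\mrm{bimod}(k[\Lambda])$ via $F(J)$. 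Once these identifications are in place, the composition of the two displayed equivalences delivers $\mrm{Sz}^\ast(J)$ as a tensor equivalence, completing the proof.
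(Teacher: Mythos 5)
Your argument is correct and is essentially the paper's own proof: the paper likewise combines the commutativity of diagram~\eqref{eq:1001} (identifying $\msc{M}(J)$ with the canonical $\rep(A^J)$-module category $\rep(k[\Lambda])$) with the weak Hopf algebra duality $\rep(H)^\ast_{\rep(H_t)}\simeq\rep(H^\ast)^{\mrm{cop}}$, which is exactly \cite[Theorem 4.2]{ostrik03}. You have simply spelled out the content of that citation rather than invoking it by name.
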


\begin{proof}
This is an immediate consequence of the diagram~\eqref{eq:1001} and~\cite[Theorem 4.2]{ostrik03}.
\end{proof}

\subsection{Constant dynamical twists}

In the case of a constant twist $J:\Lambda^\vee\to A$, with constant value $J^c$, the module category $\msc{M}(J)$ is $\mrm{rep}(\Lambda)$ with the constant associator given by multiplying by $J^c$
\[
\mrm{assoc}(J):X\ot (Y\ot V)\to (X\ot Y)\ot V,\ \ x\ot y\ot v\mapsto J^c_{12}(x\ot y\ot v).
\]
We note that for such a constant twist the value $J^c\in A\ot A$ lies in the $\Lambda$-invariants $(A\ot A)^\Lambda$, under the adjoint action.
\par

We have the left and right translation actions of $\Lambda$ on the dual $A^\ast$.  Specifically, for $\lambda\in \Lambda$ we act by the algebra automorphisms $\lambda\cdot f=\lambda^{-1}(f_1)f_2$ and $f\cdot \lambda=f_1\lambda^{-1}(f_2)$.  Restricting along these automorphisms gives an action of $\Lambda^e=\Lambda\times \Lambda^{op}$ on $\mrm{rep}(A^\ast)$, and we may take the ($k$-linear) equivariantization $\mrm{rep}(A^\ast)^{\Lambda^e}$.  Similarly, for the $\Lambda$-invariant twist $J^c$ we still have $k[\Lambda]\subset A^{J^c}$ and may take the equivariantization $\mrm{rep}(A^\ast_{J^c})^{\Lambda^e}$.  Algebraically, the equivariantization is the category of representations over the smash product $A^\ast_{J^c}\rtimes(\Lambda^e)$.
\par

We note that $\mrm{rep}(A^\ast_{J^c})^{\Lambda\times\Lambda^{op}}$ is {\it not} a tensor category under the usual product $\ot_k$, as the translation actions of $\Lambda$ on $A^\ast_{J^c}$ are not actions by Hopf automorphisms.  Via the algebra projection $A^\ast_{J^c}\to \O(\Lambda)$, and regular left action of $\O(\Lambda)$ on $k[\Lambda]$, we see that $k[\Lambda]$ is an object in $\mrm{rep}(A^\ast_{J^c})$.  We have the canonical equivariant structure on $k[\Lambda]$ given by the regular left and right actions of $\Lambda$.  The following is proved in~\cite[Proposition 1.23]{andruskiewitschmombelli07}.

\begin{proposition}[\cite{andruskiewitschmombelli07}]\label{prop:const}
Suppose that $J:\Lambda^\vee\to A$ is a constant dynamical twist for a Hopf algebra $A$, with constant value $J^c\in (A\ot A)^\Lambda$.  Then there is a $k$-linear equivalence
\[
F:\mrm{rep}(A)^\ast_{\msc{M}(J)}\overset{\sim}\to \mrm{rep}(A^\ast_{J^c})^{\Lambda\times\Lambda^{op}}
\]
under which the unit is sent to the $A^\ast_{J^c}$-representation $k[\Lambda]$, with the above equivariant structure.
\end{proposition}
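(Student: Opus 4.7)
The plan is to identify both sides with the same category of twisted bimodules via the standard dictionary between module categories over $\rep(A)$ and $A$-comodule algebras~\cite{andruskiewitschmombelli07,etingofostrik04}. First I would realize $\msc{M}(J)$ as the category of representations of an $A$-comodule algebra $T_J$ whose underlying vector space is $k[\Lambda]$, whose coaction is the restriction of $\Delta_A$ along $k[\Lambda]\hookrightarrow A$, and whose multiplication is the $J^c$-twisted product. The key observation is that a \emph{constant} dynamical twist has value $J^c\in (A\ot A)^\Lambda$ satisfying the ordinary dual $2$-cocycle equation: the dynamical cocycle condition collapses when $J$ has no dependence on $\Lambda^\vee$. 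Both the associativity of the twisted multiplication on $T_J$ and its compatibility with the coaction then reduce to this cocycle identity together with the $\Lambda$-invariance of $J^c$. One verifies by direct comparison of associators that $\rep(T_J)$ equipped with its canonical $\rep(A)$-module structure is indeed $\msc{M}(J)$.

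Next I would invoke the standard identification of the dual tensor category with $T_J$-bimodules in $\rep(A)$,
\[
\rep(A)^\ast_{\msc{M}(J)}\ \simeq\ {}_{T_J}\mrm{bimod}_{T_J}\!\big(\rep(A)\big),
\]
sending the identity functor to the regular bimodule $T_J$. Unwinding definitions, an object on the right is a vector space $W$ carrying commuting left and right $T_J=k[\Lambda]$-actions together with a compatible $A^\ast$-action; after absorbing the cocycle $J^c$ into the $A^\ast$-side, effectively replacing $A^\ast$ by $A^\ast_{J^c}$, the compatibility becomes the condition that the $k[\Lambda]$-actions are the two translation actions of $\Lambda\subset G(A)$ on $A^\ast_{J^c}$. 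This yields the desired equivalence with $\rep(A^\ast_{J^c})^{\Lambda\times \Lambda^{op}}$. The unit $\mrm{id}_{\msc{M}(J)}$ corresponds to the regular bimodule $T_J$, whose $A^\ast_{J^c}$-action factors through the projection $A^\ast_{J^c}\twoheadrightarrow \O(\Lambda)$ (on which the cocycle twist is invisible), reproducing the standard $\O(\Lambda)$-action on $k[\Lambda]$ and the regular $\Lambda\times \Lambda^{op}$-equivariant structure claimed in the statement.

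The main obstacle is the cocycle-level bookkeeping in the second paragraph: one must verify that the twist on the $A^\ast$-side extracted from the module functor pentagon under the nontrivial associator $\mrm{assoc}(J)$ matches the cocycle defining $A^\ast_{J^c}$ on the nose, not merely up to a cohomologous cocycle, and that the conventions for the two $\Lambda$-translation actions agree with those implicit in the module category data. A useful sanity check is the specialization $J^c=1\ot 1$, in which the conclusion should recover the familiar identification $\rep(A)^\ast_{\rep(\Lambda)}\simeq \rep(A^\ast)^{\Lambda\times\Lambda^{op}}$ with $k[\Lambda]$ as the unit.
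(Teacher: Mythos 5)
Your route is the one the paper itself takes: it gives no independent argument but defers to \cite[Proposition 1.23]{andruskiewitschmombelli07}, which is precisely the comodule-algebra/relative-Hopf-bimodule dictionary you invoke, and the reduction to the untwisted case by pushing the twist onto the $A$-side is also how that argument is organized. So the strategy is correct. Two steps in your write-up do not compute as literally stated, however, and both need repair.

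First, ``$T_J=k[\Lambda]$ with the $J^c$-twisted product'' does not parse: $J^c$ is an element of $(A\ot A)^\Lambda$, not a functional on $A\ot A$ and not an element of $k[\Lambda]\ot k[\Lambda]$, so there is no $J^c$-twisted multiplication on $k[\Lambda]$. The correct statement is that $k[\Lambda]$, with its \emph{untwisted} product and the coaction $\lambda\mapsto\lambda\ot\lambda$, is a left coideal subalgebra of the Drinfeld twist $A^{J^c}$; coassociativity of this coaction for the twisted comultiplication is exactly where $\Lambda$-invariance of $J^c$ enters, since $\Delta^{J^c}(\lambda)=(J^c)^{-1}(\lambda\ot\lambda)J^c=\lambda\ot\lambda$. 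Then $\msc{M}(J)$ is the untwisted module category ${}_{k[\Lambda]}\mathcal{M}$ over $\rep(A^{J^c})$, transported along the tensor equivalence $\rep(A^{J^c})\simeq\rep(A)$. This is the honest version of your ``absorb the cocycle into the $A^\ast$-side'' step, and it has to come first, not as an afterthought in the middle of the bimodule computation. Second, the dual category is not the category of ``$T_J$-bimodules in $\rep(A)$'' --- $k[\Lambda]$ is not an algebra in $\rep(A)$, and a bimodule in $\rep(A)$ would carry an $A$-action rather than the $A^\ast$-action you then describe. The correct target is the category of $k[\Lambda]$-bimodules in left $A^{J^c}$-comodules (relative Hopf bimodules), which is what your unwinding actually produces: an $A^\ast_{J^c}$-module with commuting left and right $\Lambda$-actions compatible with the translation automorphisms. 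With these corrections the identification with $\rep(A^\ast_{J^c})^{\Lambda\times\Lambda^{op}}\cong\rep(A^\ast_{J^c}\rtimes\Lambda^e)$ and the computation that the unit goes to $k[\Lambda]$ (via $A^\ast_{J^c}\twoheadrightarrow\O(\Lambda)$, on which the twist is invisible) go through as you say.
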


One can deduce from the proof of~\cite{andruskiewitschmombelli07} that there is a canonical Hopf algebroid structure on the smash product $A^\ast_{J^c}\rtimes (\Lambda^e)$ so that the above equivalence is an equivalence of tensor categories.

\bibliographystyle{abbrv}

\end{document}